%
%
%
%
\documentclass{amsart}

\usepackage{graphicx}
\usepackage{amssymb}
\usepackage{amsmath}
\usepackage{amsfonts}
\usepackage{array}
\usepackage{multirow}
\usepackage{makecell}
\usepackage{hhline}
\usepackage{multirow,bigdelim}
\usepackage{mathtools}
\usepackage{tabu}
\usepackage[table]{xcolor}
\usepackage{arydshln}
\usepackage[ruled,vlined,linesnumbered]{algorithm2e}
\usepackage{hyperref}
\usepackage{float}
\usepackage{upquote}
\usepackage{comment}

\usepackage{diagbox}
\usepackage[table]{xcolor}

\newtheorem{theorem}{Theorem}[section]
\newtheorem{corollary}{Corollary}[theorem]
\newtheorem{lemma}[theorem]{Lemma}
\theoremstyle{definition}
\newtheorem{definition}[theorem]{Definition}

\theoremstyle{remark}
\newtheorem{remark}[theorem]{Remark}

\newcommand{\RN}[1]{%
  \textup{\uppercase\expandafter{\romannumeral#1}}%
}

\newcommand{\gl}[1]{\Gl(#1)}
\newcommand{\ortho}[1]{\Ortho(#1)}
\newcommand{\un}[1]{\Un(#1)}
\newcommand{\symp}[1]{\Symp(#1)}
\newcommand{\osp}[1]{\Osp(#1)}
\newcommand{\usp}[1]{\Usp(#1)}
\newcommand{\twotwo}[4]{\begin{bmatrix}#1 & #2\\#3 & #4\end{bmatrix}}
\newcommand{\twoone}[2]{\begin{bmatrix}#1 \\ #2\end{bmatrix}}
\newcommand{\threediag}[5]{\begin{bmatrix} #1&&#2\\&#3&\\#4&&#5\end{bmatrix}}
\newcommand{\sthreediag}[5]{\begin{bsmallmatrix}#1 & & #2 \\ &#3&\\#4&&#5\end{bsmallmatrix}}
\newcommand{\stwotwo}[4]{\begin{bsmallmatrix}#1 & #2 \\ #3 & #4\end{bsmallmatrix}}
\newcommand{\twotworr}[4]{
\begin{bmatrix}\begin{array}{rr}
#1 & #2\\#3 & #4
\end{array}\end{bmatrix}}
\newcommand{\mis}{\,\hspace{-0.1cm}\scalebox{0.5}[0.7]{\( - \)}}

\newcommand{\realify}[1]{[#1]_\mathbb{R}}
\newcommand{\complexify}[1]{[#1]_\mathbb{C}}
\newcommand{\glmat}[2]{G_{#1}^\mathbb{#2}}
\newcommand{\unmat}[2]{U_{#1}^\mathbb{#2}}
\newcommand{\sympmat}[2]{Sp_{#1}^\mathbb{#2}}
\newcommand{\orthomat}[2]{O_{#1}^\mathbb{#2}}
\newcommand{\fact}[1]{\mathcal{F}_{#1}}

\newcommand{\myscale}[2]{\text{\scalebox{#1}[#1]{$#2$}}}

\hypersetup{
    colorlinks=false,
    linkcolor=black,
    filecolor=magenta,      
    urlcolor=cyan,
}

\DeclareMathOperator{\Ortho}{O}
\DeclareMathOperator{\Symp}{Sp}
\DeclareMathOperator{\Un}{U}
\DeclareMathOperator{\Gl}{GL}
\DeclareMathOperator{\diag}{diag}
\DeclareMathOperator{\Osp}{OSp}
\DeclareMathOperator{\Usp}{USp}
\newcommand{\kak}{\text{K}_1\text{AK}_2}
\numberwithin{equation}{section}

\let\oldnl\nl
\newcommand{\nonl}{\renewcommand{\nl}{\let\nl\oldnl}}

\makeatletter
\renewcommand*\env@matrix[1][\arraystretch]{%
  \edef\arraystretch{#1}%
  \hskip -\arraycolsep
  \let\@ifnextchar\new@ifnextchar
  \array{*\c@MaxMatrixCols c}}
\makeatother

\SetKwInOut{Inputone}{Input 1}
\SetKwInOut{Inputtwo}{Input 2}

\usepackage[obeyFinal,textsize=footnotesize]{todonotes}



\begin{document}

\title[New Matrix Factorizations]{Fifty Three Matrix Factorizations: \\
A systematic approach}

\author{Alan Edelman}
\address{Department of Mathematics and Computer Science \& AI Laboratory, Massachusetts Institute of Technology, Cambridge, Massachusetts, 02139}
\email{edelman@mit.edu}

\author{Sungwoo Jeong}
\address{Department of Mathematics, Massachusetts Institute of Technology, Cambridge, Massachusetts, 02139}
\email{sw2030@mit.edu}

\subjclass[2010]{Primary 15A23, 22E60; Secondary 15A30, 22E70}
\keywords{Matrix factorizations, Generalized Cartan decomposition.}

\begin{abstract}
The success of matrix factorizations such as the singular value decomposition (SVD) has motivated the search for even more factorizations. We catalog 53 matrix factorizations, most of which we believe to be new. 

Our systematic approach, inspired by the generalized Cartan decomposition of Lie theory, also encompasses known factorizations such as the SVD, the symmetric eigendecomposition, the CS decomposition, the hyperbolic SVD, structured SVDs, the Takagi factorization, and others thereby covering familiar matrix factorizations as well as ones that were waiting to be discovered.

We suggest that Lie theory has one way or another been lurking hidden in the foundations of the very successful field of matrix computations with applications routinely used in so many areas of computation. In this paper, we investigate consequences of the Cartan decomposition and the little known generalized Cartan decomposition for matrix factorizations.

We believe that these factorizations once properly identified can lead to further work on algorithmic computations and applications.
\end{abstract}

\maketitle
{\small
\tableofcontents}
\addtocontents{toc}{\protect\setcounter{tocdepth}{1}}

\vspace{-0.5cm}
\section{Introduction}
\subsection{Matrix factorizations}
An insightful exercise might be to ask what is the most important idea in linear algebra. Our answer would be ``matrix factorizations." It is as straightforward as it is unlikely to be the one we think you might hear, such as: ``linearity," ``eigenvalues" or ``singular values." Matrix factorizations have shown up relatively recently in linear algebra classes, when before they may have taken a backseat to the study of structure. However, they are critical to applications in both pure/applied mathematics, as well as science and engineering.\footnote{Indeed, Townsend and Trefethen \cite{townsend2015continuous} (\href{https://people.maths.ox.ac.uk/trefethen/publication/PDF/2014_151.pdf}{\color{blue}online version only}) highlight the critical role of matrix factorizations stating that ``one might regard this as the central dogma of classical numerical linear algebra: matrix algorithms correspond to matrix factorizations."}

\par The search for new matrix factorizations has motivated numerical linear algebraists for decades. The search has often been driven by a specific demand, a particular problem or sometimes a generalization of a known factorization. While we enjoy learning specialized new factorizations, we were motivated to understand a unifying structure. 

\begin{table}
{\tabulinesep=0.3mm
\setlength{\tabcolsep}{2pt}
\title{\large Table of 53 Matrix Factorizations \vspace{0.05in} } 

\tiny
\centering
\begin{tabu}{|c|c|c|c|c|}
\cline{1-5}
& & \small $\mathbb{R}$ & \small $\mathbb{C}$ & \small $\mathbb{H}$ \\
\noalign{\global\arrayrulewidth=1pt}\hhline{-|-|-|-|-|}\noalign{\global\arrayrulewidth=0.4pt}
    
\multirow{10}{*}{\rotatebox[origin=c]{90}{$\Un_\beta(n)$ (Compact)\hspace{0.8cm}}} &
\cellcolor{brown!20} & & \cellcolor{gray!25} $U_n = O_n D {O_n}'$ & \cellcolor{gray!25}$\unmat{n}{H} = U_n D {U_n}'$ \\

& \multirow{-2}{*}{\cellcolor{brown!20}$\fact{1}$} & & \cellcolor{gray!25}{\color{red} ODO (2013, 2018)} \cite{vilenkin2013representation,fuehr2018note} & \cellcolor{gray!25}{\color{red} Bloch, Messiah (1962)} \cite{bloch1962canonical} \\
\hhline{~|-|-|-|-|}

& \cellcolor{brown!20}$\fact{2}$ & $O_{2n} = \realify{\unmat{n}{C}}\stwotwo{R}{}{}{R^{-1}}{\realify{\unmat{n}{C}}}'$ & $U_{2n}=\complexify{\unmat{n}{H}}\stwotwo{D}{}{}{D} {\complexify{\unmat{n}{H}}}'$ & \\ \cline{2-5}
    
& $\fact{3}$ & & $U_{2n} = O_{2n}\stwotwo{D}{}{}{D}\complexify{\unmat{n}{H}}$ & \\ \hhline{~|-|-|-|-|}
    
& & \cellcolor{gray!25}$O_n = $ & \cellcolor{gray!25}$U_n = $ & $\unmat{n}{H} = $ \\

& & \cellcolor{gray!25}$\stwotwo{O_p}{}{}{O_q}\stwotwo{C}{S}{-S}{C}\stwotwo{O_r}{}{}{O_s}$ & \cellcolor{gray!25}$\stwotwo{U_p}{}{}{U_q}\stwotwo{C}{S}{-S}{C}\stwotwo{U_r}{}{}{U_s}$ & \multirow[t]{2}{*}{$\stwotwo{\unmat{p}{H}}{}{}{\unmat{q}{H}}\stwotwo{C}{S}{-S}{C}\stwotwo{\unmat{r}{H}}{}{}{\unmat{s}{H}}$} \\ 

& \cellcolor{brown!20} & \multicolumn{2}{c|}{\cellcolor{gray!25}{\color{red}CSD $(p,q)=(r,s)$ (1968) Wigner} \cite{wigner1968generalization}, {\color{red}Davis, Kahan} \cite{davis1970rotation}} &  \\

& \multirow{-4}{*}{$\fact{4}$\vspace{0.1cm}} & \multicolumn{2}{c|}{\cellcolor{gray!25}{\color{red}CSD general Davis, Kahan (1969)} \cite{davis1969some}, {\color{red}GSVD (1981) }\cite{paige1981towards}} & \\ 
\cline{2-5}

& $\fact{5}$ & & $U_n =O_n\stwotwo{C}{iS}{iS}{C}\stwotwo{U_p}{}{}{U_q}$ & $\unmat{n}{H}=U_n\stwotwo{C}{jS}{jS}{C}\stwotwo{\unmat{p}{H}}{}{}{\unmat{q}{H}}$ \\ \cline{2-5}

& $\fact{6}$ & $O_{2n} = \realify{\unmat{n}{C}}\Theta\stwotwo{O_{2p}}{}{}{O_{2q}}$ & $U_{2n} = \complexify{\unmat{n}{H}}\Theta\stwotwo{U_{2p}}{}{}{U_{2q}}$ & \\ 
\noalign{\global\arrayrulewidth=1pt}\hhline{-|-|-|-|-|}\noalign{\global\arrayrulewidth=0.4pt}

\multirow{10}{*}{\rotatebox[origin=c]{90}{$\Gl_\beta(n)$\hspace{0.3cm}}} &
\cellcolor{brown!20} & \cellcolor{gray!25}$\glmat{n}{R} = O_n\Sigma O_n'$ &
\cellcolor{gray!25} $\glmat{n}{C} = U_n\Sigma U_n'$ & 
\cellcolor{gray!25} $\glmat{n}{H} = \unmat{n}{H}\Sigma {\unmat{n}{H}}'$ \\ 

& \multirow{-2}{*}{\cellcolor{brown!20}{$\fact{7}$}} & \multicolumn{3}{c|}{\cellcolor{gray!25}{\color{red}Singular value decomposition}} \\
\cline{2-5}

& $\fact{8}$ & $\glmat{n}{R} = O_n\stwotwo{Ch}{Sh}{Sh}{Ch}\stwotwo{\glmat{p}{R}}{}{}{\glmat{q}{R}}$ & $\glmat{n}{C} = U_n\stwotwo{Ch}{Sh}{Sh}{Ch}\stwotwo{\glmat{p}{C}}{}{}{\glmat{q}{C}}$ & $\glmat{n}{H} = \unmat{n}{H}\stwotwo{Ch}{Sh}{Sh}{Ch}\stwotwo{\glmat{p}{H}}{}{}{\glmat{q}{H}}$\\ \hhline{~|-|-|-|-|}

& \multirow{2}{*}{$\fact{9}$} & \cellcolor{gray!25}$\glmat{n}{R} = O_n\Sigma \unmat{p,q}{R}$ &
\cellcolor{gray!25}$\glmat{n}{C} = U_n\Sigma \unmat{p,q}{C}$ &
\multirow{2}{*}{$\glmat{n}{H} = \unmat{n}{H}\Sigma \unmat{p,q}{H}$} \\

& & \multicolumn{2}{c|}{\cellcolor{gray!25}{\color{red}Hyperbolic SVD (1989) Onn, Steinhardt, Bojanczyk} \cite{onn1989hyperbolic}} & \\
\hhline{~|-|-|-|-|}

& \multirow{2}{*}{$\fact{10}$} & 
\cellcolor{gray!25} $\glmat{2n}{R} = O_{2n}\stwotwo{\Sigma}{}{}{\Sigma}\sympmat{2n}{R}$ &
\multirow{2}{*}{$\glmat{2n}{C} = U_{2n}\stwotwo{\Sigma}{}{}{\Sigma}\sympmat{2n}{C}$} & \\

& & \cellcolor{gray!25}{\color{red}SVD-like decomp. (2003) Xu} \cite{xu2003svd} & & \\
\cline{2-5}

& $\fact{11}$ & $\glmat{2n}{R} = O_{2n}\stwotwo{\Sigma}{}{}{\Sigma^{-1}} \realify{\glmat{n}{C}}$ & $\glmat{2n}{C} = U_{2n}\stwotwo{\Sigma}{}{}{\Sigma^{-1}} \complexify{\glmat{n}{H}}$ & \\ \cline{2-5}

& $\fact{12}$ & & $\glmat{n}{C} = U_n B \glmat{n}{R}$ & $\glmat{n}{H} = \unmat{n}{H} B \glmat{n}{C}$ \\ \cline{2-5}

& $\fact{13}$ & & $\glmat{n}{C} = U_n\Sigma \orthomat{n}{C}$ & $\glmat{n}{H} = \unmat{n}{H}\Sigma \orthomat{n}{H}$ \\ 
\noalign{\global\arrayrulewidth=1pt}\hhline{-|-|-|-|-|}\noalign{\global\arrayrulewidth=0.4pt}

\multirow{5}{*}{\rotatebox[origin=c]{90}{$\Symp_\beta(2n)$\hspace{1cm}}} & \cellcolor{brown!20} & 
\cellcolor{gray!25} $\sympmat{2n}{R} = \realify{\unmat{n}{C}}\stwotwo{\Sigma}{}{}{\Sigma^{-1}}{\realify{\unmat{n}{C}}}'$ & 
\cellcolor{gray!25} $\sympmat{2n}{C} = \complexify{\unmat{n}{H}}\stwotwo{\Sigma}{}{}{\Sigma^{-1}}\complexify{\unmat{n}{H}}'$ & \\

& \multirow{-2}{*}{\cellcolor{brown!20}$\fact{14}$} & \cellcolor{gray!25}{\color{red}Balian (1965)} \cite{balian1965forme}, {\color{red}Xu (2003)} \cite{xu2003svd}
& \cellcolor{gray!25}{\color{red}Fa{\ss}bender, Ikramov (2005)} \cite{fassbender2005several} & \\ \cline{2-5}

& $\fact{15}$ & \makecell{$\sympmat{2n}{R} = $ \\ $\realify{\unmat{n}{C}}\stwotwo{Ch}{Sh}{Sh}{Ch}\stwotwo{\glmat{n}{R}}{}{}{(\glmat{n}{R})^{-T}}$} & \makecell{$\sympmat{2n}{C} = $ \\ $\complexify{\unmat{n}{H}}\stwotwo{Ch}{Sh}{Sh}{Ch}\stwotwo{\glmat{n}{C}}{}{}{(\glmat{n}{C})^{-T}}$} & \\ \cline{2-5}

& $\fact{16}$ & \makecell{$\sympmat{2n}{R} = $ \\ $\realify{\unmat{n}{C}}\stwotwo{H}{}{}{H^{-1}}\stwotwo{\sympmat{2p}{R}}{}{}{\sympmat{2q}{R}}$} & \makecell{$\sympmat{2n}{C} = $ \\ $\complexify{\unmat{n}{H}}\stwotwo{H}{}{}{H^{-1}}\stwotwo{\sympmat{2p}{C}}{}{}{\sympmat{2q}{C}}$} & \\ \cline{2-5}

& $\fact{17}$ & & $\sympmat{2n}{C} = \complexify{\unmat{n}{H}}\stwotwo{Ch}{iSh}{-iSh}{Ch}\sympmat{2n}{R}$ & \\
\noalign{\global\arrayrulewidth=1pt}\hhline{-|-|-|-|-|}\noalign{\global\arrayrulewidth=0.4pt}

\multirow{7}{*}{\rotatebox[origin=c]{90}{$\Un_\beta(p, q)$\hspace{2.5cm}}} & \cellcolor{brown!20} & \cellcolor{gray!25}$\unmat{p,q}{R} = $ & \cellcolor{gray!25}$\unmat{p,q}{C} = $ & $\unmat{p,q}{H}=$ \\

& \cellcolor{brown!20} & \cellcolor{gray!25}$\stwotwo{O_p}{}{}{O_q}\stwotwo{Ch}{Sh}{Sh}{Ch}\stwotwo{O_p'}{}{}{O_q'}$ & \cellcolor{gray!25}$\stwotwo{U_p}{}{}{U_q}\stwotwo{Ch}{Sh}{Sh}{Ch}\stwotwo{U_p'}{}{}{U_q'}$ & \multirow[t]{2}{*}{$\stwotwo{\unmat{p}{H}}{}{}{\unmat{q}{H}}\stwotwo{Ch}{Sh}{Sh}{Ch}\stwotwo{{\unmat{p}{H}}'}{}{}{{\unmat{q}{H}}'}$} \\

& \multirow{-3}{*}{\cellcolor{brown!20}$\fact{18}$\vspace{0.4cm}} & \multicolumn{2}{c|}{\cellcolor{gray!25}{\color{red}Hyperbolic CS decomposition }\cite{grimme1996model,vilenkin2013representation,wigner1968generalization}} & \\ 
\cline{2-5} 

& $\fact{19}$ & \makecell{$\unmat{p,q}{R} =$ \\ $\stwotwo{O_p}{}{}{O_q}\stwotwo{H}{}{}{H'}\stwotwo{\unmat{a, b}{R}}{}{}{\unmat{c,d}{R}}$} & \makecell{$\unmat{p,q}{C} =$ \\ $\stwotwo{U_p}{}{}{U_q}\stwotwo{H}{}{}{H'}\stwotwo{\unmat{a,b}{C}}{}{}{\unmat{c,d}{C}}$} & \makecell{$\unmat{p,q}{H} =$ \\ $\stwotwo{\unmat{p}{H}}{}{}{\unmat{q}{H}}\stwotwo{H}{}{}{H'}\stwotwo{\unmat{a,b}{H}}{}{}{\unmat{c, d}{H}}$} \\ \cline{2-5}

& $\fact{20}$ & \makecell{$\unmat{2p,2q}{R} = $ \\ $\stwotwo{O_{2p}}{}{}{O_{2q}}\stwotwo{I}{}{}{H}\realify{\unmat{p,q}{C}}$} & \makecell{$\unmat{2p,2q}{C} = $ \\ $\stwotwo{U_{2p}}{}{}{U_{2q}}\stwotwo{I}{}{}{H}\complexify{\unmat{p,q}{H}}$} & \\ \cline{2-5}

& $\fact{21}$& \makecell{$\unmat{n,n}{R} =$ \\ $\stwotwo{O_n}{}{}{O_n'}\stwotwo{Ch}{Sh}{Sh}{Ch}\realify{\orthomat{n}{C}}$} & \makecell{$\unmat{n,n}{C} =$ \\ $\stwotwo{U_n}{}{}{U_n'}\stwotwo{Ch}{Sh}{Sh}{Ch}\complexify{\orthomat{n}{H}}$}  & \\ \cline{2-5}

& $\fact{22}$ & & \makecell{$\unmat{p,q}{C} = $ \\ $\stwotwo{U_p}{}{}{U_q}\stwotwo{Ch}{iSh}{-iSh}{Ch}\unmat{p,q}{R}$} & \makecell{$\unmat{p,q}{H} = $ \\ $\stwotwo{\unmat{p}{H}}{}{}{\unmat{q}{H}}\stwotwo{Ch}{jSh}{-jSh}{Ch}\unmat{p,q}{C}$} \\ 
\noalign{\global\arrayrulewidth=1pt}\hhline{-|-|-|-|-|}\noalign{\global\arrayrulewidth=0.4pt}

\multirow{3}{*}{\rotatebox[origin=c]{90}{$\Ortho_\beta(n)$\hspace{0.3cm}}} & \cellcolor{brown!20}$\fact{23}$ & & $\orthomat{n}{C} = O_n B O_n'$ & $\orthomat{n}{H} = U_n B U_n'$ \\ \cline{2-5}

& $\fact{24}$ & & $\orthomat{n}{C} = O_n\stwotwo{Ch}{iSh}{-iSh}{Ch}\stwotwo{\orthomat{p}{C}}{}{}{\orthomat{q}{C}}$ & $\orthomat{n}{H} = U_n \stwotwo{Ch}{jSh}{-jSh}{Ch}\stwotwo{\orthomat{p}{H}}{}{}{\orthomat{q}{H}}$ \\ \cline{2-5}

& $\fact{25}$ & & $\orthomat{2n}{C} = O_{2n} \stwotwo{Ch}{iSh}{-iSh}{Ch}\complexify{\orthomat{n}{H}}$ & \\
\cline{1-5}
\end{tabu}}
\caption{\small The list of matrix factorizations obtained in our work. For matrix notations, see Table \ref{tab:translation} (last column) and Table \ref{tab:auxmatrix}. All 53 cases can be found as theorems in this work. \colorbox{brown!20}{Brown} denotes KAK decompositions,  \colorbox{gray!25}{gray} denotes factorizations where we have found references. Matrix block structures may be abbreviated to save space.}
\label{tab:completelist}
\end{table}

\par \textbf{Our main result}, Table \ref{tab:completelist} is the list of 53 systematically derived matrix factorizations arising from the \textit{generalized Cartan ($\kak$) decomposition} \cite{flensted1978spherical}. Table \ref{tab:completelist} may be thought of as a $25\times 3$ array with entries consisting of matrix factorizations. The rows of Table \ref{tab:completelist} are labeled $\fact{1}$ through $\fact{25}$. 

\par To assist the reader in getting a quick sense of how to read Table \ref{tab:completelist} using Tables \ref{tab:translation} and \ref{tab:auxmatrix} to look up notation, we will use the (real orthogonal) $\times$ (unitary diagonal) $\times$ (another real orthogonal) $\fact{1}$ factorization of any unitary matrix as an example:
\begin{figure}[h]
    \title{\vspace{-0.4cm}\textbf{\Large KEY TO FACTORIZATION TABLES} \\ e.g.  $\fact{1}$ : Unitary = (real orth.) $\times$ (unitary diagonal) $\times$ (another orth.) } 
    \centering
    \frame{\includegraphics[width=4.8in]{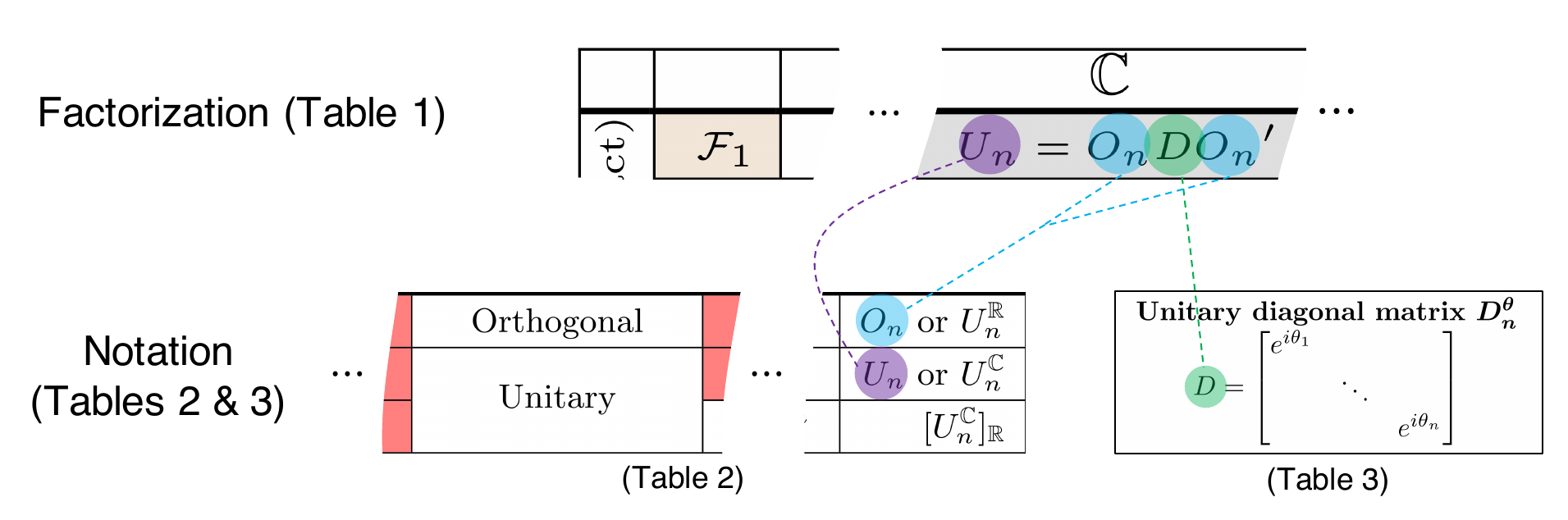}}
    \label{fig:keyfacts}
\end{figure}

\par As mathematical background to the factorizations in Table \ref{tab:completelist}, we mention the KAK and $\kak$ decompositions. The KAK decomposition marked with \colorbox{brown!25}{brown} includes the SVD (labeled $\fact{7}$) and the square partition CS decomposition (CSD). The $\kak$ decomposition includes the general CSD (labeled $\fact{4}$) and many others. Cells colored in \colorbox{gray!25}{gray} denote factorizations where we have found references in such communities as numerical linear algebra and physics.

\subsection{The KAK and $\kak$ decompositions}
Like a first edition Superman comic, gathering dust unrecognized in an attic, the KAK and $\kak$ decompositions are somewhat unknown. The reasons for this provide important lessons from mathematical history. (See Section \ref{sec:lesson} for details with a timeline.)

\par The KAK decomposition, may have first appeared in the work of Harish-Chandra \cite{harish1956representations}, described as a refinement or sharpening of the Cartan decomposition by Helgason \cite{Helgason1962}. However it remains unclear if Cartan himself knew about the KAK \cite{helgasonprivatecomm}. Nonetheless, perhaps out of respect, Helgason refers to the KAK as the Cartan decomposition in his 1978 classic \cite[p.402]{Helgason1978}.

\par We point out that neither the explicit name ``KAK decomposition" nor the importance of the decomposition is stated in the works above by Harish-Chandra or Helgason. Rather the decomposition is used somewhat in passing as a tool, and hence it was unlikely to be noticed by all but a few. 

\par We found a fascinating parallel in the discovery and the proclamation of the CSD amongst  20th century applied mathematicians, also at first used only as a tool, but later given a name and recognition as a matrix factorization by Stewart as described beautifully by Paige and Wei \cite[p.308]{paige1994history}:\footnote{Reference numbers are altered from the original quote to match the references in this paper for the convenience of the reader.}

\begin{quote}
\textit{Unaware of \cite{davis1969some}, Stewart in an appendix to \cite{stewart1977perturbation} gave a proof of the CSD ... [Stewart's] contribution was extremely important, not so much because it appears to be the first complete proof given for a CSD, but because it simply stated the CSD as a general decomposition of unitary matrices, rather than just a tool for analyzing angles between subspaces - this was something \cite{davis1969some} had not quite done. This elegant and unequivocal statement brought the CSD to the notice of a wider audience, as well as emphasizing its broader usefulness. Stewart widely advocated the use of the CSD, and came up with this appropriate name.}
\end{quote}

\par Just as the KAK remained obscure for many years, it is probably safe to say at this time that the generalization to $\kak$ remains very little known. The $\kak$ decomposition\footnote{We are grateful to Pavel Etingof, pointing us to \cite{Kobayashi} which lead us down the path of discovery.} appears first in Flensted-Jensen's work of 1978 \cite{flensted1978spherical}, where he names his result the ``generalized Cartan decomposition." As history repeats itself, it seems $\kak$ was created as a tool to study the structure of semisimple Lie algebras for specialists in the pure mathematical areas of harmonic analysis, Lie theory, and representation theory. For such studies, there seems to be little interest in individual factorizations so important in applied mathematics, engineering, the sciences, and probably, in the end, also useful to pure mathematics.

\subsection{Unified approaches to matrix factorizations} 
Given the importance of  matrix factorizations for applications, it would be of no surprise that various attempts towards  unification have been considered. We begin with a brief survey on the applied side. Mackey, Mackey and Tisseur \cite{mackey2003structured,mackey2005structured} provide an overview of structural tools for factorizations in the context of automorphism groups of scalar products. We were inspired by their work, especially their open questions suggested in Section 5.2 and 8.3 of \cite{mackey2005structured}, whose answer mostly lies in the global Cartan decompositions and KAK decompositions discussed in this paper and their isomorphic forms as described in Remark \ref{rem:isomorphismpreserveF}, e.g., the perplectic SVD in Remark \ref{rem:realperplecticsvd}.

\par Notably, Kleinsteuber provides a unified algorithmic approach on the decomposition that Cartan for sure has in his work: $\mathfrak{p} = \bigcup_{k\in K}\text{Ad}(k)\mathfrak{a}=\bigcup_{k\in K}k\mathfrak{a}k^{-1}$ (see Figure \ref{fig:cartankak} in Section \ref{sec:lesson}), in a framework of a generalization of the Givens algorithm. The structure preserving Jacobi algorithms proposed in  Kleinsteuber's thesis (2005) \cite{kleinsteuber2005jacobi} treats the SVD, Hermitian eigendecomposition and more in a unified scheme. A broad chart of structured eigenproblems with references is also nicely listed in \cite{bunse1992chart}. Bhatia (1994) \cite{bhatia1994matrix} considers a few cases where the tangent spaces are decompositions of matrix spaces.

\par The abstraction approach is great for the soul as it lets us understand so much, and can lead to new discoveries, but often the discoverers are unaware or uninterested in the details of what might seem to them as tedious examples. The applied approach is critical for applications and impact. We applaud both the pure and applied styles as valuable intellectual achievements. We credit Cartan for discovering an abstraction for the aforementioned class of matrix factorizations: $\mathfrak{p} = \bigcup_{k\in K}k\mathfrak{a}k^{-1}$  \cite[p.359]{cartan1927certaines}, and credit many 20th century mathematicians for forthcoming abstractions.
Cartan, Iwasawa\footnote{Who incidentally taught Gil Strang linear algebra at MIT.}, Kostant and Bruhat represent a line of mathematicians that skip past any one matrix factorization and do what is admirable in pure mathematics, create an abstraction, one might even say a blueprint for many factorizations. We will argue, in contrast, that specialists in matrix factorizations, some examples in this context include Golub, Kahan, Paige, Stewart, Van Loan and many others did what is admirable in applied mathematics. They discovered, developed, named, found applications, properties, and/or algorithms\footnote{Especially in the context of floating point arithmetic.} for specific matrix factorizations.


\subsection{Organization of the paper} 
Section \ref{sec:dictionary} provides an overview of the matrices in classical Lie groups (summarized in Table \ref{tab:translation}) from a linear algebra point of view intended for a modern audience. Section \ref{sec:background} discusses the basics of the KAK and $\kak$ decomposition, with easy-to-follow examples in Sections \ref{sec:gencartanex1} and \ref{sec:gencartanex2}. Sections \ref{sec:compact} to \ref{sec:orthofact} are the main results of this paper (summarized in Table \ref{tab:completelist}), the matrix factorizations of the following matrices:
\begin{itemize}
    \item Section \ref{sec:compact} : Orthogonal and unitary matrices
    \item Section \ref{sec:glfact} : Invertible matrices
    \item Section \ref{sec:sympfact} : Symplectic matrices
    \item Section \ref{sec:jorthogonalfact} : Indefinite unitary matrices
    \item Section \ref{sec:orthofact} : Complex and quaternionic orthogonal matrices
\end{itemize}


Section \ref{sec:lesson} points out some important lessons of mathematical history, together with a discussion from {\'E}lie Cartan's dicoveries to the developments of relatively newer matrix factorizations, in a chronological order. 

\addtocontents{toc}{\protect\setcounter{tocdepth}{2}}
\section{Matrices from the classical Lie groups}\label{sec:dictionary}
It has long been established that orthogonal matrices enjoy very special numerical and analytical properties. The \textit{classical Lie groups} named by Weyl \cite{weyl1946classical} consist of invertible matrices and generalizations of orthogonal matrices (Table \ref{tab:translation}). These generalizations are very simple and are the building blocks of our 53 factorizations.

\subsection{Preliminaries}\label{sec:RCHbasic}
\hfill \\
\noindent \textbf{Real, complex and quaternionic matrices:} Let $\mathbb{R}, \mathbb{C}, \mathbb{H}$ denote the reals, complexes and (real) quaternions. The symbol $\mathbb{F}$ will be used in all three cases, though quaternions are not a field. The $\beta$-symbol with $\beta=1, 2, 4$ is often used, representing each of the three cases respectively. $m\times n$ matrices are denoted by $\mathbb{F}^{m\times n}$.
\begin{definition}
    The matrices $J_n$ and $I_{p, q}$ are defined as follows. ($p, q, n \in\mathbb{N}$)
    \begin{equation}\label{eq:JnIpqdefinition}
        J_n := \twotwo{\,\,0}{I_n}{-I_n}{0}, \hspace{0.5cm} I_{p, q} := \twotwo{I_p}{0}{\,\,0}{-I_q}.
    \end{equation}
\end{definition}

\noindent \textbf{Realify and complexify:} Sometimes, we may represent a complex matrix as a real matrix\footnote{When programming languages did not support imaginary numbers, the same representation had been used by programmers, using a $2m\times 2n$ real matrix to encode an $m\times n$ complex matrix.} (which we call ``realify"), and a quaternion matrix as a complex matrix (``complexify"). The \textit{realify map} $\realify{\,\,\cdot\,\,}:\mathbb{C}^{m\times n}\to\mathbb{R}^{2m\times 2n}$ is defined as
\begin{equation}\label{eq:realify}
    \text{Realify : } X \mapsto \realify{X} \equiv \twotworr{\Re(X)}{\Im(X)}{-\Im(X)}{\Re(X)} .
\end{equation}
The \textit{complexify map} $\complexify{\,\,\cdot\,\,}:\mathbb{H}^{m\times n}\to\mathbb{C}^{2m\times 2n}$ is defined as 
\begin{equation}\label{eq:complexify}
    \text{Complexify : } Y \mapsto \complexify{Y} \equiv \twotworr{\Re(Y)+i\Im_i(Y)}{\Im_j(Y)+i\Im_k(Y)}{-\Im_j(Y)+i\Im_k(Y)}{\Re(Y)-i\Im_i(Y)},
\end{equation}
where $Y = \Re(Y) + i\Im_i(Y) + j\Im_j(Y) + k\Im_k(Y)$. 

\vspace{.2cm}
\noindent \textbf{The Transposes ($M^T,M^H,M^D,M^{D_i},M^{D_j},M^{D_k}$):}
The following defines variations on transpose for real, complex, and quaternion matrices:
\begin{alignat}{4}
(M^T)_{ab} &= M_{ba}, \,\,(\text{for } \mathbb{R}, \mathbb{C}, \mathbb{H}), \hspace{0.5cm} &&(M^H)_{ab} &&= \overline{M_{ba}}, &&\,\,(\text{for }\mathbb{C}) \\
(M^D)_{ab} &= \overline{M_{ba}},\,\,(\text{for }\mathbb{H}) && (M^{D_\eta})_{ab} &&= -\eta\overline{M_{ba}}\eta, &&\,\,(\text{for }\mathbb{H}, \eta\in\{i,j,k\})
\end{alignat}
We define $M^{D_i}$,$M^{D_j}$,$M^{D_k}$ for quaternion matrices as the transpose that only ``conjugates" the $i$, $j$, or $k$ term respectively. (See Figure \ref{fig:transposes} for an example of $M^T,M^D$ and $M^{D_j}$ for a quaternionic matrix $M$.) Formally we state the following \cite{rodman2014topics}.  
\begin{definition}
For a quaternionic matrix $M\in\mathbb{H}^{n\times n}$, the \textit{$\eta$-conjugate\footnote{We follow the $\eta$ notation in Horn and Zhang \cite{horn2012generalization}, for quaternionic imaginary units $i,j,k$.} transpose} $M^{D_\eta}$ is defined as ($\eta\in\{i, j, k\}$),
\begin{equation}\label{eq:etaDdefinition}
    M^{D_\eta}:=\eta^{-1}M^D\eta = -\eta M^D\eta.
\end{equation}
Moreover, if $A = A^{D_\eta}$ we say $A$ is \textit{$\eta$-Hermitian}.
\end{definition}

\begin{figure}[h]
    \centering
    {\includegraphics[width=4in]{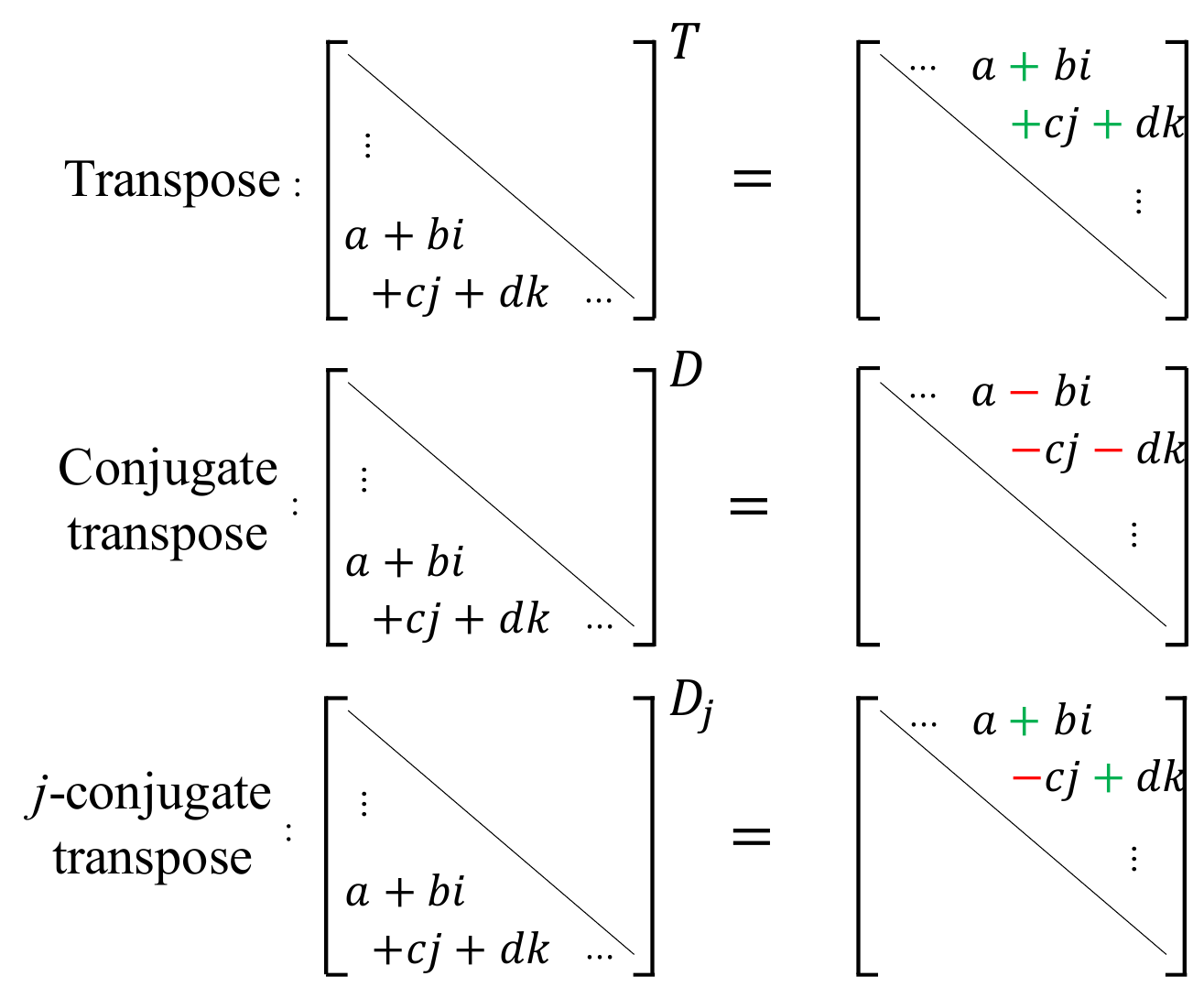}}
    \caption{Transpose $T$, conjugate transpose $D$, and $j$-conjugate transpose $D_j$ of a quaternionic matrix. $i$-conjugate transpose and $k$-conjugate transpose are defined similarly.}
    \label{fig:transposes}
\end{figure}

\par The symbols $T$, $H$, and $D$ refer to the \textit{transpose}, \textit{Hermitian transpose}, and \textit{dual} respectively. $H$ and $D$ both serve as \textit{conjugate transposes}. When it is unambiguous, we use the common notation $M^\dagger$ to denote the complex/quaternionic conjugate transpose ($M^H$ or $M^D$).

\par Some identities are useful. Suppose $C\in\mathbb{C}^{n\times n}$ and $Q\in\mathbb{H}^{n\times n}$. Then we have
\begin{align}
    \realify{C^H} &= \realify{C}^T, \\
    \realify{C^T} &= I_{n, n}\realify{C}^T I_{n, n}, \\
    \complexify{Q^D} &= \complexify{Q}^H, \\
    \complexify{Q^{D_j}} &= [Q]_\mathbb{C}^T = -J_n\complexify{Q}^HJ_n. \label{eq:DjcomplexifyT}
\end{align}
Additionally, $-J_n\realify{C}J_n = \realify{C}$ and $-J_n\overline{\complexify{Q}}J_n = \complexify{Q}$ holds. 

\subsection{Solutions $G$ to $G^*JG = J$ create Lie groups}\label{sec:autogroup}
The classical Lie groups can be derived by solving the quadratic matrix equation $G^*JG = J$ for $G$ (invertible solutions), given a fixed $J$ (top of Table \ref{tab:translation}). One can select $*$ to be $\{T, H, D\}$ or $\{T, T, T\}$ for $\mathbb{R}, \mathbb{C}, \mathbb{H}$ respectively. (For $\mathbb{H}$, $T$ is applied through the complexified matrix as $T$ is not an involution on $\mathbb{H}^{n\times n}$. See Remark \ref{rem:quaternioninvolution}.) For a choice of $J$, one can select among $0_n$, $I_n$, $I_{p, q}$ and $J_n$.\footnote{While these matrices at first may seem very arbitrary, they represent large equivalence classes through the change of basis. For example $I_{p, q}$ represents any real symmetric or complex Hermitian matrix as the ``signature" of that matrix \cite[p.240]{artin2011algebra} and $J_n$ represents any skew-symmetric or skew-Hermitian matrix.} For example, choosing $J = I_n$ and $* = \{T, H, D\}$ gives three groups, $\{G:G^TG = I_n\}$, $\{G:G^HG = I_n\}$, $\{G:G^DG = I_n\}$ corresponding to $\beta = 1, 2, 4$. 

\par Out of 24 combinations\footnote{2 for the choice of $*$, 3 for the choice of $\mathbb{F}$, 4 for the choice of $J$.}, we omit the overlapping (i.e., isomorphic, so double counting) 11. (Remark \ref{rem:conjsymp} talks about one such isomorphism.) The remaining 13 Lie groups are presented in the \textit{upper part} of Table \ref{tab:translation} denoted by 5 simplified $\beta$-notated symbol: $\color{gray}\Gl_\beta(n)$, $\color{red}\Un_\beta(n)$, $\color{blue}\Un_\beta(p, q)$, $\color{brown}\Symp_\beta(2n)$ and $\color{teal}\Ortho_\beta(n)$. The \textit{lower table} drills down into the complete list of 13 classical Lie groups including  the commonly used  Lie group symbols (column 3), the names of matrices (column 4) and  definitions (column 5). The last column is our symbol for individual matrices which will be used throughout this paper. 

\begin{table}
\begin{center}
{\tabulinesep=0.0mm
\small
\setlength{\tabcolsep}{0pt}
\hspace*{-.3in}
\begin{tabu}{|c|c|ccc|}
\multicolumn{5}{c}{\large Classical Lie groups, $\{\text{Invertible } G:G^*JG=J\}$} \\
\multicolumn{5}{c}{\footnotesize A transpose $*$ and a matrix $J$ can produce a classical Lie group} \\
\multicolumn{5}{c}{($\beta=1,2,4$  represents $\mathbb{R},\mathbb{C},\mathbb{H}$)} \\
\hline

\diagbox[width=2.5cm, height=1cm]{\footnotesize Transpose $*$ }{\makecell{\,\\$J$\,\,}} & \makecell{$0_n$} & $I_n$ & $I_{p, q}=\stwotwo{I_p}{0}{0}{-I_q}$ & $J_n=\stwotwo{0}{I_n}{-I_n}{0}$ \\
\hline

& &\multicolumn{3}{c|}{\cellcolor{white}\color{black}{{The below are all generalizations of orthogonal matrices}}} \\

\makebox(75,30){\makecell{Conjugate \\ transposes \\ $M^T,M^H,M^D$}} &
\colorbox{gray!25}{\makebox(60,30){\makecell{\large$\Gl_\beta(n)$\\``Invertibles"\\$\beta=1,2,4$}}} &
\colorbox{red!50}{\makebox(80,30){\makecell{\large$\Un_\beta(n)$\\``Unitaries"\\$\beta=1,2,4$}}} &
\colorbox{blue!30}{\makebox(80,30){\makecell{\large$\Un_\beta(p,q)$\\``Indefinite Unitaries"\\$\beta=1,2,4$}}} & 
\makebox(80,30){\makecell{\,\\-\\\,}} \\

\makebox(75,30){\makecell{Transpose\\ (only) $M^T$}} & 
\makebox(60,30){\makecell{\,\\-\\\,}} &  
\colorbox{teal!20}{\makebox(80,30){\makecell{\large$\Ortho_\beta(n)$\\``$\mathbb{C}, \mathbb{H}$ Orthogonals"\\$\beta=2,4$}}} & 
\makebox(80,30){\makecell{\,\\-\\\,}} & 
\colorbox{brown!50}{\makebox(80,30){\makecell{\large$\Symp_\beta(2n)$\\``Symplectics"\\$\beta=1,2$}}} \\
\hline
\end{tabu}}


{\tabulinesep=0.6mm
\small
\centering
\begin{tabu}{|c|c|c|c|c|c|r|c|}
\hline
{\rotatebox[origin=c]{90}{Our Symbol}\hspace*{-.03in}} & $\beta$ & \makecell{Lie group\\ Symbol} & \makecell{Matrix \\ Name}& Definition & \makecell{Ambient\\ Space}& \makecell{Our \\ Matrix\\Symbol}
\\
\Xhline{3\arrayrulewidth}

\multirow{4}{*}{\hspace*{-.1in}\rotatebox[origin=c]{90}{\hspace{1cm}{ \colorbox{gray!25}{\makebox[2.6cm]{ $\Gl_\beta(n)$} }}} \hspace*{-.2in}} & 1 & \cellcolor{gray!25}{} $\gl{n, \mathbb{R}}$ & \makecell{Real\\Invertible} &
\cellcolor{gray!25} $\{G:\det G\ne 0\}$ & $ \mathbb{R}^{n\times n}$ & $\glmat{n}{R}$\\
\hhline{~|-|-|-|-|-|-|}
& 2 & \cellcolor{gray!25}{}$\gl{n, \mathbb{C}}$ & \makecell{Complex\\Invertible} & \cellcolor{gray!25}$\{G:\det G\ne 0\}$   & $\mathbb{C}^{n\times n}$ &  $\glmat{n}{C}$ \\
\hhline{~|-|-|-|-|-|-|}
& \multirow{2}{*}{4} &\cellcolor{gray!25}{} $\gl{n, \mathbb{H}}$ & \multirow{2}{*}{\makecell{Quaternion\\Invertible}} & \cellcolor{gray!25}$\{G:\det G\ne 0\}$ & $ \mathbb{H}^{n\times n}$ & $\glmat{n}{H}$ \\
\hhline{~|~|-|~|-|-|-|}
& &\cellcolor{gray!25}{} $\Un^*(2n)$ &  & Complexify($\uparrow$) & $\mathbb{C}^{2n\times 2n}$ & $\complexify{\glmat{n}{H}}$ \\
\Xhline{2.5\arrayrulewidth}

\multirow{5}{*}{\hspace*{-.11in}\rotatebox[origin=c]{90}{\hspace{1cm}{\colorbox{red!50}{\makebox[2.3cm]{ $\Un_\beta(n)$} }}} \hspace*{-.2in}} & 1 & \cellcolor{red!50}{}$\ortho{n}$ &  Orthogonal & \cellcolor{red!50}$\{O:O^TO=I_n\} $ & $\mathbb{R}^{n\times n}$ & $ O_n \mbox{ or } \unmat{n}{R}$\\
\hhline{~|-|-|-|-|-|-|}
& \multirow{2}{*}{2} & \cellcolor{red!50}{} $\un{n}$ &\multirow{2}{*}{Unitary} & \cellcolor{red!50}$\{U:U^HU=I_n\}$ & $\mathbb{C}^{n\times n}$ & $U_n \mbox{ or } \unmat{n}{C}$\\
\hhline{~|~|-|~|-|-|-|}
& & \cellcolor{red!50}{}$\osp{2n}$ &  & Realify($\uparrow$) & $\mathbb{R}^{2n\times 2n}$ & $\realify{\unmat{n}{C}}$\\
\hhline{~|-|-|-|-|-|-|}
& \multirow{2}{*}{4} & \cellcolor{red!50}{}$\un{n, \mathbb{H}}$ & \multirow{2}{*}{\makecell{Quaternionic\\Unitary}} & 
\cellcolor{red!50}$\{U:U^DU=I_n\}$ & $\mathbb{H}^{n\times n}$ & $\unmat{n}{H}$\\
\hhline{~|~|-|~|-|-|-|}
& & \cellcolor{red!50}{}$\usp{2n}$ &  & Complexify($\uparrow$) & $\mathbb{C}^{2n\times 2n}$ & $\complexify{\unmat{n}{H}}$\\
\Xhline{2.5\arrayrulewidth}

\multirow{4}{*}{\hspace*{-.11in}\rotatebox[origin=c]{90}{ \hspace{1cm}{\colorbox{blue!30}{\makebox[1.8cm]{ $\Un_\beta(p,q)$} }}} \hspace*{-.2in}} & 1 &  \cellcolor{blue!30}{}$\ortho{p, q}$ & Indef orthogonal &\cellcolor{blue!30}{} $\{O:O^TI_{p, q}O = I_{p, q}\}$ & $\mathbb{R}^{n\times n}$ & $\unmat{p, q}{R}$\\
\hhline{~|-|-|-|-|-|-|}
& 2 & \cellcolor{blue!30}{}$\un{p, q}$ & Indef unitary &\cellcolor{blue!30}{} $\{U:U^HI_{p, q}U = I_{p, q}\}$ & $\mathbb{C}^{n\times n}$ & $\unmat{p,q}{C}$\\
\hhline{~|-|-|-|-|-|-|}
& \multirow{2}{*}{4} &\cellcolor{blue!30}{}  $\un{p, q, \mathbb{H}}$ & \multirow{2}{*}{\makecell{Quaternionic\\Indef unitary}} &\cellcolor{blue!30}{}  $\{U:U^DI_{p, q}U = I_{p, q}\}$ & $\mathbb{H}^{n\times n}$ & $\unmat{p,q}{H}$\\
\hhline{~|~|-|~|-|-|-|}
& & \cellcolor{blue!30}{} $\symp{p, q}$ &  &
Complexify($\uparrow$) & $\mathbb{C}^{2n\times 2n}$ & $\complexify{\unmat{p,q}{H}}$ \\
\Xhline{2.5\arrayrulewidth}

\multirow{3}{*}{\hspace*{-.11in}\rotatebox[origin=c]{90}{ \hspace{1cm}{\colorbox{teal!20}{\makebox[2.2cm]{$\Ortho_\beta(n)$} }}} \hspace*{-.2in}} & 2 & \cellcolor{teal!20}{} $\ortho{n, \mathbb{C}}$ & \makecell{Complex\\Orthogonal} & \cellcolor{teal!20}{} $\{O:O^TO=I_n\}$ & $\mathbb{C}^{n\times n}$ & $\orthomat{n}{C}$\\
\hhline{~|-|-|-|-|-|-|}
& \multirow{2}{*}{4} &\cellcolor{teal!20}{}  $\ortho{n, \mathbb{H}}$ & \multirow{2}{*}{\makecell{\,\\Quaternionic\\Orthogonal}} & \cellcolor{teal!20}$\{O:O^{D_j}O=I_n\}$ & $\mathbb{H}^{n\times n}$ & $\orthomat{n}{H}$\\
\hhline{~|~|-|~|-|-|-|}
& & \cellcolor{teal!20}{} $\Ortho^*(2n)$ &  & \makecell{Complexify($\uparrow$) \\ \tiny (If complexified, $O^TO = I_{2n}$)} & $\mathbb{C}^{2n\times 2n}$ & $\complexify{\orthomat{n}{H}}$ \\
\Xhline{2.5\arrayrulewidth}

\multirow{2}{*}{\hspace*{-.11in}\rotatebox[origin=c]{90}{{\colorbox{brown!50}{\makebox[1.6cm]{$\Symp_\beta(2n)$}}}} \hspace*{-.2in}}
 & 1 & \cellcolor{brown!50}{} $\symp{2n, \mathbb{R}}$ & \makecell{Real\\Symplectic} &
\cellcolor{brown!50}
$\{S:S^TJ_nS=J_n\}$ & $\mathbb{R}^{2n\times 2n}$ & $\sympmat{2n}{R}$ \\
\hhline{~|-|-|-|-|-|-|}
& 2 &\cellcolor{brown!50}{}  $\symp{2n, \mathbb{C}}$ & \makecell{Complex\\Symplectic} & \cellcolor{brown!50}
$\{S:S^TJ_nS=J_n\}$ & $\mathbb{C}^{2n\times 2n}$ & $\sympmat{2n}{C}$ \\
\Xhline{2.5\arrayrulewidth}
\end{tabu}}
\end{center}
\caption{Not satisfied with the organization of tables of classical Lie groups in the literature, we propose the following simplified organization of the 13 classical Lie groups.}
\label{tab:translation}
\end{table}

\begin{remark}[isomorphic Lie groups that change $\mathbb{F}$]\label{rem:isomorphisms}
The realify and complexify maps create isomorphic Lie groups. In the Lie theory literature, we rarely find a distinction between isomorphic representations of the same Lie group as the algebraic structure is considered more important than the specific representation. On the other hand, we have chosen to be more explicit: \textbf{we denote the isomorphic representation with a distinct matrix/Lie group notation.\footnote{There are a number of reasons that it is a good idea to use explicit representations. For example when factoring $2n\times 2n$ matrices in Section \ref{sec:F11}, it would appear to be nonsensical to multiply through an $n\times n$ complex matrix. (In other literature, it would be often left to the reader to realize that the isomorphism $\realify{\,\,\cdot\,\,}$ is at play.)}} 

\par Consider an example where the isomorphism $\complexify{\,\,\cdot\,\,}$ (the complexify map) is at play: the $n\times n$ quaternionic general linear group $\gl{n, \mathbb{H}}$ has an isomorphic copy $\Un^{*}(2n)$, which can be obtained by the complexify map. Matrices from each group will be distinctively denoted by $\glmat{n}{H}$ and $\complexify{\glmat{n}{H}}$, respectively. (See the last column of Table \ref{tab:translation} for the complete list of such distinct matrix symbols.) 

\end{remark}

\begin{remark}[isomorphic Lie groups that change $J$ but preserve $\mathbb{F}$]\label{rem:isomorphismpreserveF}
On first glance the explicit $J$'s that are commonly used such as $I_n,I_{p,q},J_n$ (and the less common $J=0_n$) seem arbitrary but they are not. If we may make an analogy with the Jordan form of a matrix, these $J$'s represent equivalence classes which may be built up in to a theory that can cover any $J$ \cite[p.92]{rossmann2006lie}. 

\par The perplectic group and perplectic matrices \cite{mackey2003structured} defined in \eqref{def:realperplectic}, \eqref{def:complexperplectic} provide interesting examples (see Remarks \ref{rem:realperplectic} and \ref{rem:complexperplectic} for real and complex cases, respectively). Another example is conjugate symplectic matrices in Remark \ref{rem:conjsymp}.
\end{remark}

\begin{remark}[Classical Lie groups as automorphism groups]\label{rem:automorphismgroups}
For a given $J$-scalar product $\langle x,y\rangle_J:=x^*Jy$, it is easy to see that $G^*JG=J$ if and only if  $\langle x,y\rangle_J=\langle Gx,Gy\rangle_J$ for all $x,y$.\footnote{Bilinearity can break down for $\mathbb{H}$ since $(x \alpha)^Ty= (x^T \alpha y) \ne \alpha x^T y$ and $(Mx)^T \ne x^T M^T$.} The set of such $G$ form a group, this group is known as the automorphism group of the scalar product, studied nicely in \cite{mackey2003structured}. Effectively, the abstract group is independent of basis, which is another way of seeing the equivalence class of possible $J$'s as mentioned in Remark \ref{rem:isomorphisms}.
\end{remark}

\subsection{$\Gl_\beta(n)$, general linear groups}
The groups $\gl{n, \mathbb{R}}$, $\gl{n, \mathbb{C}}$, $\gl{n, \mathbb{H}}$ are the groups of $n\times n$ invertible matrices of the corresponding $\mathbb{F}$. We adopt the simplified symbol $\Gl_\beta(n)$ to express the three groups at once, for $\beta = 1, 2, 4$. Note that the determinant zero condition for $\gl{n, \mathbb{H}}$ is defined through its complexified counterpart $\Un^*(2n)$, since there is no natural determinant for quaternionic matrices.

\subsection{$\Un_\beta(n)$, unitary groups}
Defined with $U^\dagger U = I_n$ for conjugate transpose $\dagger$, the name ``unitary group" (usually complex) can be extended to real and quaternionic cases. The real unitary group is indeed the orthogonal group $\ortho{n}$ of $n\times n$ orthogonal matrices, and the quaternionic unitary group $\un{n, \mathbb{H}}$ is the set of all quaternionic matrices satisfying $U^DU = I_n$. We denote these three groups at once by $\Un_\beta(n)$, $\beta = 1, 2, 4$. 

\subsection{$\Un_\beta(p, q)$, indefinite unitary groups}\label{sec:jorthogonal}
Let $n = p+q$. A real matrix $G\in\mathbb{R}^{n\times n}$ is called \textit{indefinite orthogonal} if it satisfies
\begin{equation}\label{def:jorthogonalmatrix}
    G^TI_{p,q}G = I_{p,q}.
\end{equation}
Similarly we define complex/quaternionic \textit{Indefinite unitary matrices} with 
\begin{equation}\label{def:junitarymatrix}
    G^\dagger I_{p,q}G = I_{p,q}.
\end{equation}

\par Indefinite orthogonal/unitary matrices have many other names. A matrix $G$ satisfying \eqref{def:junitarymatrix} is called a ``hypernormal matrix" in \cite{onn1989hyperbolic}, a``pseudo-unitary matrix" in \cite{mackey2003structured, mackey2005structured}, and the most commonly used name is ``$J$-unitary matrix" in \cite{higham2003j} and by many other authors. (Which is confusing since the term ``$J$-orthogonal" is sometimes used to describe a real symplectic matrix \cite{benner1998symplectic, xu2003svd} and as we do in this paper, $J$ is used more generally.) To avoid ambiguities, we use the term ``indefinite unitary matrix" (and ``indefinite orthogonal matrix") for the matrices satisfying \eqref{def:junitarymatrix} (and \eqref{def:jorthogonalmatrix}).

\par In the literature, the groups themselves go by various names with more traditional symbols. Our $\Un_1(p,q)$ goes by the symbol $\ortho{p, q}$ and is called the indefinite orthogonal group, pseudo-orthogonal group, hyperbolic orthogonal group, the generalized orthogonal group, etc. Our $\Un_2(p,q)$ is denoted simply $\un{p,q}$, the indefinite unitary group, etc. The quaternionic case is often denoted by the symbol $\symp{p, q}$ which we will restrict our use to the complex representation of $\Un_4(p,q)$. The quaternionic representation will be denoted by $\un{p, q, \mathbb{H}}$. 

\par In Remark \ref{rem:csdhcsdcompare}, we will point out the role played by these indefinite unitary groups in explaining a connection between the well-known CS decomposition and the perhaps not as well-known hyperbolic CS decomposition. A key mathematical point is the hyperbolic versions are not only analogous, but are indeed related through a Lie theoretic concept of the compact/noncompact dual KAK decomposition. 

\begin{remark}[the real perplectic group]\label{rem:realperplectic}
We remark that the interesting \textit{real perplectic group}, studied in \cite{mackey2003structured}, is isomorphic to the indefinite orthogonal group with $p=\lceil\frac{n}{2}\rceil$ and $q=\lfloor\frac{n}{2}\rfloor$ as has already been pointed out in \cite{mackey2005perplectic}. 

\par Take as our $J$ the (backward identity) matrix $E_n$ with 1's on its antidiagonal.
\begin{equation}\label{eq:Edefinition}
    E_n:=\threediag{0}{1}{\reflectbox{$\ddots$}}{1}{0}.
\end{equation}
The eigenvalues of $E_n$ are clearly $p=\lceil\frac{n}{2}\rceil$ positive ones and $q=\lfloor\frac{n}{2}\rfloor$ negative ones hence the signature is $(\lceil \frac{n}{2}\rceil,\lfloor\frac{n}{2}\rfloor).$ It follows that the real perplectic group 
\begin{equation}\label{def:realperplectic}
    \{G\in\mathbb{R}^{n\times n}:G^TE_nG = E_n\},
\end{equation}
is isomorphic to corresponding indefinite orthogonals. See Remark \ref{rem:realperplecticsvd} for isomorphism and the real perplectic SVD. Interestingly, the complex perplectic group is isomorphic to the complex orthogonal group. (See Remark \ref{rem:complexperplectic}.) 
\end{remark}

\subsection{$\Symp_\beta(2n)$, symplectic groups}\label{sec:symplectic}
The symplectic groups correspond to $J$'s that are real or complex skew-symmetric (as opposed to skew-Hermitian), represented by $J_n$ (see Remark \ref{rem:isomorphismpreserveF}). For $\mathbb{F} = \mathbb{R}, \mathbb{C}$ (but not $\mathbb{H}$)\footnote{There is no quaternion symplectic group due to the breakdown of linearity of the quaternion form $\langle x,y\rangle_{J_n} = x^TJ_ny$ as mentioned in the footnote in Remark \ref{rem:automorphismgroups}.}, a \textit{symplectic matrix} $G\in\mathbb{F}^{2n\times 2n}$ is defined with the property (the \textit{symplectic group} is denoted\footnote{In many literature $2n\times 2n$ symplectic groups are denoted by $\symp{n, \mathbb{F}}$.} by $\symp{2n, \mathbb{F}}$)
\begin{equation}\label{eq:sympdefinition}
G^T J_n G = J_n,
\end{equation}
where the transpose is the regular transpose without the complex conjugation. $\symp{2n, \mathbb{R}}$ and $\symp{2n, \mathbb{C}}$ are the real and the complex symplectic group, sometimes denoted by $\Symp_\beta(2n)$ with $\beta = 1, 2$.

\begin{remark}[using $H$ instead of $T$ is not complex symplectic]\label{rem:conjsymp}
A natural but ultimately inconsistent and problematic definition for complex symplectic matrices would take as a definition $G^HJ_nG=J_n$. This ``$H$" transposing definition may originate in 1979 \cite[p.4]{paige1979hamiltonian} and 1981 \cite[p.14]{paige1981schur} (which references and proposes a generalization of \cite[Paragraph 4.19.9]{marcus1992survey}), and sometimes called by the name \textit{conjugate symplectic matrices}\footnote{Tracking back, some older references call conjugate symplectic group the ``Hermitian symplectic group," and even older references use the name ``Hermitian modular group."} \cite{mackey2003structured}. (This confusion has been pointed out several times e.g., Remark 1.2 of \cite{mehrmann1988symplectic}.) The $2n\times 2n$ \textit{conjugate symplectic group} is denoted by $\Symp^*(2n, \mathbb{C})$ and it is isomorphic to the indefinite unitary group $\un{n, n}$. See Remark \ref{rem:conjsympandunn} for the isomorphism and factorizations of conjugate symplectic matrices. 
\end{remark}

\begin{remark}[$\beta$ doubling for $\beta$-unitaries]\label{rem:betadoubling}
We describe a charming alchemy that turns (at the group level) orthogonals into unitaries and similarly complex unitaries into quaternionic unitaries by the presence of the symplectic structure. Specifically our notation allows us to state both $\beta=1,2$ cases at once (respectively, $\mathbb{F} = \mathbb{R}, \mathbb{C}$):
\begin{equation}\label{eq:betadoubling}
    [\Un_{2\beta}(n)]_\mathbb{F} = \Un_\beta(2n)\cap\Symp_\beta(2n).
\end{equation}
Nonetheless it is worth spelling out what this says. The real case says that if you have a $2n\times 2n$ real orthogonal matrix that is also real symplectic then it is a realified unitary matrix. In addition, if you have a $2n\times 2n$ complex unitary matrix that is also complex symplectic then it is a complexified quaternionic unitary matrix. 

\par The group of $2n\times 2n$ unitary symplectic matrices is often denoted by $\symp{n}$ or $\text{USp}(2n)$, called the \textit{unitary (or compact) symplectic group}. Similarly the group of $2n\times 2n$ orthogonal symplectics is called the \textit{orthogonal symplectic group} and denoted by $\osp{2n}$. The following is an explicit version of \eqref{eq:betadoubling}. 
\begin{alignat}{2}
    \realify{\un{n}} &= \osp{2n} &&:= \ortho{2n}\cap\symp{2n, \mathbb{R}}, \\
    \complexify{\un{n, \mathbb{H}}} &= \text{USp}(2n) &&:= \un{2n}\cap\symp{2n, \mathbb{C}}.
\end{alignat}
\end{remark}

\subsection{$\Ortho_\beta(n)$, orthogonal groups: complex and quaternion}\label{sec:orthogonal}
Although the complex/quaternionic analogues of the orthogonal group (defined with $M^\dagger$) are often referred to as the complex and quaternionic unitary group, there exist two alternative Lie groups $\Ortho_\beta(n)$, $\beta = 2, 4$ which go by the name complex and quaternionic orthogonal groups. The \textit{complex orthogonal group}\footnote{$\text{SL}(n, \mathbb{C})$, $\text{SO}(n, \mathbb{C})$, $\symp{2n, \mathbb{C}}$ are the three complex Lie groups (complex manifolds) with semisimple complex Lie algebras.} is defined in a way that is reminiscent of the real orthogonal group using $M^T$ for $M\in\mathbb{C}^{n\times n}$, 
\begin{equation}
    \ortho{n, \mathbb{C}} := \{G\in\mathbb{C}^{n\times n}| G^TG = I_n\}.
\end{equation}
The \textit{quaternionic orthogonal group} $\ortho{n, \mathbb{H}}$ is a Lie group defined as,
\begin{equation}\label{def:quaternionorthogonal}
    \ortho{n, \mathbb{H}} := \{G\in\mathbb{H}^{n\times n}| G^{D_j}G = I_n\}.
\end{equation}

\par The choice of the $j$-conjugate transpose $M^{D_j}$ instead of $M^T$ to define $\Ortho_\beta(n)$, $\beta = 4$ seems somewhat unnatural but it becomes clear if one consider the identity \eqref{eq:DjcomplexifyT} of the map $\complexify{\,\,\cdot\,\,}$. The complexified $M^{D_j}$ is the regular transpose $\complexify{M}^T$. Thus, $\Ortho_\beta(n)$ all have the property $O^TO=I$ in their complex representations. 

A commonly found symbol in the literature is $\Ortho^*(2n)$, the complexified $\Ortho(n, \mathbb{H})$. 
\begin{equation}\label{eq:orthostar}
    \text{O}^{*}(2n) := \{G\in\Un^*(2n): G^TG= I_{2n}\}=\complexify{\ortho{n, \mathbb{H}}}.
\end{equation}
The standard choice in \eqref{def:quaternionorthogonal} uses $D_j$ but $D_i$ or $D_k$ can be also used instead of $D_j$. Sometimes we explicitly emphasize the choice of $i, j, k$, denoting the group by $\Ortho_i(n, \mathbb{H})$, $\Ortho_j(n, \mathbb{H})$, $\Ortho_k(n, \mathbb{H})$. The group structure is isomorphic for all three cases.

\begin{remark}[the quaternionic matrices satisfying $G^TG=I_n$]\label{rem:quaternioninvolution}
A reader might be curious about the straightforward quaternionic extension of orthogonality by the transpose, a quaternion matrix $G\in\mathbb{H}^{n\times n}$ satisfying
\begin{equation}
    G^TG = I_n.
\end{equation}
However the set of such matrices is not a group, since the relationship $(AB)^T = B^TA^T$ does not hold for quaternion matrices. (In other words, the transpose is not an involution for quaternion matrices.)
\end{remark}

\begin{remark}[the complex perplectic group]\label{rem:complexperplectic}
As mentioned in Remark \ref{rem:realperplectic}, the \textit{complex perplectic group} is another interesting group of matrices defined as:
\begin{equation}\label{def:complexperplectic}
    \{G\in\mathbb{C}^{n\times n}: G^TE_nG = E_n\}.
\end{equation}
Whenever $*=T$, $\mathbb{F} = \mathbb{C}$ are given for the equation $G^*JG=J$, the group determined by any invertible complex symmetric $J$ is isomorphic\footnote{This is the automorphism group of the complex symmetric bilinear form.} to the complex orthogonal group $\ortho{n, \mathbb{C}}$. This is due to the fact that any complex symmetric matrix $J_s$ can be decomposed as $J_s = V^TV$ for some $V$.\footnote{See Lemma \ref{lem:compsymsqrt}. One can also use the Takagi factorization to see this.} An isomorphic mapping $G\mapsto V^{-1}GV$ sends $\{G: G^TJ_sG=J_s\}$ to $\ortho{n, \mathbb{C}}$. See Remark \ref{rem:complexperplecticsvd} for details on the isomorphism for the complex perplectic group and the corresponding structure preserving SVD.  
\end{remark}

\subsection{Lie groups and Lie algebras}
Until this point, we have discussed exclusively the classical Lie groups. Generally, a \textit{Lie group} is defined as a differentiable manifold that possesses a group structure. Note that some Lie groups may not have a matrix representation. A \textit{Lie algebra} can be thought as the tangent space of the Lie group at the identity. For example, the Lie group $G = \ortho{n}$ has its tangent space at the identity $\mathfrak{g}=\mathfrak{o}(n)$, the set of all skew-symmetric matrices. This is often denoted by $\mathfrak{g} = \text{Lie}(G)$. The Lie algebras of classical Lie groups are listed in Appendix \ref{app:liealgebras}.

\subsection{Some auxiliary matrices}\label{sec:auxmatrix}
Let us define some useful auxiliary matrices which appears frequently in our 53 matrix factorizations. Let $\theta=(\theta_1, \dots, \theta_n)$ be a real vector in $\mathbb{R}^n$. We define nine auxiliary matrices $C_n^\theta, S_n^\theta$, $D_n^\theta, R_n^\theta, Ch_n^\theta, Sh_n^\theta, \Sigma_n^\theta, B_n^{\eta, \theta}$ and $H_{m, n}^\theta$ as in Table \ref{tab:auxmatrix}. For simplicity if $n, \theta$ is clear from the context we just denote them by $C, S, D, R, Ch, Sh, \Sigma, B, H$. 

\begin{table}[h]
{\tabulinesep=0.4mm
\title{\Large Auxiliary matrices for a real vector $\theta= (\theta_1, \dots, \theta_n)\in\mathbb{R}^n$ \vspace{0.1cm}}

\begin{tabu}{|c|}
\hline 

\makecell{
\textbf{Cosine and Sine diagonal matrices $\boldsymbol{C_n^\theta, S_n^\theta}\in\mathbb{R}^{n\times n}$} \\ 
$C= \myscale{0.8}{\threediag{\cos\theta_1}{}{\ddots}{}{\cos\theta_n}}, \hspace{0.2cm} S= \myscale{0.8}{\threediag{\sin\theta_1}{}{\ddots}{}{\sin\theta_n}}$
} \\ \hline

\makecell{
\textbf{Unitary diagonal matrix $\boldsymbol{D_n^\theta}\in\mathbb{C}^{n\times n}$} \\
$D = \threediag{e^{i\theta_1}}{}{\ddots}{}{e^{i\theta_n}}$ 
} \\ \hline

\makecell{
\textbf{Rotation block diagonal matrix $\boldsymbol{R_n^\theta}\in\mathbb{R}^{n\times n}$} \\
$R=\threediag{R_1}{}{\ddots}{}{R_{\frac{n}{2}}}, \text{ ($n$ even)}\hspace{0.5cm} R=\myscale{0.8}{\begin{bmatrix}1 & & & \\ & R_1 & & \\ & & \ddots & \\ & & & R_{\lfloor\frac{n}{2}\rfloor}\end{bmatrix}}, \text{ ($n$ odd)}$
\\ 
$\text{ where } R_l=\stwotwo{\,\,\,\cos\theta_l}{\sin\theta_l}{-\sin\theta_l}{\cos\theta_l}$
} \\ \hline

\makecell{
\textbf{Hyperbolic Cosine and Sine diagonal matrices $\boldsymbol{Ch_n^\theta, Sh_n^\theta}\in\mathbb{R}^{n\times n}$} \\
$Ch = \myscale{0.8}{\threediag{\cosh\theta_1}{}{\ddots}{}{\cosh\theta_n}},\hspace{0.5cm} Sh = \myscale{0.8}{\threediag{\sinh\theta_1}{}{\ddots}{}{\sinh\theta_n}}$
} \\ \hline

\makecell{
\textbf{Positive real diagonal matrix $\boldsymbol{\Sigma_n^\theta}\in\mathbb{R}^{n\times n}$} \\ 
$\Sigma = \threediag{\sigma_1}{}{\ddots}{}{\sigma_n} := \threediag{e^{\theta_1}}{}{\ddots}{}{e^{\theta_n}}$
} \\ \hline

\makecell{
\textbf{Imaginary (hyperbolic) rotation block diagonal matrix $\boldsymbol{B_n^{\eta, \theta}}\in\mathbb{H}^{n\times n}$} \\
$B= \threediag{B_1^\eta}{}{\ddots}{}{B_{\frac{n}{2}}^\eta},\text{ ($n$ even)}\hspace{0.5cm} B=\myscale{0.8}{ \begin{bmatrix} 1 & & & \\ & B_1^\eta & & \\ & & \ddots & \\ & & & B_{\lfloor\frac{n}{2}\rfloor}^\eta \end{bmatrix}}, \text{ ($n$ odd)}$ \\ 
$\text{ where } B^\eta_l=\stwotwo{\,\,\,\cosh\theta_l}{\eta\sinh\theta_l}{-\eta\sinh\theta_l}{\,\,\,\cosh\theta_l}$, $\eta\in\{i, j, k\}$
} \\ \hline

\makecell{
\textbf{Hyperbolic block rotation matrix $\boldsymbol{H_{m, n}^\theta}\in\mathbb{R}^{(n+m)\times(n+m)}$} \\
$H = \threediag{Ch^\theta_n}{Sh_n^\theta}{I_{m-n}}{Sh_n^\theta}{Ch_n^\theta}$
}\\ \hline
\end{tabu}}
\vspace{0.1cm}
\caption{List of auxiliary matrices frequently used in this work.}
\label{tab:auxmatrix}
\end{table}

Figure \ref{fig:rotations} describes the hyperbolic rotation matrix $H_{1, 1}$ against the usual $2\times 2$ rotation matrix. The matrix $H_{m, n}^\theta$ is a multi-dimensional extension of the hyperbolic rotation matrix.
\begin{figure}[h]
    \title{\Large Rotation vs. Hyperbolic Rotation}
    \centering
    \frame{\includegraphics[width=4.5in]{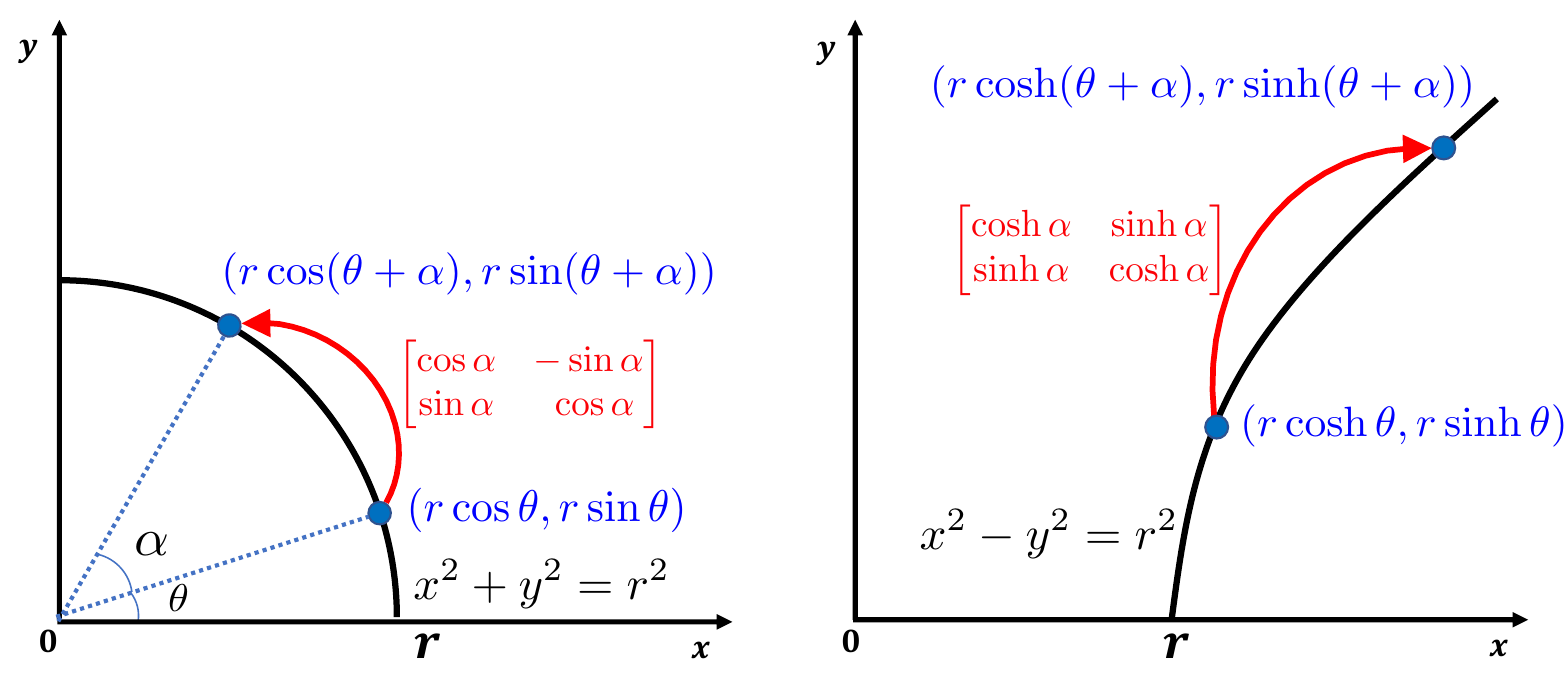}}
    \caption{The (usual) rotation $(R_2^{\alpha})^T$ and the hyperbolic rotation $H_{1, 1}^{\alpha}$.}
    \label{fig:rotations}
\end{figure}




\section{Background}\label{sec:background}

An important result in {\'E}lie Cartan's work on the Riemannian symmetric space \cite{Cartan1,cartan1927certaines,Cartan2} is the \textit{Cartan decomposition} $\mathfrak{g} = \mathfrak{k}+ \mathfrak{p}$. (Some readers may be unfamiliar with $\mathfrak{k}$ pronounced ``k" or ``fraktur k.")
The Cartan decomposition leads to the KAK decomposition, $G = KAK$ (and also extends to compact cases). The term ``decomposition" is used both for $\mathfrak{g} = \mathfrak{k}+\mathfrak{p}$ (additive) and $G= KAK$ (multiplicative). See \eqref{eq:cartandecompositionexample}, \eqref{eq:kakexample} for helpful examples.

\par The decomposition $G = KAK$ itself may have first appeared in Harish-Chandra's work \cite[p.590]{harish1956representations} in 1956. In the 1960s  \cite{Helgason1962,takahashi1963representations,wolf1964self} and also Helgason's 1978 textbook \cite[p.402]{Helgason1978}, the KAK decomposition shows up under the name\footnote{As has been pointed out by Helgason \cite{helgasonprivatecomm}, however, it is unclear that if Cartan himself knew directly about the KAK decomposition as the link between Lie algebras and Lie groups were not yet completed at the time.} ``Cartan decomposition." The name ``KAK decomposition" appears in  the late 1970s \cite{barbasch1979fourier} but it is unclear who first started using the name. Refer to Section \ref{sec:lesson} for more details on the history of the Cartan and the KAK decomposition.

\par The KAK decomposition has been extended to the \textit{generalized Cartan decomposition} by Flensted-Jensen in his 1978 paper \cite{flensted1978spherical}. Originally his work \cite{flensted1978spherical,flensted1980discrete} considered the noncompact cases only but the compact cases were subsequently studied in \cite{hoogenboom1983generalized,hoogenboomthesis} by Hoogenboom. We will call both of them the ``$\kak$ decomposition". In the 1990s, Matsuki extensively studied specific examples   \cite{matsuki1995double,matsuki1997double}, and the idea of visible actions on symmetric spaces \cite{kobayashi2007visible} led to more case studies \cite{Kobayashi,sasaki2010generalized}. Furthermore, the root systems of the compact $\kak$ decompositions have been computed and classified by Matsuki \cite{matsuki2002classification}.

\subsection{Symmetric spaces and the KAK decomposition}\label{sec:symspace}
Let us begin with an elementary example. For the Lie group $G = \gl{n, \mathbb{R}}$, its tangent space at the identity (Lie algebra) is $\mathfrak{g} = \{\text{All $n\times n$ matrices}\}$. Let $\sigma$ be the involution\footnote{Any automorphism $\theta$ on $\mathfrak{g}$ that satisfies $\theta^2 = \text{Id}$ is called an involution.} $X\mapsto -X^T$. Obviously the eigenvalues of $\sigma$ are $\pm 1$ and the following direct sum decomposition $\mathfrak{g} = \mathfrak{k}+ \mathfrak{p}$ holds for the $\pm1$ eigenspaces $\mathfrak{k}, \mathfrak{p}\subset \mathfrak{g}$ of $\sigma$.
\begin{equation}\label{eq:cartandecompositionexample}
    \arraycolsep=1pt
    \hspace{-0.2cm}\begin{array}{ccccc}
        \mathfrak{g} & = & \mathfrak{k} & + & \mathfrak{p} \\
        \{\text{All $n\times n$ matrices}\} & = & \{\text{Skew-symmetric matrices}\}& + & \{\text{Symmetric matrices}\}.
    \end{array}\hspace{-0.5cm}\,
\end{equation}
Recall that $e^{a+b} = e^ae^b$ holds for numbers $a, b$. Certainly this is not true for matrices. However on the set level the following decomposition
\begin{equation}\label{eq:globalcartanexample}
    \arraycolsep=1pt
    \hspace{-0.2cm}\begin{array}{ccccc}
        e^\mathfrak{g} & = & e^\mathfrak{k} & \cdot & e^\mathfrak{p} \\
        \gl{n, \mathbb{R}} & = & \{\text{Orthogonal matrices}\}& \cdot & \{\text{Pos. def. Symmetric matrices}\},
    \end{array}\hspace{-0.5cm}\,
\end{equation}
holds\footnote{Some readers may have noticed that we are abusing notations in \eqref{eq:globalcartanexample}. The sets $e^{\mathfrak{g}}$ and $e^{\mathfrak{k}}$ are in fact $\text{SL}(n, \mathbb{R})$ and $\text{SO}(n)$. We multiplied $\{+1, -1\}$ for both Lie groups so that the decomposition \eqref{eq:globalcartanexample} becomes the usual polar decomposition.} as it is the well-known \textit{polar decomposition}. Let us denote $K=\ortho{n}$ and $P=e^{\mathfrak{p}} = \{\text{Pos. def. Symmetric matrices}\}$ so that \eqref{eq:globalcartanexample} is $G = K\cdot P$. 

\par The (additive) decomposition \eqref{eq:cartandecompositionexample} is the famous \textit{Cartan decomposition}. Moreover it is not a coincidence that \eqref{eq:globalcartanexample} holds, as in Lie theory it is proved that \eqref{eq:globalcartanexample} is true given \eqref{eq:globalcartanexample} is from the Cartan decomposition of any given $G$. The (multiplicative) decomposition \eqref{eq:globalcartanexample} is sometimes called the \textit{global Cartan decomposition}.

\par The involution $\sigma$ is a key ingredient that connects these decompositions. In fact, the involution $\sigma(X)=-X^T$ on $\mathfrak{g}$ is not an arbitrary involution. It is a unique involution called the \textit{Cartan involution}. To be a Cartan involution, the subgroup $K\subset G$ must be a maximal compact subgroup. For example, if we try to add even a single nonorthogonal matrix to $\ortho{n}$, the generated group is no longer compact. Therefore, the Cartan decomposition is also unique.

\par Cartan noticed that the maximal abelian subgroup (largest commuting subgroup) of $P$ plays an important role. Let $A$ be a maximal abelian subgroup of $P$, which is the set of all positive diagonal matrices for our example. (Obviously, positive diagonal matrices form a group, as they are closed under multiplication, are invertible, include the identity, etc., even though the positive definite matrices are not a group.) Also let $\mathfrak{a}\subset\mathfrak{p}$ be the Lie algebra of $A$ (all diagonal matrices). Then, the following decompositions\footnote{The decomposition of $\mathfrak{p}$ (left) is what Cartan wrote in \cite{cartan1927certaines}, $\mathfrak{p}=\bigcup_{k\in K}\text{Ad}(k)\mathfrak{a}$, also mentioned in our introduction. Notationally, we write $\mathfrak{p} = \bigcup_{k\in K}k\mathfrak{a}k^{-1}$ rather than the wrong set notation $\mathfrak{p} = K\mathfrak{a} K^{-1}$ as the latter does not denote the constraint that the $k$ on the left is the inverse of the $k$ on the right, hence the union symbol.} hold:
\begin{equation}\label{eq:symeigdecompositionexample}
    \mathfrak{p} = \bigcup_{k\in K}k\mathfrak{a}k^{-1} {\hspace{0.5cm}\xRightarrow{\exp}\hspace{0.5cm}} P = \bigcup_{k\in K}kAk^{-1}.
\end{equation}

In linear algebra the left side of the arrow in \eqref{eq:symeigdecompositionexample} is known as the symmetric eigendecomposition (and on the right, the positive definite case). Finally combining \eqref{eq:globalcartanexample} and \eqref{eq:symeigdecompositionexample}, we obtain the \textit{KAK decomposition} $G = KAK$,
\begin{equation}\label{eq:kakexample}
    \arraycolsep=1pt
    \hspace{-0.2cm}\begin{array}{ccccccc}
        G & = & K &\cdot & A &\cdot & K \\
        \gl{n, \mathbb{R}} & = & \{\text{Orthogonal}\}& \cdot & \{\text{Positive Diagonal}\} & \cdot & \{\text{Orthogonal}\}.
    \end{array}\hspace{-0.5cm}\,
\end{equation}
In the current example \eqref{eq:kakexample} is the SVD of square invertible matrices. 

\par We point out that $G$ is required to be a noncompact Lie group to have the Cartan decomposition. However, decompositions \eqref{eq:symeigdecompositionexample}, \eqref{eq:kakexample} are also available for compact Lie groups. By the following simple technique \cite[Proposition 7.15]{knapp2013lie} (Weyl's unitary trick) we can obtain a compact Lie group $U$ from a noncompact $G$. Let us define the Lie algebra $\mathfrak{u}$ from the Cartan decomposition $\mathfrak{g} = \mathfrak{k}+\mathfrak{p}$ by multiplying the imaginary unit $i$ on the $\mathfrak{p}$ part:
\begin{equation*}
    \mathfrak{u} := \mathfrak{k} + i\mathfrak{p}.
\end{equation*}
Continuing with the example $G = \gl{n, \mathbb{R}}$ the Lie algebra $\mathfrak{u}$ is $\{\text{Skew-symmetric}\}+ i\{\text{Symmetric}\}$, the set of all skew-Hermitian matrices. Letting $U:=e^{\mathfrak{u}}$ we obtain the compact Lie group $U = \un{n}$, the unitary group.\footnote{As we discussed in the footnote after \eqref{eq:globalcartanexample}, $e^{\mathfrak{u}}$ is in fact the Lie group $\text{SU}(n)$, not $\un{n}$. The group $U$ can be precisely defined as the group such that $K\subset U$ and $\text{Lie}(U) = \mathfrak{u}$.} For $A' = \exp(i\mathfrak{a})$, the set of all unit diagonal matrices, we have the compact counterpart of \eqref{eq:kakexample}, $U = KA'K$.

\par We will be concerned with the triples $(G,U,K)$ with the properties thus described. These triples arose historically in connection with Cartan's theory of \textit{Riemannian symmetric spaces}. Both the noncompact and the compact quotient spaces $G/K$ and $U/K$ are symmetric spaces.

\par Cartan classified all possible irreducible Riemannian globally symmetric spaces \cite{cartan1927certaines}. Table \ref{tab:cartanss} is the complete list of triples $(G, U, K)$ for infinite families of Cartan's Riemannian symmetric spaces. Note that for a fixed $U$, a subgroup $K$ which makes $U/K$ a Riemannian symmetric space might not be unique.

\begin{table}[h]
    \centering
    {Cartan's 20 symmetric spaces  \\ $G/K$ (noncompact), $U/K$ (compact)}
    \begin{tabular}{|c|c|c|c|}
    \hline
    \makecell{\small Cartan's\\\small Type} & $G$ (noncompact) & $U$ (compact) & Subgroup $K$ \\ \hline
    A & $\gl{n, \mathbb{C}}$ & $\un{n}\times \un{n}$ & $\un{n}$\\
    A$\RN{1}$ & $\gl{n, \mathbb{R}}$ & $\un{n}$ & $\ortho{n}$ \\  
    A$\RN{2}$ & $\gl{n, \mathbb{H}}$ & $\un{2n}$ & $\un{n, \mathbb{H}}$ (or $\usp{2n}$) \\
    A$\RN{3}$ & $\un{p,q}$ & $\un{n}$ & $\un{p}\times\un{q}$ \\ \hdashline
    BD & $\ortho{n, \mathbb{C}}$ & $\ortho{n}\times \ortho{n}$ & $\ortho{n}$\\
    BD$\RN{1}$ & $\ortho{p,q}$ & $\ortho{n}$ & $\ortho{p}\times\ortho{q}$ \\
    D$\RN{3}$ & $\Ortho^*(2n)$ & $\ortho{2n}$ & $\osp{2n}$\\ \hdashline
    C & $\symp{2n, \mathbb{C}}$ & $\usp{2n}\times \usp{2n}$ & $\usp{2n}$\\
    C$\RN{1}$ & $\symp{2n, \mathbb{R}}$ & $\usp{2n}$ & $\osp{2n}$\\
    C$\RN{2}$ & $\un{p, q, \mathbb{H}}$ & $\un{n, \mathbb{H}}$ & $\un{p, \mathbb{H}}\times \un{q, \mathbb{H}}$\\ \hline
    \end{tabular}
    \vspace{0.1cm}
    \caption{The full list of triples $(G, U, K)$. Cartan's irreducible Riemannian symmetric spaces are $G/K$ (noncompact) and $U/K$ (compact). Refer to Table \ref{tab:translation} for Lie group notation. (We do not consider exceptional types here.) For types A, BD, C which does not seem to have proper $U/K$ representations, refer to \cite{Helgason1978} for details.}
    \label{tab:cartanss}
\end{table}

\par Regarding the theory of symmetric spaces, there are a number of classic textbooks: Helgason \cite{Helgason1978,Helgason1984}, Knapp \cite{knapp2013lie}, Gilmore \cite{gilmore2012lie} and many more. The authors also describe some key ideas with modern linear algebra in Section 3 of \cite{edelman2020generalized}. 

\par Let us formally state what we have discussed so far in this section. Let $G$ be a noncompact semisimple Lie group with the Lie algebra $\mathfrak{g}$. There exists a unique (up to isomorphism) Cartan involution $\sigma$ on $\mathfrak{g}$ with $\pm 1$ eigenspaces $\mathfrak{k}, \mathfrak{p}\subset\mathfrak{g}$. Then, the Cartan decomposition $\mathfrak{g} = \mathfrak{k} +\mathfrak{p}$ holds. Let $K$ be the analytic subgroup of $G$ with $\text{Lie}(K)=\mathfrak{k}$, and let $P=\exp(\mathfrak{p})$. The product map $K\times P \to G$ is a diffeomorphism. The quotient $G/K$ is a noncompact Riemannian manifold, thus it is a (noncompact) Riemannian symmetric space. Define $\mathfrak{u} := \mathfrak{k} + i\mathfrak{p}$ and let $U$ be the Lie group such that $K\subset U$ and $\text{Lie}(U) = \mathfrak{u}$. The group $U$ is compact and the quotient $U/K$ is a compact Riemannian symmetric space.

\par Let $\mathfrak{a}$ be a maximal abelian subalgebra\footnote{To compute $\mathfrak{a}$ one has to consider small examples and then expand to larger cases. In textbooks such as Helgason the list of $\mathfrak{a}$ is given as there are only a finite number of $\mathfrak{p}$'s for Cartan's symmetric spaces. The authors are not aware of the general procedure of computing a maximal abelian subalgebra $\mathfrak{a}$ of $\mathfrak{p}$ (or $\mathfrak{p}_\tau\cap\mathfrak{p}_\sigma$, which appears in the following section). However, we propose a conjecture that the odd powers of any given generic element $p\in\mathfrak{p}$ construct a basis of a maximal abelian subalgebra.} of $\mathfrak{p}$ and $A := \exp(\mathfrak{a})$. Then, $\mathfrak{p} = \cup_{k\in K}\text{Ad}(k)\!\cdot\!\mathfrak{a} = \cup_{k\in K}k \mathfrak{a} k^{-1}$ holds and on the group level $P = \cup_{k\in K}\text{Ad}(k)\!\cdot\! A$ holds. Finally, we obtain the KAK decomposition. 
\begin{theorem}[KAK decomposition, {\cite[Theorem 6.7, p.249]{Helgason1978}}]
For $(G, U, K)$ discussed above, let $\mathfrak{a}$, $\mathfrak{a}'$ be maximal abelian subalgebras of $\mathfrak{p}$ and $i\mathfrak{p}$, respectively. Then for $A = \exp(\mathfrak{a})$ and $A' = \exp(\mathfrak{a}')$ we have,
\begin{equation*}
    G = KAK \hspace{0.5cm}\text{and}\hspace{0.5cm} U = KA'K.
\end{equation*}
\end{theorem}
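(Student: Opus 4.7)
The plan is to derive both decompositions from the \emph{global} Cartan decomposition $G = K\cdot\exp(\mathfrak{p})$ (already stated in the background just above the theorem) together with a single orbital identity $\mathfrak{p} = \bigcup_{k\in K}\text{Ad}(k)\mathfrak{a}$. Once this identity is in hand, writing an arbitrary $g\in G$ as $g = k_0\exp(X)$ with $X\in\mathfrak{p}$ and choosing $k_1\in K$ such that $\text{Ad}(k_1)X \in \mathfrak{a}$ gives $g = (k_0 k_1^{-1})\exp(\text{Ad}(k_1)X)\, k_1 \in KAK$, since $k_1\exp(X)k_1^{-1} = \exp(\text{Ad}(k_1)X)$. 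So the entire content of $G=KAK$ is the orbital identity.

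The heart of the argument is then a conjugacy lemma for maximal abelian subalgebras of $\mathfrak{p}$. I would pick a \emph{regular} element $H\in\mathfrak{a}$ (one whose centralizer in $\mathfrak{p}$ is exactly $\mathfrak{a}$) and, given $X\in\mathfrak{p}$, consider the continuous function $f:K\to\mathbb{R}$ defined by $f(k) = \langle\text{Ad}(k)X,H\rangle$, where $\langle U,V\rangle := -B(U,\sigma V)$ is the positive definite inner product built from the Killing form $B$ and the Cartan involution $\sigma$. Compactness of $K$ forces $f$ to attain its maximum at some $k_0$, and differentiating along one-parameter subgroups $e^{tZ}$ for $Z\in\mathfrak{k}$ at $t=0$ yields $\langle[Z,X_0],H\rangle = 0$ for all $Z\in\mathfrak{k}$, where $X_0 := \text{Ad}(k_0)X$. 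Invariance of $\langle\cdot,\cdot\rangle$ rearranges this to $\langle Z,[X_0,H]\rangle = 0$; since $[X_0,H]\in\mathfrak{k}$ (as $[\mathfrak{p},\mathfrak{p}]\subseteq\mathfrak{k}$) and $\langle\cdot,\cdot\rangle$ is nondegenerate on $\mathfrak{k}$, this forces $[X_0,H]=0$. Regularity of $H$ then gives $X_0\in\mathfrak{a}$, proving the orbital identity.

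For the compact dual $U = KA'K$, I would replay the same variational argument inside $\mathfrak{u} = \mathfrak{k} + i\mathfrak{p}$: the Killing form of $\mathfrak{u}$ is negative definite, so $-B$ restricted to $i\mathfrak{p}$ is a positive definite $\text{Ad}(K)$-invariant inner product and plays the role of $\langle\cdot,\cdot\rangle$ above. The extremum-and-differentiate routine, now with $i\mathfrak{p}$ in place of $\mathfrak{p}$ and $\mathfrak{a}'$ in place of $\mathfrak{a}$, yields $i\mathfrak{p} = \bigcup_{k\in K}\text{Ad}(k)\mathfrak{a}'$. For the surjectivity step in the compact case the global decomposition $U = K\cdot\exp(i\mathfrak{p})$ comes from Hopf--Rinow applied to the complete compact symmetric space $U/K$: every point is joined to the base point by a geodesic, and geodesics from the base point lift to one-parameter subgroups $t\mapsto\exp(tY)$ with $Y\in i\mathfrak{p}$. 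Combining this with the orbital identity produces $U=KA'K$ by the same rewriting as in the noncompact case.

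The main technical obstacle is ensuring the existence of a regular element in $\mathfrak{a}$ whose centralizer in $\mathfrak{p}$ collapses exactly to $\mathfrak{a}$. This is not automatic: it depends on the restricted root space decomposition of $\mathfrak{g}$ with respect to $\mathfrak{a}$, and the argument closes only because $\mathfrak{a}$ was chosen \emph{maximal} abelian rather than merely abelian. In practice this is the step where each individual triple $(G,U,K)$ from Table~\ref{tab:cartanss} produces its own restricted root system, and where the explicit choice of $\mathfrak{a}$ and $\mathfrak{a}'$ becomes nontrivial and governs what the diagonal factor $A$ looks like in each of the 53 concrete factorizations.
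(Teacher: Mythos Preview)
Your proposal is correct and follows exactly the route the paper sketches in the paragraph immediately preceding the theorem: combine the global Cartan decomposition $G=K\exp(\mathfrak{p})$ with the orbital identity $\mathfrak{p}=\bigcup_{k\in K}\mathrm{Ad}(k)\mathfrak{a}$, then cite Helgason for the remaining details. You have gone further than the paper by actually supplying those details---the variational argument on $K$ with a regular element of $\mathfrak{a}$, and Hopf--Rinow for the compact dual---which is precisely Helgason's proof of Theorem~6.7 that the paper defers to.
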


\par For a Lie group $G$ there also exist non-Cartan involutions, specifically involutions where the subgroup $K$ (whose Lie algebra is the $+1$ eigenspace) is noncompact. For example, the map $\tau:X\mapsto I_{p,q}XI_{p,q}$ is another involution on $\mathfrak{gl}(n, \mathbb{R})$ for $n=p+q$. The $\pm1$ eigenspace decomposition $\mathfrak{g} = \mathfrak{k}_\tau + \mathfrak{p}_\tau$ is the following:
\begin{equation*}
    \arraycolsep=1pt
    \begin{array}{ccccc}
        \mathfrak{g} & = & \mathfrak{k}_\tau & + & \mathfrak{p}_\tau \\
        \mathfrak{gl}(n, \mathbb{R}) & = & \bigg\{\begin{bmatrix} \,\,a\,\, & \,\,0\,\,\\0 & d \end{bmatrix} \bigg|\,\,a\in\mathbb{R}^{p\times p}, d\in\mathbb{R}^{q\times q}\bigg\} 
        & + & \bigg\{\begin{bmatrix}
    0 & \,\,b\,\,\\c^T & 0
    \end{bmatrix}\bigg|\,\,b, c\in\mathbb{R}^{p\times q}\bigg\}.
    \end{array}
\end{equation*}

\par The subgroup $K_\tau\subset G$ is $\gl{p, \mathbb{R}} \oplus \gl{q, \mathbb{R}}$ (block diagonal sum), which is clearly not compact. For a non-Cartan involution $\tau$ the quotient $G/K_\tau$ is a pseudo-Riemannian differentiable manifold (pseudo-Riemannian means the metric is indefinite), thus $G/K_\tau$ is a \textit{pseudo-Riemannian symmetric space}. Pseudo-Riemannian symmetric spaces play an important role in the upcoming section. 

\begin{remark}[The use of $\gl{n, \mathbb{R}}$ rather than $\text{SL}(n, \mathbb{R})$]
The noncompact Lie group $G$ as one can find in many Lie group textbooks, is usually assumed to be semisimple (or more precisely, has a semisimple Lie algebra). However the Lie groups introduced in Table \ref{tab:translation} are mostly \textit{reductive} Lie groups which can be reduced to semisimple cases by scalar multiplication. A good example would be the \textit{special linear group} $\text{SL}(n, \mathbb{R})$ ($n\times n$ real matrices with determinant one) which is semisimple and connected. The corresponding reductive Lie group we use is the Lie group $\gl{n, \mathbb{R}}$ which is neither semisimple nor connected. We multiply a factor $\mathbb{R}$, abusing the theory of semisimple Lie algebra and applying them on reductive groups. For example the Cartan decomposition for $\text{SL}(n, \mathbb{R})$ would be 
\begin{equation*}
    \mathfrak{sl}(n, \mathbb{R}) = \mathfrak{so}(n)+\{\text{All traceless symmetric matrices}\},
\end{equation*}
whereas we use the reductive version \eqref{eq:cartandecompositionexample}. 
\end{remark}

\subsection{The generalized Cartan decomposition}\label{sec:gencartantheory}
To construct a generalized Cartan decomposition, we need two ingredients. The first ingredient is a Riemannian noncompact symmetric space $G/K_\sigma$. Again, ``Riemannian" implies the subgroup $K_\sigma$ (usually denoted by $K$) is a maximal compact subgroup of $G$, and $\sigma$ is the (unique) Cartan involution. The second ingredient is a pseudo-Riemannian symmetric space $G/K_\tau$ arising from a (Cartan or non-Cartan) involution $\tau$ on $\mathfrak{g}$ (the Lie algebra of $G$), which satisfies $\tau\sigma=\sigma\tau$. Now we are ready.

\par The Cartan decomposition arising from $\sigma$ is the following:
\begin{equation*}
    \mathfrak{g}=\mathfrak{k}_\sigma+\mathfrak{p}_\sigma.
\end{equation*}
Since we have another involution $\tau$, we have another decomposition of $\mathfrak{g}$, 
\begin{equation*}
    \mathfrak{g} = \mathfrak{k}_\tau+\mathfrak{p}_\tau, 
\end{equation*}
where $\mathfrak{k}_\tau$ and $\mathfrak{p}_\tau$ are $\pm 1$ eigenspaces of $\tau$, respectively. The subgroup $K_\tau$ of $G$ is the subgroup which has $\mathfrak{k}_\tau$ as its tangent space. We call this the \textit{generalized Cartan triple}.
\begin{equation}\label{def:gencartantriple}
    (G, K_\sigma, K_\tau)
\end{equation}

\par The last step is to take the intersection $\mathfrak{p}_\sigma\cap\mathfrak{p}_\tau$ and compute a maximal abelian subalgebra $\mathfrak{a}$ inside $\mathfrak{p}_\sigma\cap\mathfrak{p}_\tau$. The choice of $\mathfrak{a}$ may not be unique, as we will see at the end of Section \ref{sec:gencartanex2}. (This is also true for the KAK decomposition.) Once $\mathfrak{a}$ is selected we quotient out the symmetry to make $a\in \exp(\mathfrak{a})$ unique for the factorization, by fixing a Weyl chamber\footnote{For example if $\exp(\mathfrak{a})$ is the set of all positive diagonal matrices, fixing a Weyl chamber to obtain $\mathfrak{a}^+$ is equivalent to setting the diagonal entries of $a\in\exp(\mathfrak{a}^+)$ in a decreasing order.} and computing $\mathfrak{a}^+$. Finally we get the subgroup $A=\exp(\mathfrak{a}^+)$. The generalized Cartan decomposition \cite{flensted1978spherical} follows. 

\begin{theorem}[generalized Cartan ($\kak$) decomposition {\cite[Theorem 4.1]{flensted1978spherical}}]\label{thm:gencartan}
Let $\mathfrak{g}$ be a noncompact semisimple Lie algebra over $\mathbb{R}$. Suppose $G$ is a connected Lie group with Lie algebra $\mathfrak{g}$. Let $\sigma$ be its Cartan involution on $\mathfrak{g}$ and $\tau$ be any involution on $\mathfrak{g}$ such that $\tau\sigma = \sigma\tau$. Define $K_\sigma$, $K_\tau$ and $A$ as above. Then we have the group decomposition
\begin{equation*}
    G = K_\sigma A K_\tau.
\end{equation*}
More precisely, for any $g\in G$, there exists a unique $a\in A$ such that $g = k_\sigma a k_\tau$ for some $k_\sigma\in K_\sigma$ and $k_\tau\in K_\tau$. 
\end{theorem}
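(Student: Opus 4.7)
The plan is to reduce the $\kak$ decomposition to the classical KAK in two stages. First, produce an intermediate factorization $g = k_\sigma \exp(X) k_\tau$ with $X \in \mathfrak{p}_\sigma \cap \mathfrak{p}_\tau$; second, conjugate $X$ into the chosen Weyl chamber $\mathfrak{a}' \subseteq \mathfrak{a}$ using an element of $K_\sigma \cap K_\tau$. The second stage is almost automatic: since $\sigma$ and $\tau$ commute, $\mathfrak{g}$ splits into four joint $(\pm 1, \pm 1)$-eigenspaces, and a direct bracket check shows that $\mathfrak{l} := (\mathfrak{k}_\sigma \cap \mathfrak{k}_\tau) \oplus (\mathfrak{p}_\sigma \cap \mathfrak{p}_\tau)$ is a Lie subalgebra carrying the structure of a compact orthogonal symmetric pair. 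Applying the ordinary KAK to this compact pair yields
\[
\mathfrak{p}_\sigma \cap \mathfrak{p}_\tau \;=\; \bigcup_{k \in K_\sigma \cap K_\tau} \operatorname{Ad}(k)\, \mathfrak{a},
\]
and because $K_\sigma \cap K_\tau$ sits inside both $K_\sigma$ and $K_\tau$, such a conjugation of $X$ into $\mathfrak{a}$ is absorbed simultaneously on both sides; an additional Weyl-group conjugation (likewise realized inside $K_\sigma \cap K_\tau$) lands $X$ in $\mathfrak{a}'$.

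The core task is therefore the intermediate factorization: every $(K_\sigma, K_\tau)$-double coset meets $\exp(\mathfrak{p}_\sigma \cap \mathfrak{p}_\tau)$. I would prove this on the Riemannian symmetric space $M = G/K_\sigma$, on which $K_\tau$ acts by isometries from the left (well-defined because $\tau$ stabilizes $K_\sigma$, a consequence of $\sigma\tau = \tau\sigma$). Given $g \in G$, use the global Cartan decomposition $G = K_\sigma \exp(\mathfrak{p}_\sigma)$ to write $g = k_0 \exp(Y)$ with $Y \in \mathfrak{p}_\sigma$, set $x_0 = \exp(Y) K_\sigma$, and minimize $\psi(k) := d(eK_\sigma,\, k \cdot x_0)^2$ over $k \in K_\tau$. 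The restriction $\sigma|_{K_\tau}$ is a Cartan involution of the reductive group $K_\tau$, yielding a polar decomposition $K_\tau = (K_\tau \cap K_\sigma) \cdot \exp(\mathfrak{k}_\tau \cap \mathfrak{p}_\sigma)$; the compact factor preserves $\psi$, so minimization reduces to the noncompact factor. A first-order stationarity calculation at a minimizer, using $T_{eK_\sigma} M \cong \mathfrak{p}_\sigma$ and the $G$-invariance of the metric, produces the orthogonality condition $B_\sigma(Y, \mathfrak{k}_\tau) = 0$ for the positive-definite $\sigma$-shifted Killing form; this simplifies to $\tau(Y) = -Y$, i.e., $Y \in \mathfrak{p}_\sigma \cap \mathfrak{p}_\tau$, exactly as needed.

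The main obstacle is the existence of the minimizer, since $K_\tau$ need not be compact. I would handle this by exploiting that $M$ is a Hadamard manifold (simply connected with nonpositive sectional curvature), which makes $\psi$ geodesically convex; coercivity of $\psi$ along rays in $\exp(\mathfrak{k}_\tau \cap \mathfrak{p}_\sigma)$ that do not stabilize $x_0$ follows from the standard nonpositive-curvature distance estimate, and the stabilizer itself can be quotiented out without changing the infimum. For uniqueness of $a \in A$, I would transfer the standard Weyl-group argument from KAK to the $\kak$ restricted root system (as classified in the compact case by Matsuki): if $g = k_\sigma a k_\tau = k_\sigma' a' k_\tau'$, rearranging puts $a^{-1}(k_\sigma^{-1} k_\sigma') a'$ in $K_\tau$, and a root-space analysis forces $a^{-1} a'$ into the normalizer of $\mathfrak{a}$ in $K_\sigma \cap K_\tau$ modulo its centralizer; the chosen positivity of $\mathfrak{a}'$ then forces $a = a'$.
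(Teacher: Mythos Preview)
The paper does not supply its own proof of this theorem: it is stated with a citation to Flensted-Jensen's original paper and then used as a black box to generate the 53 matrix factorizations in Sections~4--8. There is therefore no proof in the paper against which to compare your proposal.

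Your outline is nonetheless a reasonable route to the result, and the two-stage reduction---first show every $(K_\sigma,K_\tau)$-double coset meets $\exp(\mathfrak{p}_\sigma\cap\mathfrak{p}_\tau)$, then conjugate into $\mathfrak{a}'$ using $K_\sigma\cap K_\tau$---is the standard architecture. Your identification of $\sigma|_{K_\tau}$ as a Cartan involution of the reductive group $K_\tau$ is the correct structural input for taming the noncompactness of $K_\tau$, and the variational argument on the Hadamard manifold $G/K_\sigma$ is a legitimate way to produce the intermediate representative. Two places would need tightening in a full write-up. First, the coercivity of $\psi$ along $\exp(\mathfrak{k}_\tau\cap\mathfrak{p}_\sigma)$ is true but not immediate: you need an actual estimate (e.g.\ a flat comparison or the law of cosines in nonpositive curvature), and ``quotienting out the stabilizer'' is not automatic since the stabilizer of $x_0$ in $K_\tau$ need not be normal---what you really want is that $\psi$ descends to a proper convex function on the closed convex set $\exp(\mathfrak{k}_\tau\cap\mathfrak{p}_\sigma)\cdot x_0\subset M$. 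Second, your uniqueness paragraph appeals to Matsuki's classification, which is carried out for the \emph{compact} $\kak$ setting, whereas the theorem here is noncompact; the Weyl-group argument you sketch does go through, but you should either redo the root-space bookkeeping in the noncompact setting or invoke Flensted-Jensen directly rather than Matsuki.
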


\par We can write the procedure to obtain a matrix factorization by Theorem \ref{thm:gencartan}, as Algorithm \ref{alg:K1AK2noncompact}. The following sections then will present step-by-step examples. The list of Lie algebras $\mathfrak{g}$ of the classical Lie groups can be found in Appendix \ref{app:liealgebras}. The involutions $\tau$ for our 53 matrix factorizations are listed in Appendix \ref{app:involutions}. 
\begin{algorithm}\label{alg:K1AK2noncompact}
\caption{Create a new factorization $G = K_\sigma A K_\tau$ (noncompact)}
\Inputone{$G$: Choose a classical Lie group $G$ from Table \ref{tab:translation}}
\nonl \hspace{1.4cm} $K_\sigma$: Automatically given as the corresponding $K$ in Table \ref{tab:cartanss}\\
\Inputtwo{$\tau$: Choose an involution on $\mathfrak{g}$, the tangent space of $G$ at $I$.}
Compute decomposition $\mathfrak{g}=  \mathfrak{k}_\tau+\mathfrak{p}_\tau$ \\ 
\nonl Then, $K_\tau= \exp(\mathfrak{k}_\tau)^\star$ \\
Compute the Cartan decomposition $\mathfrak{g}= \mathfrak{k}_\sigma+\mathfrak{p}_\sigma$ \\
\nonl Choose $\mathfrak{a}$ = \text{Maximal abelian subalgebra of $\mathfrak{p}_\sigma\cap\mathfrak{p}_\tau$}\\
Let $A = \exp(\mathfrak{a})$\\
\KwOut{Factorization $G = K_\sigma A K_\tau$} 
\nonl \hfill\tiny{$\star$ More precisely, $K_\tau\subset G$ is the analytic subgroup with Lie algebra $\mathfrak{k}_\tau$}
\end{algorithm}

\subsection{A first example: factorization $\fact{13}$, complex (Theorem \ref{thm:F13complex})}\label{sec:gencartanex1}
\begin{align*}
    &\text{Input 1. Choose }G = \gl{n, \mathbb{C}}. \text{ From Table \ref{tab:cartanss} (first row), }K_\sigma=\un{n}. \\
    &\text{Input 2. Involution $\tau(X)$ : }X\mapsto -X^T \text{ on } \mathfrak{g} = \mathfrak{gl}(n, \mathbb{C}).
\end{align*}

\noindent\textit{\underline{Step 1. Compute $K_\tau$, $\mathfrak{p}_\tau$.}}
\par $\mathfrak{g}$ is the set of all complex $n\times n$ matrices. The decomposition $\mathfrak{g} = \mathfrak{k}_\tau + \mathfrak{p}_\tau$ is,
\begin{equation*}
    \mathfrak{g} = \{\text{complex skew-symmetric matrices}\} + \{\text{complex symmetric matrices}\},
\end{equation*}
where we obtain $\mathfrak{p}_\tau$ immediately, and the group $K_\tau$ is obtained as $\exp(\mathfrak{k}_\tau)$,\footnote{As noted in $\star$ of Algorithm \ref{alg:K1AK2noncompact}, directly exponentiating $\mathfrak{k}_\tau$ gives the subgroup of $K_\tau$, elements with positive determinants. One need to multiply the factor $\pm 1$ to match the group $\ortho{n, \mathbb{R}}$. Such factors can be offsetted by quotienting out the symmetries in $\mathfrak{a}$.} 
\begin{equation*}
    K_\tau = \{\text{All complex orthogonal matrices}\} = \ortho{n, \mathbb{C}}.
\end{equation*}

\noindent\textit{\underline{Step 2. Compute $\mathfrak{a}$.}}
\par The Cartan decomposition $\mathfrak{g} = \mathfrak{k}_\sigma + \mathfrak{p}_\sigma$ is
\begin{equation*}
    \mathfrak{g} = \{\text{All skew-Hermitian matrices}\} + \{\text{All Hermitian matrices}\}.
\end{equation*}
The intersection $\mathfrak{p}_\sigma \cap \mathfrak{p}_\tau$ is the intersection of all Hermitian matrices with all complex symmetric matrices, 
\begin{equation*}
    \mathfrak{p}_\sigma \cap \mathfrak{p}_\tau = \{\text{All real symmetric matrices}\}.
\end{equation*}
An obvious choice of the maximal abelian subalgebra is the set of all diagonal matrices $\mathfrak{a}$.

\noindent\textit{\underline{Step 3. Complete $G = K_\sigma A K_\tau$.}}
\par Taking the exponential of $\mathfrak{a}$ we obtain $A$, the set of all positive diagonal matrices. The group $K_\sigma$ is given as $\un{n}$ and $K_\tau$ given in step 1, is $\ortho{n, \mathbb{C}}$. The matrix factorization we obtain as $G= K_\sigma A K_\tau$ is the following matrix factorization,
\begin{equation*}
    \text{For any }\glmat{n}{C}\in\gl{n, \mathbb{C}}, \hspace{0.5cm} \glmat{n}{C} = U_n \Sigma O_n^\mathbb{C},
\end{equation*}
where $U_n$ is an $n\times n$ unitary matrix and $O_n^\mathbb{C}$ is an $n\times n$ complex orthogonal matrix. $\Sigma$ is a positive diagonal matrix unique up to the diagonal permutation. This is exactly the factorization $\fact{13}$ complex, Theorem \ref{thm:F13complex} in Section \ref{sec:F13}. 

\subsection{Another example: factorization $\fact{8}$, real (Theorem \ref{eq:F8real})}\label{sec:gencartanex2}
\begin{align*}
    &\text{Input 1. Choose }G = \gl{n, \mathbb{R}}. \text{ From Table \ref{tab:cartanss} (second row), }K_\sigma = \ortho{n}. \\
    &\text{Input 2. Involution $\tau(X)$ : }X\mapsto I_{p, q}XI_{p, q} \text{ on }\mathfrak{g} = \mathfrak{gl}(n, \mathbb{R}).
\end{align*}

\noindent\textit{\underline{Step 1. Compute $K_\tau$, $\mathfrak{p}_\tau$.}}
\par $\mathfrak{g}$ is the set of all real $n\times n$ matrices. Compute the decomposition $\mathfrak{g}=\mathfrak{k}_\tau+\mathfrak{p}_\tau$,
\begin{equation*}
    \mathfrak{g} =  
    \bigg\{\begin{bmatrix}
    a & 0\\0 & d
    \end{bmatrix}\bigg|a\in\mathbb{R}^{p\times p}, d\in\mathbb{R}^{q\times q}\bigg\} + 
    \bigg\{\begin{bmatrix}
    0 & b\\c^T & 0
    \end{bmatrix}\bigg|b, c\in\mathbb{R}^{p\times q}\bigg\}.
\end{equation*}
We obtain $\mathfrak{p}_\tau$ immediately, and the group $K_\tau$ is obtained from $\exp(\mathfrak{k}_\tau)$, 
\begin{equation}\label{eq:firstexKtau}
    K_\tau = \bigg\{\begin{bmatrix}
    A & 0\\0 & D
    \end{bmatrix}\bigg|A\in\gl{p, \mathbb{R}}, D\in\gl{q, \mathbb{R}}\bigg\}.
\end{equation}

\noindent\textit{\underline{Step 2. Compute $\mathfrak{a}$.}}
\par The Cartan decomposition $\mathfrak{g} = \mathfrak{k}_\sigma+\mathfrak{p}_\sigma$ is 
\begin{equation*}
    \mathfrak{g} = \{\text{All skew-symmetric matrices}\} + \{\text{All symmetric matrices}\}.
\end{equation*}
The intersection $\mathfrak{p}_\tau\cap\mathfrak{p}_\sigma$ becomes
\begin{equation}\label{eq:example2intersection}
    \mathfrak{p}_\tau\cap\mathfrak{p}_\sigma = \bigg\{\begin{bmatrix}
    0 & b\\b^T & 0
    \end{bmatrix}\bigg|b\in\mathbb{R}^{p\times q}\bigg\},
\end{equation}
and a choice of the maximal abelian subalgebra $\mathfrak{a}\subset\mathfrak{p}_\tau\cap\mathfrak{p}_\sigma$ is the set of all matrices of the form
\begin{gather}
    \nonumber
    \hspace{0.6cm}{\overbrace{\hspace{1.8cm}}^{p}\overbrace{\hspace{1cm}}^{q}}
    \\[-4pt]
    h = 
    \begin{bmatrix}
    \begin{array}{c:c}
    \makebox(33, 38){}& 
    \makebox(17, 38){$\begin{smallmatrix}\theta_1\hspace{0.6cm}\\ \diagdown \\ \hspace{0.6cm}\theta_q  \\ \rule{0cm}{0.5cm}\end{smallmatrix}$}\\ \hdashline
    \makebox(38,17){$\begin{smallmatrix}\theta_1\hspace{1.4cm}\\ \diagdown\hspace{0.8cm}\\ \theta_q\hspace{0.2cm}\end{smallmatrix}$}
    & \makebox(17, 20){}
    \end{array}\label{eq:example2maxabelian}
    \end{bmatrix}.
\end{gather}

\noindent\textit{\underline{Step 3. Complete $G = K_\sigma A K_\tau$.}}
\par Taking the exponential of a matrix $h$ above (with the positive $\theta_l$'s), we obtain the subgroup $A$, consisting of $a$ such that (in fact, this is $H_{p, q}^\theta$ of Table \ref{tab:auxmatrix}):
\begin{equation*}
    a = \begin{bmatrix}
    \begin{array}{c:c}
    \makebox(33, 38){$\begin{smallmatrix}\diagdown\hspace{1.3cm}\\ \cosh\theta_l \hspace{0.7cm}\\ \diagdown \hspace{0.1cm} \\ \hspace{0.7cm}\rule{0cm}{0.4cm}\text{\large{$I_{p-q}$}}\end{smallmatrix}$}& 
    \makebox(17, 38){$\begin{smallmatrix}\diagdown\hspace{0.6cm}\\ \sinh\theta_l \\ \hspace{0.6cm}\diagdown  \\ \rule{0cm}{0.5cm}\end{smallmatrix}$}\\ \hdashline
    \makebox(38,17){$\begin{smallmatrix}\diagdown\hspace{1.2cm}\\ \sinh\theta_l \hspace{0.7cm}\\ \diagdown\hspace{0.1cm}\end{smallmatrix}$}
    & \makebox(17, 20){$\begin{smallmatrix}\diagdown\hspace{0.7cm}\\ \cosh\theta_l\\ \hspace{0.5cm}\diagdown\end{smallmatrix}$}
    \end{array}
    \end{bmatrix}.
\end{equation*}
The group $K_\sigma$ is $\ortho{n}$ and $K_\tau$ is given in \eqref{eq:firstexKtau}. 

\par The computed generalized Cartan decomposition $G = K_\sigma AK_\tau$ is the following matrix factorization. (Theorem \ref{thm:F8real})
\begin{equation*}
    \text{For }\glmat{n}{R}\in\gl{n, \mathbb{R}}, \hspace{0.5cm} \glmat{n}{R} = O_n \begin{bmatrix}\begin{array}{cc:c}
        Ch & & Sh \\
         & I_{p-q} & \\ \hdashline
         Sh & & Ch
    \end{array}\end{bmatrix}
    \begin{bmatrix}
    \begin{array}{c:c}
    \glmat{p}{R} & \\ \hdashline & \glmat{q}{R}\end{array}
    \end{bmatrix},
\end{equation*}
where $Ch, Sh\in\mathbb{R}^{q\times q}$ are diagonal matrices with $\cosh, \sinh$ values of $\theta_l$'s, $\glmat{p}{R}\in\gl{p, \mathbb{R}}$, $\glmat{q}{R}\in\gl{q, \mathbb{R}}$ and $O_n$ is an $n\times n$ orthogonal matrix. This is $\fact{8}$ (real) matrix factorization which will appear in Section \ref{sec:F8}.

\hspace{0.1cm}

\noindent\textit{\underline{The choice of a maximal abelian subalgebra $\mathfrak{a}$ changes the matrix factorization.}}
As mentioned in Section \ref{sec:symspace}, the choice of $\mathfrak{a}$ inside $\mathfrak{p}_\sigma\cap\mathfrak{p}_\tau$ is not unique. The choice \eqref{eq:example2maxabelian} inside \eqref{eq:example2intersection} is the standard choice for obtaining the tridiagonal matrix $a$. Nonetheless, other choices of $\mathfrak{a}$ are isomorphic to the standard choice but may have different matrix representations. For example, another choice of $\mathfrak{a}$ is the collection of the following $h$ matrices:

\begin{equation*}
    h = \begin{bmatrix}
    \begin{array}{c:c}
    \makebox(33, 38){}& 
    \makebox(17, 38){$\begin{smallmatrix}\hspace{0.6cm}\theta_1\\ \text{\reflectbox{$\diagdown$}}\hspace{0.1cm} \\ \theta_q\hspace{0.6cm}  \\ \rule{0cm}{0.5cm}\end{smallmatrix}$}\\ \hdashline
    \makebox(38,17){$\begin{smallmatrix}\theta_1\hspace{0.2cm}\\ \text{\reflectbox{$\diagdown$}}\hspace{0.9cm}\\ \theta_q\hspace{1.4cm}\end{smallmatrix}$}
    & \makebox(17, 20){}
    \end{array}
    \end{bmatrix}.
\end{equation*}
In this case the subgroup $A$ becomes the collection of the following $a$ matrices:
\begin{equation}\label{eq:alternativeforms}
    a = \begin{bmatrix}
    \begin{array}{c:c}
    \makebox(33, 38){$\begin{smallmatrix}\diagdown\hspace{1.3cm}\\ \cosh\theta_l \hspace{0.7cm}\\ \diagdown \hspace{0.1cm} \\ \hspace{0.8cm}\rule{0cm}{0.4cm}\text{\large{$I_{p-q}$}}\end{smallmatrix}$}& 
    \makebox(17, 38){$\begin{smallmatrix}\hspace{0.6cm}\text{\reflectbox{$\diagdown$}}\\ \sinh\theta_l \\ \text{\reflectbox{$\diagdown$}}\hspace{0.6cm}  \\ \rule{0cm}{0.5cm}\end{smallmatrix}$}\\ \hdashline
    \makebox(38,17){$\begin{smallmatrix} \text{\reflectbox{$\diagdown$}}\hspace{0.1cm}\\ \sinh\theta_l \hspace{0.7cm}\\ \text{\reflectbox{$\diagdown$}}\hspace{1.2cm}\end{smallmatrix}$}
    & \makebox(17, 20){$\begin{smallmatrix}\diagdown\hspace{0.7cm}\\ \cosh\theta_l\\ \hspace{0.5cm}\diagdown\end{smallmatrix}$}
    \end{array}
    \end{bmatrix}.
\end{equation}
The $\kak$ decomposition also holds for such alternative choices of $A$.

\subsection{Compact case and $\kak$ decomposition}
If the group $G$ is compact, the situation is slightly different. As we discussed in Section \ref{sec:symspace}, a compact Lie group $G$ might have multiple choices of $K$ to make $G/K$ a Riemannian manifold. Thus, the two ingredients are $G/K_\sigma$ and $G/K_\tau$, both being Riemannian symmetric spaces. Then the later steps are nearly identical, as one needs to compute the maximal abelian subalgebra $\mathfrak{a}\subset\mathfrak{p}_\sigma\cap\mathfrak{p}_\tau$. (In fact, the subgroup $A$ is a torus.) See Theorem \ref{thm:gencartancompact} and Algorithm \ref{alg:K1AK2compact} for the detail.

\begin{theorem}[generalized Cartan ($\kak$) decomposition, compact case {\cite[Theorem 3.6]{hoogenboom1983generalized}}]\label{thm:gencartancompact}
Let $G$ be a compact semisimple Lie group with $\text{Lie}(G) = \mathfrak{g}$. Let $\sigma$, $\tau$ be two commuting involutions such that $G/K_\sigma$, $G/K_\tau$ are Riemannian symmetric spaces. Let $\mathfrak{g} = \mathfrak{k}_\sigma + \mathfrak{p}_\sigma$ and $\mathfrak{g} = \mathfrak{k}_\tau + \mathfrak{p}_\tau$ be the $\pm 1$ eigenspace decompositions of $\mathfrak{g}$. Suppose $\mathfrak{a}\subset(\mathfrak{p}_\sigma \cap\mathfrak{p}_\tau)$ be a maximal abelian subalgebra. Fix a Weyl chamber to get $\mathfrak{a}^+$ and let $A=\exp(\mathfrak{a}^+)$. Then, the generalized Cartan decomposition of $G$ follows:
\begin{equation*}
    G = K_\sigma A K_\tau.
\end{equation*}
\end{theorem}

\begin{algorithm}\label{alg:K1AK2compact}
\caption{Create a new factorization $G = K_\sigma A K_\tau$ (compact)}
    \Inputone{Compact Lie group $G$ with Lie algebra $\mathfrak{g}$}
    \Inputtwo{Choose two involutions $\tau, \sigma$ on $\mathfrak{g}$}
    $\mathfrak{p}_\sigma = \{g\in\mathfrak{g}|\sigma(g)=-g\}$, $K_\sigma = \exp(\{g\in\mathfrak{g}|\sigma(g) = g\})$ \\ 
    \nonl $\mathfrak{p}_\tau = \{g\in\mathfrak{g}|\tau(g)=-g\}$, $K_\tau = \exp(\{g\in\mathfrak{g}|\tau(g) = g\})$ \\ 
    $\mathfrak{a}$ = \text{Maximal abelian subalgebra of $\mathfrak{p}_\sigma\cap\mathfrak{p}_\tau$}\\
    $A = \exp(\mathfrak{a})$\\
    \KwOut{Factorization $G = K_\sigma A K_\tau$}
\end{algorithm}

\subsection{Factorization folding : generalizing the link between the SVD and symmetric eigendecomposition}\label{sec:folding}
The (square) SVD and the symmetric eigendecomposition have an important relationship which can be illustrated as follows.
\begin{equation}\label{eq:foldingrelation}\boxed{
    \hspace{0.5cm}\makecell{A = U\Sigma V^T\\(\text{SVD})} \hspace{0.3cm}{\LARGE{\Longrightarrow}}\hspace{0.3cm} \makecell{AA^T = U\Sigma^2U^T\\(\text{Eigendecomp.})}
}\end{equation}

The SVD is a classical example of the KAK decomposition. Let $G = \gl{n, \mathbb{R}}$, $K = \ortho{n}$ and $P = \{A\in\mathbb{R}^{n\times n}|A\text{ : Symmetric positive definite}\}$. Then the SVD is the KAK decomposition of $G$ and the symmetric positive definite eigendecomposition is a decomposition of $P$. 

\par Let $\tau:X\mapsto X^{-T}$ be the group level involution (which satisfies $\tau(k) = k$, $\tau(p) = p^{-1}$). Then since 
\begin{equation*}
    g\cdot\tau(g)^{-1} = kak'\tau(k')^{-1}\tau(a)^{-1}\tau(k)^{-1} = kak'(k')^{-1}ak^{-1} = ka^2k^{-1},
\end{equation*}
the relationship \eqref{eq:foldingrelation} can be rewritten with $p:=g\cdot\tau(g)^{-1}$ as the following. 
\begin{equation*}\boxed{
    \makecell{g = ka k' \\ \text{(SVD, KAK decomp.)}} 
    \hspace{0.3cm}{{\xRightarrow{\text{Fold}}}}\hspace{0.3cm}
    \makecell{p = k a^2 k^{-1} \\ \text{(Eigendecomp.)}}
}\end{equation*}
We call this the \textit{folding} of the SVD into the eigendecomposition, as we collapse $k'$ using the involution.\footnote{This is equivalent to Kleinsteuber's table of ``normal form methods" in his thesis \cite{kleinsteuber2005jacobi}.} In fact, the folding can be done in both directions (collapsing $k$ or $k'$), and we call the above example as the \textit{right folding}. The \textit{left folding} is a factorization of $p' := \tau(g)^{-1}\cdot g$, as follows.
\begin{equation*}\boxed{
    \makecell{p' = k' a^2 k'^{-1} \\ \text{(Eigendecomp.)}}
    \hspace{0.5cm}{{\xLeftarrow[\text{Left}]{\text{Fold}}}}\hspace{0.5cm}
    \makecell{g = ka k' \\ \text{(SVD)}} 
    \hspace{0.5cm}{{\xRightarrow[\text{Right}]{\text{Fold}}}}\hspace{0.5cm}
    \makecell{p = k a^2 k^{-1} \\ \text{(Eigendecomp.)}}
}\end{equation*}

\par For the $\kak$ decomposition, the same folding technique can be applied by generalizing the involution $\tau:X\mapsto X^{-T}$ on each side. Consider a $\kak$ decomposition $g = k_1ak_2$, let $\tau_1, \tau_2$ be two global (group level) involutions such that $\tau_1(k_1) = k_1$ and $\tau_2(k_2) = k_2$. We obtain the following two foldings.
\begin{equation}\label{eq:foldingrecipe}
    \arraycolsep=1.4pt
    \begin{array}{rclcrccclr}
         g & \cdot & \tau_2(g)^{-1} & = & k_1 & \cdot & a^2 & \cdot & \tau_2(k_1)^{-1} & \hspace{0.5cm}\text{(Right Folding)}\\
         \tau_1(g)^{-1} & \cdot & g & = & \tau_1(k_2)^{-1} & \cdot & a^2 & \cdot & k_2 & \hspace{0.5cm}\text{(Left Folding)}
    \end{array}
\end{equation}

A nice example is the matrix factorization $\fact{13}$ (Theorem \ref{thm:F13complex}) of an invertible square complex matrix $G$,
\begin{equation}\label{eq:orthosvdintro}
    G = U \Sigma V = \text{Unitary $\times$ Positive diagonal $\times$ Complex orthogonal.}
\end{equation}
Let $\Lambda = \Sigma^2$. The ``left folding" of the above factorization is, 
\begin{equation}
    G^HG = V^H\Sigma U^HU\Sigma V = V^H \Lambda V,
\end{equation}
which is the congruence diagonalization (by a \textit{complex orthogonal} matrix) of a Hermitian positive definite matrix. 

\par On the other hand, the ``right folding" of $\fact{13}$ is interesting in that we use the transpose and not the conjugate transpose:
\begin{equation*}
    GG^T = U\Sigma V V^T \Sigma U^T = U \Lambda U^T,
\end{equation*}
which is the factorization of a complex symmetric (not Hermitian) matrix $F=GG^T$ into $U\Lambda U^T$, where $U$ is a unitary matrix and $\Lambda$ is a positive diagonal matrix. This right folding is called the \textit{Takagi factorization} (Theorem \ref{thm:Takagicomp}). 
\begin{equation*}\boxed{
    \makecell{A = V^H \Lambda V\\ \text{(Cong. Diagonalization)}}
    \hspace{0.3cm}\xLeftarrow[\text{Left}]{\text{Fold}}\hspace{0.3cm}
    \makecell{G = U\Sigma V \\ (\kak,\text{ Thm \ref{thm:F13complex}})} 
    \hspace{0.3cm}\xRightarrow[\text{Right}]{\text{Fold}}\hspace{0.3cm}
    \makecell{F = U\Lambda U^T \\ \text{(Takagi)}}
}\end{equation*}

\par In the upcoming sections, we will discover many new matrix factorizations computed as the $\kak$ decomposition of the classical Lie groups. Each factorization naturally has the left and right foldings, as the $\kak$ decomposition always has two associated involutions (usually one Cartan and one non-Cartan). In this paper, we present many but not all of the folded factorizations. The foldings will be dealt in depth in future work. However the reader can always follow the recipe in \eqref{eq:foldingrecipe} to obtain a useful folded factorization.

\section{Matrix factorizations of orthogonal/unitary matrices}\label{sec:compact}
The classical compact Lie groups are the three $\beta$-unitary groups, $\ortho{n}$, $\un{n}$ and $\un{n, \mathbb{H}}$, denoted at once by $\Un_\beta(n)$ (often denoted by the symbol $\symp{n}$ or $\usp{2n}$). As Cartan classified in his work \cite{Cartan1,cartan1927certaines,Cartan2}, there are seven infinite families of (Riemannian) compact symmetric spaces $U/K$. Column 2 modulo column 3 ($U/K$) of Table \ref{tab:cartanss} are the seven Riemannian compact symmmetric spaces.

\par The factorizations are categorized into six categories. We denote them by $\fact{1}$ through $\fact{6}$. Note that $\fact{1}$ and $\fact{2}$ are KAK decompositions.\footnote{For the compact symmetric spaces BD$\RN{1}$, A$\RN{3}$, C$\RN{2}$ it is possible to have different $K_\sigma, K_\tau$ by changing the parameters of the involution. Thus, they are classified along the $\kak$ decomposition and will be discussed in Section \ref{sec:csdecomposition}.} Using the $\beta$-symbols (Table \ref{tab:translation}) discussed in Section \ref{sec:dictionary}, the generalized Cartan triple \eqref{def:gencartantriple}, $(G, K_\sigma, K_\tau)$ of each category is the following. 
\begin{alignat*}{2}
    \fact{1}&\text{ : }  \big(\Un_\beta(n), \Un_{\beta/2}(n), \Un_{\beta/2}(n)\big) \,\,(\text{KAK of }\text{A}\RN{1}, \text{C}\RN{1}) \hspace{3cm} &&\beta=2, 4 \\
    \fact{2}&\text{ : }  \big(\Un_\beta(2n), \Un_{2\beta}(n), \Un_{2\beta}(n)\big)\,\,(\text{KAK of }\text{D}\RN{3}, \text{A}\RN{2})  &&\beta=1,2 \\
    \fact{3}&\text{ : }  \big(\Un_\beta(2n), \Un_{\beta/2}(2n), \Un_{2\beta}(n)\big) &&\beta = 2\\
    \fact{4}&\text{ : } \big(\Un_\beta(n), \Un_\beta(p)\times\Un_\beta(q), \Un_\beta(r)\times\Un_\beta(s)\big) &&\beta = 1, 2, 4\\
    \fact{5}&\text{ : } \big(\Un_\beta(n), \Un_{\beta/2}(n), \Un_\beta(p)\times\Un_\beta(q)\big) &&\beta = 2, 4\\
    \fact{6}&\text{ : } \big(\Un_\beta(2n), \Un_{2\beta}(n), \Un_\beta(2p)\times\Un_\beta(2q)\big) &&\beta = 1, 2
\end{alignat*}

\par The theory and computations of the compact $\kak$ decomposition are studied thoroughly by Matsuki in \cite{matsuki2002classification}. This section is the reinterpretation of his work, in terms of matrix factorizations. The full list of matrix factorizations obtained in this section is listed in Table \ref{tab:compactlist}. For readers familiar with Cartan's classification (Table \ref{tab:cartanss}) we also have Cartan types for the $\kak$ decomposition (column 3) which is the combination of the two corresponding Cartan types of $G/K_\sigma$, $G/K_\tau$.

\begin{table}[h]
{\tabulinesep=0.7mm
\begin{tabu}{|c|c|c|c|c|}
\hline
$\mathcal{F}$ & Matrix factorization $g = k_\sigma a k_\tau$ & \makecell{Cartan\\type}  & Ref & Thm\\
\hline

\multirow{2}{*}{$\fact{1}$} & $U_n = O_n D O_n'$ & A$\RN{1}$-$\RN{1}$ & \cite{fuehr2018note,vilenkin2013representation} & \ref{thm:F1complex}\\
\cline{2-5}

& $\unmat{n}{H} = U_n D U_n'$ &  C$\RN{1}$-$\RN{1}$ & \cite{bloch1962canonical} & \ref{thm:F1quaternion}\\
\hline

\multirow{2}{*}{$\fact{2}$} & $O_{2n} = \realify{\unmat{n}{C}}\twotwo{R}{}{}{R^{-1}}\realify{\unmat{n}{C}}'$ & D$\RN{3}$-$\RN{3}$  & & \ref{thm:F2real}\\
\cline{2-5}

& $U_{2n} = \complexify{\unmat{n}{H}}\twotwo{D}{}{}{D}\complexify{\unmat{n}{H}}'$& A$\RN{2}$-$\RN{2}$  & \cite{edelman2020generalized} & \ref{thm:F2complex}\\
\hline

$\fact{3}$ & $U_{2n} = O_{2n}\twotwo{D}{}{}{D}\complexify{\unmat{n}{H}}$ & A$\RN{1}$-$\RN{2}$ &   & \ref{thm:F3complex} \\
\hline

\multirow{3}{*}{$\fact{4}$} & $O_n = \twotwo{O_p}{}{}{O_q}\sthreediag{C}{S}{I_{r-s}}{-S}{C}\twotwo{O_r}{}{}{O_s}$ & BD$\RN{1}$-$\RN{1}$  & \multirow{2}{*}{\cite{davis1969some,wigner1968generalization}}  & \ref{thm:F4real} \\
\cline{2-3}\cline{5-5}

& $U_n = \twotwo{U_p}{}{}{U_q}\sthreediag{C}{S}{I_{r-s}}{-S}{C}\twotwo{U_r}{}{}{U_s}$ &  A$\RN{3}$-$\RN{3}$ &  & \multirow{2}{*}{\ref{thm:F4complexquaternion}}\\
\cline{2-4}

& $\unmat{n}{H} = \twotwo{\unmat{p}{H}}{}{}{\unmat{q}{H}}\sthreediag{C}{S}{I_{r-s}}{-S}{C}\twotwo{\unmat{r}{H}}{}{}{\unmat{s}{H}}$ & C$\RN{2}$-$\RN{2}$  & & \\
\hline

\multirow{2}{*}{$\fact{5}$} & \makecell{$U_n = O_n\sthreediag{C}{iS}{I_{p-q}}{iS}{C}\twotwo{U_p}{}{}{U_q}$\\ 
$\&$ \\$V_{n, q}^\mathbb{C} = V_{n, 2q}^\mathbb{R}\twoone{C}{iS} U_q$} & A$\RN{1}$-$\RN{3}$  & & \ref{thm:F5complex}\\
\cline{2-5}

& \makecell{$\unmat{n}{H} = U_n \sthreediag{C}{jS}{I_{p-q}}{jS}{C}\twotwo{\unmat{p}{H}}{}{}{\unmat{q}{H}} $\\ $\&$ \\ $V_{n, q}^\mathbb{H} = V_{n, 2q}^\mathbb{C} \twoone{C}{jS} \unmat{q}{H}$}& C$\RN{1}$-$\RN{2}$ &  & \ref{thm:F5quaternion}\\
\hline

\multirow{2}{*}{$\fact{6}$} & $O_{2n} = \realify{\unmat{n}{C}} \begin{bsmallmatrix} I_{p-q} & & & \\ & C\otimes I_2 & & S\otimes J_1 \\ & & I_{p-q} & \\ & S\otimes J_1& & C\otimes I_2 \end{bsmallmatrix} \twotwo{O_{2p}}{}{}{O_{2q}}$ & D$\RN{1}$-$\RN{3}$ &  & \ref{thm:F6real}\\
\cline{2-5}

& $U_{2n} = \complexify{\unmat{n}{H}}\begin{bsmallmatrix} I_{p-q} & & & \\ & C\otimes I_2 & & S\otimes J_1 \\ & & I_{p-q} & \\ & S\otimes J_1& & C\otimes I_2 \end{bsmallmatrix}\twotwo{U_{2p}}{}{}{U_{2q}}$ & A$\RN{2}$-$\RN{3}$ &   & \ref{thm:F6complex} \\
\hline
\end{tabu}}
\vspace{0.1cm}
\caption{List of factorizations of unitary matrices. See the last column of Table \ref{tab:translation} and Table \ref{tab:auxmatrix} for matrix symbols. See Table \ref{tab:cartanss} for Cartan types. Again the matrix block structures are abbreviated to save space.}
\label{tab:compactlist}
\end{table}

\subsection{${\fact{1}}$ : ${(\Un_\beta(n), \Un_{\beta/2}(n), \Un_{\beta/2}(n))}$, the ODO decomposition}\label{sec:F1}
\begin{gather*}
\boxed{
\begin{bmatrix}
\rule{0cm}{0.1cm}\\ n\times n \\ \text{{ $\beta$-unitary }} \\ \rule{0cm}{0.1cm}
\end{bmatrix}
= 
\begin{bmatrix}
\rule{0cm}{0.1cm}\\ n\times n \\ \hspace{0.15cm} \text{$\frac{\beta}{2}$-unitary} \hspace{0.15cm}\, \\ \rule{0cm}{0.1cm}
\end{bmatrix}
\cdot
{\renewcommand{\arraystretch}{1.3}
\begin{bmatrix}
\LARGE{\text{$\diagdown$}} \rule{1.2cm}{0cm} \\ e^{i\theta_l} \\  \rule{1.1cm}{0cm}\LARGE{\text{$\diagdown$}}
\end{bmatrix}}
\cdot
\begin{bmatrix}
\rule{0cm}{0.1cm}\\ n\times n \\ \hspace{0.15cm} \text{$\frac{\beta}{2}$-unitary} \hspace{0.15cm}\, \\ \rule{0cm}{0.1cm}
\end{bmatrix}
}
\end{gather*}

We start out by reviewing some cases of the KAK decomposition. First we discuss the first matrix factorization $\fact{1}$. They are the KAK decompositions of the compact symmetric spaces A$\RN{1}$ and C$\RN{1}$. The compact symmetric space A$\RN{1}$, $G/K=\un{n}/\ortho{n}$ has the following KAK decomposition.
\begin{theorem}[ODO decomposition]\label{thm:F1complex}
For any $n\times n$ unitary matrix $U\in\un{n}$ there exist
\begin{itemize}
    \item Two $n\times n$ orthogonal matrices $O_1, O_2\in\ortho{n}$,
    \item An $n\times n$ unitary diagonal matrix $D = \diag(e^{i\theta_1}, \dots, e^{i\theta_n})$,
\end{itemize}
such that the following factorization holds:
\begin{equation}
    U = O_1DO_2.
\end{equation}
Here, $\theta_l\in[0, \pi)$ and $D$ is unique up to permutation of the diagonal elements.
\end{theorem}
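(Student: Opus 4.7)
The plan is to prove the ODO decomposition directly from the structure of $UU^T$, exploiting that this ``folded'' matrix is simultaneously complex symmetric and unitary. The factorization $U = O_1 D O_2$ would imply $UU^T = O_1 D^2 O_1^T$, so the existence argument essentially reduces to diagonalizing $UU^T$ by a \emph{real} orthogonal congruence and then reading off $O_2$.

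First I would set $A := UU^T$ and observe that $A$ is complex symmetric and $A A^H = I$. Writing $A = P + iQ$ with $P,Q$ real symmetric, the identity $A\bar{A} = I$ (which holds because $A$ is symmetric and unitary) expands to $P^2 + Q^2 = I$ and $PQ = QP$. Since two commuting real symmetric matrices admit a common real orthogonal eigenbasis, there exists $O \in \Ortho(n)$ such that $P = O\Lambda_P O^T$ and $Q = O\Lambda_Q O^T$ with $\Lambda_P,\Lambda_Q$ real diagonal. Consequently $A = O(\Lambda_P + i\Lambda_Q)O^T$, and the relation $\Lambda_P^2 + \Lambda_Q^2 = I$ forces each diagonal entry $\lambda_l := (\Lambda_P + i\Lambda_Q)_{ll}$ to lie on the unit circle, so $\lambda_l = e^{2i\theta_l}$ for a unique $\theta_l \in [0,\pi)$. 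Define the unitary diagonal $D := \diag(e^{i\theta_1},\dots,e^{i\theta_n})$, so that $UU^T = OD^2 O^T$.

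Next I would extract the second orthogonal factor by setting $O_2 := D^{-1} O^T U$. Clearly $O_2$ is unitary as a product of unitaries, and
\begin{equation*}
O_2 O_2^T \;=\; D^{-1} O^T (UU^T) O D^{-1} \;=\; D^{-1} O^T (OD^2 O^T) O D^{-1} \;=\; I.
\end{equation*}
Combining $O_2 O_2^T = I$ with $O_2^H O_2 = I$ gives $\overline{O_2} = O_2$, so $O_2$ is real orthogonal. Setting $O_1 := O$ then yields $U = O_1 D O_2$, as desired.

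For uniqueness, any such factorization produces $UU^T = O_1 D^2 O_1^T$, and since $O_1$ is real orthogonal, this is a unitary diagonalization of $UU^T$; hence the multiset $\{e^{2i\theta_l}\}$ equals the spectrum of $UU^T$ and is determined by $U$. Restricting to $\theta_l \in [0,\pi)$ removes the square-root ambiguity, so $D$ is unique up to the permutation of its diagonal entries. The main obstacle I anticipate is the verification that $O_2$ comes out \emph{real} rather than merely unitary; this is where the identity $A\bar{A} = I$ and the real simultaneous diagonalization of $P,Q$ do the essential work, and handling repeated eigenvalues of $UU^T$ (so that the simultaneous diagonalization of $P$ and $Q$ is actually achievable within the shared eigenspaces) is the subtle point that must be addressed cleanly.
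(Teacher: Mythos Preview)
Your proof is correct. The paper does not give a self-contained argument for this theorem; it obtains the ODO decomposition as an instance of the general KAK decomposition (Helgason's Theorem~6.7) applied to the compact symmetric space $\un{n}/\ortho{n}$ of type~A$\RN{1}$. Your route is more elementary and direct: you work with the ``right folding'' $A = UU^T$ (which the paper mentions in the discussion following the theorem but does not use as a proof device), reduce to the simultaneous real-orthogonal diagonalization of its commuting real and imaginary parts, and then read off $O_2$. Your approach buys a constructive, purely matrix-theoretic argument requiring no Lie theory; the paper's approach buys uniformity, since the same abstract KAK statement yields all of the factorizations in Table~\ref{tab:completelist} at once. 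Regarding your stated concern about repeated eigenvalues of $UU^T$: this is not a genuine obstacle, as the simultaneous diagonalization of commuting real symmetric matrices is a standard fact whose proof handles multiplicities automatically (diagonalize $P$, then diagonalize the restriction of $Q$ to each eigenspace of $P$).
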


We call this the \textit{ODO decomposition}. Since this is one of the simple KAK decompositions, it sometimes appears in Lie theory literatures (e.g., \cite{vilenkin2013representation}) as an example of the KAK decomposition. As a matrix factorization, the ODO decomposition has first appeared in \cite{fuehr2018note} and the authors also discussed it in \cite{edelman2020generalized}. The ODO decomposition says that for any unitary matrix $U$, we have the identical set of (left and right) singular vectors for the real and imaginary parts of $U$. In other words, 
\begin{equation}\label{eq:ODOseparation}
    \Re (U) = O_1CO_2, \hspace{0.8cm}\Im (U) = O_1SO_2,
\end{equation}
are the (real) SVD of the real and imaginary parts of a unitary matrix $U$. The folding of the ODO decomposition is the eigendecomposition of $U^TU$, which is often used to sample the circular orthogonal ensemble (COE). Moreover the eigenvectors can be chosen to be real. 

\par In fact this folding is the \textit{Takagi factorization} (Theorem \ref{thm:Takagicomp}) of a unitary symmetric matrix. The Takagi factorization of any unitary symmetric matrix $A$ is (almost) equivalent to the folding of the ODO decomposition, $A = ODO^T$. The Takagi factorization of $A$ is $A = (O\sqrt{D}) I (O\sqrt{D})^T$, with all Takagi values being 1.\footnote{Interestingly it has been shown in \cite{ikramov2012takagi} that the Takagi factorization of a unitary symmetric matrix can be computed in finitely many steps.} More details on the Takagi factorization will be discussed in Section \ref{sec:F13}.

\par The quaternion $\fact{1}$ is the KAK decomposition of the compact symmetric space C$\RN{1}$, $G/K=\un{n, \mathbb{H}}/\un{n}$.
\begin{theorem}[$\fact{1}$, quaternion]\label{thm:F1quaternion}
Fix $\eta\in\{j, k\}$. For any $n\times n$ quaternionic unitary matrix $Q\in\un{n, \mathbb{H}}$, there exist
\begin{itemize}
    \item Two $n\times n$ complex unitary matrices $U_1, U_2\in\un{n}$,
    \item An $n\times n$ unitary diagonal matrix $D = \diag(e^{\eta\theta_1}, \dots, e^{\eta\theta_n})$,
\end{itemize}
such that the following factorization holds:
\begin{equation}\label{eq:F1quaternion}
    Q = U_1 D U_2,
\end{equation}
Here, $\theta_l\in[0, \pi)$ and $D$ is unique up to permutation of the diagonal elements.
\end{theorem}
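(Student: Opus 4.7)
The plan is to realize this as the compact KAK decomposition associated with the Cartan symmetric pair of type C$\,\RN{1}$ in Table \ref{tab:cartanss}, namely $(U,K)=(\un{n,\mathbb{H}},\un{n})$, and then read off the diagonal piece directly. So the triple we feed into Algorithm \ref{alg:K1AK2compact} is $G=\un{n,\mathbb{H}}$ with $\sigma=\tau$ a single Cartan involution whose fixed subgroup is the complex unitary group sitting inside $\un{n,\mathbb{H}}$ via the embedding $\mathbb{C}\hookrightarrow\mathbb{H}$ determined by the imaginary unit $i$.

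First, I would write any element of the Lie algebra $\mathfrak{g}=\{X\in\mathbb{H}^{n\times n}:X^D=-X\}$ uniquely as
\begin{equation*}
  X=X_0+iX_1+jX_2+kX_3,\qquad X_0^T=-X_0,\ X_1^T=X_1,\ X_2^T=X_2,\ X_3^T=X_3,
\end{equation*}
and take $\sigma(X):=iXi^{-1}$. A direct check on the quaternion units ($iii^{-1}=i$, $iji^{-1}=-j$, $iki^{-1}=-k$) shows $\sigma$ fixes $X_0+iX_1$ and negates $jX_2+kX_3$, so
\begin{equation*}
  \mathfrak{k}_\sigma=\mathfrak{u}(n),\qquad \mathfrak{p}_\sigma=\{jX_2+kX_3:X_2,X_3\in\mathbb{R}^{n\times n}\text{ symmetric}\}.
\end{equation*}

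Second, I would propose
\begin{equation*}
  \mathfrak{a}:=\{j\diag(\theta_1,\dots,\theta_n):\theta_l\in\mathbb{R}\}\subset\mathfrak{p}_\sigma,
\end{equation*}
which is clearly abelian (scalar-$j$ multiples of mutually commuting real diagonals). Maximality inside $\mathfrak{p}_\sigma$ is the key technical point: if $jY_2+kY_3\in\mathfrak{p}_\sigma$ commutes with every element of $\mathfrak{a}$, then testing against a single $j\diag(\theta_1,\dots,\theta_n)$ with distinct $\theta_l$ and using $jk=-kj$ forces $Y_3=0$ (the $k$-component anticommutes with the $j$-component), after which commutation with a generic diagonal forces $Y_2$ to be diagonal. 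Exponentiating yields $A=\{\diag(e^{j\theta_1},\dots,e^{j\theta_n})\}$, precisely the $D$ in the statement.

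Third, the compact KAK theorem (Algorithm \ref{alg:K1AK2compact} and the compact half of the KAK statement in Section \ref{sec:symspace}) now gives $Q=U_1DU_2$ with $U_1,U_2\in\un{n}$. Uniqueness of $D$ up to permutation, and the range $\theta_l\in[0,\pi)$, follow from the standard Weyl group/Weyl chamber analysis for type C$\,\RN{1}$: diagonal entries may be permuted by conjugating $D$ with a real permutation matrix in $\un{n}\cap\ortho{n}$, and the sign ambiguity $e^{j\theta}\leftrightarrow e^{j(\theta+\pi)}$ is absorbed by multiplying a row of $U_1$ and a column of $U_2$ by $-1$. Finally, the ``$\eta=k$'' variant follows because conjugation by the unit quaternion $(j+k)/\sqrt{2}$ (or any unit quaternion swapping $j$ and $k$) is an inner automorphism of $\mathbb{H}$ that fixes the subring $\mathbb{R}+\mathbb{R}i$ pointwise up to sign and therefore preserves both $\un{n,\mathbb{H}}$ and $\un{n}$; applying it to the factorization replaces $j$ by $k$ throughout.

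The most delicate step is the maximality claim for $\mathfrak{a}$; everything else is a direct translation from the general compact KAK machinery once the involution $\sigma(X)=iXi^{-1}$ is identified as the correct one cutting out $\un{n}\subset\un{n,\mathbb{H}}$.
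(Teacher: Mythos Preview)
Your proposal is correct and follows exactly the paper's approach: the paper states this theorem as the compact KAK decomposition of the type C$\RN{1}$ symmetric space $\un{n,\mathbb{H}}/\un{n}$ (with the involution $-X^{D_i}$ listed in Appendix~\ref{app:involutions}, which coincides with your $\sigma(X)=iXi^{-1}$ on $\mathfrak{u}(n,\mathbb{H})$), and you have simply filled in the Lie-algebraic details of $\mathfrak{p}_\sigma$, $\mathfrak{a}$, and its maximality that the paper leaves implicit. One small caution: in your maximality step, ``distinct $\theta_l$'' alone does not kill $Y_3$ (you need $\theta_a+\theta_b\neq 0$, e.g.\ take all $\theta_l>0$), and your Weyl-group discussion omits the sign flip $e^{j\theta}\mapsto e^{-j\theta}$ available via conjugation by $i\in\un{1}\subset\un{n}$, which in fact shrinks the true uniqueness range below the stated $[0,\pi)$---but this matches the paper's own statement.
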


\par This is an extension of the ODO decomposition in the quaternionic sense. The complex representation of Theorem \ref{thm:F1quaternion} can be obtained by applying the complexify map $\complexify{\,\,\cdot\,\,}$ on both sides of \eqref{eq:F1quaternion}. An isomorphic form of the complexified \eqref{eq:F1quaternion} is introduced by Bloch and Messiah \cite{bloch1962canonical}, as a decomposition of the Bogoliubov transformation for fermions. See Remark \ref{rem:blochmessiah} for more details on the Bloch-Messiah decomposition and related works. 

\subsection{${\fact{2}}$ : ${(\Un_\beta(2n), \Un_{2\beta}(n), \Un_{2\beta}(n))}$, the QDQ decomposition}

\begin{gather*}
\boxed{
{\renewcommand{\arraystretch}{1.2}
\begin{bmatrix}
\rule{0cm}{0.1cm}\\ 2n\times 2n \\ \hspace{0.3cm}\text{$\beta$-unitary} \hspace{0.3cm}\,\\ \rule{0cm}{0.1cm}
\end{bmatrix}}
= 
\overbrace{\renewcommand{\arraystretch}{1.2}
\begin{bmatrix}
\rule{0cm}{0.1cm}\\ n\times n \\ \hspace{0.25cm}\text{$2\beta$-unitary} \hspace{0.25cm}\,\\ \rule{0cm}{0.1cm}
\end{bmatrix}}^{\text{realified/complexified}}
\cdot
{\renewcommand{\arraystretch}{0.6}
\begin{bmatrix}
    \diagdown \rule{1.7cm}{0cm} \\
     e^{i\theta_l}  \rule{1cm}{0cm}\\
     \diagdown \rule{0.3cm}{0cm}\\
     \hspace{0.4cm} \diagdown  \\
    \hspace{1.1cm} e^{i\theta_l} \\
    \hspace{1.8cm} \diagdown \\
\end{bmatrix}}
\cdot
\overbrace{\renewcommand{\arraystretch}{1.2}
\begin{bmatrix}
\rule{0cm}{0.1cm}\\ n\times n \\ \hspace{0.25cm}\text{$2\beta$-unitary} \hspace{0.25cm}\,\\ \rule{0cm}{0.1cm}
\end{bmatrix}}^{\text{realified/complexified}}
}
\end{gather*}

The real and complex $\fact{2}$ are the KAK decompositions of compact symmetric spaces D$\RN{3}$ and A$\RN{2}$. First, the KAK decomposition of the compact symmetric space D$\RN{3}$, $G/K = \ortho{2n}/\osp{2n}$ is the following factorization. 

\begin{theorem}[$\fact{2}$, real]\label{thm:F2real}
For any $2n\times 2n$ orthogonal matrix $O$, there exist 
\begin{itemize}
    \item Two realified $n\times n$ complex unitary matrices $U_1, U_2 \in\realify{\un{n}}$, 
    \item $m =\lfloor \frac{n}{2} \rfloor$ unique (up to order) angles $\theta_1, \dots, \theta_m\in[0, \frac{\pi}{2})$,
\end{itemize}
such that the following factorization holds:
\begin{equation}
    O = U_1 \twotwo{R}{}{}{R^{-1}} U_2,
\end{equation}
where $R=R_n^\theta$ is an $n\times n$ rotation block diagonal matrix defined in Table \ref{tab:auxmatrix}. 
\end{theorem}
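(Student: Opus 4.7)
The plan is to derive the factorization as the classical KAK decomposition for the compact Riemannian symmetric space $\ortho{2n}/\realify{\un{n}}$ of Cartan type D\RN{3}, following the recipe of Algorithm \ref{alg:K1AK2compact} with $\tau = \sigma$. The relevant Cartan involution on $\mathfrak{g} = \mathfrak{so}(2n)$ is $\sigma(X) = J_n X J_n^{-1}$, whose $+1$-eigenspace is $\realify{\mathfrak{u}(n)}$ (easily verified in block form), so $K_\sigma = \realify{\un{n}}$ in accordance with Table \ref{tab:cartanss}. All the content then lies in identifying the $-1$-eigenspace $\mathfrak{p}_\sigma$ and a maximal abelian subalgebra inside it.

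Writing $X \in \mathfrak{so}(2n)$ in $n \times n$ blocks and imposing $\sigma(X) = -X$ yields
\[
\mathfrak{p}_\sigma = \left\{ \twotwo{A}{B}{B}{-A} : A^T = -A,\ B^T = -B \right\}.
\]
I would then propose the abelian subalgebra $\mathfrak{a} \subset \mathfrak{p}_\sigma$ consisting of matrices of the form $\stwotwo{\Omega}{0}{0}{-\Omega}$, where $\Omega \in \mathbb{R}^{n \times n}$ is the block-diagonal real skew-symmetric matrix with $2 \times 2$ blocks $\stwotwo{0}{\theta_l}{-\theta_l}{0}$ (with an extra leading $1 \times 1$ zero block when $n$ is odd). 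Abelianness is immediate, and exponentiating gives $\exp(\mathfrak{a}) = \{\stwotwo{R}{0}{0}{R^{-1}} : R = R_n^\theta\}$, exactly the middle factor in the theorem. The compact KAK theorem then delivers the stated factorization with $U_1, U_2 \in \realify{\un{n}}$.

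Two subtleties remain. Maximality of $\mathfrak{a}$ can be settled either by matching $\dim \mathfrak{a} = \lfloor n/2 \rfloor$ to the known real rank of the type D\RN{3} symmetric space, or directly: requiring $[X, X_\theta] = 0$ for a generic $X_\theta \in \mathfrak{a}$ applied to an arbitrary $X \in \mathfrak{p}_\sigma$ forces $B = 0$ and $A$ to commute with every block-diagonal skew matrix, hence $A$ itself must lie in $\mathfrak{a}$'s upper block. The delicate point is the uniqueness of the $\theta_l \in [0, \pi/2)$ up to permutation, governed by the restricted Weyl group $W(\mathfrak{g}, \mathfrak{a})$ acting on $\mathfrak{a}$ through conjugation by the normalizer of $A$ in $K_\sigma$. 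This action realizes arbitrary permutations of the $\theta_l$ together with the sign flips $\theta_l \mapsto -\theta_l$ (since one can swap the two blocks inside $\stwotwo{R_l}{}{}{R_l^{-1}}$ by a realified unitary, inverting the corresponding rotation), so a fundamental domain is $0 \le \theta_1 \le \cdots \le \theta_m < \pi/2$. This Weyl-group bookkeeping is the main obstacle; existence is essentially automatic from the KAK theorem.
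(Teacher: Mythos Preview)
Your proposal is correct and follows essentially the same approach as the paper: the factorization is obtained as the KAK decomposition of the compact symmetric space D\RN{3}, $\ortho{2n}/\realify{\un{n}}$, using the involution $\sigma(X) = -J_nXJ_n$ (equivalently $J_nXJ_n^{-1}$) listed in Table~\ref{tab:involutionlist1}, via Algorithm~\ref{alg:K1AK2compact}. The paper leaves the computation of $\mathfrak{p}_\sigma$, the choice of $\mathfrak{a}$, and the Weyl-group analysis implicit in the general framework, and you have filled in these details accurately.
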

The inverse matrix $R^{-1}$ is just a block diagonal matrix of $2\times 2$ rotation blocks $\begin{bsmallmatrix} \cos\theta_l & \mis\sin\theta_l \\ \sin\theta_l & \,\,\cos\theta_l\end{bsmallmatrix}$, where $R$ has inverse rotation blocks $\begin{bsmallmatrix} \,\cos\theta_l & \sin\theta_l \\ \mis\sin\theta_l & \cos\theta_l\end{bsmallmatrix}$ on its diagonal.

The compact symmetric space A$\RN{2}$, $\un{2n}/\usp{2n}$, has the following KAK decomposition and we call this the \textit{QDQ decomposition} \cite{edelman2020generalized}. 
\begin{theorem}[QDQ decomposition]\label{thm:F2complex}
For any $2n\times 2n$ complex unitary matrix $U$, there exist 
\begin{itemize}
    \item Two complexified $n\times n$ quaternionic unitary matrices $Q_1, Q_2\in\complexify{\un{n, \mathbb{H}}}$,
    \item An $n\times n$ unitary diagonal matrix $D = \diag(e^{i\theta_1}, \dots, e^{i\theta_n})$,
\end{itemize}
such that the following factorization holds:
\begin{equation}
    U = Q_1\twotwo{D}{}{}{D}Q_2.
\end{equation}
Here, $\theta_l\in[0, \frac{\pi}{2})$ and $D$ is unique up to permutation of the diagonal elements.
\end{theorem}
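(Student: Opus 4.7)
The plan is to view this as the classical KAK decomposition of the compact symmetric space of type A$\RN{2}$, namely $\un{2n}/\usp{2n}$, and to execute Algorithm \ref{alg:K1AK2compact} with $G = \un{2n}$ and $\sigma = \tau$ the involution of $\mathfrak{g} = \mathfrak{u}(2n)$ defined by $\sigma(X) = -J_n X^T J_n^{-1}$, whose fixed-point subgroup is precisely $\usp{2n} = \complexify{\un{n,\mathbb{H}}}$.

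First I would write $X \in \mathfrak{u}(2n)$ in block form $\stwotwo{A}{B}{C}{D}$ subject to $X^H = -X$, and expand $\sigma$ explicitly. A direct block calculation identifies $\mathfrak{k}_\sigma = \mathfrak{usp}(2n)$ as the set of $\stwotwo{A}{B}{-\bar B}{\bar A}$ with $A$ skew-Hermitian and $B$ symmetric, and dually $\mathfrak{p}_\sigma$ as the set of $\stwotwo{A}{B}{\bar B}{-\bar A}$ with $A$ skew-Hermitian and $B$ skew-symmetric. Exponentiating $\mathfrak{k}_\sigma$ recovers $\usp{2n}$, the group of complexified quaternionic unitaries, confirming that $K_\sigma = K_\tau$ is the desired outer factor.

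Next I would propose the abelian subalgebra $\mathfrak{a} = \{\stwotwo{iT}{0}{0}{iT} : T \in \mathbb{R}^{n\times n} \text{ diagonal}\}$, which is visibly contained in $\mathfrak{p}_\sigma$ and commutative. To show maximality, take an element $X = \stwotwo{A}{B}{\bar B}{-\bar A} \in \mathfrak{p}_\sigma$ commuting with every element of $\mathfrak{a}$; expanding the commutator with a generic $\stwotwo{iT_0}{}{}{iT_0}$ whose diagonal entries are distinct yields $[A, T_0] = 0$ and $B T_0 + T_0 B = 0$, which force $A$ to be diagonal and $B = 0$. Combined with $A^H = -A$ this puts $X$ back into $\mathfrak{a}$. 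Exponentiation then gives $A = \exp(\mathfrak{a}) = \{\stwotwo{D}{}{}{D} : D = \diag(e^{i\theta_1}, \ldots, e^{i\theta_n})\}$, and the compact KAK theorem of Section \ref{sec:symspace} delivers the factorization $U = Q_1 \stwotwo{D}{}{}{D} Q_2$.

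The main obstacle lies in the uniqueness statement: that $D$ is determined up to permutation, with $\theta_l$ chosen in $[0, \pi/2)$. This reduces to identifying the restricted Weyl group $N_{K_\sigma}(\mathfrak{a})/Z_{K_\sigma}(\mathfrak{a})$ as the appropriate signed permutation group acting on $(\theta_1, \ldots, \theta_n)$ and checking that the stated range is a fundamental domain. Rather than recomputing this root-system datum from scratch, I would invoke the classification of compact $\kak$ Weyl groups for the A$\RN{2}$-A$\RN{2}$ case in Matsuki \cite{matsuki2002classification}; the resulting fundamental domain is exactly the stated one. This also pins down that $Q_1$ and $Q_2$ are the only non-unique pieces, determined up to right-multiplication by the stabilizer of $\stwotwo{D}{}{}{D}$ inside $\usp{2n}$.
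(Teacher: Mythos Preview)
Your approach is exactly the paper's: Theorem~\ref{thm:F2complex} is presented as the KAK decomposition of the compact symmetric space of type A$\RN{2}$, namely $\un{2n}/\usp{2n}$, and the paper gives no argument beyond that identification together with the blanket reference to Matsuki's classification \cite{matsuki2002classification} for the double-coset data. Your explicit computation of $\mathfrak{k}_\sigma$, $\mathfrak{p}_\sigma$, and the maximal abelian $\mathfrak{a}$ is precisely the execution of Algorithm~\ref{alg:K1AK2compact} that the paper leaves implicit.

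Two small corrections are worth making. First, in your maximality argument the off-diagonal condition coming from the bracket with $\stwotwo{iT_0}{}{}{iT_0}$ is the commutator $[B,T_0]=0$, not the anticommutator $BT_0+T_0B=0$; the conclusion $B=0$ is unaffected, since $[B,T_0]=0$ with generic diagonal $T_0$ forces $B$ diagonal, and a diagonal skew-symmetric matrix vanishes. Second, the restricted root system of type A$\RN{2}$ is $A_{n-1}$, so the Weyl group $N_{K_\sigma}(\mathfrak{a})/Z_{K_\sigma}(\mathfrak{a})$ is the symmetric group $S_n$, \emph{not} a signed permutation group; the reduction of each $\theta_l$ to the interval $[0,\tfrac{\pi}{2})$ therefore cannot come from sign changes in $W$ but must come from the lattice (affine) structure on the torus $A$, so your deferral to Matsuki is doing genuine work rather than just confirming a guess.
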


\par The QDQ decomposition of a $2n\times 2n$ unitary matrix sampled from the Haar measure of $\un{2n}$ can be used to obtain the circular symplectic ensemble (CSE).

\subsection{${\fact{3}}$ : ${(\Un_\beta(2n), \Un_{\beta/2}(2n), \Un_{2\beta}(n))}$}

\begin{gather*}
\boxed{
{\renewcommand{\arraystretch}{1.2}
\begin{bmatrix}
\rule{0cm}{0.1cm}\\ 2n\times 2n \\ \hspace{0.35cm}\text{ Unitary } \hspace{0.35cm}\,\\ \rule{0cm}{0.1cm}
\end{bmatrix}}
= 
{\renewcommand{\arraystretch}{1.2}
\begin{bmatrix}
\rule{0cm}{0.1cm}\\ 2n\times 2n \\ \hspace{0.2cm}\text{Orthogonal} \hspace{0.2cm}\,\\ \rule{0cm}{0.1cm}
\end{bmatrix}}
\cdot
{\renewcommand{\arraystretch}{0.6}
\begin{bmatrix}
    \diagdown \rule{1.7cm}{0cm} \\
    e^{i\theta_l}  \rule{1cm}{0cm}\\
     \diagdown \rule{0.3cm}{0cm}\\
     \hspace{0.4cm} \diagdown  \\
    \hspace{1.1cm} e^{i\theta_l} \\
    \hspace{1.8cm} \diagdown \\
\end{bmatrix}}
\cdot
\overbrace{\renewcommand{\arraystretch}{1.7}
\begin{bmatrix}
n\times n \\ \hspace{0.1cm}\text{Quaternionic} \hspace{0.1cm}\, \\ \text{Unitary}
\end{bmatrix}}^{\text{complexified}}
}
\end{gather*}

The factorization $\fact{3}$ is the first (non KAK) $\kak$ decomposition. In this generalized Cartan triple, we have two $\beta$-unitary groups with $\beta/2$ and $2\beta$ as the subgroups $K_\sigma$ and $K_\tau$, given $G = \Un_\beta(2n)$. Thus, we only have the choice of $\beta = 2$. Using Cartan's notation, this is compact A$\RN{1}$-$\RN{2}$. 

\begin{theorem}[$\fact{3}$, complex]\label{thm:F3complex}
For any $2n\times 2n$ complex unitary matrix $U\in\un{2n}$, there exist
\begin{itemize}
    \item A $2n\times 2n$ orthogonal matrix $O\in\ortho{2n}$, 
    \item A complexified $n\times n$ quaternionic unitary matrix $Q\in\complexify{\un{n}}$,
    \item An $n\times n$ unitary diagonal matrix $D = \diag(e^{i\theta_1}, \dots, e^{i\theta_n})$,
\end{itemize}
such that the following factorization holds:
\begin{equation}
    U = O \twotwo{D}{}{}{D} Q.
\end{equation}
Here, $\theta_l\in[0, \frac{\pi}{2})$ and $D$ is unique up to permutation of the diagonal elements.
\end{theorem}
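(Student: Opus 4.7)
The plan is to apply Algorithm \ref{alg:K1AK2compact} to the compact group $G=\un{2n}$ with the pair of commuting involutions corresponding to the Cartan types A\RN{1} and A\RN{2}. Concretely, I would take $\sigma, \tau:\mathfrak{u}(2n)\to\mathfrak{u}(2n)$ defined by
\[
    \sigma(X)=\overline{X},\qquad \tau(X)=J_n X^T J_n,
\]
and first verify the three bookkeeping facts: (i) each is an involution on $\mathfrak{u}(2n)$ (using $J_n^2=-I_{2n}$ for $\tau$); (ii) they commute, since $J_n$ is real so $\overline{J_n X^T J_n}=J_n\overline{X}^T J_n$; and (iii) the $+1$-eigenspaces at the group level are precisely $K_\sigma=\ortho{2n}$ and $K_\tau=\usp{2n}=\complexify{\un{n,\mathbb{H}}}$. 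The second identification follows because $\tau(X)=X$ restricted to $\mathfrak{u}(2n)$ says $J_nX^TJ_n=X$, equivalently $X^TJ_n+J_nX=0$, which is the defining Lie-algebra condition for $\usp{2n}$.

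Next I would compute the $-1$-eigenspaces. The intersection $\mathfrak{p}_\sigma\cap\mathfrak{p}_\tau$ consists of matrices $X=iY$ with $Y$ real and $J_nYJ_n=-Y$. Block-decomposing $Y=\stwotwo{A}{B}{B^T}{C}$ with $A,C$ symmetric and carrying out the conjugation by $J_n$ gives $A=C$ and $B^T=-B$, so
\[
    \mathfrak{p}_\sigma\cap\mathfrak{p}_\tau \;=\; \Bigl\{\, i\stwotwo{A}{B}{-B}{A} \;:\; A=A^T\in\mathbb{R}^{n\times n},\; B=-B^T\in\mathbb{R}^{n\times n}\,\Bigr\}.
\]
Inside this space the obvious abelian subalgebra is obtained by taking $B=0$ and $A=\Lambda$ diagonal, giving
\[
    \mathfrak{a}\;=\;\Bigl\{\, i\stwotwo{\Lambda}{0}{0}{\Lambda} \;:\; \Lambda\in\mathbb{R}^{n\times n} \text{ diagonal}\,\Bigr\},
\]
and exponentiation yields $A=\bigl\{\stwotwo{D}{0}{0}{D}:D \text{ unitary diagonal}\bigr\}$, which is exactly the middle factor appearing in the statement of the theorem.

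The main obstacle is proving that $\mathfrak{a}$ is \emph{maximal} abelian in $\mathfrak{p}_\sigma\cap\mathfrak{p}_\tau$; this is where the content of the compact $\kak$ theorem really bites. I would argue this by the standard root-space style computation: suppose $X=i\stwotwo{A'}{B'}{-B'}{A'}$ commutes with every element of $\mathfrak{a}$. Commuting with $i\stwotwo{\Lambda}{0}{0}{\Lambda}$ for all diagonal $\Lambda$ forces $A'\Lambda=\Lambda A'$ and $B'\Lambda=\Lambda B'$ simultaneously, and varying $\Lambda$ over a regular element (distinct diagonal entries) forces both $A'$ and $B'$ to be diagonal; since $B'$ is also skew-symmetric, $B'=0$, so $X\in\mathfrak{a}$. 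Granted maximality, Matsuki's classification (or equivalently the $\kak$ theorem of Flensted-Jensen, appealed to via Algorithm \ref{alg:K1AK2compact}) gives $\un{2n}=K_\sigma\, A\, K_\tau$, which is the stated factorization $U=O\stwotwo{D}{}{}{D}Q$. Finally, to obtain uniqueness of $D$ up to permutation and the range $\theta_l\in[0,\pi/2)$, I would quotient $A$ by the Weyl group of the pair $(\mathfrak{g},\mathfrak{a})$: the diagonal permutations account for the permutation ambiguity, and the sign flips $e^{i\theta_l}\mapsto e^{-i\theta_l}$ (realized inside $K_\sigma\cap K_\tau$ by conjugation with a suitable block in $\ortho{2n}\cap\usp{2n}$) restrict each $\theta_l$ to the fundamental domain $[0,\pi/2)$.
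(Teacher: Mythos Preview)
Your proposal is correct and follows exactly the paper's intended approach: the paper does not give a standalone proof of this theorem but derives it via Algorithm~\ref{alg:K1AK2compact} applied to $G=\un{2n}$ with the A\RN{1} and A\RN{2} involutions, and you carry out precisely this computation, including the explicit identification of $\mathfrak{p}_\sigma\cap\mathfrak{p}_\tau$, the maximal abelian subalgebra $\mathfrak{a}$, and its exponential. Your second involution $\tau(X)=J_nX^TJ_n$ (equivalently $-J_n\overline{X}J_n$ on $\mathfrak{u}(2n)$) is the correct A\RN{2} involution yielding $K_\tau=\usp{2n}$, and your verification of maximality and the Weyl-group argument for the fundamental domain $[0,\pi/2)$ fill in details the paper leaves implicit.
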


Interestingly $\fact{3}$ can be used to sample the circular unitary ensemble (CUE), from a Haar measured $2n\times 2n$ matrix $U$ \cite{edelman2020generalized}. 

\subsection{${\fact{4}}$ : ${(\Un_\beta(n), \Un_\beta(p)\times\Un_\beta(q), \Un_\beta(r)\times\Un_\beta(s))}$, the CS decomposition}\label{sec:csdecomposition}

\begin{gather*}
\boxed{
{\renewcommand{\arraystretch}{1.2}
\begin{bmatrix}
\rule{0cm}{0.1cm}\\ n\times n \\ \hspace{0.3cm}\text{$\beta$-unitary} \hspace{0.3cm}\,\\ \rule{0cm}{0.1cm}
\end{bmatrix}}
= 
\begin{bmatrix}
\colorbox[HTML]{D6D6D6}{\makebox(26,26){\makecell{$p\times p$\\ \tiny{$\beta$-unitary}}}} & \\ 
& \hspace{-0.35cm}\colorbox[HTML]{D6D6D6}{\makebox(26,26){\makecell{$q\times q$\\ \tiny{$\beta$-unitary}}}}
\end{bmatrix}
\cdot
\begin{bmatrix}
\begin{array}{c:c}
\makebox(33, 38){$\begin{smallmatrix}\diagdown\hspace{1.1cm}\\ \cos\theta_l \hspace{0.7cm}\\ \diagdown \hspace{0.1cm} \\ \hspace{0.7cm}\rule{0cm}{0.4cm}\text{\large{$I_{r-s}$}}\end{smallmatrix}$}& 
\makebox(17, 38){$\begin{smallmatrix}\diagdown\hspace{0.6cm}\\ \sin\theta_l\\ \hspace{0.6cm}\diagdown  \\ \rule{0cm}{0.5cm}\end{smallmatrix}$}\\ \hdashline
\makebox(38,17){$\begin{smallmatrix}\diagdown\hspace{1.2cm}\\ -\sin\theta_l\hspace{0.7cm}\\ \diagdown\hspace{0.1cm}\end{smallmatrix}$}
& \makebox(17, 20){$\begin{smallmatrix}\diagdown\hspace{0.7cm}\\ \cos\theta_l\\ \hspace{0.5cm}\diagdown\end{smallmatrix}$}
\end{array}
\end{bmatrix}
\cdot
\begin{bmatrix}
\colorbox[HTML]{D6D6D6}{\makebox(35,35){\makecell{$r\times r$\\ \small{$\beta$-unitary}}}} & \\ 
& \hspace{-0.35cm}\colorbox[HTML]{D6D6D6}{\makebox(18,18){\makecell{\small{$s\times s$}\\\tiny{$\beta$-unit.}}}}
\end{bmatrix}
}
\end{gather*}

The matrix factorization $\fact{4}$ arising as a $\kak$ decomposition of $\Un_\beta(n)$ is a popular matrix factorization, the CS decomposition (CSD). Let $(p, q)$ and $(r, s)$ be two partitions of $n$, so $p, q, r, s$ are four integers such that $p+q = r+s = n$. Without losing generality, suppose $r\ge p\ge q\ge s$. We start with the real (orthogonal) CSD.

\begin{theorem}[CSD, real]\label{thm:F4real}
For any $n\times n$ orthogonal matrix $O\in\ortho{n}$, there exist four orthogonal matrices $O_r, O_p, O_q, O_s$ from $\ortho{r}$, $\ortho{p}$, $\ortho{q}$, $\ortho{s}$ and $s$ angles $\theta_1, \dots, \theta_s\in[0, \frac{\pi}{2})$ (unique up to order) such that the following holds:
\begin{equation}\label{eq:F4real}
O  = 
\begin{bmatrix}
\begin{array}{c:c}
    O_p &  \\ \hdashline
     & O_q
\end{array}
\end{bmatrix}
\begin{bmatrix}
\begin{array}{rc:c|c}
 C &  & & S \\
 & I_{p-s} & & \\ \hdashline
 & & I_{q-s} & \\ \hline
 -S & & & C
\end{array}
\end{bmatrix}
\begin{bmatrix}
\begin{array}{c|c}
    O_r &  \\ \hline
     & O_s
\end{array}
\end{bmatrix}.
\end{equation}
The matrices $C, S\in\mathbb{R}^{s\times s}$ are diagonal matrices with cosine and sine values of $\theta_1, \dots, \theta_s$ on their diagonals. These trigonometric values are called the CS values.
\end{theorem}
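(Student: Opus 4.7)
The plan is to apply Algorithm \ref{alg:K1AK2compact} to the compact triple
$$(G, K_\sigma, K_\tau) = (\ortho{n},\ \ortho{p}\times\ortho{q},\ \ortho{r}\times\ortho{s}),$$
which is Cartan type BD$\RN{1}$-$\RN{1}$. On $\mathfrak{g} = \mathfrak{o}(n)$ (skew-symmetric matrices), I would take the two involutions to be conjugations, $\sigma(X) = I_{p,q} X I_{p,q}$ and $\tau(X) = I_{r,s} X I_{r,s}$. These commute because $I_{p,q}$ and $I_{r,s}$ are both diagonal with $\pm 1$ entries. Writing a skew-symmetric $X$ as $\stwotwo{A}{B}{-B^T}{D}$ in the $(p,q)$ block form, one sees immediately that $\sigma(X)=X$ forces $B=0$ and $\sigma(X)=-X$ forces $A=0,D=0$. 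Thus $K_\sigma = \ortho{p}\times\ortho{q}$ (block diagonal) and $\mathfrak{p}_\sigma$ consists of matrices with zero diagonal blocks, and symmetrically for $\tau$ using the $(r,s)$ partition.

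The heart of the argument is computing $\mathfrak{p}_\sigma\cap\mathfrak{p}_\tau$. Using $r\ge p\ge q\ge s$ and the identity $r-p = q-s$, I would refine the row/column indexing into four consecutive blocks of sizes $(s,\, p-s,\, q-s,\, s)$, which sum to $n$. Under this refinement, the $(p,q)$ split groups these four blocks as $\{1,2\}$ vs.\ $\{3,4\}$ while the $(r,s)$ split groups them as $\{1,2,3\}$ vs.\ $\{4\}$. A matrix in $\mathfrak{p}_\sigma$ has nonzero entries only in the $\{1,2\}{\times}\{3,4\}$ and $\{3,4\}{\times}\{1,2\}$ positions, and similarly for $\mathfrak{p}_\tau$. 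The only joint positions are $(1,4)$ and $(4,1)$, giving
$$\mathfrak{p}_\sigma\cap\mathfrak{p}_\tau \;=\; \left\{\begin{bsmallmatrix} 0 & 0 & 0 & X \\ 0 & 0 & 0 & 0 \\ 0 & 0 & 0 & 0 \\ -X^T & 0 & 0 & 0 \end{bsmallmatrix} \;:\; X \in \mathbb{R}^{s\times s}\right\}.$$
A natural maximal abelian subalgebra $\mathfrak{a}$ is obtained by restricting $X$ to diagonal matrices $\diag(\theta_1,\dots,\theta_s)$: commutativity is immediate since the only interaction of two such elements is a pair of diagonal $s\times s$ products, and maximality follows from the standard observation that anything in the intersection commuting with a generic diagonal $X$ must itself be diagonal in the same pairing.

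Exponentiating $\mathfrak{a}$ produces the middle matrix of \eqref{eq:F4real}: each index pair $(l,\, n-s+l)$ with $l=1,\dots,s$ contributes a $2\times 2$ rotation block $\stwotwo{\cos\theta_l}{\sin\theta_l}{-\sin\theta_l}{\cos\theta_l}$, while the fixed coordinates produce the $I_{p-s}$ and $I_{q-s}$ identity blocks in the middle. Algorithm \ref{alg:K1AK2compact} then yields $O = k_\sigma a k_\tau$ with $k_\sigma\in K_\sigma$, $a\in A$, $k_\tau\in K_\tau$, which is exactly \eqref{eq:F4real}. Uniqueness of the $\theta_l$ up to permutation, and the restriction $\theta_l\in[0,\pi/2)$, come from intersecting $\mathfrak{a}$ with a positive Weyl chamber to quotient out the residual Weyl group action (here the signed permutations on the $s$ angles, whose sign changes can be absorbed into $K_\sigma, K_\tau$). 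The main obstacle is the block-refinement bookkeeping in Step~2 together with the verification that diagonal $X$'s give a \emph{maximal} abelian subalgebra of $\mathfrak{p}_\sigma\cap\mathfrak{p}_\tau$; the remaining steps are routine applications of the compact $\kak$ theorem.
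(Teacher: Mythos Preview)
Your approach is exactly the paper's: apply Algorithm~\ref{alg:K1AK2compact} to $(\ortho{n},\ortho{p}\times\ortho{q},\ortho{r}\times\ortho{s})$ with the involutions $\sigma(X)=I_{p,q}XI_{p,q}$ and $\tau(X)=I_{r,s}XI_{r,s}$ listed in Table~\ref{tab:involutionlist1}, and the resulting $\mathfrak{a}$ and $A=\exp(\mathfrak{a})$ are correct.

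There is, however, a computational slip in your description of $\mathfrak{p}_\sigma\cap\mathfrak{p}_\tau$. In the four-block refinement $(s,\,p{-}s,\,q{-}s,\,s)$, the sign patterns of $I_{p,q}$ and $I_{r,s}$ are $(+,+,-,-)$ and $(+,+,+,-)$ respectively, so $\mathfrak{p}_\sigma$ lives in block positions $\{(1,3),(1,4),(2,3),(2,4)\}$ (and transposes) while $\mathfrak{p}_\tau$ lives in $\{(1,4),(2,4),(3,4)\}$ (and transposes). The intersection is therefore
\[
\mathfrak{p}_\sigma\cap\mathfrak{p}_\tau
=\left\{\begin{bsmallmatrix} 0 & 0 & 0 & X \\ 0 & 0 & 0 & Y \\ 0 & 0 & 0 & 0 \\ -X^T & -Y^T & 0 & 0 \end{bsmallmatrix}\ :\ X\in\mathbb{R}^{s\times s},\ Y\in\mathbb{R}^{(p-s)\times s}\right\},
\]
not just the $(1,4)$ corner; you dropped the $(2,4)$ block. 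Your choice of $\mathfrak{a}$ (diagonal $X$, $Y=0$) is still a maximal abelian subalgebra of this larger space, but your maximality argument as written only checks the $X$-part. To close the gap you must also verify that commuting with a generic diagonal element forces $Y=0$: the $(1,2)$ block of the commutator of $\diag(\theta)$ with $(X,Y)$ is $-\diag(\theta)\,Y^T$, which vanishes only if $Y=0$ when the $\theta_l$ are nonzero. With that added, the rest of your argument goes through unchanged.
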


The real CSD is the compact $\kak$ decomposition of the type BD$\RN{1}$-$\RN{1}$. If $p=r$ and $q=s$, Theorem \ref{thm:F4real} becomes the KAK decomposition of the compact symmetric space BD$\RN{1}$. The following complex and quaternion CSD are compact $\kak$ decomposition of the types A$\RN{3}$-$\RN{3}$ and C$\RN{2}$-$\RN{2}$. 

\begin{theorem}[CSD, complex/quaternion]\label{thm:F4complexquaternion}
For any $n\times n$ complex (resp. quaternionic) unitary matrix $U$, there exist four complex (resp. quaternionic) unitary matrices $U_r, U_p, U_q, U_s$ with the sizes $r\times r$, $p\times p$, $q\times q$, $s\times s$, and $s$ angles $\theta_1, \dots, \theta_s\in[0, \frac{\pi}{2})$ (unique up to order) such that the following holds.
\begin{equation}\label{eq:F4complexquaternion}
U =  
\begin{bmatrix}
\begin{array}{c:c}
    U_p &  \\ \hdashline
     & U_q
\end{array}
\end{bmatrix}
\begin{bmatrix}
\begin{array}{rc:c|c}
 C &  & & S \\
 & I_{p-s} & & \\ \hdashline
 & & I_{q-s} & \\ \hline
 -S & & & C
\end{array}
\end{bmatrix}
\begin{bmatrix}
\begin{array}{c|c}
    U_r &  \\ \hline
     & U_s
\end{array}
\end{bmatrix}\end{equation}
The matrices $C, S\in\mathbb{R}^{s\times s}$ are defined as in Theorem \ref{thm:F4real}. 
\end{theorem}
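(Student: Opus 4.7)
The plan is to derive this factorization as the compact generalized Cartan ($\kak$) decomposition (Algorithm~\ref{alg:K1AK2compact}) of the triple
$$\big(G,\,K_\sigma,\,K_\tau\big)\,=\,\big(\Un_\beta(n),\,\Un_\beta(p)\times\Un_\beta(q),\,\Un_\beta(r)\times\Un_\beta(s)\big),\qquad \beta\in\{2,4\},$$
which is the Cartan type A\RN{3}-\RN{3} when $\beta=2$ and C\RN{2}-\RN{2} when $\beta=4$. For the two involutions on $\mathfrak{g}$ I would take $\sigma(X)=I_{p,q}XI_{p,q}$ and $\tau(X)=I_{r,s}XI_{r,s}$. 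Both are Lie algebra involutions preserving the skew-Hermitian (respectively, skew-$D$-Hermitian) condition, they commute because $I_{p,q}$ and $I_{r,s}$ are diagonal, and the analytic subgroups exponentiating their $+1$ eigenspaces are $K_\sigma$ and $K_\tau$ respectively.

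Next I would compute the $-1$ eigenspaces. The space $\mathfrak{p}_\sigma$ consists of $X\in\mathfrak{g}$ with vanishing $(p,q)$ block-diagonal blocks, i.e., $X=\begin{bsmallmatrix} 0 & B\\ -B^\dagger & 0\end{bsmallmatrix}$ with $B\in\mathbb{F}^{p\times q}$, and analogously for $\mathfrak{p}_\tau$ with the $(r,s)$ partition. Using $r\geq p\geq q\geq s$ and $p+q=r+s=n$, the two partitions jointly refine the index set into four regions of sizes $s,\,p-s,\,r-p,\,s$ (and one verifies $r-p=q-s$). A direct entry-by-entry check shows that $\mathfrak{p}_\sigma\cap\mathfrak{p}_\tau$ is supported exactly in the first-region by last-region block and its Hermitian partner, giving matrices $X(M)$ that are zero except for some $M\in\mathbb{F}^{s\times s}$ in the top-right $s\times s$ corner and $-M^\dagger$ in the bottom-left. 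The bracket $[X(M),X(M')]$ vanishes iff $MM'^\dagger=M'M^\dagger$, which is automatic when both $M$'s are restricted to real diagonal matrices; this yields a natural $s$-parameter abelian subalgebra $\mathfrak{a}$.

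Exponentiating the generator $X(\diag(\theta_1,\dots,\theta_s))$ decouples into $s$ commuting $2\times 2$ rotations acting on the first and last regions, while the two middle regions contribute identity blocks of sizes $p-s$ and $q-s=r-p$. This produces precisely the middle factor in \eqref{eq:F4complexquaternion}, and invoking the $\kak$ decomposition theorem completes the factorization. Uniqueness of the angles $\theta_l\in[0,\pi/2)$ up to permutation follows from the Weyl group action on $\mathfrak{a}$ (signed permutations combined with Weyl reflections).

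The main obstacle is verifying maximality of $\mathfrak{a}$ cleanly: a matrix-level argument requires ruling out any further commuting direction among $\{X(M)\}$, or one appeals abstractly to the rank classification of the compact symmetric pair $(\Un_\beta(2s),\Un_\beta(s)\times\Un_\beta(s))$ attached to the off-diagonal block, which has rank exactly $s$. A secondary subtlety for $\beta=4$ is correctly handling the quaternionic dagger $D$; since the complexify map $\complexify{\cdot}$ intertwines $D$ with the ordinary Hermitian conjugate on $2n\times 2n$ complex matrices, the entire derivation transports verbatim to the complexified representation and then descends back.
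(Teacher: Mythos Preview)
Your approach is exactly the paper's: Algorithm~\ref{alg:K1AK2compact} with the involutions $\sigma(X)=I_{p,q}XI_{p,q}$ and $\tau(X)=I_{r,s}XI_{r,s}$ (the pair listed for $\fact{4}$ in Table~\ref{tab:involutionlist1}), giving compact types A$\RN{3}$--$\RN{3}$ and C$\RN{2}$--$\RN{2}$.

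One slip to fix: your description of $\mathfrak{p}_\sigma\cap\mathfrak{p}_\tau$ is off. Since $r\ge p$, the common refinement of the partitions $(p,q)$ and $(r,s)$ has only \emph{three} blocks, of sizes $p,\,r-p=q-s,\,s$; an entry-by-entry check then shows $\mathfrak{p}_\sigma\cap\mathfrak{p}_\tau$ consists of skew-Hermitian matrices supported in the full $p\times s$ block (rows $1,\dots,p$ against columns $r{+}1,\dots,n$) together with its conjugate partner, not merely an $s\times s$ corner. Your chosen $\mathfrak{a}$---real diagonals placed in the top $s\times s$ of that $p\times s$ block---is still the correct maximal abelian subalgebra, and exponentiating it produces exactly the middle factor of \eqref{eq:F4complexquaternion}; the finer four-block row/column pattern $(s,\,p{-}s,\,q{-}s,\,s)$ only emerges at that stage because $\mathfrak{a}$ acts trivially on rows $s{+}1,\dots,p$ (whence $I_{p-s}$) and on rows $p{+}1,\dots,r$ (whence $I_{q-s}$). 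Consequently the maximality verification you flag is about ruling out extra commuting directions in a $p\times s$ block rather than an $s\times s$ one, but it is still routine.
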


In some ways the primitive form of the CS decomposition can be traced back to Jordan \cite{jordan1875essai}, and Hotelling \cite{hotelling1936relations}. The explicit form of the CSD with the restricted partition $p=r$, $q=s$ was first developed by Davis and Kahan in 1968 \cite{davis1970rotation} (which was published in 1970). It is in their later paper \cite{davis1969some} where they extended the CSD to cover the general partitions. At the same time, Eugene Wigner also published a result in 1968 equivalent to the CSD with $p=r$, $q=s$ which he viewed as a generalization of Euler's angles \cite[Eq.15]{wigner1968generalization}. (Remarkably, Wigner discovered the hyperbolic CS decomposition along the CSD in the same paper. See Section \ref{sec:F18} for details.) The CSD is covered in standard matrix computation texts \cite{golub2013matrix}, and it is used as a basic tool in the matrix perturbation theory and has numerous applications. Refer to \cite{paige1994history} for the history and applications of the CSD. 

\begin{remark}[the Stiefel manifold: tall skinny orthogonal/unitary matrices]
The \textit{Stiefel manifold} $V_p(\mathbb{F}^n)$ can be defined as a set of (ordered) $p$ orthonormal vectors in $\mathbb{F}^n$. A group theoretic definition of Stiefel manifold is (real case)
\begin{equation}
    V_p(\mathbb{R}^n) := \ortho{n}/\ortho{n-p}.
\end{equation}
The complex and quaternionic Stiefel manifolds are similarly defined with $\un{n}$ and $\un{n, \mathbb{H}}$, denoted by $V_p(\mathbb{C}^n)$ and $V_p(\mathbb{H}^n)$ respectively.

The Stiefel manifolds are useful when presenting the factorization of orthogonal and unitary matrices in a tall skinny (partial) format. In linear algebra, the Stiefel manifold is widely known as rectangular (or tall skinny) orthogonal and unitary matrices. A point in the Stiefel manifold $V_p(\mathbb{R}^n)$ is an $n\times p$ orthogonal matrix, as its columns are $p$ orthonormal vectors. A good example of a partial format would be the partial CS decomposition, which follows after this remark. 
\end{remark}

\par In the following paragraphs, we state the partial format of the CSD, which only involves a subset of columns in \eqref{eq:F4real} and \eqref{eq:F4complexquaternion}. In the real case it starts with a tall skinny orthogonal matrix, which can be regarded as an element in the Stiefel manifold, $V_k(\mathbb{R}^n)$. We introduce the last $s$ column version of Theorem \ref{thm:F4real}. We omit the complex and quaternion cases since they are trivially extended.

\begin{theorem}[tall skinny CSD, real]\label{thm:partialCSDreal}
For any $n\times s$ tall skinny orthogonal matrix $O\in V_s(\mathbb{R}^n)$, there exist orthogonal matrices $O_p$, $O_q$, $O_s$ and $s$ angles $\theta_1, \dots, \theta_s$ such that
\begin{equation}
    O = \twotwo{O_p}{}{}{O_q}\twoone{C'}{S'}O_s = \twoone{O_pCO_s}{O_qSO_s},
\end{equation}
where $C'\in\mathbb{R}^{p\times s}$, $S'\in\mathbb{R}^{q\times s}$ are (nonsquare) diagonal matrices with cosine and sine values of $\theta_1, \dots, \theta_s$ on their diagonals. 
\end{theorem}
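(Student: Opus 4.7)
The plan is to derive this directly from the ordinary SVD of the top block of $O$. Write $O = \twoone{A}{B}$ with $A \in \mathbb{R}^{p \times s}$ and $B \in \mathbb{R}^{q \times s}$; the orthonormality $O^T O = I_s$ reads $A^T A + B^T B = I_s$, so $A^T A \preceq I_s$ and every singular value of $A$ lies in $[0,1]$. This lets me define angles $\theta_i \in [0, \pi/2]$ by declaring $\cos\theta_i$ to be the $i$-th singular value of $A$.

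Next I would take the full SVD $A = O_p\, C'\, V^T$ with $O_p \in \ortho{p}$, $V \in \ortho{s}$, and $C' = \twoone{\diag(\cos\theta_1, \ldots, \cos\theta_s)}{0_{(p-s)\times s}} \in \mathbb{R}^{p\times s}$. Setting $O_s := V^T$ puts the top block in the target form $A = O_p\, C'\, O_s$. Substituting $A = O_p C' V^T$ into $A^TA + B^TB = I_s$ and conjugating by $V$ yields
\[ V^T B^T B\, V \;=\; I_s - (C')^T C' \;=\; \diag(\sin^2\theta_1, \ldots, \sin^2\theta_s), \]
so the columns of $BV$ are pairwise orthogonal with the $i$-th column having Euclidean norm $\sin\theta_i$.

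It then remains to factor $BV$ as $O_q\, S'$ with $O_q \in \ortho{q}$ and $S' = \twoone{\diag(\sin\theta_1,\ldots,\sin\theta_s)}{0_{(q-s)\times s}}$. For each index $i$ with $\sin\theta_i > 0$, define the $i$-th column of a preliminary matrix $W \in \mathbb{R}^{q\times s}$ to be $(BV)_i / \sin\theta_i$; these are orthonormal by the identity above. For indices with $\sin\theta_i = 0$, the corresponding column of $BV$ vanishes, so I choose any unit vector in $\mathbb{R}^q$ orthogonal to the columns of $W$ already selected; such a choice exists since $q \geq s$. Finally, extend $W$ to a full orthogonal matrix $O_q \in \ortho{q}$ by appending $q - s$ additional orthonormal vectors via Gram--Schmidt. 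Then $BV = O_q S'$, and assembling gives
\[ O = \twoone{A}{B} = \twoone{O_p\, C'\, V^T}{O_q\, S'\, V^T} = \twotwo{O_p}{}{}{O_q} \twoone{C'}{S'} O_s. \]

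The only real subtlety is handling the degenerate indices $\sin\theta_i = 0$, where the $i$-th column of $O_q$ is not pinned down by $BV$ alone; this simply reflects the non-uniqueness of the factorization at degenerate angles, a feature already present in the full CSD (Theorem \ref{thm:F4real}). The angles $\{\theta_i\}$ themselves are unique up to permutation by the uniqueness of the singular values of $A$.
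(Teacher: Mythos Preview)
Your argument is correct, but it proceeds differently from the paper. The paper obtains Theorem~\ref{thm:partialCSDreal} simply by restricting the full CSD (Theorem~\ref{thm:F4real}) to its last $s$ columns: given $O\in V_s(\mathbb{R}^n)$, one extends $O$ to a square orthogonal matrix in $\ortho{n}$, applies the $\kak$ decomposition of type BD$\RN{1}$-$\RN{1}$, and then reads off the last $s$ columns of \eqref{eq:F4real}. Your proof is instead a direct construction from the ordinary SVD of the top block~$A$, together with a Gram--Schmidt completion for the bottom block.

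What each approach buys: the paper's derivation is essentially free once the full CSD is in hand, and it keeps the result inside the Lie-theoretic $\kak$ framework that the paper is built around. Your argument is more elementary and self-contained---it requires only the rectangular SVD and makes no appeal to the square CSD or to symmetric spaces---and it makes the non-uniqueness at degenerate angles $\sin\theta_i=0$ explicit rather than inherited. Both routes rely on the standing assumption $p\ge q\ge s$ (needed in your step where you extend $W$ to $O_q\in\ortho{q}$), which the paper fixes before Theorem~\ref{thm:F4real}.
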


\par An important algorithm related to the partial format of the CSD is the \textit{generalized singular value decomposition} (GSVD). The GSVD is an extension of the CSD to general matrices. One can imagine the GSVD equivalently as applying the \textit{QL decomposition}\footnote{Imagine the lower triangular analogue of the QR decomposition.} to any invertible matrix first and then applying CSD on the $Q$ part. The relationship between the GSVD and the CSD is well described in \cite{van1985computing}. We briefly state the real GSVD here.
\begin{theorem}[GSVD, real]
Let $A\in\mathbb{R}^{p\times s}$ and $B\in\mathbb{R}^{q\times s}$ be two matrices such that the matrix $[A; B]$ has rank $s$. Then we have the following decomposition,
\begin{equation}\label{eq:GSVD}
\twoone{A}{B} = \twotwo{O_p}{}{}{O_q}\twoone{C'}{S'}H
\end{equation}
where $H\in\mathbb{R}^{s\times s}$ is a full rank matrix. $C'\in\mathbb{R}^{p\times s}$, $S'\in\mathbb{R}^{q\times s}$ are (nonsquare) diagonal matrices with cosine and sine values of $\theta_1, \dots, \theta_s$ on their diagonals. 
\end{theorem}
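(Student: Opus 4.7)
The plan is to reduce the GSVD directly to the tall skinny (partial) CSD of Theorem \ref{thm:partialCSDreal}. Start by forming the stacked matrix $M := \twoone{A}{B} \in \mathbb{R}^{(p+q)\times s}$, which by hypothesis has full column rank $s$. The first step is to strip off the ``generic'' part of $M$ and leave behind a point on the Stiefel manifold $V_s(\mathbb{R}^{p+q})$: apply a reduced QR (equivalently, a reduced QL or polar) decomposition to write
\begin{equation*}
M = Q H_0, \qquad Q \in V_s(\mathbb{R}^{p+q}), \qquad H_0 \in \mathbb{R}^{s\times s} \text{ invertible},
\end{equation*}
which is possible precisely because $\mathrm{rank}(M)=s$.

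Next, partition $Q$ conformally with the block sizes of $A$ and $B$, namely $Q = \twoone{Q_1}{Q_2}$ with $Q_1\in\mathbb{R}^{p\times s}$ and $Q_2\in\mathbb{R}^{q\times s}$. Now invoke Theorem \ref{thm:partialCSDreal} (the tall skinny real CSD) on $Q$: there exist orthogonal matrices $O_p\in\ortho{p}$, $O_q\in\ortho{q}$, $O_s\in\ortho{s}$ and angles $\theta_1,\dots,\theta_s$ such that
\begin{equation*}
Q \;=\; \twotwo{O_p}{}{}{O_q}\twoone{C'}{S'}\,O_s,
\end{equation*}
with $C'\in\mathbb{R}^{p\times s}$ and $S'\in\mathbb{R}^{q\times s}$ the diagonal ``cosine'' and ``sine'' blocks as in the tall skinny CSD.

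Substituting this into $M = Q H_0$ and collecting the rightmost factors yields
\begin{equation*}
\twoone{A}{B} \;=\; \twotwo{O_p}{}{}{O_q}\twoone{C'}{S'}\,H, \qquad H := O_s H_0,
\end{equation*}
and $H$ is still invertible (i.e., full rank) because $O_s$ is orthogonal and $H_0$ is invertible. This is exactly the stated form \eqref{eq:GSVD}, so the proof is complete.

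There is really no single ``hard step'' here: the mathematical content of the GSVD has been pushed entirely into the tall skinny CSD (Theorem \ref{thm:partialCSDreal}), which in turn comes from the compact $\kak$ decomposition of type BD$\RN{1}$-$\RN{1}$. The only thing that might merit a sentence of care is the initial QR/polar step, where we need $\mathrm{rank}[A;B] = s$ to guarantee that $H_0$ (hence $H$) is invertible; this is precisely the full column rank hypothesis of the theorem and is what prevents the CS angles from being ill-defined.
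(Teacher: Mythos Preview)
Your proof is correct and follows exactly the approach the paper sketches in the paragraph preceding the theorem: apply a QR/QL decomposition to the stacked matrix $[A;B]$ and then invoke the tall skinny CSD (Theorem~\ref{thm:partialCSDreal}) on the orthogonal factor, absorbing the leftover $O_s$ into the triangular factor to obtain $H$. The paper does not spell out a proof beyond that one-sentence description, so your write-up is a faithful formalization of it.
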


\subsection{${\fact{5}}$ : ${(\Un_\beta(n), \Un_{\beta/2}(n), \Un_\beta(p)\times\Un_\beta(q))}$}

\begin{gather*}
\boxed{
{\renewcommand{\arraystretch}{1.2}
\begin{bmatrix}
\rule{0cm}{0.1cm}\\ n\times n \\ \hspace{0.3cm}\text{{$\beta$-unitary}} \hspace{0.3cm}\,\\ \rule{0cm}{0.1cm}
\end{bmatrix}}
= 
{\renewcommand{\arraystretch}{1.2}
\begin{bmatrix}
\rule{0cm}{0.1cm}\\ n\times n \\ \hspace{0.3cm}\text{{$\frac{\beta}{2}$-unitary}} \hspace{0.3cm}\,\\ \rule{0cm}{0.1cm}
\end{bmatrix}}
\cdot
\begin{bmatrix}
\begin{array}{c:c}
\makebox(33, 38){$\begin{smallmatrix}\diagdown\hspace{1.1cm}\\ \cos\theta_l \hspace{0.7cm}\\ \diagdown \hspace{0.1cm} \\ \hspace{0.7cm}\rule{0cm}{0.4cm}\text{\large{$I_{p-q}$}}\end{smallmatrix}$}& 
\makebox(17, 38){$\begin{smallmatrix}\diagdown\hspace{0.6cm}\\ i\sin\theta_l\\ \hspace{0.6cm}\diagdown  \\ \rule{0cm}{0.5cm}\end{smallmatrix}$}\\ \hdashline
\makebox(38,17){$\begin{smallmatrix}\diagdown\hspace{1.2cm}\\ i\sin\theta_l\hspace{0.7cm}\\ \diagdown\hspace{0.1cm}\end{smallmatrix}$}
& \makebox(17, 20){$\begin{smallmatrix}\diagdown\hspace{0.7cm}\\ \cos\theta_l\\ \hspace{0.5cm}\diagdown\end{smallmatrix}$}
\end{array}
\end{bmatrix}
\cdot
\begin{bmatrix}
\colorbox[HTML]{D6D6D6}{\makebox(35,35){\makecell{$p\times p$\\ \small{$\beta$-unitary}}}} & \\ 
& \hspace{-0.35cm}\colorbox[HTML]{D6D6D6}{\makebox(18,18){\makecell{\small{$q\times q$}\\\tiny{$\beta$-unit.}}}}
\end{bmatrix}
}
\end{gather*}

The factorization $\fact{5}$ is obtained by the $\kak$ decomposition with the triple $(G, K_\sigma, K_\tau) = (\Un_\beta(n), \Un_{\beta/2}(n), \Un_\beta(p)\times\Un_\beta(q))$, for $\beta = 2, 4$. With Cartan's classification, they are compact types A$\RN{1}$-$\RN{3}$ and C$\RN{1}$-$\RN{2}$. In this section we assume $n = p + q$ and $p\ge q$. The factorization $\fact{5}$ for $\beta = 2$ is the following.

\begin{theorem}[$\fact{5}$, complex]\label{thm:F5complex}
For any $n\times n$ unitary matrix $U\in\un{n}$, there exist
\begin{itemize}
    \item An $n\times n$ orthogonal matrix $O\in\ortho{n}$,
    \item Two unitary matrices $U_p\in\un{p}$ and $U_q\in\un{q}$,
    \item $q$ unique (up to order) angles $\theta_1, \dots, \theta_q\in[0,\frac{\pi}{2})$,
\end{itemize}
such that the following factorization of $U$ holds:
\begin{equation}\label{eq:F5complex}
    U = O 
    \begin{bmatrix}\begin{array}{cc:c}
        C & & iS \\
         & I_{p-q} & \\ \hdashline
         iS & & C
    \end{array}\end{bmatrix}
    \begin{bmatrix}
    \begin{array}{c:c}
    U_p & \\ \hdashline & U_q\end{array}
    \end{bmatrix}.
\end{equation}
where $C, S$ are cosine, sine matrices defined as in Table \ref{tab:auxmatrix}. 
\end{theorem}

Taking the last $q$ columns of \eqref{eq:F5complex} we obtain the following partial format factorization for tall skinny unitary matrix $U$ in the Stiefel manifold $V_q(\mathbb{C}^n)$. 
\begin{corollary}
Let $n\ge 2q$. For any $n\times q$ (tall skinny) unitary matrix $U$, there exist 
\begin{itemize}
    \item An $n\times 2q$ (tall skinny) orthogonal matrix $O$,
    \item A $q\times q$ unitary matrix $V\in\un{q}$,
    \item $q$ angles $\theta_1, \dots, \theta_q\in[0, \frac{\pi}{2})$,
\end{itemize}
such that the following factorization holds:
\begin{gather}
\nonumber{\overbrace{\hspace{0.2cm}}^{q}\hspace{0.7cm}\overbrace{\hspace{1cm}}^{2q}\hspace{0.4cm}\overbrace{\hspace{0.5cm}}^{q}}\hspace{0.7cm}
\\[-5pt]
\hspace{0.2cm}n\left\{
\begin{bmatrix}
\, \\ 
\,U\, \\
\,
\end{bmatrix}\right.
= \begin{bmatrix}
\hspace{0.8cm}\, \\
O \\
\,
\end{bmatrix}
\cdot
\begin{bmatrix}
C \\ iS
\end{bmatrix}
\left.\cdot \begin{bmatrix}
V
\end{bmatrix}\right\}q \ ,
\end{gather}
where $C, S\in\mathbb{R}^{q\times q}$ are the cosine/sine diagonal matrices.
\end{corollary}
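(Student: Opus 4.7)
The plan is to derive this partial format as a direct consequence of Theorem~\ref{thm:F5complex} by completing $U$ to a full square unitary matrix and then reading off the last $q$ columns. First I would extend the given $n \times q$ unitary $U$ (orthonormal columns in $\mathbb{C}^n$) to a full $n \times n$ complex unitary $\tilde U \in \un{n}$ by appending any orthonormal basis of the orthogonal complement of the column span of $U$. Because $n \ge 2q$, I may apply Theorem~\ref{thm:F5complex} with the partition $(p, q)$ where $p := n - q \ge q$, obtaining
\[
\tilde U \;=\; O \begin{bmatrix} C & 0 & iS \\ 0 & I_{p-q} & 0 \\ iS & 0 & C \end{bmatrix} \begin{bmatrix} U_p & 0 \\ 0 & U_q \end{bmatrix},
\]
with $O \in \ortho{n}$, $U_p \in \un{p}$, $U_q \in \un{q}$, and $C = C_q^{\theta}$, $S = S_q^{\theta}$ formed from $q$ angles $\theta_l \in [0, \tfrac{\pi}{2})$.

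Next I would isolate the last $q$ columns of both sides. Multiplying $\tilde U$ on the right by the matrix $[\,0\ \vert\ I_q\,]^{T}$ recovers $U$ on the left; on the right it selects $U_q$ from the block-diagonal factor together with the last $q$ columns of the middle matrix, namely the $n \times q$ block $\bigl[(iS)^T,\ 0,\ C^T\bigr]^T$ with row partition $(q, p-q, q)$. Partitioning $O = [\,O_1 \mid O_2 \mid O_3\,]$ by the conforming column sizes $(q, p-q, q)$, the block $O_2$ multiplies a zero block and drops out, so
\[
U \;=\; \bigl(O_1 \cdot iS + O_3 \cdot C\bigr)\,U_q \;=\; [\,O_3 \mid O_1\,] \begin{bmatrix} C \\ iS \end{bmatrix} U_q.
\]

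Setting $O' := [\,O_3 \mid O_1\,]$ and $V := U_q$ yields the claimed factorization. Here $O'$ is an $n \times 2q$ real matrix whose columns are the concatenation of two disjoint column blocks of the orthogonal matrix $O \in \ortho{n}$, hence they are orthonormal in $\mathbb{R}^n$; in other words $O' \in V_{2q}(\mathbb{R}^n)$. The factor $V$ lies in $\un{q}$, and the angles $\theta_l$ are those supplied by Theorem~\ref{thm:F5complex}. There is no real obstacle: the only care needed is to keep the $(q, p-q, q)$ block structure of the middle matrix aligned with the column partition of $O$, and to perform the permutation of column blocks $[O_1 \mid O_3] \mapsto [O_3 \mid O_1]$ that converts $\bigl[\,iS\ ;\ C\,\bigr]^T$ into the desired form $\bigl[\,C\ ;\ iS\,\bigr]^T$. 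No new Lie-theoretic input is required beyond Theorem~\ref{thm:F5complex} itself.
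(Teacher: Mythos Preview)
Your proof is correct and follows essentially the same approach the paper takes: complete the tall-skinny $U$ to a full unitary, apply Theorem~\ref{thm:F5complex}, and read off the last $q$ columns. You have simply made explicit the column reordering $[O_1\mid O_3]\mapsto[O_3\mid O_1]$ that puts the middle factor in the stated $\bigl[\,C\,;\,iS\,\bigr]^T$ form, a detail the paper leaves implicit.
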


For $q$ such that $2q\ge n$, one can pad $1$'s on the diagonal of $C$ (equivalently, selecting the first $p$ columns from \eqref{eq:F5complex}) to obtain a similar result. The quaternionic version of $\fact{5}$ is the following:

\begin{theorem}[$\fact{5}$, quaternion]\label{thm:F5quaternion}
Fix $\eta \in\{j, k\}$. For any $n\times n$ quaternionic unitary matrix $Q\in\un{n, \mathbb{H}}$, there exist
\begin{itemize}
    \item An $n\times n$ complex unitary matrix $U\in\un{n}$,
    \item Two quaternionic unitary matrices, $Q_p\in\un{p, \mathbb{H}}$ and $Q_q\in\un{q, \mathbb{H}}$,
    \item $q$ unique (up to order) angles $\theta_1, \dots, \theta_q\in[0, \frac{\pi}{2})$,
\end{itemize}
such that the following factorization holds: 
\begin{equation}
    Q = U 
    \begin{bmatrix}\begin{array}{cc:c}
        C & & \eta S \\
         & I_{p-q} & \\ \hdashline
         \eta S & & C
    \end{array}\end{bmatrix}
    \begin{bmatrix}
    \begin{array}{c:c}
    Q_p & \\ \hdashline & Q_q\end{array}
    \end{bmatrix}.
\end{equation}
\end{theorem}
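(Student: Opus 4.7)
The plan is to invoke Algorithm \ref{alg:K1AK2compact} on $G = \un{n,\mathbb{H}}$ with two commuting involutions: $\sigma(Q) := iQi^{-1}$, whose fixed subgroup is $K_\sigma = \un{n}$ (quaternionic unitaries with entries in $\mathbb{C}$, i.e.\ those commuting with $i$), and $\tau(Q) := I_{p,q}QI_{p,q}$, whose fixed subgroup is $K_\tau = \un{p,\mathbb{H}}\times \un{q,\mathbb{H}}$. They commute since $I_{p,q}$ is real, and the resulting triple realizes Cartan's compact type C\RN{1}-\RN{2} listed just before the theorem.

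Next I would decompose the Lie algebra via $X = A + Bj$ with $A, B \in \mathbb{C}^{n\times n}$. Membership in $\mathfrak{u}(n,\mathbb{H})$ is equivalent to $A^H = -A$ and $B^T = B$. Since $i$ commutes with the entries of $A$, and a short computation gives $iBji^{-1} = B(iji^{-1}) = -Bj$, the involution $\sigma$ fixes the $A$-part and negates the $Bj$-part, so $\mathfrak{k}_\sigma = \mathfrak{u}(n)$ and $\mathfrak{p}_\sigma = \{Bj : B^T = B\}$. Conjugation by $I_{p,q}$ preserves the block-diagonal parts of both $A$ and $B$ and flips the sign of their off-diagonal parts, so the intersection is
\begin{equation*}
\mathfrak{p}_\sigma\cap\mathfrak{p}_\tau \;=\; \Big\{Bj : B = \begin{bsmallmatrix}0 & M \\ M^T & 0\end{bsmallmatrix},\ M\in\mathbb{C}^{p\times q}\Big\}.
\end{equation*}

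I would then choose $\mathfrak{a}$ by specializing $M$ to $\begin{bsmallmatrix}\Theta\\ 0\end{bsmallmatrix}$ with $\Theta=\diag(\theta_1,\dots,\theta_q)$ real. Two elements $B_1j, B_2j$ commute iff $B_1\overline{B_2}=B_2\overline{B_1}$ (since $jB = \overline{B}j$ for $B\in\mathbb{C}^{n\times n}$), automatic here because the $\Theta_i$ are real and diagonal. Exponentiating uses $(Bj)^2=-B^2$ (as $\overline{B}=B$), decoupling into $2\times 2$ building blocks indexed by $(l,\,p+l)$, each of which evaluates to $\bigl[\begin{smallmatrix}\cos\theta_l & j\sin\theta_l \\ j\sin\theta_l & \cos\theta_l\end{smallmatrix}\bigr]$. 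Reassembled with the identity $I_{p-q}$ filling the uncoupled rows/columns, this is exactly the middle factor stated in the theorem, and the generalized Cartan decomposition then delivers $Q = U\cdot A\cdot \diag(Q_p, Q_q)$.

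The main obstacle is confirming that the chosen $\mathfrak{a}$ is maximal abelian and pinning down the angle range. Maximality requires ruling out larger commuting families in $\mathfrak{p}_\sigma\cap\mathfrak{p}_\tau$, a genuine quaternionic constraint via $B_1\overline{B_2}=B_2\overline{B_1}$; this is handled by Matsuki's root-system classification \cite{matsuki2002classification}, which identifies the rank of the pair as $q = \min(p,q)$ with a Weyl group acting by signed permutations, restricting the $\theta_l$ to $[0,\pi/2)$ and giving uniqueness up to reordering. For the alternative $\eta = k$, one absorbs conjugation by $\alpha I_n$ with $\alpha = e^{i\pi/4}$ (which satisfies $\alpha j\alpha^{-1}=k$) into the adjacent factor $U$, or equivalently repeats the construction with $\sigma'(Q)=kQk^{-1}$.
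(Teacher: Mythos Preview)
Your proposal is correct and follows essentially the paper's approach: apply Algorithm~\ref{alg:K1AK2compact} to $G=\un{n,\mathbb{H}}$ with the two commuting involutions recorded in Table~\ref{tab:involutionlist1} (your $\sigma(X)=iXi^{-1}$ agrees with the paper's $-X^{D_i}$ on $\mathfrak{u}(n,\mathbb{H})$ since $X^D=-X$ there), compute $\mathfrak{p}_\sigma\cap\mathfrak{p}_\tau$, exponentiate the real-diagonal $\mathfrak{a}$, and defer maximality and the Weyl chamber to Matsuki~\cite{matsuki2002classification}. The paper itself does not spell out these computations for this theorem, so you have simply filled in the details the framework leaves implicit.
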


Obviously, by the isomorphism between $i, j, k$, one can also select $\eta = i$, with $U$ being a unitary matrix with $j$ or $k$ as the complex unit. The partial format (Stiefel manifold version) of quaternion $\fact{5}$ is the following.
\begin{corollary}
For any $n\times q$ ($n\ge 2q$) tall skinny quaternionic unitary matrix $Q$ and fixed $\eta = j \text{ or } k$, there exist
\begin{itemize}
    \item An $n\times 2q$ tall skinny complex unitary matrix $U$, ($U^HU = I_{2q}$)
    \item A $q\times q$ quaternionic unitary matrix $V\in\un{q, \mathbb{H}}$, 
    \item $q$ angles $\theta_1, \dots, \theta_q\in[0, \frac{\pi}{2})$,
\end{itemize}
such that the following factorization holds: 
\begin{gather}
\nonumber{\overbrace{\hspace{0.5cm}}^{q}\hspace{0.7cm}\overbrace{\hspace{1cm}}^{2q}\hspace{0.4cm}\overbrace{\hspace{0.5cm}}^{q}}\hspace{0.7cm}
\\[-5pt]
\hspace{0.2cm}n\left\{
\begin{bmatrix}
\, \\ 
\,Q\, \\
\,
\end{bmatrix}\right.
= \begin{bmatrix}
\hspace{0.8cm}\, \\
U \\
\,
\end{bmatrix}
\cdot
\begin{bmatrix}
C \\ \eta S
\end{bmatrix}
\left.\cdot \begin{bmatrix}
V
\end{bmatrix}\right\}q \ .
\end{gather}
\end{corollary}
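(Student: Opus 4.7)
The plan is to deduce this rectangular (Stiefel) corollary directly from the square-matrix statement of Theorem \ref{thm:F5quaternion} by the standard extension-and-truncation argument, mirroring the complex corollary stated just above.

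First, I would extend the given $n\times q$ quaternionic unitary matrix $Q$ to a full $n\times n$ quaternionic unitary matrix $\tilde Q\in\un{n,\mathbb{H}}$. Since the $q$ columns of $Q$ are orthonormal under the quaternionic inner product $\langle x,y\rangle = x^D y$, a quaternionic Gram--Schmidt procedure produces $p := n-q$ additional orthonormal columns completing them to a basis of $\mathbb{H}^n$; the resulting matrix $\tilde Q$ lies in $\un{n,\mathbb{H}}$ and satisfies $\tilde Q[:,\,p{+}1{:}n] = Q$.

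Next, I would apply Theorem \ref{thm:F5quaternion} to $\tilde Q$ to obtain $\tilde Q = \tilde U\,M\,\mathrm{diag}(Q_p,Q_q)$, where $\tilde U\in\un{n}$, $Q_p\in\un{p,\mathbb{H}}$, $Q_q\in\un{q,\mathbb{H}}$, and
\[
M = \begin{bmatrix} C & 0 & \eta S \\ 0 & I_{p-q} & 0 \\ \eta S & 0 & C \end{bmatrix},
\]
with row/column partition $(q, p{-}q, q)$. Reading off the last $q$ columns of both sides, the factor $\mathrm{diag}(Q_p,Q_q)$ contributes only $Q_q$, while $M$ contributes its last block column $[\eta S;\,0;\,C]^\top$. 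Writing $\tilde U = [\tilde U_1\mid \tilde U_2\mid \tilde U_3]$ with $\tilde U_1,\tilde U_3\in\mathbb{C}^{n\times q}$ and $\tilde U_2\in\mathbb{C}^{n\times(p-q)}$, the middle zero block annihilates $\tilde U_2$, giving
\[
Q = \tilde U_1\,\eta S\,Q_q + \tilde U_3\,C\,Q_q = [\tilde U_1\mid \tilde U_3]\begin{bmatrix}\eta S\\ C\end{bmatrix}Q_q.
\]

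Finally, I would rearrange to match the stated form: set $U := [\tilde U_3\mid \tilde U_1]\in\mathbb{C}^{n\times 2q}$ and $V := Q_q\in\un{q,\mathbb{H}}$. Because $\tilde U$ is complex unitary, any selection of its columns remains orthonormal in $\mathbb{C}^n$, so $U^H U = I_{2q}$, i.e.\ $U$ is an $n\times 2q$ complex unitary in the Stiefel sense. The swap of the two column blocks converts $[\eta S;\,C]^\top$ into $[C;\,\eta S]^\top$, yielding the desired $Q = U\,[C;\,\eta S]^\top\,V$. The only subtlety, and the main piece of bookkeeping to be careful with, is that ``complex unitary'' must be interpreted for rectangular matrices as having orthonormal columns (a Stiefel-manifold element), since the full square unitarity is lost once the inert middle columns of $\tilde U$ are discarded; the hypothesis $n\ge 2q$ is exactly what guarantees $p-q\ge 0$ so that the middle block $I_{p-q}$ makes sense.
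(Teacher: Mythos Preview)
Your proposal is correct and follows exactly the paper's intended argument: the paper states this corollary (as with its complex analogue) simply by ``considering the last $q$ columns'' of the full square factorization in Theorem~\ref{thm:F5quaternion}, and your extension-to-a-square-matrix followed by column truncation (with the block swap to match the $[C;\,\eta S]^\top$ ordering) is precisely that. No further justification is needed.
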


\subsection{${\fact{6}}$ : ${(\Un_\beta(2n), \Un_{2\beta}(n), \Un_\beta(2p)\times\Un_\beta(2q))}$}

\begin{gather*}
\boxed{
{\renewcommand{\arraystretch}{1.2}
\begin{bmatrix}
\rule{0cm}{0.1cm}\\ 2n\times 2n \\ \hspace{0.3cm}\text{$\beta$-unitary} \hspace{0.3cm}\,\\ \rule{0cm}{0.1cm}
\end{bmatrix}}
= 
\overbrace{\renewcommand{\arraystretch}{1.2}
\begin{bmatrix}
\rule{0cm}{0.1cm}\\ n\times n \\ \hspace{0.25cm}\text{$2\beta$-unitary} \hspace{0.25cm}\,\\ \rule{0cm}{0.1cm}
\end{bmatrix}}^{\text{realified/complexified}}
\cdot
\begin{bmatrix}
\makebox(30,30){$\begin{smallmatrix}\diagdown\hspace{0.8cm} \\ c_l \,\, 0 \\ 0\,\,c_l \\ \hspace{0.8cm}\diagdown\end{smallmatrix}$} 
& \hspace{-0.2cm}\makebox(30,30){$\begin{smallmatrix}\diagdown\hspace{0.8cm} \\ 0 \,\, s_l \\ -s_l \,\, 0 \hspace{0.3cm} \\ \hspace{0.8cm}\diagdown\end{smallmatrix}$} \\
\makebox(30,30){$\begin{smallmatrix}\diagdown\hspace{0.8cm} \\  0 \,\, s_l \\ -s_l \,\, 0 \hspace{0.3cm} \\ \hspace{0.8cm}\diagdown\end{smallmatrix}$} 
& \hspace{-0.2cm}\makebox(30,30){$\begin{smallmatrix}\diagdown\hspace{0.8cm} \\ c_l \,\, 0 \\ 0\,\,c_l \\ \hspace{0.8cm}\diagdown\end{smallmatrix}$}
\end{bmatrix}
\cdot
\begin{bmatrix}
\colorbox[HTML]{D6D6D6}{\makebox(35,35){\makecell{$2p\times 2p$\\\small{$\beta$-unitary}}}} & \\ 
& \hspace{-0.35cm}\colorbox[HTML]{D6D6D6}{\makebox(18,18){\makecell{\tiny{$2q\times 2q$}\\\tiny{$\beta$-unit.}}}}
\end{bmatrix}
}
\end{gather*}

The last factorization in this section involves realified complex (complexified quaternionic) unitary matrices. Recall two matrices
\begin{equation}
    I_2 = \twotwo{1}{0}{0}{1},\hspace{0.5cm}J_1= \twotworr{0}{1}{-1}{0},
\end{equation}
and let $n = p + q$, $p\ge q$. The factorization $\fact{6}$, $\beta=1$ is the following.
\begin{theorem}[$\fact{6}$, real]\label{thm:F6real}
For any $2n\times 2n$ orthogonal matrix $O\in\ortho{2n}$, there exist
\begin{itemize}
    \item An $n\times n$ unitary matrix $U\in\un{n}$,
    \item Two orthogonal matrices $O_{2p}\in\ortho{2p}, O_{2q}\in\ortho{2q}$,
    \item $q$ unique (up to order) angles $\theta_1, \dots, \theta_q\in[0, \frac{\pi}{2})$,
\end{itemize}
such that the following factorization holds:
\begin{equation}\label{eq:F6real}
    O = \begin{bmatrix}
    \begin{array}{r:c}
    \Re(U) & \Im(U)  \\ \hdashline
    -\Im(U) & \Re(U) 
    \end{array}
    \end{bmatrix}
    \begin{bmatrix}
    \begin{array}{cc:c|c}
        I_{p-q} & & & \\
        & C' & & S'\\ \hdashline
        & & I_{p-q} & \\ \hline
        & S' & & C'\\
    \end{array}
    \end{bmatrix}
    \begin{bmatrix}\begin{array}{c|c}
    O_{2p}     &  \\ \hline
         & O_{2q}
    \end{array}
    \end{bmatrix}
\end{equation}
where $C', S'\in\mathbb{R}^{2q\times 2q}$ are defined by the Kronecker product,
\begin{equation}\label{eq:F6realCS}
    C' := C\otimes I_2,\hspace{0.5cm} S' := S\otimes J_1.
\end{equation}
\end{theorem}

The first factor is indeed the realified $U$. Another choice of a maximal subalgebra $\mathfrak{a}$ can be made so that $C'=I_2\otimes C$ and $S'=J_1\otimes S$ (at the same positions of $C', S'$ in \eqref{eq:F6real}). Taking the last $2q$ columns of the left hand side of \eqref{eq:F6real}, we obtain the following partial format of Theorem \ref{thm:F6real}. 

\begin{corollary}
For any $2n\times 2q$ tall skinny orthogonal matrix $O$ there exist, ($n\ge 2q$)
\begin{itemize}
    \item An $n\times 2q$ complex tall skinny unitary matrix $U$ such that $U^HU = I_{2q}$,
    \item A $2q\times 2q$ orthogonal matrix $O_{2q}\in\ortho{2q}$,
    \item $q$ angles $\theta_1, \dots, \theta_q\in[0, \frac{\pi}{2})$,
\end{itemize}
such that the following factorization holds:
\begin{equation}
    O = \twotworr{\Re(U)}{\Im(U)}{-\Im(U)}{\Re(U)}\twoone{S'}{C'}O_{2q} = \realify{U}\twoone{S'}{C'} O_{2q},
\end{equation}
where the matrices $C', S'\in\mathbb{R}^{2q\times 2q}$ are defined as in \eqref{eq:F6realCS}.
\end{corollary}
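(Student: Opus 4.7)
The plan is to obtain the partial (Stiefel manifold) factorization as a direct consequence of the full $\fact{6}$ real decomposition (Theorem~\ref{thm:F6real}) applied to an orthogonal completion of $O$. Since $n \ge 2q$, we may set $p = n - q \ge q$, satisfying the hypothesis $p \ge q$ of Theorem~\ref{thm:F6real}.

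First, I extend the given $2n \times 2q$ matrix $O$ (with $O^T O = I_{2q}$) to a square orthogonal matrix $\tilde{O} = [X \mid O] \in \ortho{2n}$ by choosing any $X \in \mathbb{R}^{2n \times (2n-2q)}$ whose columns complete those of $O$ to an orthonormal basis of $\mathbb{R}^{2n}$. Applying Theorem~\ref{thm:F6real} to $\tilde{O}$ produces
\[
\tilde{O} \;=\; \realify{V}\,M\,\twotwo{O_{2p}}{}{}{O_{2q}}
\]
for some $V \in \un{n}$, $O_{2p} \in \ortho{2p}$, $O_{2q} \in \ortho{2q}$, and angles $\theta_1,\dots,\theta_q \in [0,\pi/2)$, where $M$ denotes the middle factor. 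Reading off the last $2q$ columns on both sides, the last $2q$ columns of $\diag(O_{2p},O_{2q})$ are stacked zeros atop $O_{2q}$, and multiplying through $M$ keeps only its last block-column, yielding
\[
O \;=\; \realify{V}\begin{bmatrix} 0 \\ S' \\ 0 \\ C' \end{bmatrix} O_{2q},
\]
where the row partition on the right is $p-q,\,2q,\,p-q,\,2q$.

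The one nontrivial step is to rewrite the right-hand side as a realified tall-skinny complex unitary times $\twoone{S'}{C'}$. I partition $V = [A \mid B]$ column-wise with $A \in \mathbb{C}^{n \times (p-q)}$ and $B \in \mathbb{C}^{n \times 2q}$; since $\realify{V}$ inherits the column partition $p-q,\,2q,\,p-q,\,2q$, only the $\Re B$ and $\Im B$ columns of $\realify{V}$ touch the nonzero blocks of $[0;\,S';\,0;\,C']$. A direct block computation then gives
\[
\realify{V}\begin{bmatrix} 0 \\ S' \\ 0 \\ C' \end{bmatrix} \;=\; \begin{bmatrix} \Re B\cdot S' + \Im B\cdot C' \\ -\Im B\cdot S' + \Re B\cdot C' \end{bmatrix} \;=\; \realify{B}\twoone{S'}{C'}.
\]
Setting $U := B$ (the last $2q$ columns of $V$) yields $U \in \mathbb{C}^{n \times 2q}$, and orthonormality of the columns of the unitary $V$ gives $U^H U = I_{2q}$, completing the desired factorization.

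The main obstacle is essentially bookkeeping: one must verify that the column partitioning of $\realify{V}$ into blocks of sizes $p-q,\,2q,\,p-q,\,2q$ aligns precisely with the splitting $V = [A\mid B]$ so that the first $p-q$ complex columns of $V$ drop out of the final product. The condition $n \ge 2q$ is exactly what permits the choice $p = n - q \ge q$ needed to invoke Theorem~\ref{thm:F6real}.
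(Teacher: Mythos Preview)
Your proof is correct and follows exactly the approach the paper intends: complete $O$ to a square orthogonal matrix, apply Theorem~\ref{thm:F6real}, and extract the last $2q$ columns. The paper merely says ``taking the last $2q$ columns of the left hand side,'' and you have filled in precisely the bookkeeping needed to see that the resulting column block $\realify{V}\,[0;S';0;C']$ collapses to $\realify{B}\,[S';C']$ with $B$ the last $2q$ columns of $V$.
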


The $\kak$ decomposition of compact type A$\RN{2}$-$\RN{3}$ is the complex version of real $\fact{6}$ on $2n\times 2n$ complex unitary matrices. 
\begin{theorem}[$\fact{6}$, complex]\label{thm:F6complex}
For any $2n\times 2n$ unitary matrix $U\in\un{2n}$, there exist
\begin{itemize}
    \item An $n\times n$ quaternionic unitary matrix $Q\in\un{n, \mathbb{H}}$,
    \item Two unitary matrices $U_{2p}\in\un{2p}, U_{2q}\in\un{2q}$,
    \item $q$ unique (up to order) angles $\theta_1, \dots, \theta_q\in[0, \frac{\pi}{2})$,
\end{itemize}
such that the following factorization holds:
\begin{equation}
    U = \begin{bmatrix}
    \renewcommand\arraystretch{1.3}
    \begin{array}{c:c}
    X & Y  \\ \hdashline
    -\overline{Y} & \overline{X} 
    \end{array}
    \end{bmatrix}
    \begin{bmatrix}
    \begin{array}{cc:c|c}
        I_{p-q} & & & \\
        & C' & & S'\\ \hdashline
        & & I_{p-q} & \\ \hline
        & S' & & C'\\
    \end{array}
    \end{bmatrix}
    \begin{bmatrix}\begin{array}{c|c}
    U_{2p}     &  \\ \hline
         & U_{2q}
    \end{array}
    \end{bmatrix},
\end{equation}
where $C', S'\in\mathbb{R}^{2q\times 2q}$ are defined as \eqref{eq:F6realCS} and $\complexify{Q} = \stwotwo{X}{Y}{-\overline{Y}}{\overline{X}}$ (complexified $Q$).
\end{theorem}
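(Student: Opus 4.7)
The plan is to apply Algorithm \ref{alg:K1AK2compact} to the compact generalized Cartan triple $(G, K_\sigma, K_\tau) = (\Un_2(2n), \complexify{\Un_4(n)}, \Un_2(2p)\times\Un_2(2q))$, which is Cartan type A\RN{2}-\RN{3}. The proof mirrors the real analogue (Theorem \ref{thm:F6real}), with $\mathbb{C}/\mathbb{H}$ replacing $\mathbb{R}/\mathbb{C}$ throughout.

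On $\mathfrak{g} = \mathfrak{u}(2n)$ the two relevant involutions are
\[
\sigma(X) \;=\; -J_n\,\overline{X}\,J_n, \qquad \tau(X) \;=\; \widetilde{I}\,X\,\widetilde{I},
\]
where $\widetilde{I} = \diag(I_p, -I_q, I_p, -I_q)$ is the reordered signature matrix obtained from $I_{2p,2q}$ by the permutation that makes it commute with $J_n$. This commutativity guarantees $\sigma\tau=\tau\sigma$, and the same permutation ultimately produces the interleaved block layout visible in the middle factor. The fixed-point subgroups are $K_\sigma = \complexify{\Un_4(n)} = \usp{2n}$ and $K_\tau = \Un_2(2p)\times\Un_2(2q)$, the latter embedded in the permuted order.

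A direct block calculation shows $\mathfrak{p}_\sigma = \{\stwotwo{A}{B}{\bar B}{-\bar A} : A^H=-A,\ B^T=-B\}$, while $\mathfrak{p}_\tau$ is the off-block-diagonal part with respect to the partition $(p,q,p,q)$ induced by $\widetilde{I}$. Intersecting yields elements parameterized by two complex $p\times q$ blocks. A maximal abelian subalgebra $\mathfrak{a}$ of rank $q$ arises by restricting these blocks to diagonal shape with $q$ independent real parameters $\theta_1,\ldots,\theta_q$ (padded by zeros in the extra $p-q$ rows). Exponentiating produces, on each active index pair, a $2\times 2$ block of the form $\twotwo{\cos\theta_l\, I_2}{\sin\theta_l\, J_2}{\sin\theta_l\, J_2}{\cos\theta_l\, I_2}$; assembling these across the $q$ active indices and padding by $I_{p-q}$ in the passive slots gives exactly the Kronecker form $C' = C\otimes I_2$ and $S' = S\otimes J_2$. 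Conjugating back by the permutation relating $\widetilde{I}$ to $\diag(I_{2p}, -I_{2q})$ recovers the block pattern in the theorem.

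The main obstacle is not existence but reconciling the two concurrent structures: the quaternionic structure from $\sigma$ naturally sits on the partition $(n,n)=(p+q,p+q)$, whereas the indefinite unitary structure from $\tau$ wants to sit on $(2p,2q)$. The commuting pair $(\widetilde{I}, J_n)$ reconciles these and forces the Kronecker products to appear, with the apparently awkward $I_{p-q}$ blocks slotted into the passive positions. The standard Weyl-group argument applied to Matsuki's root system classification \cite{matsuki2002classification} for type A\RN{2}-\RN{3} then forces $\theta_l \in [0,\pi/2)$ to be unique up to permutation.
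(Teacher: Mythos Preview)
Your approach is correct and matches the paper's framework: the paper does not give a separate proof of Theorem~\ref{thm:F6complex} but presents it as the compact type A\RN{2}--\RN{3} instance of Algorithm~\ref{alg:K1AK2compact}, with the involutions recorded in Table~\ref{tab:involutionlist1} and the classification deferred to Matsuki~\cite{matsuki2002classification}. Your use of $\widetilde I=\diag(I_p,-I_q,I_p,-I_q)$ in place of $I_{2p,2q}$, so that $\tau$ genuinely commutes with $\sigma$, is a valid and necessary refinement of that table entry; the permutation relating the two is exactly what produces the interleaved $(p{-}q,2q,p{-}q,2q)$ block pattern of the middle factor and reconciles the $(n,n)$ structure of $\complexify{Q}$ with the $(2p,2q)$ structure of $\diag(U_{2p},U_{2q})$.
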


Finally the tall skinny version of complex $\fact{6}$ is as follows. 
\begin{corollary}
For any $2n\times 2q$ complex tall skinny unitary matrix $U$ there exist, ($n \ge 2q$)
\begin{itemize}
    \item A $n\times 2q$ tall skinny quaternionic unitary matrix $Q$ such that $Q^DQ = I_{2q}$,
    \item A $2q\times 2q$ complex unitary matrix $U_{2q}\in\un{2q}$,
    \item $q$ angles $\theta_1, \dots, \theta_q\in[0, \frac{\pi}{2})$,
\end{itemize}
such that the following factorization holds:
\begin{equation}
    U = \complexify{Q}\twoone{S'}{C'}U_{2q}
\end{equation}
with $C', S'\in\mathbb{R}^{2q\times 2q}$ defined as \eqref{eq:F6realCS}
\end{corollary}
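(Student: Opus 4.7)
The plan is to reduce to Theorem \ref{thm:F6complex} by extending $U$ to a square unitary, then extract the last $2q$ columns of the resulting factorization. Because $U \in \mathbb{C}^{2n\times 2q}$ has $2n$ rows and only $2q$ orthonormal columns, and $2n \ge 4q$, we can complete $U$ to a full unitary $\widetilde U \in \un{2n}$ whose last $2q$ columns are precisely $U$.

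Next, apply Theorem \ref{thm:F6complex} to $\widetilde U$ with the partition $n = p + q$ where $p \ge q$ (which is exactly the hypothesis $n \ge 2q$). This gives
\begin{equation*}
\widetilde U \;=\; \complexify{\widetilde Q}\,\begin{bmatrix}
I_{p-q} & & & \\
& C' & & S'\\
& & I_{p-q} & \\
& S' & & C'
\end{bmatrix}\,
\begin{bmatrix} U_{2p} & 0 \\ 0 & U_{2q}\end{bmatrix},
\end{equation*}
with $\widetilde Q \in \un{n,\mathbb{H}}$, $U_{2p}\in\un{2p}$, $U_{2q}\in\un{2q}$, and unique angles $\theta_1,\dots,\theta_q\in[0,\tfrac{\pi}{2})$. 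Restricting both sides to their last $2q$ columns collapses the rightmost factor to $U_{2q}$ (sitting in its native $2q\times 2q$ slot) and collapses the middle matrix to its fourth block column $[0;\,S';\,0;\,C']^{T}$, so $U = \complexify{\widetilde Q}\,[0;\,S';\,0;\,C']^{T}\,U_{2q}$.

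The main step is then to recognize the product $\complexify{\widetilde Q}\,[0;\,S';\,0;\,C']^{T}$ as $\complexify{Q}\twoone{S'}{C'}$ for the correct $Q$. Writing $\complexify{\widetilde Q}=\stwotwo{X}{Y}{-\bar Y}{\bar X}$ with $n\times n$ blocks, and partitioning $X = [X_1 \mid X_2]$, $Y = [Y_1 \mid Y_2]$ column-wise with $X_1,Y_1$ of width $p-q$ and $X_2,Y_2$ of width $2q$, a direct block multiplication gives
\begin{equation*}
\complexify{\widetilde Q}\begin{bmatrix} 0 \\ S' \\ 0 \\ C' \end{bmatrix}
= \begin{bmatrix} X_2 S' + Y_2 C' \\ -\bar Y_2 S' + \bar X_2 C' \end{bmatrix}
= \begin{bmatrix} X_2 & Y_2 \\ -\bar Y_2 & \bar X_2 \end{bmatrix}\twoone{S'}{C'}.
\end{equation*}
Setting $Q\in\mathbb{H}^{n\times 2q}$ to be the submatrix of $\widetilde Q$ consisting of columns $p-q+1,\dots,n$, the right-hand factor is exactly $\complexify{Q}\twoone{S'}{C'}$, because the complexify map acts column-by-column in the block sense. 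Finally, $Q^D Q = I_{2q}$ follows from $\widetilde Q^D\widetilde Q = I_n$ by restricting to the relevant columns.

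I expect the only delicate point to be the bookkeeping in the last display: one must verify that the block structure $(p-q,2q,p-q,2q)$ of the middle matrix aligns correctly with the $(n,n)$ row-block structure of $\complexify{\widetilde Q}$, so that taking the same $2q$ consecutive columns of $\widetilde Q$ (in both its upper and lower complexified blocks) gives a bona fide complexified quaternionic matrix rather than a mismatched slice. Everything else — the padding in Step 1, the restriction of the right factor, and the Stiefel-type identity $Q^D Q = I_{2q}$ — is routine once Theorem \ref{thm:F6complex} is in hand.
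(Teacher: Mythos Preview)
Your proposal is correct and follows exactly the approach the paper intends: extend to a square unitary, apply Theorem \ref{thm:F6complex}, and read off the last $2q$ columns (the paper states this corollary simply as ``the tall skinny version of complex $\fact{6}$'', mirroring the real case where it explicitly says ``taking the last $2q$ columns''). Your explicit verification that the relevant $2q$-column slice of $\complexify{\widetilde Q}$ is $\complexify{Q}$ for $Q$ the corresponding column slice of $\widetilde Q$ is a useful detail the paper leaves implicit.
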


\section{Matrix factorizations of invertible matrices}\label{sec:glfact}
In this section we discuss matrix factorizations of invertible matrices arising from the generalized Cartan decompositions of $\Gl_\beta(n)$. We obtain seven matrix factorizations: $\fact{7}$ to $\fact{13}$. The generalized Cartan triple \eqref{def:gencartantriple}, $(G, K_\sigma, K_\tau)$, of each $\fact{m}$ is as follows with $G=\Gl_\beta(n)$. Each category as listed may have a $\beta=1,2,4$ version or may only have two of the three values for $\beta$ providing a grand total of 17 possibilities.

\begin{alignat*}{2}
    \fact{7} &\text{ : }\big(\Gl_\beta(n), \Un_\beta(n), \Un_\beta(n)\big)\,\,(\text{KAK = SVD})\hspace{3.5cm} && \beta = 1, 2, 4 \\
    \fact{8} &\text{ : }\big(\Gl_\beta(n), \Un_{\beta}(n), \Gl_\beta(p)\times \Gl_\beta(q)\big) &&\beta = 1, 2, 4\\
    \fact{9} &\text{ : }\big(\Gl_\beta(n), \Un_{\beta}(n), \Un_\beta(p, q)\big) &&\beta = 1, 2, 4\\
    \fact{10} &\text{ : }\big(\Gl_\beta(2n), \Un_{\beta}(2n), \Symp_\beta(2n)\big) &&\beta = 1, 2\\
    \fact{11} &\text{ : }\big(\Gl_\beta(2n), \Un_{\beta}(2n), \Gl_{2\beta}(n)\big) &&\beta = 1, 2\\
    \fact{12} &\text{ : }\big(\Gl_\beta(n), \Un_{\beta}(n), \Gl_{\beta/2}(n)\big) &&\beta = 2, 4\\
    \fact{13} &\text{ : }\big(\Gl_\beta(n), \Un_{\beta}(n), \Ortho_{\beta}(n)\big) &&\beta = 2, 4
\end{alignat*}

\subsection{${\fact{7}}$ : ${(\Gl_\beta(n), \Un_{\beta}(n), \Un_{\beta}(n))}$, the SVD}

\begin{gather*}
\boxed{
\begin{bmatrix}
\rule{0cm}{0.1cm}\\ n\times n \\ \text{{ Invertible }} \\ \rule{0cm}{0.1cm}
\end{bmatrix}
= 
\begin{bmatrix}
\rule{0cm}{0.1cm}\\ n\times n \\ \hspace{0.1cm} \text{{ Unitary }} \hspace{0.1cm}\, \\ \rule{0cm}{0.1cm}
\end{bmatrix}
\cdot
{\renewcommand{\arraystretch}{1.3}
\begin{bmatrix}
\LARGE{\text{$\diagdown$}} \rule{1.2cm}{0cm} \\ \sigma_l \\  \rule{1.1cm}{0cm}\LARGE{\text{$\diagdown$}}
\end{bmatrix}}
\cdot
\begin{bmatrix}
\rule{0cm}{0.1cm}\\ n\times n \\ \hspace{0.1cm} \text{{ Unitary }} \hspace{0.1cm}\, \\ \rule{0cm}{0.1cm}
\end{bmatrix}
}
\end{gather*}

We first discuss the KAK decomposition of the general linear groups $\gl{n, \mathbb{R}}$, $\gl{n, \mathbb{C}}$ and $\gl{n, \mathbb{H}}$. All three $\beta = 1, 2, 4$ cases have the same double coset structure, the set of all positive diagonal matrices. These KAK decompositions are the famous \textit{singular value decomposition} (SVD) of square invertible matrices.\footnote{This is the most common example of the KAK decomposition used in the Lie theory courses and textbooks.} Note that the rectangular SVDs are obtained from the folding of the KAK decompositions of the indefinite unitary groups $G=\Un_\beta(p, q)$. See Section \ref{sec:F18} or \cite{edelman2020generalized,kleinsteuber2005jacobi} for details.

\begin{theorem}\label{thm:SVD}
The KAK decomposition of the (Riemannian) noncompact symmetric spaces $\gl{n, \mathbb{R}}/\ortho{n}$, $\gl{n, \mathbb{C}}/\un{n}$ and $\gl{n, \mathbb{H}}/\un{n, \mathbb{H}}$ are the real, complex and quaternionic SVD, respectively. 
\end{theorem}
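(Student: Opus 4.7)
The plan is to apply the KAK Algorithm of Section 3.2 (Algorithm 3.1) with $\sigma=\tau$ both set equal to the Cartan involution of $\mathfrak{g}$, so that the generalized Cartan decomposition collapses to the ordinary Cartan/KAK decomposition. For each of the three cases $\beta=1,2,4$, I would take $G=\Gl_\beta(n)$ with $K_\sigma=\Un_\beta(n)$ as dictated by the first three lines of Table \ref{tab:cartanss} (types A, A\RN{1}, A\RN{2}). The Cartan involution at the Lie algebra level is $X\mapsto -X^{\dagger}$, where $\dagger$ is the appropriate conjugate transpose ($T$, $H$, or $D$ for $\beta=1,2,4$). Then the task reduces to identifying the $\pm 1$ eigenspaces of this involution and a maximal abelian subalgebra of the $-1$ eigenspace.

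First I would compute $\mathfrak{g}=\mathfrak{k}+\mathfrak{p}$. The $+1$ eigenspace is $\mathfrak{k}=\{X:X^{\dagger}=-X\}$, which is the space of skew-symmetric / skew-Hermitian / skew-$D$-Hermitian matrices, and this is exactly the Lie algebra of $\Un_\beta(n)$, confirming the consistency with Table \ref{tab:cartanss}. The $-1$ eigenspace is $\mathfrak{p}=\{X:X^{\dagger}=X\}$, the space of (real) symmetric, Hermitian, or $D$-Hermitian matrices respectively. Next I would exhibit the maximal abelian subalgebra $\mathfrak{a}\subset\mathfrak{p}$ as the set of real diagonal matrices. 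Two verifications are needed: (a) abelianness is immediate since diagonal matrices commute; (b) maximality follows because any $X\in\mathfrak{p}$ commuting with a generic real diagonal element of $\mathfrak{a}$ must itself be diagonal, and a diagonal element of $\mathfrak{p}$ is necessarily real (in all three cases the diagonal entries must be fixed by conjugation, hence real). Taking $A=\exp(\mathfrak{a})$ yields the set of positive real diagonal matrices $\Sigma$.

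Invoking the KAK decomposition theorem of Helgason stated just before Section 3.2 then produces the factorization $G=K\,A\,K$ in the form
\begin{equation*}
G = \unmat{n}{\mathbb{F}}\,\Sigma\,\unmat{n}{\mathbb{F}}',
\end{equation*}
for $\mathbb{F}\in\{\mathbb{R},\mathbb{C},\mathbb{H}\}$, which is precisely the statement of the real, complex, and quaternionic SVD. Uniqueness of $\Sigma$ up to permutation of diagonal entries is a standard consequence of the Weyl group action on $\mathfrak{a}$: the symmetric group $S_n$ acts as signed permutations on the diagonal, and quotienting $\mathfrak{a}$ by this action to a positive Weyl chamber (entries in decreasing order) uniquifies the choice.

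The main obstacle I foresee is the quaternionic case $\beta=4$. The Cartan decomposition machinery is usually stated for real semisimple Lie algebras, and $\gl{n,\mathbb{H}}$ requires viewing $\mathfrak{gl}(n,\mathbb{H})$ as a real Lie algebra with involution $X\mapsto -X^D$; one must verify that diagonal $D$-Hermitian matrices are indeed real (not quaternion-valued), so that $A$ again consists of real positive diagonal matrices. This is where the noncommutativity of $\mathbb{H}$ could seem to threaten the argument, but since $x=\bar{x}$ in $\mathbb{H}$ forces $x\in\mathbb{R}$, the diagonal entries of any $D$-Hermitian matrix are automatically real, and the rest of the argument proceeds identically. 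Alternatively one may pass to the complexified form $\Un^{*}(2n)$ via $\complexify{\,\cdot\,}$ as in Remark \ref{rem:isomorphisms} and apply the KAK decomposition there, transferring the conclusion back by the isomorphism; this also aligns with Zhang's treatment \cite{zhang1997quaternions} of the quaternionic SVD.
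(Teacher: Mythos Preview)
Your proposal is correct and follows exactly the standard derivation; the paper itself offers no proof of this theorem, treating it as a well-known example of the KAK decomposition and simply citing \cite{golub2013matrix} and \cite{zhang1997quaternions}. Your computation of $\mathfrak{k}$, $\mathfrak{p}$, and $\mathfrak{a}$ via the Cartan involution $X\mapsto -X^{\dagger}$ is the canonical one, and your handling of the quaternionic case (checking that diagonal $D$-Hermitian entries are real) is the right verification.
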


\begin{remark}\label{rem:glsvdnaming}
An important relationship which will be used throughout the section is the equivalence of the (left) folding of the SVD to the eigenvalue decomposition. As we discuss in Section \ref{sec:folding}, the following well-known left folding of the SVD becomes the symmetric (positive definite) eigenvalue decomposition. 
\begin{equation}\label{eq:svdfolding}
    \makecell{A^TA = V\Sigma^2V^T\\(\text{Eigendecomposition})}
    \hspace{0.3cm}{\Large{\xLeftarrow[\text{Left}]{\text{Fold}}}}\hspace{0.3cm} 
    \makecell{A = U\Sigma V^T\\ (\text{SVD})} \ ,
\end{equation}
The $\kak$ decomposition of $\Gl_\beta(n)$ always has the first factor $U\in\Un_\beta(n)=K_\sigma$, which obtains the left folding as follows. ($X=A^\dagger A$)
\begin{equation}\label{eq:glsvdleftfoldingrelation}
    \hspace{1.4cm}\makecell{X = V^\dagger \Lambda^2 V\\(\text{Eigen-like})}
    \hspace{0.5cm}{\Large{\xLeftarrow[\text{Left}]{\text{Fold}}}}\hspace{0.5cm} 
    \makecell{A = U \Lambda V \\ (\kak)}.
\end{equation}

\par The matrix $\Lambda^2$ in \eqref{eq:glsvdleftfoldingrelation} contains the eigenvalue-like information of the congruence diagonalization. In fact, the values of $\Lambda$ are sometimes named as a variant of the eigenvalues (which will appear in Section \ref{sec:F9}, \ref{sec:F10}), although the values themselves are nothing related to the usual eigenvalues. Being consistent with the names in \eqref{eq:svdfolding} and \eqref{eq:glsvdleftfoldingrelation}, we call some factorizations of $\Gl_\beta(n)$ as a variant of the SVD. (Equivalently, one can put the factor $\Un_\beta(n)$ on the right side of the $\kak$ decomposition and perform the right folding.)
\end{remark}

\subsection{${\fact{8}}$ : ${(\Gl_\beta(n), \Un_{\beta}(n), \Gl_\beta(p)\times \Gl_\beta(q))}$}\label{sec:F8}

\begin{gather*}
\boxed{
{\renewcommand{\arraystretch}{1.35}
\begin{bmatrix}
\rule{0cm}{0.1cm}\\ n\times n \\ \hspace{0.4cm} \text{{Invertible}} \hspace{0.4cm}\, \\ \rule{0cm}{0.1cm}
\end{bmatrix}}
= 
{\renewcommand{\arraystretch}{1.35}
\begin{bmatrix}
\rule{0cm}{0.1cm}\\ n\times n \\ \hspace{0.4cm} \text{{ Unitary }} \hspace{0.4cm}\, \\ \rule{0cm}{0.1cm}
\end{bmatrix}}
\cdot
\begin{bmatrix}
\begin{array}{c:c}
\makebox(33, 38){$\begin{smallmatrix}\diagdown\hspace{1.1cm}\\ \text{ch}_l \hspace{0.7cm}\\ \diagdown \hspace{0.1cm} \\ \hspace{0.7cm}\rule{0cm}{0.4cm}\text{\large{$I_{p-q}$}}\end{smallmatrix}$}& 
\makebox(17, 38){$\begin{smallmatrix}\diagdown\hspace{0.6cm}\\ \text{sh}_l\\ \hspace{0.6cm}\diagdown  \\ \rule{0cm}{0.5cm}\end{smallmatrix}$}\\ \hdashline
\makebox(38,17){$\begin{smallmatrix}\diagdown\hspace{1.2cm}\\ \text{sh}_l\hspace{0.7cm}\\ \diagdown\hspace{0.1cm}\end{smallmatrix}$}
& \makebox(17, 20){$\begin{smallmatrix}\diagdown\hspace{0.7cm}\\ \text{ch}_l\\ \hspace{0.5cm}\diagdown\end{smallmatrix}$}
\end{array}
\end{bmatrix}
\cdot
\begin{bmatrix}
\colorbox[HTML]{D6D6D6}{\makebox(35,35){\makecell{$p\times p$\\\small{Invertible}}}} & \\ 
& \hspace{-0.35cm}\colorbox[HTML]{D6D6D6}{\makebox(18,18){\makecell{\small{$q\times q$}\\\tiny{Invert.}}}}
\end{bmatrix}
}
\end{gather*}

The factorization $\fact{8}$ is a matrix factorization of real, complex and quaternionic invertible matrices. Let $n = p+q$, $p\ge q$.
\begin{theorem}[$\fact{8}$, real]\label{thm:F8real}
For any real $n\times n$ invertible matrix $G$, there exist
\begin{itemize}
    \item An $n\times n$ orthogonal matrix $O\in\ortho{n}$,
    \item Two real invertible matrices $G_p\in\gl{p, \mathbb{R}}$ and $G_q\in\gl{q,\mathbb{R}}$,
    \item $q$ unique (up to order and signs) real numbers $\theta_1, \dots, \theta_q$,
\end{itemize}
such that the following factorization of $G$ holds:
\begin{equation}\label{eq:F8real}
    G = O \begin{bmatrix}\begin{array}{cc:c}
        Ch & & Sh \\
         & I_{p-q} & \\ \hdashline
         Sh & & Ch
    \end{array}\end{bmatrix}
    \begin{bmatrix}
    \begin{array}{c:c}
    G_p & \\ \hdashline & G_q\end{array}
    \end{bmatrix},
\end{equation}
where $Ch, Sh\in\mathbb{R}^{q\times q}$ are diagonal matrices with $\cosh, \sinh$ values of $\theta_l$'s.
\end{theorem}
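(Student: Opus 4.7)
The proof will be a direct application of Algorithm \ref{alg:K1AK2noncompact} to the generalized Cartan triple $(G,K_\sigma,K_\tau)=(\gl{n,\mathbb{R}},\ortho{n},\gl{p,\mathbb{R}}\times\gl{q,\mathbb{R}})$. Indeed, Section \ref{sec:gencartanex2} already carries out precisely this computation as a worked example, so the plan is to package that derivation as a proof and then address uniqueness.

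First I would take the involution $\tau:X\mapsto I_{p,q}XI_{p,q}$ on $\mathfrak{g}=\mathfrak{gl}(n,\mathbb{R})$ and verify that $\tau^2=\mathrm{id}$ and that $\tau$ commutes with the Cartan involution $\sigma:X\mapsto -X^T$, since $I_{p,q}^T=I_{p,q}$. Decomposing into $\pm 1$ eigenspaces gives $\mathfrak{k}_\tau$ equal to the block-diagonal $(p,q)$-matrices and $\mathfrak{p}_\tau$ equal to the block off-diagonal ones, so $K_\tau=\gl{p,\mathbb{R}}\times\gl{q,\mathbb{R}}$ sitting inside $\gl{n,\mathbb{R}}$. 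The Cartan decomposition $\mathfrak{g}=\mathfrak{k}_\sigma+\mathfrak{p}_\sigma$ is the familiar splitting into skew-symmetric plus symmetric matrices. Intersecting,
\begin{equation*}
\mathfrak{p}_\sigma\cap\mathfrak{p}_\tau=\Big\{\begin{bmatrix}0&b\\b^T&0\end{bmatrix}:b\in\mathbb{R}^{p\times q}\Big\}.
\end{equation*}

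Next I would choose a maximal abelian subalgebra $\mathfrak{a}\subset\mathfrak{p}_\sigma\cap\mathfrak{p}_\tau$. A natural choice is the set of $h$ whose $b$ block has the $q$ parameters $\theta_1,\dots,\theta_q$ on its leading diagonal and zeros elsewhere; commutativity follows from a direct check that two such matrices commute because their diagonal structure makes their products block-diagonal with matching entries. Maximality follows from a standard rank count: any element of $\mathfrak{p}_\sigma\cap\mathfrak{p}_\tau$ commuting with a generic $h\in\mathfrak{a}$ must itself have the prescribed diagonal shape. Exponentiating $\mathfrak{a}$ produces precisely the hyperbolic block $A=H_{p,q}^\theta$ from Table \ref{tab:auxmatrix}, since
\begin{equation*}
\exp\begin{bmatrix}0&b\\b^T&0\end{bmatrix}=\begin{bmatrix}\cosh(\sqrt{bb^T})&\sinh(\sqrt{bb^T})\,b\,(\sqrt{b^Tb})^{-1}\\\ast&\cosh(\sqrt{b^Tb})\end{bmatrix}
\end{equation*}
reduces, when $b$ is the padded diagonal matrix above, to the middle factor appearing in \eqref{eq:F8real}.

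At this point the generalized Cartan decomposition theorem (Flensted-Jensen, stated in Section \ref{sec:gencartantheory}) applied to $(\gl{n,\mathbb{R}},\ortho{n},\gl{p,\mathbb{R}}\times\gl{q,\mathbb{R}})$ yields existence of the factorization $G=O\cdot H_{p,q}^\theta\cdot\mathrm{diag}(G_p,G_q)$. For uniqueness of the $\theta_l$ up to permutation and sign, I would invoke the usual Weyl group argument: any other choice of $\mathfrak{a}'\subset\mathfrak{p}_\sigma\cap\mathfrak{p}_\tau$ is $K_\sigma\cap K_\tau$-conjugate to $\mathfrak{a}$, and within $\mathfrak{a}$ the little Weyl group acts by signed permutations of $(\theta_1,\dots,\theta_q)$. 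Equivalently, folding the factorization on the right as in Section \ref{sec:folding} gives $GI_{p,q}G^T$ whose eigenvalues determine the set $\{\pm\cosh(2\theta_l)\}$ uniquely, and this is the main technical step I would expect to require a careful pass through the root-space structure.
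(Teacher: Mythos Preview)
Your proposal is correct and follows essentially the same route as the paper: the paper proves Theorem~\ref{thm:F8real} by exactly the worked computation of Section~\ref{sec:gencartanex2} (choosing $\tau(X)=I_{p,q}XI_{p,q}$, computing $\mathfrak{p}_\sigma\cap\mathfrak{p}_\tau$, picking the diagonal $\mathfrak{a}$, exponentiating to $H_{p,q}^\theta$, and invoking Flensted--Jensen's theorem for existence and uniqueness of $a$). One small caution: your final uniqueness heuristic via $GI_{p,q}G^T$ is not the correct right folding for this triple (the group-level $\tau$ is $G\mapsto I_{p,q}GI_{p,q}$, so the fold is $GI_{p,q}G^{-1}I_{p,q}$, not $GI_{p,q}G^T$), but this is immaterial since uniqueness is already supplied by the cited $\kak$ theorem.
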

The $\beta=2$ (resp. $\beta=4$) $\fact{8}$ factorization is obtained from the generalized Cartan decomposition of $\gl{n, \mathbb{C}}$ (resp. $\gl{n, \mathbb{H}}$). 

\begin{theorem}[$\fact{8}$, complex/quaternion]\label{thm:F8complexquaternion}
For any complex (resp. quaternionic) $n\times n$ invertible matrix $G$, there exist
\begin{itemize}
    \item An $n\times n$ complex (resp. quaternionic) unitary matrix $U$,
    \item Two complex (resp. quaternionic) invertible matrices $G_p\in\gl{p, \mathbb{C}}$ and $G_q\in\gl{q, \mathbb{C}}$, (resp., from $\gl{p, \mathbb{H}}$ and $\gl{q, \mathbb{H}}$)
    \item $q$ unique (up to order and signs) real numbers $\theta_1, \dots, \theta_q$,
\end{itemize}
such that the following factorization of $G$ holds:
\begin{equation}\label{eq:F8complexquaternion}
    G = U\begin{bmatrix}\begin{array}{cc:c}
        Ch & & Sh \\
         & I_{p-q} & \\ \hdashline
         Sh & & Ch
    \end{array}\end{bmatrix}
    \begin{bmatrix}
    \begin{array}{c:c}
    G_p & \\ \hdashline & G_q\end{array}
    \end{bmatrix}.
\end{equation}
\end{theorem}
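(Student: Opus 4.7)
The plan is to follow Algorithm \ref{alg:K1AK2noncompact} verbatim, mirroring the step-by-step computation already carried out for the real case in Section \ref{sec:gencartanex2}. For $\beta=2$ I would take $G=\gl{n,\mathbb{C}}$ with Cartan involution $\sigma(X)=-X^H$ (so $K_\sigma=\un{n}$), and for $\beta=4$ take $G=\gl{n,\mathbb{H}}$ with $\sigma(X)=-X^D$ (so $K_\sigma=\un{n,\mathbb{H}}$). In both cases pick the same non-Cartan involution $\tau(X)=I_{p,q}\,X\,I_{p,q}$, which visibly commutes with $\sigma$ because $I_{p,q}$ is $\beta$-Hermitian.

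Next I would read off $\mathfrak{k}_\tau$ and $\mathfrak{p}_\tau$: since $\tau$ is conjugation by $I_{p,q}$, the $+1$ eigenspace consists of block diagonal matrices $\begin{bmatrix}a & 0\\0 & d\end{bmatrix}$ with $a\in\mathbb{F}^{p\times p}$, $d\in\mathbb{F}^{q\times q}$, and the $-1$ eigenspace of anti-block-diagonal matrices. Exponentiating $\mathfrak{k}_\tau$ gives $K_\tau=\Gl_\beta(p)\times\Gl_\beta(q)$ embedded block-diagonally, exactly as in \eqref{eq:firstexKtau}. The Cartan decomposition $\mathfrak{g}=\mathfrak{k}_\sigma+\mathfrak{p}_\sigma$ has $\mathfrak{p}_\sigma$ equal to the $\beta$-Hermitian matrices, so the intersection $\mathfrak{p}_\sigma\cap\mathfrak{p}_\tau$ consists precisely of matrices $\begin{bmatrix}0 & b\\ b^\dagger & 0\end{bmatrix}$ with $b\in\mathbb{F}^{p\times q}$. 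The obvious maximal abelian subalgebra $\mathfrak{a}$ is obtained by restricting $b$ to have only real entries on its main (rectangular) diagonal and zeros elsewhere, and exponentiating produces exactly the hyperbolic block $H_{p,q}^\theta$ of Table \ref{tab:auxmatrix}.

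With these ingredients, Flensted-Jensen's $\kak$ theorem immediately yields $G=K_\sigma A K_\tau$, which is the claimed factorization \eqref{eq:F8complexquaternion}; uniqueness of the $\theta_l$ up to permutation and sign is the standard Weyl-group ambiguity.

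The main obstacle is twofold. First, strictly speaking the Flensted-Jensen theorem is stated for semisimple Lie algebras over $\mathbb{R}$, whereas $\gl{n,\mathbb{F}}$ is only reductive. This is handled by peeling off the (real) center as in the reductive-vs-semisimple remark: apply the theorem to $\mathfrak{sl}(n,\mathbb{F})$ and absorb the scalar factor into $A$ via the determinant (or equivalently restrict to matrices of unit $|\det|$ and rescale). Second, one must check that the chosen $\mathfrak{a}$ is genuinely maximal abelian inside $\mathfrak{p}_\sigma\cap\mathfrak{p}_\tau$; the standard argument is that if $b$ had an off-diagonal entry then bracketing with a generic element of $\mathfrak{a}$ would produce a nonzero commutator, but in the quaternion case one must additionally verify commutativity among the diagonal generators despite the non-commutativity of $\mathbb{H}$, which holds because the diagonal entries are real. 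Everything else is a mechanical repetition of the real-case computation.
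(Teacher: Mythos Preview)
Your proposal is correct and follows precisely the paper's own approach: the paper treats all of $\fact{8}$ as instances of Algorithm~\ref{alg:K1AK2noncompact} with the involution $\tau(X)=I_{p,q}XI_{p,q}$ (recorded in Table~\ref{tab:involutionlist2}), and works out the real case in full in Section~\ref{sec:gencartanex2} exactly as you describe, leaving the complex and quaternionic cases as the evident parallel computation. Your remarks on the reductive-versus-semisimple adjustment and on maximality of $\mathfrak{a}$ are appropriate refinements of points the paper handles informally.
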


\begin{remark}\label{rem:F8triangularformat}
The matrix with hyperbolic cosines and sines in $\fact{8}$ factorizations can be replaced by a simpler matrix. Applying appropriate Givens rotations (on the left) and column scaling by a diagonal matrix (on the right), we obtain a simple modification of \eqref{eq:F8real} (also a similarly form for \eqref{eq:F8complexquaternion}). 
\begin{equation}
    G = O'{\renewcommand{\arraystretch}{1.2}
    \begin{bmatrix}\begin{array}{cc:c}
        I_q & & \Sigma \\
         & I_{p-q} & \\ \hdashline
          & & I_q
    \end{array}\end{bmatrix}
    \begin{bmatrix}
    \begin{array}{c:c}
    G_p' & \\ \hdashline & G_q'\end{array}
    \end{bmatrix}},
\end{equation}
where $\Sigma$ is a $q\times q$ diagonal matrix placed at the top right corner. We use the prime notation to emphasize that the matrices $G_p', G_q', O'$ are modified from \eqref{eq:F8real}. 
\end{remark}

\par It is natural to consider the left folding of $\fact{5}$. By the Cholesky factorization we know that the set of all matrices $G^TG$, $G\in\gl{n, \mathbb{R}}$ is equal to the set of all symmetric positive definite matrices. Similarly we have that $\{G^HG: G\in\gl{n, \mathbb{C}}\}$ is the set of all Hermitian positive definite matrices. (Also holds for quaternionic cases which we will omit here.)

\begin{corollary}
Any $n\times n$ symmetric (resp. complex Hermitian) positive definite matrix $A$ can be tridiagonalized by a congruence transformation. In other words, there exist $G_p\in\gl{p, \mathbb{R}}$, $G_q\in\gl{q, \mathbb{R}}$ (resp. $G_p\in\gl{p, \mathbb{C}}$, $G_q\in\gl{q, \mathbb{C}}$) and $q$ real numbers $\theta_1, \dots, \theta_q$ such that the following holds:
\begin{equation}
    A = \begin{bmatrix}
    \begin{array}{c:c}
        G_p &  \\ \hdashline
         & G_q
    \end{array}
    \end{bmatrix}
    \begin{bmatrix}
    \begin{array}{cc:c}
        Ch & & Sh \\
         & I_{p-q} & \\ \hdashline
         Sh & & Ch
    \end{array}
    \end{bmatrix}
    \begin{bmatrix}
    \begin{array}{c:c}
        G_p &  \\ \hdashline
         & G_q
    \end{array}
    \end{bmatrix}^H.
\end{equation}
\end{corollary}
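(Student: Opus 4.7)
The corollary is exactly the left folding of the $\fact{8}$ real factorization described in Remark \ref{rem:glsvdnaming} and Section \ref{sec:folding}, applied via the fact that every symmetric positive definite $A$ can be realized as $A = G^TG$ for some invertible $G$ (e.g.\ by a Cholesky factor, or by any symmetric positive definite square root).

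The plan is as follows. First, given a symmetric positive definite $A\in\mathbb{R}^{n\times n}$, write $A = G^T G$ with $G\in\gl{n,\mathbb{R}}$. Next, apply Theorem \ref{thm:F8real} to $G$: there exist $O\in\ortho{n}$, $G_p\in\gl{p,\mathbb{R}}$, $G_q\in\gl{q,\mathbb{R}}$ and angles $\theta_1,\dots,\theta_q$ such that
\begin{equation*}
    G = O\,H_{p,q}^{\theta}\,\begin{bmatrix}\begin{array}{c:c}G_p & \\ \hdashline & G_q\end{array}\end{bmatrix},
\end{equation*}
where $H_{p,q}^{\theta}$ is the hyperbolic block matrix of Table \ref{tab:auxmatrix}. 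Substituting this into $A = G^TG$, the orthogonal factor cancels ($O^TO = I_n$) and the symmetry $H_{p,q}^{\theta} = (H_{p,q}^{\theta})^T$ leaves
\begin{equation*}
    A = \begin{bmatrix}\begin{array}{c:c}G_p^T & \\ \hdashline & G_q^T\end{array}\end{bmatrix}\bigl(H_{p,q}^{\theta}\bigr)^2\begin{bmatrix}\begin{array}{c:c}G_p & \\ \hdashline & G_q\end{array}\end{bmatrix}.
\end{equation*}

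The key computation is that $\bigl(H_{p,q}^\theta\bigr)^2 = H_{p,q}^{2\theta}$: squaring the $q+(p-q)+q$ block matrix and applying the identities $\cosh^2\theta_l + \sinh^2\theta_l = \cosh 2\theta_l$ and $2\cosh\theta_l\sinh\theta_l = \sinh 2\theta_l$ to each diagonal entry shows the squared matrix retains the same block pattern with angles doubled (the middle $I_{p-q}$ block is unchanged). Finally, reparameterize by setting $\tilde\theta_l := 2\theta_l$ and renaming $\tilde G_p := G_p^T$, $\tilde G_q := G_q^T$ to place the statement in the exact form claimed, namely $A = \operatorname{diag}(\tilde G_p,\tilde G_q)\,H_{p,q}^{\tilde\theta}\,\operatorname{diag}(\tilde G_p,\tilde G_q)^T$.

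There is no genuine obstacle: the only thing to be careful about is the bookkeeping around the doubling of the parameters and the swap between $G_p$ and $G_p^T$, which is purely cosmetic (both are still in $\gl{p,\mathbb{R}}$). Uniqueness of the $\theta_l$ up to permutation and sign in the corollary inherits directly from the corresponding uniqueness in Theorem \ref{thm:F8real}, modulo the factor of $2$ absorbed into the parameterization.
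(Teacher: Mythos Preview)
Your proof is correct and follows exactly the approach the paper intends: the corollary is stated immediately as the left folding of Theorem~\ref{thm:F8real}, with the Cholesky factorization supplying the passage from an arbitrary symmetric positive definite $A$ to some $G^TG$. Your explicit verification that $(H_{p,q}^\theta)^2 = H_{p,q}^{2\theta}$ via the hyperbolic double-angle identities, together with the cosmetic relabeling $\tilde G_p = G_p^T$, $\tilde\theta_l = 2\theta_l$, spells out precisely what the paper leaves implicit in the phrase ``left folding.''
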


\subsection{${\fact{9}}$ : ${(\Gl_\beta(n), \Un_{\beta}(n), \Un_\beta(p, q))}$, the hyperbolic SVD}\label{sec:F9}

\begin{gather*}
\boxed{
\begin{bmatrix}
\rule{0cm}{0.1cm}\\ n\times n \\ \text{{ Invertible }} \\ \rule{0cm}{0.1cm}
\end{bmatrix}
= 
\begin{bmatrix}
\rule{0cm}{0.1cm}\\ n\times n \\ \hspace{0.1cm} \text{ Unitary } \hspace{0.1cm}\, \\ \rule{0cm}{0.1cm}
\end{bmatrix}
\cdot
{\renewcommand{\arraystretch}{1.3}
\begin{bmatrix}
\LARGE{\text{$\diagdown$}} \rule{1.2cm}{0cm} \\ \sigma_l \\  \rule{1.1cm}{0cm}\LARGE{\text{$\diagdown$}}
\end{bmatrix}}
\cdot
{\renewcommand{\arraystretch}{1.3}
\begin{bmatrix}
n\times n \\ \hspace{0.1cm} \text{ Indefinite } \hspace{0.1cm}\, \\ \text{unitary}
\end{bmatrix}}
}
\end{gather*}

Suppose $n = p+q$, $p\ge q$ and let $A$ be the group of $n\times n$ positive diagonal matrices. The factorization $\fact{9}$ of real, complex and quaternionic invertible matrices arises from the following three generalized Cartan decompositions:
\begin{align*}
    \gl{n, \mathbb{R}} &= \ortho{n} \cdot A \cdot\ortho{p, q},\\
    \gl{n, \mathbb{C}} &= \un{n} \cdot A \cdot\un{p, q},\\
    \gl{n, \mathbb{H}} &= \un{n, \mathbb{H}}\cdot A \cdot \un{p, q, \mathbb{H}}.
\end{align*}
Using $\beta$ symbols we can denote these decompositions at once by 
\begin{equation}
    \Gl_\beta(n) = \Un_\beta(n) \cdot A \cdot \Un_\beta(p, q).
\end{equation}
We note that the real and complex cases appeared as a matrix factorization in 1989 \cite{onn1989hyperbolic} and was named the \textit{hyperbolic SVD} (HSVD). 

\begin{theorem}[HSVD, real]\label{thm:F9real}
For any $n\times n$ real invertible matrix $G$, there exist
\begin{itemize}
    \item An $n\times n$ orthogonal matrix $O\in\ortho{n}$,
    \item An indefinite orthogonal matrix $V\in\ortho{p, q}$,
    \item A unique (up to diagonal permutation) $n\times n$ positive diagonal matrix $\Sigma$ , 
\end{itemize}
such that the following factorization holds:
\begin{equation}
    G = O\Sigma V.
\end{equation}
The diagonal values $\sigma_1, \dots, \sigma_n$ of $\Sigma$ are called the hyperbolic singular values.
\end{theorem}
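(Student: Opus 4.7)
The plan is to realize Theorem \ref{thm:F9real} as a direct application of Algorithm \ref{alg:K1AK2noncompact}, along the lines of the two worked examples in Sections \ref{sec:gencartanex1} and \ref{sec:gencartanex2}. I take $G = \gl{n, \mathbb{R}}$, which from Table \ref{tab:cartanss} automatically supplies $K_\sigma = \ortho{n}$ with Cartan involution $\sigma(X) = -X^T$, and the second involution $\tau(X) = -I_{p,q} X^T I_{p,q}$ on $\mathfrak{gl}(n, \mathbb{R})$. A direct check shows $\tau^2 = \mathrm{id}$ and $\sigma\tau = \tau\sigma$ (both compositions send $X$ to $I_{p,q} X I_{p,q}$), so the hypotheses of the $\kak$ theorem are satisfied.

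Next I would execute the three algorithmic steps. The $+1$ eigenspace of $\tau$ is $\mathfrak{k}_\tau = \{X : X^T I_{p,q} + I_{p,q} X = 0\} = \mathfrak{o}(p, q)$, whose exponential gives $K_\tau = \ortho{p,q}$. The $-1$ eigenspace is $\mathfrak{p}_\tau = \{X : I_{p,q} X^T I_{p,q} = X\}$. Intersecting with $\mathfrak{p}_\sigma$ (the symmetric matrices) leaves symmetric matrices commuting with $I_{p,q}$, i.e., block-diagonal symmetric matrices with $p\times p$ and $q\times q$ diagonal blocks. An obvious maximal abelian subalgebra of this intersection is $\mathfrak{a}=$ diagonal matrices, whose exponential $A$ is the group of positive diagonal matrices. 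Plugging into the Flensted-Jensen theorem yields $\gl{n,\mathbb{R}} = \ortho{n} \cdot A \cdot \ortho{p,q}$, which is precisely the HSVD statement $G = O \Sigma V$.

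The main obstacle is the uniqueness of $\Sigma$ up to permutation. Flensted-Jensen guarantees uniqueness of $a \in A$ only after fixing a positive Weyl chamber, so the claim reduces to identifying the small Weyl group $N_{K_\sigma \cap K_\tau}(\mathfrak{a})/Z_{K_\sigma \cap K_\tau}(\mathfrak{a})$; here $K_\sigma \cap K_\tau = \ortho{p} \times \ortho{q}$ acts on the diagonal matrices by permutations within each of the two blocks together with sign changes, which account exactly for the permutation ambiguity and the passage between $A$ and positive diagonals. As an independent cross-check one can give a purely matrix-theoretic derivation: form the signature-$(p,q)$ symmetric matrix $G I_{p,q} G^T$ (signature $(p,q)$ by Sylvester's law of inertia since $G$ is invertible), spectrally diagonalize it as $O D O^T$ via the ordinary symmetric eigendecomposition, write $D = \Sigma I_{p,q} \Sigma$ after permuting so that the positive eigenvalues fill the first $p$ slots, and verify $V := \Sigma^{-1} O^T G$ satisfies $V I_{p,q} V^T = I_{p,q}$ by a one-line calculation, delivering $G = O \Sigma V$ explicitly.
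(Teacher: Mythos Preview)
Your proposal is correct and follows exactly the paper's approach: the paper obtains Theorem \ref{thm:F9real} as an instance of Algorithm \ref{alg:K1AK2noncompact} with $G=\gl{n,\mathbb{R}}$, $K_\sigma=\ortho{n}$, and the non-Cartan involution $\tau(X)=-I_{p,q}X^TI_{p,q}$ (cf.\ Table \ref{tab:involutionlist2}), which is precisely what you carry out. Your direct matrix-theoretic cross-check via diagonalizing $G I_{p,q} G^T$ is essentially the right folding the paper discusses around \eqref{eq:indefinitesymeigproblem}, so even that supplementary argument is already implicit in the paper's framework.
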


\par Recall the discussion regarding the left folding of the SVD in Remark \ref{rem:glsvdnaming}. The hyperbolic SVD is the hyperbolic analogue of the SVD in a sense that its left folding is the \textit{hyperbolic eigenproblem}. The hyperbolic eigenproblem for a symmetric positive definite matrix $K$ is defined as finding a pair $(\lambda, x)$ satisfying ($x\neq 0$)
\begin{equation}\label{eq:hyperboliceigproblem}
    Kx = \lambda I_{p, q} x,
\end{equation}
where $\lambda$ is called the \textit{hyperbolic eigenvalue} and the vector $x$ is called the \textit{hyperbolic eigenvector}. It can be shown that the matrix $X$, such that $X^TKX$ is a diagonal matrix, is exactly the matrix consisting of hyperbolic eigenvectors. In addition, the diagonal matrix $\Lambda = X^TKX$ is the diagonal matrix with hyperbolic eigenvalues. The left folding of the HSVD $A = O\Sigma V$ is
\begin{equation}
    A^TA = V^T\Sigma^2 V,
\end{equation}
which is equivalent to the hyperbolic eigenproblem above with $K=A^TA$, $V = X^{-1}$, $\Lambda = \Sigma^2$. Thus, we justify the name ``HSVD" for Theorem \ref{thm:F9real}. 
\begin{corollary}
Fix $n = p+q$. Any $n\times n$ real symmetric positive definite matrix $S$ can be diagonalized by a congruence transformation of some indefinite orthogonal matrix $V\in\ortho{p, q}$. 
\end{corollary}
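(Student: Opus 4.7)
The plan is to deduce the corollary directly from Theorem \ref{thm:F9real} by exploiting the left-folding relationship spelled out in Remark \ref{rem:glsvdnaming} and the paragraph immediately preceding the corollary. The key observation is that the set $\{A^TA : A\in \gl{n,\mathbb{R}}\}$ is precisely the set of $n\times n$ symmetric positive definite (SPD) matrices, so every SPD matrix arises as a left folding of some invertible matrix.

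First I would pick an SPD matrix $S$ and produce an invertible $A \in \gl{n,\mathbb{R}}$ with $S = A^TA$. This can be done with any square root: the Cholesky factorization $S = L L^T$ gives $A = L^T$, or alternatively one can use the unique SPD square root $S^{1/2}$ and set $A = S^{1/2}$. Either works; no properties beyond invertibility are needed.

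Next I would apply the real HSVD (Theorem \ref{thm:F9real}) to $A$, obtaining
\begin{equation*}
    A = O \Sigma V, \qquad O \in \ortho{n},\ \Sigma \text{ positive diagonal},\ V \in \ortho{p,q}.
\end{equation*}
Computing the congruence:
\begin{equation*}
    S = A^T A = V^T \Sigma O^T O \Sigma V = V^T \Sigma^2 V,
\end{equation*}
where $\Sigma^2$ is itself a positive diagonal matrix. Thus $V \in \ortho{p,q}$ realizes a congruence diagonalization of $S$, with the diagonal entries being the squares of the hyperbolic singular values of $A$.

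The only mild subtlety—and not really an obstacle—is checking that the signature $(p,q)$ can indeed be chosen freely, since the statement fixes $n = p+q$ but does not tie the signature to $S$. This is automatic: the HSVD of $A$ exists for any preassigned signature $(p,q)$ (the choice of $\tau$ in the $\kak$ triple determines the signature), so for each partition $n = p+q$ we obtain a $V \in \ortho{p,q}$ congruence-diagonalizing $S$. The corresponding diagonal $\Sigma^2$ then encodes the hyperbolic eigenvalues of $S$ in the sense of equation \eqref{eq:hyperboliceigproblem} with $K = S$.
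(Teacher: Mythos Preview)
Your proof is correct and follows essentially the same approach as the paper: you invoke the left folding of the HSVD (Theorem \ref{thm:F9real}) exactly as described in Remark \ref{rem:glsvdnaming} and the paragraph preceding the corollary, supplying the Cholesky step to ensure every SPD matrix arises as $A^TA$. The only minor point to keep in mind is that the matrix $V$ you produce satisfies $S = V^T \Sigma^2 V$, so the congruence diagonalization in the stated direction is effected by $V^{-1}$, which is again in $\ortho{p,q}$.
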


\par On the other hand, the right folding of the HSVD is the following important factorization. For any invertible matrix $A$, the matrix 
\begin{equation}\label{eq:indefinitesymeigproblem}
    A^TI_{p, q}A,
\end{equation}
is a symmetric matrix containing $p$ positive and $q$ negative eigenvalues. This eigenproblem is the \textit{indefinite symmetric eigenproblem}. The right folding of the HSVD is the eigendecomposition of \eqref{eq:indefinitesymeigproblem}. 

\par Combining the positive definite, negative definite and indefinite cases, we obtain a set of tools for analyzing the symmetric eigenproblems. The Cholesky factorization can be applied for any symmetric matrix $G$ with a small modification, (the LDL decomposition with $D=I_{p, q}$)
\begin{equation}
    G = LI_{p, q}L^T,
\end{equation}
where $p, q$ are numbers of positive and negative eigenvalues of $G$. The pair $(p, q)$ is often called the \textit{signature} or \textit{Sylvester's inertia}. Taking the HSVD of $L$ (if $q=0$ it is the usual SVD) the eigendecomposition of $G$ is obtained. 

\begin{corollary}
Any $n\times n$ invertible real symmetric matrix $S$ can be decomposed as the following:
\begin{equation}
    S = O\Lambda O^T,
\end{equation}
where $O\in\ortho{n}$ and $\Lambda$ is a real diagonal matrix.
\end{corollary}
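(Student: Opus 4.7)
My plan is to chain the two factorizations cited in the paragraph just above the corollary: the signature-based $LDL$ factorization $S = L I_{p,q} L^T$ and the hyperbolic SVD of the resulting $L$. Concretely, let $(p,q)$ denote the Sylvester inertia of $S$, so that $S$ admits a factorization $S = L I_{p,q} L^T$ with $L$ invertible (the case $q = 0$ is the ordinary Cholesky factorization, and the general indefinite case is mentioned in the text). Then I apply Theorem \ref{thm:F9real} (HSVD) to $L$, obtaining $L = O \Sigma V$ with $O \in \ortho{n}$, $\Sigma$ positive diagonal, and $V \in \ortho{p,q}$.

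The key algebraic step is to contract the indefinite-orthogonal factors. Because $V \in \ortho{p,q}$, we have $V^T I_{p,q} V = I_{p,q}$, and a short manipulation (writing $V^T = I_{p,q} V^{-1} I_{p,q}$ and inverting) shows also $V I_{p,q} V^T = I_{p,q}$. Substituting the HSVD of $L$ into the $LDL$-style factorization therefore gives
\begin{equation*}
S = O \Sigma V \, I_{p,q} \, V^T \Sigma O^T = O \Sigma I_{p,q} \Sigma O^T = O \Lambda O^T,
\end{equation*}
where $\Lambda := \Sigma I_{p,q} \Sigma = I_{p,q} \Sigma^2$ is a real diagonal matrix (since diagonal matrices commute), with exactly $p$ positive and $q$ negative entries. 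This is the desired orthogonal diagonalization, and as a bonus it realizes the spectral theorem for invertible symmetric matrices through the HSVD machinery rather than through a direct eigenvalue argument.

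The main obstacle is really the first step, namely justifying the $S = L I_{p,q} L^T$ decomposition for an arbitrary invertible symmetric $S$. A naive LDL step can fail with zero pivots on the way, so the cleanest route is to invoke Sylvester's law of inertia directly: since $S$ is invertible and symmetric, there exists an invertible $L$ such that $L^{-1} S L^{-T} = I_{p,q}$, where $(p,q)$ is the signature of $S$. Once this congruence-to-$I_{p,q}$ is in hand, the HSVD does all the remaining work, and the right-folding interpretation of the HSVD discussed earlier in the section makes the corollary essentially a restatement of what $\fact{9}$ already encodes.
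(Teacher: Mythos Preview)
Your proof is correct and follows essentially the same route as the paper: write $S = L I_{p,q} L^T$ via the indefinite $LDL$/Sylvester inertia step, apply the HSVD (Theorem~\ref{thm:F9real}) to $L$, and collapse the $V I_{p,q} V^T$ factor using the defining relation of $\ortho{p,q}$. Your remark that this is precisely the right folding of $\fact{9}$, together with your caveat about invoking Sylvester's law of inertia rather than a naive pivoted $LDL$, matches the paper's discussion exactly.
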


\begin{figure}[h]
    \title{\Large  Geometry of the Hyperbolic SVD: $G=O\Sigma V$\vspace{0.1cm}} 
    \centering
    \frame{\includegraphics[width=4.9in]{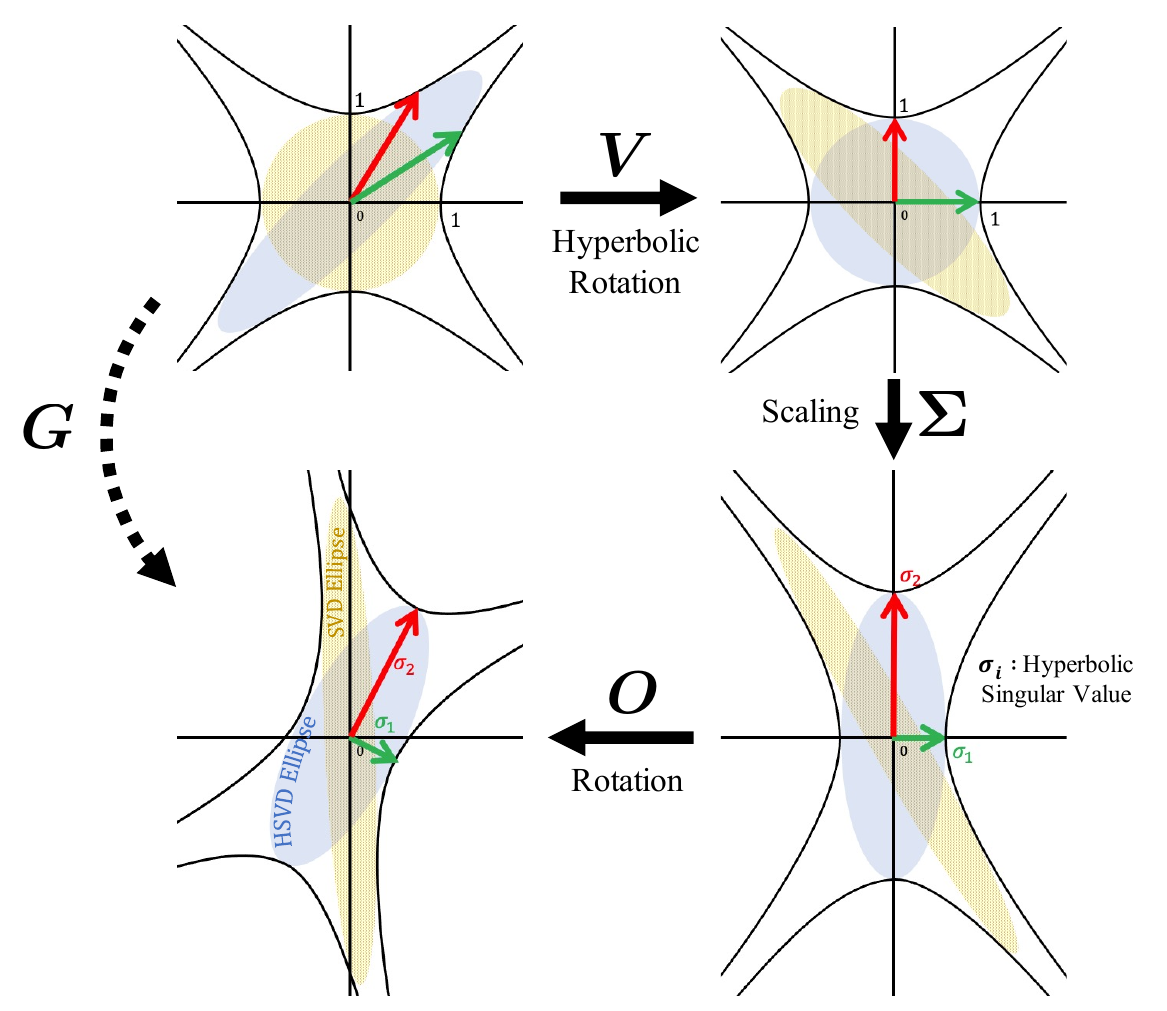}}
    \caption{The yellow shapes are the standard picture of the ordinary SVD ellipse, while the blue shape is the hyperbolic SVD ellipse: 1) Hyperbolic Rotation ($V$): preserves hyperbolas, yellow unit circle $\rightarrow$ yellow ellipse, blue ellipse $\rightarrow$ blue circle, red and green hyperbolic orthogonal vectors $\rightarrow$ another set of hyperbolic orthogonal vectors. 2) Scaling ($\Sigma$) reveals the hyperbolic singular values 3) Orthogonal ($O$) rotates (or reflects).}
    \label{fig:hsvd}
\end{figure}

\par Figure \ref{fig:hsvd} illustrates the geometry of the HSVD as a linear mapping on a real matrix $G$ in analogy to the famous three step illustration of the SVD. (Along with the SVD of $G$ which is illustrated by the yellow ellipses.) In Figure \ref{fig:hsvd}, $G=O\Sigma V$ is the HSVD of $G$. The red and green vectors in the top left represent two hyperbolic orthogonal vectors. ($v_1^T I_{p, q}v_2 = 0$.) Two vectors evolve as the matrices $V$, $\Sigma$ and $O$ consecutively applied as linear operators. The blue ellipse in the top left is the ellipse circumscribing two vectors, equivalently the inverse image of the unit disk (top right) under $V$. The bottom left picture encodes both singular values and hyperbolic singular values, as the length of axes of blue and yellow ellipse, respectively.

\par Complex and quaternionic HSVDs and their foldings are the following.

\begin{theorem}[HSVD, complex/quaternion]\label{thm:F9comp}
For any $n\times n$ complex (resp. quaternionic) invertible matrix $G$, there exist 
\begin{itemize}
    \item An $n\times n$ complex (resp. quaternionic) unitary matrix $U\in\un{n}$ (resp. $U\in\un{n, \mathbb{H}}$),
    \item An indefinite complex (resp. quaternionic) unitary matrix $V\in\un{p, q}$ (resp $V\in\un{p, q,\mathbb{H}}$),
    \item A unique (up to diagonal permutation) $n\times n$ positive diagonal matrix $\Sigma$,
\end{itemize}
such that the factorization $G = U\Sigma V$ holds.
\end{theorem}

\begin{corollary}
Any $n\times n$ complex (resp. quaternionic) Hermitian matrix $A$ is $\dagger$-congruent to a real positive diagonal matrix $D$ by a complex (resp. quaternionic) indefinite unitary matrix. In other words, there exist $V\in\un{p, q}$ (resp. $V\in\un{p, q, \mathbb{H}}$) such that
\begin{equation}
    A = VDV^\dagger.
\end{equation}
\end{corollary}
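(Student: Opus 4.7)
The plan is to derive this corollary by the left folding of the hyperbolic SVD, in direct analogy with the discussion immediately preceding the statement. First I would clarify the content: the form $A = V D V^\dagger$ with $V$ invertible and $D$ real positive diagonal forces $A$ to be Hermitian positive definite, since $V D V^\dagger$ inherits the inertia of $D$, namely $(n,0)$. So effectively the claim is that for any fixed partition $n = p+q$ and any Hermitian positive definite $A \in \mathbb{F}^{n\times n}$ (with $\mathbb{F} = \mathbb{C}$ or $\mathbb{H}$), such a factorization exists with $V$ in the appropriate indefinite unitary group $\un{p,q}$ (resp.\ $\un{p, q, \mathbb{H}}$).

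Given such $A$, I would first invoke a Cholesky-type factorization to write $A = G^\dagger G$ for some invertible $G \in \Gl_\beta(n)$. For $\beta = 2$ this is the classical Cholesky factorization, and for $\beta = 4$ it follows from standard quaternionic arguments (or by applying the complexify map $\complexify{\,\cdot\,}$ of Section~\ref{sec:RCHbasic} and reducing to the complex case). Next I would apply Theorem~\ref{thm:F9comp} (respectively Theorem~\ref{thm:F9quaternion}) to the invertible matrix $G$, obtaining $G = U\Sigma V'$ with $U \in \Un_\beta(n)$, $\Sigma$ positive diagonal, and $V' \in \Un_\beta(p,q)$. Then the left folding gives
\begin{equation*}
A \;=\; G^\dagger G \;=\; (V')^\dagger \Sigma\, U^\dagger U\, \Sigma\, V' \;=\; (V')^\dagger \Sigma^2 V'.
\end{equation*}

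To match the form $A = V D V^\dagger$ stated in the corollary, I would verify that $\Un_\beta(p,q)$ is closed under the conjugate transpose operation. Starting from $(V')^\dagger I_{p,q} V' = I_{p,q}$ and using $I_{p,q}^2 = I$, a short rearrangement yields $V' I_{p,q} (V')^\dagger = I_{p,q}$, which says precisely $(V')^\dagger \in \Un_\beta(p,q)$. Setting $V := (V')^\dagger$ and $D := \Sigma^2$ then completes the proof, since $D$ is a real positive diagonal matrix and $A = V D V^\dagger$ with $V$ in the required indefinite unitary group.

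I do not anticipate a serious obstacle; this is the group-level analogue of the real congruence-diagonalization corollary that was derived two pages earlier. The only points requiring any care are the quaternionic Cholesky factorization and the closure of $\un{p,q,\mathbb{H}}$ under the dual $D$; both follow either directly or by transporting the complex statement through the complexify map $\complexify{\,\cdot\,}$, under which the indefinite unitary groups are compatibly defined.
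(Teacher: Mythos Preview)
Your proposal is correct and follows exactly the paper's approach: the corollary is stated as the left folding of Theorems~\ref{thm:F9comp} and~\ref{thm:F9quaternion}, obtained by writing $A = G^\dagger G$ via Cholesky and then collapsing the unitary factor $U$. Your additional observation that the statement implicitly requires $A$ to be Hermitian \emph{positive definite} (since $VDV^\dagger$ with $D$ positive diagonal has inertia $(n,0)$) is a worthwhile clarification that the paper's phrasing omits.
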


\subsection{${\fact{10}}$ : ${(\Gl_\beta(2n), \Un_{\beta}(2n), \Symp_\beta(2n))}$, the symplectic SVD}\label{sec:F10}
\begin{gather*}
\boxed{
\begin{bmatrix}
\rule{0cm}{0.1cm}\\ 2n\times 2n \\ \text{{ Invertible }} \\ \rule{0cm}{0.1cm}
\end{bmatrix}
= 
\begin{bmatrix}
\rule{0cm}{0.1cm}\\ 2n\times 2n \\ \hspace{0.15cm} \text{ Unitary } \hspace{0.15cm}\, \\ \rule{0cm}{0.1cm}
\end{bmatrix}
\cdot
{\renewcommand{\arraystretch}{0.65}
\begin{bmatrix}
    \diagdown \rule{1.5cm}{0cm} \\
    \sigma_l  \rule{0.9cm}{0cm}\\
     \diagdown \rule{0.3cm}{0cm}\\
     \hspace{0.4cm} \diagdown  \\
    \hspace{1cm} \sigma_l \\
    \hspace{1.6cm} \diagdown \\
\end{bmatrix}}
\cdot
\begin{bmatrix}
\rule{0cm}{0.1cm}\\ 2n\times 2n \\ \text{{ Symplectic }} \\ \rule{0cm}{0.1cm}
\end{bmatrix}
}
\end{gather*}

The factorization $\fact{10}$ of real and complex invertible matrices involves symplectic matrices from $\Symp_\beta(2n)$. Recall that a $2n\times 2n$ real or complex symplectic matrix $A$ satisfies the property $A^TJ_nA = J_n$ and the transpose $A^T$ is always the (non-conjugate) regular transpose regardless of $\mathbb{F} = \mathbb{R}, \mathbb{C}$. Refer to Section \ref{sec:symplectic} for more details on the symplectic group.

\par The following factorizations are similar to the HSVD with a replacement of the indefinite orthogonal/unitary matrix by a symplectic matrix. Again, recall the discussion about the nomenclature of these factorizations in Remark \ref{rem:glsvdnaming}. In fact, the left folding of these factorizations are related to the so-called \textit{symplectic eigenvalues}. Correspondingly, we call this the \textit{symplectic SVD}. 

\begin{theorem}[symplectic SVD, real]\label{thm:F10real}
For any $2n\times 2n$ real invertible matrix $G$, there exist 
\begin{itemize}
    \item A $2n\times 2n$ orthogonal matrix $O\in\ortho{2n}$, 
    \item A $2n\times 2n$ real symplectic matrix $S\in\symp{2n, \mathbb{R}}$,
    \item A unique $n\times n$ positive diagonal matrix $\Sigma$ (up to diagonal permutation),
\end{itemize}
such that the following factorization holds:
\begin{equation}
    G = O \twotwo{\Sigma}{}{}{\Sigma} S.
\end{equation}
\end{theorem}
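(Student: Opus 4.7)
The plan is to derive this factorization from Algorithm 3.1 applied to the triple $(G, K_\sigma, K_\tau) = (\gl{2n, \mathbb{R}}, \ortho{2n}, \symp{2n, \mathbb{R}})$. The non-Cartan involution I would choose on $\mathfrak{g} = \mathfrak{gl}(2n, \mathbb{R})$ is $\tau(X) = J_n X^T J_n$. A direct check gives $\tau^2 = \mathrm{id}$ and $\tau\sigma = \sigma\tau$ for the Cartan involution $\sigma(X) = -X^T$, and the $+1$-eigenspace of $\tau$ equals $\mathfrak{sp}(2n, \mathbb{R}) = \{X : X^T J_n + J_n X = 0\}$; so the analytic subgroup delivered by the algorithm is indeed $K_\tau = \symp{2n, \mathbb{R}}$.

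The next task is to compute $\mathfrak{p}_\sigma \cap \mathfrak{p}_\tau$. Since $\mathfrak{p}_\sigma$ is the space of real symmetric matrices, a symmetric $X$ lies in $\mathfrak{p}_\tau$ iff $J_n X^T J_n = -X$, which after using $X^T = X$ and $J_n^2 = -I$ reduces to $[X, J_n] = 0$. A real symmetric $2n\times 2n$ matrix commuting with $J_n$ is easily seen to have block form
\begin{equation*}
X = \twotwo{A}{B}{-B}{A}, \qquad A = A^T,\quad B = -B^T,
\end{equation*}
which is precisely $\realify{A + iB}$ for the $n\times n$ Hermitian matrix $A + iB$. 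Since $\realify{\,\cdot\,}$ is an injective $\mathbb{R}$-algebra homomorphism, a maximal abelian subalgebra of $\mathfrak{p}_\sigma \cap \mathfrak{p}_\tau$ corresponds to one in the Hermitians, i.e., to the real diagonal matrices. Taking $\mathfrak{a} := \{\realify{D} : D \in \mathbb{R}^{n\times n} \text{ diagonal}\}$ and exponentiating yields $A = \exp(\mathfrak{a}) = \big\{\twotwo{\Sigma}{}{}{\Sigma} : \Sigma > 0 \text{ diagonal}\big\}$.

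Applying the generalized Cartan decomposition (Theorem 3.3) then produces $O \in \ortho{2n}$, a symplectic $S \in \symp{2n, \mathbb{R}}$, and a positive diagonal $\Sigma$ with $G = O \cdot \twotwo{\Sigma}{}{}{\Sigma} \cdot S$; uniqueness of $\Sigma$ up to entry-permutation follows from the little Weyl-group action on $\mathfrak{a}$, which in this rank-$n$ setup is the symmetric group $S_n$. The main obstacle I expect is the maximality assertion for $\mathfrak{a}$ (rather than mere commutativity): one must rule out enlarging $\mathfrak{a}$ by a realified off-diagonal Hermitian matrix, and the cleanest route is exactly the $\realify{\,\cdot\,}$ correspondence together with the classical fact that any maximal commuting family of Hermitians is simultaneously diagonalizable. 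A secondary technical point is that Theorem 3.3 is phrased for semisimple $\mathfrak{g}$, whereas $\gl{2n,\mathbb{R}}$ is only reductive; as the paper already remarks, one handles this by separating the central $\mathbb{R}^{\times}$ scalar factor (absorbing it into $\Sigma$) and applying the theorem to the semisimple part.
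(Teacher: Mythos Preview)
Your proposal is correct and follows exactly the paper's approach: it instantiates Algorithm~\ref{alg:K1AK2noncompact} with the involution $\tau(X)=J_nX^TJ_n$ listed in Table~\ref{tab:involutionlist2} for $\fact{10}$, computes $\mathfrak{p}_\sigma\cap\mathfrak{p}_\tau$ as the realified Hermitian matrices, and takes the realified real diagonals as the maximal abelian $\mathfrak{a}$. The only point worth tightening is the identification of the little Weyl group; here the restricted root system is of type $C_n$, so the group acting on $\mathfrak{a}$ is the hyperoctahedral group (signed permutations), but since $\Sigma$ is required to be positive the sign freedom is already eliminated and uniqueness up to permutation of the diagonal entries is exactly what remains.
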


The symplectic SVD diagonalizes an invertible matrix by the combination of the (left) orthogonal transformation and the (right) symplectic transformation (or the other way around). This is first introduced by Xu in 2003 \cite{xu2003svd} and Xu calls this factorization the \textit{SVD-like decomposition}. (See \cite{xu2005numerical} for an algorithm.) However as we argue in Remark \ref{rem:glsvdnaming}, we feel that the factorizations of $\gl{n, \mathbb{F}}$ in this section are more appropriate to be called a variant of the SVD. 

\par The following left folding of Theorem \ref{thm:F10real} is another eigen-like problem. 
\begin{corollary}\label{thm:symplecticcong}
Any real $2n\times 2n$ symmetric positive definite matrix $A$ is congruent to a positive diagonal matrix $\Lambda$ of the form $\begin{bsmallmatrix}\Sigma & \\ & \Sigma\end{bsmallmatrix}$ by a real symplectic matrix $S\in\symp{2n, \mathbb{R}}$. In other words, we have
\begin{equation}
    SAS^T = \twotwo{\Sigma}{}{}{\Sigma}.
\end{equation}
\end{corollary}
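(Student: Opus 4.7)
The plan is to derive this corollary as a direct instance of left folding (Section \ref{sec:folding}) applied to Theorem \ref{thm:F10real}. Any $2n\times 2n$ symmetric positive definite $A$ can be written as $A = G^TG$ for some invertible $G \in \gl{2n,\mathbb{R}}$ (take $G$ to be the positive definite square root of $A$, or use the Cholesky factor). Then every such $A$ is covered once we understand how the left fold of the symplectic SVD acts.

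Applying Theorem \ref{thm:F10real} to $G$ yields $G = O D S_0$, where $O \in \ortho{2n}$, $S_0 \in \symp{2n,\mathbb{R}}$, and $D = \twotwo{\Sigma'}{}{}{\Sigma'}$ for some positive diagonal $\Sigma'$. Since $O^TO = I_{2n}$ and $D$ is symmetric,
\begin{equation*}
  A = G^TG = S_0^T D^T O^T O D S_0 = S_0^T D^2 S_0 = S_0^T \twotwo{\Sigma'^{\,2}}{}{}{\Sigma'^{\,2}} S_0.
\end{equation*}
This is precisely the left fold under the involution $\tau_1(X) = X^{-T}$ applied to Theorem \ref{thm:F10real}, consistent with the recipe in \eqref{eq:foldingrecipe}, because $\tau_1$ fixes the orthogonal factor on the left and replaces the symplectic factor $S_0$ on the right by $S_0^T$ after squaring the middle.

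To obtain the stated form $SAS^T = \twotwo{\Sigma}{}{}{\Sigma}$, set $S := S_0^{-T}$ and $\Sigma := \Sigma'^{\,2}$. Then $SAS^T = S_0^{-T}(S_0^TD^2S_0)S_0^{-1} = D^2$, which is exactly $\twotwo{\Sigma}{}{}{\Sigma}$ in positive diagonal form. The only real point of care is verifying that $S_0^{-T}$ remains symplectic: from $S_0^T J_n S_0 = J_n$ one inverts both sides to get $S_0^{-1} J_n^{-1} S_0^{-T} = J_n^{-1}$, and using $J_n^{-1} = -J_n$ this rearranges to $(S_0^{-T})^T J_n (S_0^{-T}) = J_n$, so indeed $S_0^{-T} \in \symp{2n,\mathbb{R}}$.

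The main obstacle is essentially bookkeeping: confirming that the block-diagonal structure $\diag(\Sigma',\Sigma')$ of the $A$-factor in Theorem \ref{thm:F10real} is preserved under squaring (trivial, since it is diagonal) and that the symplectic group is invariant under the inverse-transpose involution. Once these are checked, the corollary follows by a one-line computation from Theorem \ref{thm:F10real}, and illustrates precisely the left-folding paradigm that links $\kak$-type decompositions of $\Gl_\beta$ with congruence diagonalizations of Hermitian positive definite matrices, as anticipated in Remark \ref{rem:glsvdnaming}.
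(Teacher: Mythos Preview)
Your proof is correct and follows exactly the approach the paper indicates: the corollary is obtained as the left folding of Theorem~\ref{thm:F10real}, writing $A=G^TG$ (via Cholesky or the positive square root) and collapsing the orthogonal factor. Your added verification that $S_0^{-T}\in\symp{2n,\mathbb{R}}$ and the identification $\Sigma=\Sigma'^{\,2}$ fill in precisely the bookkeeping the paper leaves implicit.
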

Corollary \ref{thm:symplecticcong} is called \textit{Williamson's theorem}. It is a special case of Corollary of Theorem 2 in Williamson's original paper in 1936 \cite{williamson1936algebraic}, which is later named and summarized in 1978, in Appendix 6 of \cite{arnol2013mathematical}. It has been studied in several numerical linear algebra papers with applications on the Hamiltonian dynamics, quantum mechanics, quantum information, etc. See \cite{bhatia2015symplectic,bovdi2018symplectic,simon1999congruences} for detailed discussions. The establishment of the name ``Williamson's theorem" is very well explained in Section 1 of \cite{ikramov2018symplectic} or in the bibliography of \cite{simon1999congruences}. 

\par The positive real values appearing on the diagonal of $\Sigma$ are called \textit{symplectic eigenvalues} \cite{bhatia2015symplectic,ikramov2018symplectic}. (Note that a similar terminology, \textit{symplectic eigenproblem} may refer to another concept, the eigenvalue structure of symplectic matrices.) Since this is another special eigenvalues, we again justify the name ``symplectic SVD," following the discussion in Remark \ref{rem:glsvdnaming}. 

\par On the other hand, another corollary follows from the right folding of the symplectic SVD.
\begin{corollary}\label{cor:F10rightfolding}
Any real $2n\times 2n$ invertible skew-symmetric matrix $A$ has the following factorization:
\begin{equation}
    A = O\twotworr{0}{\Lambda}{-\Lambda}{0}O^T
\end{equation}
where $O\in\ortho{2n}$ is an orthogonal matrix, $\Lambda$ is an $n\times n$ positive diagonal matrix. 
\end{corollary}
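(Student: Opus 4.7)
The plan is to derive the corollary from the symplectic SVD (Theorem \ref{thm:F10real}) in the spirit of the right folding of Section \ref{sec:folding}. The starting point is the classical linear Darboux theorem: every non-degenerate alternating bilinear form on $\mathbb{R}^{2n}$ is equivalent to the standard symplectic form $J_n$, which is to say that every invertible skew-symmetric $A\in\mathbb{R}^{2n\times 2n}$ may be written as $A = G J_n G^T$ for some $G\in\gl{2n,\mathbb{R}}$. I would briefly recall this via the standard inductive construction of a symplectic basis: choose vectors $u,v$ with $u^TAv\ne 0$, rescale so that $u^TAv=1$, and recurse on the $A$-orthogonal complement of $\mathrm{span}\{u,v\}$.

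Once $A = GJ_nG^T$ is in hand, I would apply Theorem \ref{thm:F10real} to write $G = O\,\stwotwo{\Sigma}{0}{0}{\Sigma}\,S$, and then use the companion symplectic identity $SJ_nS^T = J_n$. The latter follows from the defining relation $S^TJ_nS=J_n$ by rearranging to $S^{-1} = -J_nS^TJ_n$ and substituting, equivalently by observing that the transpose of a symplectic matrix is again symplectic. Combining these ingredients gives the short algebraic chain
\begin{equation*}
A = GJ_nG^T = O\twotwo{\Sigma}{0}{0}{\Sigma}(SJ_nS^T)\twotwo{\Sigma}{0}{0}{\Sigma}O^T = O\twotwo{\Sigma}{0}{0}{\Sigma}J_n\twotwo{\Sigma}{0}{0}{\Sigma}O^T = O\twotworr{0}{\Sigma^2}{-\Sigma^2}{0}O^T,
\end{equation*}
so that setting $\Lambda := \Sigma^2$ delivers the stated factorization; positivity and diagonality of $\Lambda$ are inherited from the symplectic singular values $\Sigma$.

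The only non-routine input is the Darboux reduction $A = GJ_nG^T$; everything else is a direct manipulation, which makes precise the slogan that this corollary is the right folding of Theorem \ref{thm:F10real}, with the symplectic factor $S$ absorbed into the invariant form $J_n$. Conversely, any matrix of the form $O\,\stwotworr{0}{\Lambda}{-\Lambda}{0}\,O^T$ with $\Lambda$ positive diagonal is manifestly invertible and skew-symmetric, so this decomposition furnishes a complete canonical form for invertible real skew-symmetric $2n\times 2n$ matrices, with the diagonal entries of $\Lambda$ playing the role of Pfaffian-type invariants.
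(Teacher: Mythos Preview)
Your proposal is correct and follows essentially the same route as the paper: write the invertible skew-symmetric matrix as $A = GJ_nG^T$ (the paper cites the $BJB^T$ factorization of Benner--Byers for this, where you invoke the Darboux/symplectic-basis construction), apply the symplectic SVD of Theorem~\ref{thm:F10real} to $G$, and use $SJ_nS^T = J_n$ to collapse the symplectic factor, yielding $\Lambda = \Sigma^2$. The only difference is cosmetic---your explicit justification that $S^T$ is symplectic is left implicit in the paper's one-line folding argument.
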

This result is well studied in \cite{xu2003svd}. The set of all invertible skew-symmetric matrices is the collection of $GJ_nG^T$ for invertible $G$, due to the $BJB^T$ factorization \cite{benner2000cholesky}. The right folding of the symplectic SVD $G = O\stwotwo{\Sigma}{}{}{\Sigma} S$ gives $GJG^T = O\stwotwo{\Sigma}{}{}{\Sigma} J\stwotwo{\Sigma}{}{}{\Sigma} O^T$, equivalent to the above result by setting $\Lambda = \Sigma^2$. 

\par A complex version of $\fact{10}$ and Corollary \ref{thm:symplecticcong} follows. Corollary \ref{cor:F10rightfolding} is also extended to the complex skew-symmetric cases which we will omit.
\begin{theorem}[symplectic SVD, complex]\label{thm:F10complex}
For any $2n\times 2n$ complex invertible matrix $G$, there exist 
\begin{itemize}
    \item A $2n\times 2n$ unitary matrix $U\in\un{2n}$,
    \item A $2n\times 2n$ complex symplectic matrix $S\in\symp{2n, \mathbb{C}}$,
    \item A unique $n\times n$ positive diagonal matrix $\Sigma$ (up to diagonal permutation),
\end{itemize}
such that the following factorization holds:
\begin{equation}
    G = U \twotwo{\Sigma}{}{}{\Sigma} S.
\end{equation}
\end{theorem}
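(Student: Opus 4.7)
The plan is to apply Algorithm \ref{alg:K1AK2noncompact} to the generalized Cartan triple $(G, K_\sigma, K_\tau) = (\gl{2n, \mathbb{C}}, \un{2n}, \symp{2n, \mathbb{C}})$, the complex analogue of the set-up used for Theorem \ref{thm:F10real}. I would take the non-Cartan involution $\tau(X) = -J_n^{-1} X^T J_n$ on $\mathfrak{g} = \mathfrak{gl}(2n, \mathbb{C})$, whose fixed-point set is precisely $\mathfrak{sp}(2n, \mathbb{C}) = \{X : X^T J_n + J_n X = 0\}$, so that $K_\tau = \symp{2n, \mathbb{C}}$. Using $J_n^H = -J_n$, a direct computation shows that $\sigma\tau(X) = J_n \overline{X} J_n^{-1} = \tau\sigma(X)$, where $\sigma(X) = -X^H$ is the Cartan involution, so the hypotheses of the generalized Cartan decomposition are met.

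Next I would compute the intersection $\mathfrak{p}_\sigma \cap \mathfrak{p}_\tau$. Writing $X = \stwotwo{A}{B}{C}{D}$ in $n \times n$ blocks, the Hermitian condition from $\mathfrak{p}_\sigma$ forces $A = A^H$, $D = D^H$, $C = B^H$, while the condition $X^T J_n = J_n X$ from $\mathfrak{p}_\tau$ forces $D = A^T$ and both $B$ and $C$ to be complex skew-symmetric. Combining these identifies
\begin{equation*}
    \mathfrak{p}_\sigma \cap \mathfrak{p}_\tau = \left\{\stwotwo{A}{B}{-\overline{B}}{\overline{A}} : A = A^H,\ B = -B^T\right\}.
\end{equation*}

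The crucial step is exhibiting a maximal abelian subalgebra $\mathfrak{a}$ inside this space. I propose $\mathfrak{a} = \{\stwotwo{\Lambda}{}{}{\Lambda} : \Lambda \in \mathbb{R}^{n\times n} \text{ diagonal}\}$, which is visibly abelian and lies in $\mathfrak{p}_\sigma \cap \mathfrak{p}_\tau$. The hard part is maximality: suppose $Y = \stwotwo{A}{B}{-\overline{B}}{\overline{A}}$ commutes with every element of $\mathfrak{a}$; computing $[\stwotwo{\Lambda}{}{}{\Lambda}, Y]$ block by block yields $\Lambda A - A\Lambda = 0$ and $\Lambda B - B\Lambda = 0$ for every real diagonal $\Lambda$. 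The first forces $A$ to be diagonal (hence real diagonal, since Hermitian), and the second forces $B$ to be diagonal; but a diagonal skew-symmetric matrix must be zero, so $Y \in \mathfrak{a}$. This skew-symmetry argument is the main technical point, and it is exactly what makes $\Sigma$ appear twice rather than as a $\stwotwo{\Sigma}{}{}{\Sigma^{-1}}$ pair.

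Exponentiating gives $A = \exp(\mathfrak{a}) = \{\stwotwo{\Sigma}{}{}{\Sigma} : \Sigma \text{ positive diagonal}\}$, and the generalized Cartan decomposition theorem immediately yields the factorization $G = U \stwotwo{\Sigma}{}{}{\Sigma} S$ with $U \in \un{2n}$ and $S \in \symp{2n, \mathbb{C}}$. Uniqueness of $\Sigma$ up to a permutation is the standard Weyl group statement: the restricted Weyl group acting on $\mathfrak{a}$ is the symmetric group $S_n$ permuting the diagonal entries of $\Lambda$ (and hence of $\Sigma$) simultaneously in both blocks, exactly as in the real case (Theorem \ref{thm:F10real}).
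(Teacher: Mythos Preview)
Your proposal is correct and follows exactly the approach the paper prescribes: you are carrying out Algorithm~\ref{alg:K1AK2noncompact} for the triple $(\gl{2n,\mathbb{C}},\un{2n},\symp{2n,\mathbb{C}})$ with the involution $\tau(X)=J_nX^TJ_n$ listed in Table~\ref{tab:involutionlist2}, and your identification of $\mathfrak{p}_\sigma\cap\mathfrak{p}_\tau$ and the maximal abelian $\mathfrak{a}$ is the explicit computation the paper leaves to the reader. The skew-symmetry argument forcing $B=0$ is indeed the key point distinguishing the $\stwotwo{\Sigma}{}{}{\Sigma}$ middle factor here from the $\stwotwo{\Sigma}{}{}{\Sigma^{-1}}$ of $\fact{11}$.
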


\begin{corollary}
Any $2n\times 2n$ complex Hermitian positive definite matrix $A$ is $H$-congruent to a positive diagonal matrix $D$ with repeating diagonal values, by a complex symplectic matrix $S\in\symp{2n, \mathbb{C}}$. 
\end{corollary}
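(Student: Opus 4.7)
The plan is to obtain this statement as the left folding of the complex symplectic SVD (Theorem \ref{thm:F10complex}), mirroring exactly how Williamson's theorem (Corollary \ref{thm:symplecticcong}) arises as the folding of the real symplectic SVD. The folding recipe of Section \ref{sec:folding} applies with the global involution $\tau_1(X) = (X^H)^{-1}$, which fixes the unitary factor $U \in \un{2n}$ sitting on the left of Theorem \ref{thm:F10complex}.

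First I would write $A = G^H G$ for some invertible $G \in \gl{2n, \mathbb{C}}$. This is standard: one may take $G$ to be the (unique Hermitian) positive definite square root $A^{1/2}$, or equivalently use the Cholesky factorization $A = LL^H$ and set $G = L^H$. Applying Theorem \ref{thm:F10complex} to $G$ yields $U \in \un{2n}$, a complex symplectic $S \in \symp{2n, \mathbb{C}}$, and a real positive diagonal $\Sigma \in \mathbb{R}^{n\times n}$ with
\[
G \;=\; U \twotwo{\Sigma}{}{}{\Sigma} S.
\]
Taking conjugate transposes and multiplying, I use $\Sigma^H = \Sigma$ (since $\Sigma$ is real) together with $U^H U = I_{2n}$ to collapse the unitary factor:
\[
A \;=\; G^H G \;=\; S^H \twotwo{\Sigma}{}{}{\Sigma} U^H U \twotwo{\Sigma}{}{}{\Sigma} S \;=\; S^H \twotwo{\Sigma^2}{}{}{\Sigma^2} S.
\]
Setting $D = \stwotwo{\Sigma^2}{}{}{\Sigma^2}$ gives a positive real diagonal matrix whose entries occur in pairs, and rewriting the identity as $(S^{-1})^H A S^{-1} = D$, with $S^{-1} \in \symp{2n, \mathbb{C}}$ since the symplectic matrices form a group, displays $A$ as Hermitian-congruent to $D$ via a complex symplectic matrix.

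I do not expect a substantive obstacle: the derivation is mechanical once Theorem \ref{thm:F10complex} is in hand. The only conceptual care needed is that, since $A$ is Hermitian (not complex symmetric), the appropriate folding uses $\tau_1(X) = (X^H)^{-1}$ rather than $\tau_1(X) = (X^T)^{-1}$; consequently it is $U^H U$ that collapses to the identity, and $\Sigma^2$ (rather than $\Sigma \Sigma^T$) appears in $D$ automatically because $\Sigma$ has real entries. The ``repeating diagonal values'' structure of $D$ is inherited from the block $\stwotwo{\Sigma}{}{}{\Sigma}$ in Theorem \ref{thm:F10complex}, and is preserved under the squaring.
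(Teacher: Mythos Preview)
Your proposal is correct and follows exactly the paper's approach: the corollary is stated as the left folding of Theorem~\ref{thm:F10complex}, obtained by writing $A=G^HG$ (Cholesky) and collapsing the unitary factor via $U^HU=I$, just as Williamson's theorem (Corollary~\ref{thm:symplecticcong}) is the left folding of the real case.
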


\subsection{${\fact{11}}$ : ${(\Gl_\beta(2n), \Un_{\beta}(2n), \Gl_{2\beta}(n))}$}\label{sec:F11}

\begin{gather*}
\boxed{
{\renewcommand{\arraystretch}{1.4}
\begin{bmatrix}
\rule{0cm}{0.1cm}\\ 2n\times 2n \\ \hspace{0.3cm} \text{$\beta$-invertible} \hspace{0.3cm}\, \\ \rule{0cm}{0.1cm}
\end{bmatrix}}
= 
{\renewcommand{\arraystretch}{1.4}
\begin{bmatrix}
\rule{0cm}{0.1cm}\\ 2n\times 2n \\ \hspace{0.4cm} \text{$\beta$-unitary} \hspace{0.4cm}\, \\ \rule{0cm}{0.1cm}
\end{bmatrix}}
\cdot
{\small\begin{bmatrix}
    \makebox(30,30){\makecell{$\diagdown$ \hspace{0.7cm}\,\\$\sigma_l$\\ \hspace{0.7cm}$\diagdown$}} & 
    \hspace{-0.2cm}\makebox(30,30){} \\
    \makebox(30,30){} & 
    \hspace{-0.2cm}\makebox(30,30){\makecell{$\diagdown$ \hspace{0.75cm}\,\\$\frac{1}{\sigma_l}$\\ \hspace{0.7cm}$\diagdown$}} 
\end{bmatrix}}
\cdot
\overbrace{\small\begin{bmatrix}
\colorbox{gray!25}{\makebox(27,27){$\,$}}
& \hspace{-0.35cm}\colorbox{gray!50}{\makebox(27,27){$\,$}}\\ 
\colorbox{gray!50}{\makebox(27,27){$\,$}} 
& \hspace{-0.35cm}\colorbox{gray!25}{\makebox(27,27){$\,$}}
\end{bmatrix}}^{\text{realified/complexified}}
\hspace{-2.7cm}{\renewcommand{\arraystretch}{1.25}
\begin{matrix}
 n\times n \\ \hspace{0.3cm}\text{$2\beta$-invertible} \hspace{0.3cm}\,
\end{matrix}}
}
\end{gather*}

The factorization $\fact{11}$ of a size $2n\times 2n$ $\beta$-invertible matrix has a factor of $\Gl_{2\beta}(n)$, with the realify or the complexify map applied. For real invertible matrices, we have the following factorization $\fact{11}$.
\begin{theorem}[$\fact{11}$, real]\label{thm:F11real}
For any $2n\times 2n$ real invertible matrix $G$, there exist 
\begin{itemize}
    \item A $2n\times 2n$ orthogonal matrix $O\in\ortho{2n}$,
    \item Real matrices $X, Y\in\mathbb{R}^{n\times n}$ such that $X + iY$ is complex invertible,
    \item A unique $n\times n$ positive diagonal matrix $\Sigma$ (up to diagonal permutation),
\end{itemize}
such that the following factorization holds:
\begin{equation}
    G = O \twotwo{\Sigma}{}{}{\Sigma^{-1}}\twotworr{X}{Y}{-Y}{X}.
\end{equation}
The last factor $V = \begin{bsmallmatrix}\,\,X & Y \\ -Y & X \end{bsmallmatrix}$ is a realified complex invertible matrix $\realify{X+iY}$, characterized by $-J_nVJ_n = V$.
\end{theorem}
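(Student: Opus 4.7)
The plan is to apply Algorithm \ref{alg:K1AK2noncompact} to the generalized Cartan triple $(\gl{2n,\mathbb{R}},\ortho{2n},\realify{\gl{n,\mathbb{C}}})$. The natural candidate for the second involution is $\tau(X)=J_nXJ_n^{-1}=-J_nXJ_n$ on $\mathfrak{g}=\mathfrak{gl}(2n,\mathbb{R})$; its group-level version fixes exactly those matrices that commute with $J_n$, which is precisely the characterization $-J_nVJ_n=V$ stated for $V\in\realify{\gl{n,\mathbb{C}}}$ in the theorem. I would first check $\tau^2=\mathrm{id}$ (immediate because $J_n^2=-I_{2n}$) and $\tau\sigma=\sigma\tau$ with $\sigma(X)=-X^T$ (both equalities use only $J_n^T=-J_n$).

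Writing $X$ in $n\times n$ blocks, $XJ_n=J_nX$ gives $\mathfrak{k}_\tau=\realify{\mathfrak{gl}(n,\mathbb{C})}$, exponentiating to $K_\tau=\realify{\gl{n,\mathbb{C}}}$, while $XJ_n=-J_nX$ forces $X=\stwotwo{A}{B}{B}{-A}$ with arbitrary $A,B\in\mathbb{R}^{n\times n}$. Intersecting with the Cartan piece $\mathfrak{p}_\sigma=\{X:X^T=X\}$ yields
\begin{equation*}
\mathfrak{p}_\sigma\cap\mathfrak{p}_\tau=\left\{\stwotwo{A}{B}{B}{-A}:A=A^T,\ B=B^T\right\}.
\end{equation*}

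The heart of the argument is choosing and verifying a maximal abelian subalgebra. I take $\mathfrak{a}=\bigl\{\stwotwo{D}{0}{0}{-D}:D\in\mathbb{R}^{n\times n}\ \text{diagonal}\bigr\}$, which is visibly abelian. For maximality, if $\stwotwo{A}{B}{B}{-A}\in\mathfrak{p}_\sigma\cap\mathfrak{p}_\tau$ commutes with every element of $\mathfrak{a}$, comparing blocks gives $DA=AD$ and $DB=-BD$ for all diagonal $D$. Picking a $D$ with distinct nonzero entries, the first relation forces $A$ diagonal and the second forces $(D_{ii}+D_{jj})B_{ij}=0$ for all $i,j$, so $B=0$. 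Hence the commutant of $\mathfrak{a}$ inside $\mathfrak{p}_\sigma\cap\mathfrak{p}_\tau$ is $\mathfrak{a}$ itself, which is exactly maximality.

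Exponentiating gives $A=\bigl\{\stwotwo{\Sigma}{0}{0}{\Sigma^{-1}}:\Sigma\ \text{positive diagonal}\bigr\}$, and Flensted-Jensen's theorem now delivers the stated factorization $G=O\stwotwo{\Sigma}{0}{0}{\Sigma^{-1}}V$ with $O\in\ortho{2n}$ and $V\in\realify{\gl{n,\mathbb{C}}}$. Uniqueness of $\Sigma$ up to permutation of the diagonal comes from the standard restricted Weyl group action on $\mathfrak{a}$. The one genuinely calculational step is the maximality verification above; everything else is either automatic from the algorithm or follows directly from the definition of $\realify{\,\cdot\,}$ and the block structure of $J_n$.
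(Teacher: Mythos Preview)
Your proposal is correct and follows essentially the same approach as the paper: you apply Algorithm~\ref{alg:K1AK2noncompact} with exactly the involution $\tau(X)=-J_nXJ_n$ that the paper records for $\fact{11}$ in Table~\ref{tab:involutionlist2}, and your computations of $\mathfrak{k}_\tau$, $\mathfrak{p}_\sigma\cap\mathfrak{p}_\tau$, and the maximal abelian $\mathfrak{a}$ are the ones implicit in the paper's framework. The explicit maximality check you supply is a nice addition, since the paper leaves this step to the reader in its worked examples.
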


Again with the left folding $G\mapsto G^TG$ and the Cholesky factorization, we obtain an interesting result on the congruence diagonalization of a symmetric positive definite matrix. 

\begin{corollary}\label{cor:F11real}
Any $2n\times 2n$ symmetric positive definite matrix is congruent to a positive diagonal matrix by the realified complex matrix. Moreover, the resulting diagonal matrix has a block structure $\diag(\Sigma, \Sigma^{-1})$. 
\end{corollary}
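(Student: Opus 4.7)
The plan is to apply the left folding recipe from Section \ref{sec:folding} (equation \eqref{eq:foldingrecipe}) to the factorization of Theorem \ref{thm:F11real}, exactly as the surrounding discussion hints. First I would invoke the fact that every $2n \times 2n$ symmetric positive definite matrix $A$ can be written as $A = G^T G$ for some $G \in \gl{2n, \mathbb{R}}$ (this is Cholesky, or equivalently the symmetric positive definite square root followed by an orthogonal). Then I apply Theorem \ref{thm:F11real} to $G$ to obtain
\[
G = O \twotwo{\Sigma_0}{0}{0}{\Sigma_0^{-1}} V,
\]
with $O \in \ortho{2n}$, $V = \realify{X+iY}$ a realified complex invertible matrix, and $\Sigma_0$ a positive diagonal matrix.

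Substituting this expression into $A = G^T G$ and using $O^T O = I_{2n}$ collapses the orthogonal factor. Since the middle block diagonal factor is symmetric and commutes with itself, squaring preserves its block structure:
\[
A = V^T \twotwo{\Sigma_0}{0}{0}{\Sigma_0^{-1}} O^T O \twotwo{\Sigma_0}{0}{0}{\Sigma_0^{-1}} V = V^T \twotwo{\Sigma_0^2}{0}{0}{\Sigma_0^{-2}} V.
\]
Renaming $\Sigma := \Sigma_0^2$ produces the stated factorization $A = V^T \diag(\Sigma, \Sigma^{-1}) V$. One small sanity check I would include: the transpose of a realified complex matrix is itself a realified complex matrix, since $\realify{X+iY}^T = \stwotwo{X^T}{-Y^T}{Y^T}{X^T} = \realify{(X+iY)^H}$, so the left congruence factor $V^T$ is of the asserted form.

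There is really no substantial obstacle here beyond quoting Theorem \ref{thm:F11real}; the corollary is a mechanical consequence of the left folding once one observes that the middle factor $\diag(\Sigma_0, \Sigma_0^{-1})$ has the convenient property that its square $\diag(\Sigma_0^2, \Sigma_0^{-2})$ retains the reciprocal-block structure. The only place a reader might pause is the implicit use of surjectivity of $G \mapsto G^T G$ onto the symmetric positive definite cone, which is a classical fact and needs no elaboration.
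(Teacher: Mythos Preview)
Your proof is correct and follows precisely the approach the paper outlines: Cholesky to write $A=G^TG$, then the left folding of Theorem \ref{thm:F11real} collapses the orthogonal factor and squares the middle block. The extra sanity check that $V^T=\realify{(X+iY)^H}$ is again a realified complex matrix is a nice touch not made explicit in the paper.
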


\par The factorization $\fact{11}$ for complex invertible matrices is as follows. 
\begin{theorem}[$\fact{11}$, complex]\label{thm:F11complex}
For any $2n\times 2n$ complex invertible matrix $G$, there exist 
\begin{itemize}
    \item A $2n\times 2n$ unitary matrix $U\in\un{2n}$, 
    \item A complexified $n\times n$ quaternionic invertible matrix $V\in\complexify{\gl{n, \mathbb{H}}}$
    \item A unique $n\times n$ positive diagonal matrix $\Sigma$ (up to diagonal permutation),
\end{itemize} 
such that the following factorization holds:
\begin{equation}
    G = U \twotwo{\Sigma}{}{}{\Sigma^{-1}}V.
\end{equation}
\end{theorem}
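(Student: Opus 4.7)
The plan is to apply Algorithm \ref{alg:K1AK2noncompact} to the noncompact generalized Cartan triple $(G, K_\sigma, K_\tau) = (\gl{2n,\mathbb{C}}, \un{2n}, \complexify{\gl{n,\mathbb{H}}})$. The Cartan involution on $\mathfrak{g}=\mathfrak{gl}(2n,\mathbb{C})$ is $\sigma(X) = -X^H$, which gives the standard splitting $\mathfrak{k}_\sigma \oplus \mathfrak{p}_\sigma = \{\text{skew-Hermitian}\} \oplus \{\text{Hermitian}\}$. For the non-Cartan involution I would take
\[
\tau(X) \;=\; -J_n\,\overline{X}\,J_n,
\]
since (as verified directly from the definition \eqref{eq:complexify}) an element $G\in\gl{2n,\mathbb{C}}$ satisfies $\tau(G)=G$ if and only if $G\in\complexify{\gl{n,\mathbb{H}}}$. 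A short computation shows $\sigma\tau(X)=\tau\sigma(X)=J_nX^TJ_n$, so the hypotheses of Flensted-Jensen's theorem are met, giving $\mathfrak{p}_\tau=\{X : J_n\overline{X}J_n = X\}$.

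Next I would compute the intersection $\mathfrak{p}_\sigma\cap\mathfrak{p}_\tau$. Writing a Hermitian block matrix $X=\stwotwo{A}{B}{B^H}{D}$ with $A=A^H$ and $D=D^H$, the condition $J_n\overline{X}J_n=X$ reduces (by multiplying the four block entries) to $D=-\overline{A}$ and $B=B^T$. Thus
\[
\mathfrak{p}_\sigma\cap\mathfrak{p}_\tau \;=\; \left\{ \stwotwo{A}{B}{\overline{B}}{-\overline{A}} : A=A^H,\ B=B^T \right\}.
\]
The obvious candidate for a maximal abelian subalgebra is $\mathfrak{a} = \{\stwotwo{T}{0}{0}{-T} : T\ \text{real diagonal}\}$. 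Exponentiating produces exactly
\[
A \;=\; \left\{ \stwotwo{\Sigma}{0}{0}{\Sigma^{-1}} : \Sigma\ \text{positive diagonal} \right\},
\]
which is precisely the middle factor in the theorem.

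The only non-mechanical step is verifying that $\mathfrak{a}$ is maximal abelian in $\mathfrak{p}_\sigma\cap\mathfrak{p}_\tau$. Suppose $Y=\stwotwo{A}{B}{\overline{B}}{-\overline{A}}$ commutes with every $\stwotwo{T}{0}{0}{-T}\in\mathfrak{a}$. Block-wise comparison yields $[A,T]=0$ and $TB+BT=0$ for all real diagonal $T$. Choosing a generic real diagonal $T$ forces $A$ to be diagonal (hence real diagonal, using $A=A^H$), and the entrywise identity $(t_i+t_j)B_{ij}=0$ with generic $t_i+t_j\neq 0$ forces $B=0$, so $Y\in\mathfrak{a}$. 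With maximality in hand, Flensted-Jensen's theorem delivers $\gl{2n,\mathbb{C}}=\un{2n}\cdot A\cdot\complexify{\gl{n,\mathbb{H}}}$, with the $\Sigma$ unique up to the Weyl-group action, i.e.\ diagonal permutation. The hardest part is really only the maximality argument; everything else is a routine instantiation of the $\kak$ framework of Section \ref{sec:gencartantheory}, mirroring the model computations in Sections \ref{sec:gencartanex1} and \ref{sec:gencartanex2}.
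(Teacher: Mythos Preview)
Your proposal is correct and follows exactly the paper's approach: the theorem is simply an instance of Algorithm~\ref{alg:K1AK2noncompact} applied to the triple $(\gl{2n,\mathbb{C}},\un{2n},\complexify{\gl{n,\mathbb{H}}})$, and you have filled in the computational details (the description of $\mathfrak{p}_\sigma\cap\mathfrak{p}_\tau$, the choice of $\mathfrak{a}$, and the maximality check) that the paper leaves implicit.

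One point worth flagging: your involution $\tau(X)=-J_n\overline{X}J_n$ is the right one, consistent with the identity $-J_n\overline{\complexify{Q}}J_n=\complexify{Q}$ recorded in Section~\ref{sec:RCHbasic}. The paper's Table~\ref{tab:involutionlist2} lists $\tau(X)=-J_nXJ_n$ (without the conjugate) for the complex $\fact{11}$, but that involution has fixed-point algebra $\{X:[X,J_n]=0\}$, whose blocks look like $\stwotwo{A}{B}{-B}{A}$ with $A,B\in\mathbb{C}^{n\times n}$, not the complexified-quaternionic form $\stwotwo{A}{B}{-\overline{B}}{\overline{A}}$ required for $K_\tau=\Un^*(2n)$. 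So your choice corrects what appears to be a typo in the appendix table; the body of the paper and your argument agree.
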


Note that in the classical symbol, $V$ is an element of $\Un^*(2n)$. Again by the Cholesky factorization we obtain the folded factorization of $\fact{11}$, an $H$-congruence diagonalization a Hermitian positive definite matrix. 
\begin{corollary}
Any $2n\times 2n$ Hermitian positive definite matrix $A$ is $H$-congruent to a positive diagonal matrix by a complexified quaternionic matrix $V\in\complexify{\gl{n, \mathbb{H}}}=\Un^*(2n)$. Moreover, the resulting diagonal matrix can be chosen to have a block structure of $\diag(\Sigma, \Sigma^{-1})$
\end{corollary}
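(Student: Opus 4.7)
The plan is to derive this corollary as the left folding of the complex $\fact{11}$ factorization (Theorem \ref{thm:F11complex}), exactly paralleling the derivation of Corollary \ref{cor:F11real} from the real $\fact{11}$. First I would reduce to the invertible case via the Cholesky factorization (or any invertible square root): any $2n \times 2n$ Hermitian positive definite matrix $A$ can be written as $A = G^H G$ for some $G \in \gl{2n, \mathbb{C}}$. Hence it suffices to prove the congruence statement for matrices of this form.

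Next I would apply Theorem \ref{thm:F11complex} to $G$, writing $G = U \stwotwo{\Sigma}{}{}{\Sigma^{-1}} V$ with $U \in \un{2n}$, $V \in \complexify{\gl{n, \mathbb{H}}} = \Un^*(2n)$, and $\Sigma$ an $n \times n$ positive diagonal matrix. Since $U^H U = I_{2n}$ and $\stwotwo{\Sigma}{}{}{\Sigma^{-1}}$ is real diagonal (in particular self-adjoint), a direct calculation gives
\begin{equation*}
    A = G^H G = V^H \stwotwo{\Sigma}{}{}{\Sigma^{-1}} U^H U \stwotwo{\Sigma}{}{}{\Sigma^{-1}} V = V^H \stwotwo{\Sigma^2}{}{}{\Sigma^{-2}} V.
\end{equation*}
Setting $\Sigma' := \Sigma^2$ yields the desired congruence $A = V^H \stwotwo{\Sigma'}{}{}{\Sigma'^{-1}} V$ by an element $V$ of the complexified quaternionic general linear group, with the prescribed block structure.

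There is no real obstacle here beyond the bookkeeping: the result is a direct application of the left-folding recipe \eqref{eq:foldingrecipe}, and the block structure $\diag(\Sigma', \Sigma'^{-1})$ is preserved automatically because squaring the diagonal commutes with inversion. The only subtlety worth flagging is the choice of left folding (via the conjugate transpose) rather than the right folding (via the plain transpose). The left folding is forced on us because we want $A$ Hermitian rather than complex symmetric; the right folding of $\fact{11}$ would instead produce a factorization of $G G^T$, yielding a Takagi-style statement for complex symmetric matrices rather than the Hermitian congruence claimed here.
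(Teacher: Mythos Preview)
Your proposal is correct and follows exactly the paper's intended approach: apply the Cholesky factorization to write $A = G^H G$, then perform the left folding of Theorem \ref{thm:F11complex} to collapse the unitary factor and obtain $A = V^H\diag(\Sigma^2,\Sigma^{-2})V$ with $V\in\complexify{\gl{n,\mathbb{H}}}$. The paper states this corollary immediately after Theorem \ref{thm:F11complex} with precisely this justification.
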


\subsection{${\fact{12}}$ : ${(\Gl_\beta(n), \Un_{\beta}(n), \Gl_{\beta/2}(n))}$}

\begin{gather*}
\boxed{
\begin{bmatrix}
\rule{0cm}{0.1cm}\\ n\times n \\ \text{$\beta$-invertible} \\\rule{0cm}{0.1cm}
\end{bmatrix}
= 
\begin{bmatrix}
\rule{0cm}{0.1cm}\\ n\times n \\ \hspace{0.15cm}\text{$\beta$-unitary}\hspace{0.15cm}\, \\\rule{0cm}{0.1cm}
\end{bmatrix}
\cdot
{\renewcommand{\arraystretch}{0.8}
\begin{bmatrix}
\text{\Large{$\diagdown$}} \hspace{1.1cm}\,\vspace{0.1cm} \\
\hspace{0.3cm}\text{\tiny{$\text{ch}_l$ $i\text{sh}_l$}} \\
\text{\tiny{$-i\text{sh}_l$ $\text{ch}_l$}}\\
\hspace{1.3cm}\,\vspace{0.1cm} \text{\Large{$\diagdown$}} \hspace{0.2cm}\\
\end{bmatrix}}
\cdot
\begin{bmatrix}
\rule{0cm}{0.1cm}\\ n\times n \\ \text{$\frac{\beta}{2}$-invertible} \\\rule{0cm}{0.1cm}
\end{bmatrix}
}
\end{gather*}

The factorization $\fact{12}$ of complex and quaternionic invertible matrices is introduced. The factors include the real part of a complex matrix, or the complex part (the real part and a single imaginary part) of a quaternionic matrix. We start with the factorization of a complex invertible matrix. 

\begin{theorem}[$\fact{12}$, complex]\label{thm:F12real}
Let $m=\lfloor\frac{n}{2}\rfloor$. For any $n\times n$ complex invertible matrix $G$, there exist 
\begin{itemize}
    \item An $n\times n$ unitary matrix $U\in\un{n}$, 
    \item An $n\times n$ real invertible matrix $V\in\gl{n, \mathbb{R}}$,
    \item $m$ unique (up to order and signs) real numbers $\theta_1, \dots, \theta_m$, 
\end{itemize}
such that the following factorization holds:
\begin{equation}
    G = UB_n^{i, \theta} V,
\end{equation}
where $B_n^{i, \theta}$ is a block diagonal matrix with $2\times 2$ blocks defined in Table \ref{tab:auxmatrix}. 
\end{theorem}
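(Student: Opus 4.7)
The plan is to instantiate Algorithm \ref{alg:K1AK2noncompact} for the generalized Cartan triple $(\gl{n,\mathbb{C}},\un{n},\gl{n,\mathbb{R}})$. The correct non-Cartan involution to use is entrywise complex conjugation $\tau(X)=\bar X$ on $\mathfrak{g}=\mathfrak{gl}(n,\mathbb{C})$; it clearly commutes with the Cartan involution $\sigma(X)=-X^H$, so the hypotheses of the $\kak$ decomposition are met.

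First I identify the eigenspaces of $\tau$: $\mathfrak{k}_\tau=\mathfrak{gl}(n,\mathbb{R})$ (real matrices, which are $\tau$-fixed) and $\mathfrak{p}_\tau=i\,\mathbb{R}^{n\times n}$ (purely imaginary matrices). Exponentiating $\mathfrak{k}_\tau$ gives $K_\tau=\gl{n,\mathbb{R}}$. The Cartan decomposition is the familiar
\begin{equation*}
\mathfrak{gl}(n,\mathbb{C})=\{\text{skew-Hermitian}\}+\{\text{Hermitian}\},
\end{equation*}
so $\mathfrak{p}_\sigma$ is the set of Hermitian matrices. A matrix that is both Hermitian and purely imaginary is of the form $iS$ with $S\in\mathbb{R}^{n\times n}$ real skew-symmetric, hence $\mathfrak{p}_\sigma\cap\mathfrak{p}_\tau=i\,\mathfrak{so}(n)$.

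Next I take as maximal abelian subalgebra $\mathfrak{a}\subset i\,\mathfrak{so}(n)$ the standard block diagonal Cartan of $\mathfrak{so}(n)$ scaled by $i$: for $m=\lfloor n/2\rfloor$, the block diagonal matrices with $2\times 2$ blocks $\begin{bsmallmatrix}0 & i\theta_l \\ -i\theta_l & 0\end{bsmallmatrix}$ (with a trailing $1\times 1$ zero block when $n$ is odd). Maximality reduces to the classical maximality of this torus inside $\mathfrak{so}(n)$. Using $\begin{bsmallmatrix}0 & i\theta \\ -i\theta & 0\end{bsmallmatrix}^2=\theta^2 I_2$ and summing even and odd powers separately yields
\begin{equation*}
\exp\begin{bsmallmatrix}0 & i\theta \\ -i\theta & 0\end{bsmallmatrix}=\begin{bsmallmatrix}\cosh\theta & i\sinh\theta \\ -i\sinh\theta & \cosh\theta\end{bsmallmatrix},
\end{equation*}
which is precisely one $2\times 2$ diagonal block of $B_n^{i,\theta}$ from Table \ref{tab:auxmatrix}. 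Hence $A=\exp(\mathfrak{a})=\{B_n^{i,\theta}:\theta\in\mathbb{R}^m\}$, and Flensted-Jensen's theorem delivers $G=U\,B_n^{i,\theta}\,V$ with $U\in\un{n}$ and $V\in\gl{n,\mathbb{R}}$.

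The main obstacle is the uniqueness of the $\theta_l$ up to permutation and sign. This is governed by the restricted Weyl group of the symmetric pair $(\mathfrak{gl}(n,\mathbb{C}),\mathfrak{gl}(n,\mathbb{R}))$, whose action on $\mathfrak{a}$ is (after absorbing the $\pm 1$ block symmetries coming from conjugation by signed permutation matrices in both $K_\sigma$ and $K_\tau$) the signed permutation group on $(\theta_1,\ldots,\theta_m)$. Concretely one can verify this by inspecting the left folding $G^HG=V^T\,\overline{B_n^{i,\theta}}\,B_n^{i,\theta}\,V$; since $\overline{B_n^{i,\theta}}\,B_n^{i,\theta}$ is real symmetric positive definite with eigenvalues $\{e^{\pm 2\theta_l}\}$, the multiset $\{|\theta_l|\}$ is an invariant of $G$, yielding uniqueness up to sign and reordering.
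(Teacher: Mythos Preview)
Your approach is exactly the paper's: instantiate Algorithm~\ref{alg:K1AK2noncompact} with $G=\gl{n,\mathbb{C}}$, Cartan involution $\sigma(X)=-X^H$, and $\tau(X)=\overline X$ (as recorded in Table~\ref{tab:involutionlist2}), then identify $\mathfrak{p}_\sigma\cap\mathfrak{p}_\tau=i\,\mathfrak{so}(n)$ and its block-diagonal maximal abelian subalgebra, whose exponential is precisely $B_n^{i,\theta}$. Existence and the Weyl-group uniqueness are correctly obtained from Flensted-Jensen's theorem.

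The closing ``concrete verification'' via the left folding, however, contains two slips. First, each $2\times 2$ block of $B_n^{i,\theta}$ is Hermitian, so $B^H=B$ and $G^HG=V^TB^2V$, not $V^T\overline{B}\,B\,V$; in fact $\overline{B}\,B=I$ since $B$ is complex orthogonal, so your displayed formula would collapse to $V^TV$. Second, and more seriously, $V^TB^2V$ is a \emph{congruence} by a real invertible $V$, not a similarity, so its eigenvalues are not those of $B^2$ and the multiset $\{e^{\pm 2\theta_l}\}$ cannot be read off this way. A correct concrete invariant comes from separating real and imaginary parts (cf.\ \eqref{eq:F12decoupling}): since $V$ is real,
\[
\Re(G^HG)^{-1}\Im(G^HG)=V^{-1}\bigl(\Re(B^2)^{-1}\Im(B^2)\bigr)V,
\]
which \emph{is} a similarity with eigenvalues $\pm i\tanh(2\theta_l)$, recovering $\{|\theta_l|\}$. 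As you already have uniqueness from the restricted Weyl group, this only affects the optional verification, not the proof itself.
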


The $2\times 2$ blocks $\stwotwo{\hspace{0.2cm}\cosh\alpha}{i\sinh\alpha}{-i\sinh\alpha}{\,\,\cosh\alpha}$ on the diagonal of $B_n^{i, \alpha}$ can be thought as the usual rotation matrix with purely imaginary angles  $\stwotwo{\,\,\cos(i\alpha)}{\sin(i\alpha)}{-\sin(i\alpha)}{\cos(i\alpha)}$, and \cite{lansey2009visualizing} provides some helpful intuitions with illustrations. 

\par Another way of describing complex $\fact{12}$ is the following. Any complex invertible matrix can be $2\times 2$ block diagonalized by the combination of the (left) unitary transformation and the (right) real linear transformation. The left folding of Theorem \ref{thm:F12real} implies a simultaneous congruence transform of the real and imaginary parts of a Hermitian positive definite matrix. 
\begin{corollary}\label{cor:F12real}
Any $n\times n$ Hermitian positive definite matrix is congruent to a block diagonal matrix with $2\times 2$ diagonal blocks $B^i_1, \dots, B^i_m$ defined in Table \ref{tab:auxmatrix}, by a real $n\times n$ invertible matrix. 
\end{corollary}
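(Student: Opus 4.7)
\textbf{Proof proposal for Corollary~\ref{cor:F12real}.} The plan is to obtain the statement as the \emph{left folding} of the complex $\fact{12}$ factorization from Theorem~\ref{thm:F12real}, following the recipe in Section~\ref{sec:folding}. First, I would use the Cholesky factorization (or any square root) to write an arbitrary Hermitian positive definite matrix $A \in \mathbb{C}^{n\times n}$ as $A = G^H G$ for some invertible $G \in \gl{n,\mathbb{C}}$. Applying Theorem~\ref{thm:F12real} gives $G = U B_n^{i,\theta} V$, with $U \in \un{n}$, $V \in \gl{n,\mathbb{R}}$, and $B_n^{i,\theta}$ the block diagonal matrix from Table~\ref{tab:auxmatrix}.

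Next, I would compute
\begin{equation*}
A = G^H G = V^H (B_n^{i,\theta})^H U^H U B_n^{i,\theta} V = V^T (B_n^{i,\theta})^H B_n^{i,\theta} V,
\end{equation*}
using $U^H U = I_n$ and $V^H = V^T$ (since $V$ is real). This is already a congruence by a real invertible matrix; it remains to show that the middle factor is itself block diagonal with the correct $2\times 2$ structure.

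The key observation, and the only nontrivial computation, is that each $2\times 2$ block
\begin{equation*}
B^i_l = \begin{bmatrix} \cosh\theta_l & i\sinh\theta_l \\ -i\sinh\theta_l & \cosh\theta_l \end{bmatrix}
\end{equation*}
is Hermitian: taking the conjugate transpose swaps the off-diagonal signs twice and recovers $B^i_l$. Therefore $(B^i_l)^H B^i_l = (B^i_l)^2$, which a direct multiplication shows equals
\begin{equation*}
\begin{bmatrix} \cosh^2\theta_l + \sinh^2\theta_l & 2i\sinh\theta_l\cosh\theta_l \\ -2i\sinh\theta_l\cosh\theta_l & \cosh^2\theta_l + \sinh^2\theta_l \end{bmatrix} = \begin{bmatrix} \cosh(2\theta_l) & i\sinh(2\theta_l) \\ -i\sinh(2\theta_l) & \cosh(2\theta_l) \end{bmatrix},
\end{equation*}
i.e.\ another block of the same form with doubled parameter. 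Assembling the blocks, $(B_n^{i,\theta})^H B_n^{i,\theta} = B_n^{i, 2\theta}$, so
\begin{equation*}
A = V^T B_n^{i,2\theta} V,
\end{equation*}
which is the claimed congruence.

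I do not expect a real obstacle here; the proof is almost mechanical once the folding framework is in place, and the only content is the Hermiticity of $B^i_l$ together with the hyperbolic double-angle identity. The mildest subtlety is ensuring positivity: since $A$ is positive definite and $V$ is real invertible, $B_n^{i,2\theta}$ is automatically positive definite, which is consistent with $\cosh(2\theta_l) > 0$ and $\det B^i_l(2\theta_l) = \cosh^2(2\theta_l) - \sinh^2(2\theta_l) = 1 > 0$. Symmetrically, one may invoke the right folding to obtain a parallel statement on the other side, but it is not needed for this corollary.
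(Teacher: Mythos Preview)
Your proposal is correct and is precisely the approach the paper indicates: the corollary is obtained as the left folding of Theorem~\ref{thm:F12real} via the Cholesky factorization, and your verification that $(B_n^{i,\theta})^H B_n^{i,\theta} = B_n^{i,2\theta}$ supplies the detail the paper leaves implicit. The only cosmetic omission is the odd-$n$ case, where the leading $1\times 1$ block in $B_n^{i,\theta}$ squares to itself, which is immediate.
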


The significance of Corollary \ref{cor:F12real} is that it decouples the real and the imaginary part of a complex Hermitian positive definite matrix $A$ with a simultaneous congruence diagonalization: 
\begin{equation}\label{eq:F12decoupling}
    \Re(A)= V^T\begin{bsmallmatrix}
    c_1 & 0 & & & \\
    0 & c_1 & & & \\
    & & \ddots & &\\
    & & & c_n & 0 \\
    & & & 0 & c_n\\
    \end{bsmallmatrix}V, \hspace{0.5cm} 
    \Im(A) =  V^T\begin{bsmallmatrix}
    0 & s_1 & & & \\
    -s_1 & 0 & & & \\
    & & \ddots & &\\
    & & & 0 & s_n \\
    & & & -s_n & 0\\
    \end{bsmallmatrix}V,
\end{equation}
where $V$ is an invertible real matrix. In equation \eqref{eq:F12decoupling}, the real part $\Re(A)$ is real symmetric and the imaginary part $\Im(B)$ is real skew-symmetric. 

\par The quaternionic version of the factorization $\fact{12}$ is the following.
\begin{theorem}[$\fact{12}$, quaternion]\label{thm:F12quaternion}
Let $m = \lfloor{\frac{n}{2}}\rfloor$ and fix $\eta\in\{j, k\}$. For any $n\times n$ quaternionic invertible matrix $G$, there exist
\begin{itemize}
    \item An $n\times n$ quaternionic unitary matrix $U\in\un{n, \mathbb{H}}$,
    \item An $n\times n$ complex invertible matrix $V\in\gl{n, \mathbb{C}}$,
    \item $m$ unique (up to order and signs) real numbers $\theta_1, \dots, \theta_m$,
\end{itemize}  
such that the following factorization of $G$ holds:
\begin{equation}
    G = UB_n^{\eta, \theta}V,
\end{equation}
with block diagonal matrix $B_n^{\eta, \theta}$ defined in Table \ref{tab:auxmatrix}.
\end{theorem}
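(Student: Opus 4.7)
The plan is to apply Algorithm \ref{alg:K1AK2noncompact} to the generalized Cartan triple $(G, K_\sigma, K_\tau) = (\gl{n,\mathbb{H}}, \un{n,\mathbb{H}}, \gl{n,\mathbb{C}})$ and recognize the resulting $A$ as the family $\{B_n^{\eta,\theta}\}$. First I would take as non-Cartan involution
\begin{equation*}
\tau(X) := iXi^{-1} = -iXi, \qquad X \in \mathfrak{gl}(n,\mathbb{H}).
\end{equation*}
Writing every quaternionic matrix uniquely as $X = Z_1 + Z_2 j$ with $Z_1, Z_2 \in \mathbb{C}^{n\times n}$ and using $iZ_r = Z_ri$ (since $i$ commutes with complex scalars) together with $iji = j$, one gets $\tau(X) = Z_1 - Z_2 j$. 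Hence $\mathfrak{k}_\tau = \mathfrak{gl}(n,\mathbb{C})$, $\mathfrak{p}_\tau = \{Zj : Z \in \mathbb{C}^{n\times n}\}$, and $K_\tau = \gl{n,\mathbb{C}}$ as demanded. A one-line computation with $i^D = -i$ and $(ABC)^D = C^D B^D A^D$ shows that $\tau$ commutes with the Cartan involution $\sigma(X) = -X^D$, so the hypothesis of the $\kak$ theorem is met.

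Next I would intersect $\mathfrak{p}_\sigma \cap \mathfrak{p}_\tau$. Using the identity $j\overline{z} = zj$ for $z \in \mathbb{C}$ (already listed in the Preliminaries), one obtains $(Zj)^D = -Z^T j$, so $Zj$ is $D$-Hermitian precisely when $Z$ is complex skew-symmetric. Inside this space I would take
\begin{equation*}
\mathfrak{a} := \bigg\{\bigoplus_{l=1}^m \theta_l \stwotwo{0}{j}{-j}{0} : \theta_l \in \mathbb{R}\bigg\},
\end{equation*}
with $m = \lfloor n/2 \rfloor$, appending a $1\times 1$ zero block when $n$ is odd. Each $2\times 2$ generator squares to the $2\times 2$ identity, so direct exponentiation yields $\exp(\mathfrak{a}) = \{B_n^{j,\theta} : \theta \in \mathbb{R}^m\}$, exactly the auxiliary matrix of Table \ref{tab:auxmatrix} for $\eta = j$. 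The generalized Cartan theorem then delivers $\gl{n,\mathbb{H}} = \un{n,\mathbb{H}} \cdot B_n^{j,\theta} \cdot \gl{n,\mathbb{C}}$, which is the stated factorization with $\eta = j$. The case $\eta = k$ follows by the parallel choice $\mathfrak{a} = \bigoplus_l \theta_l \stwotwo{0}{k}{-k}{0}$: each generator still lies in $\mathfrak{p}_\sigma \cap \mathfrak{p}_\tau$ (because $Ak$ is $D$-Hermitian when $A$ is real skew-symmetric, and $\tau$ negates $k$), and the same exponential produces $B_n^{k,\theta}$.

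The main obstacle is proving that $\mathfrak{a}$ is actually \emph{maximal} abelian in $\mathfrak{p}_\sigma \cap \mathfrak{p}_\tau$, not merely some abelian subspace. A direct computation gives $[Z_1 j, Z_2 j] = Z_2\overline{Z_1} - Z_1\overline{Z_2}$ for complex skew-symmetric $Z_1, Z_2$, which vanishes on my chosen $\mathfrak{a}$ because the $Z$'s there are real and mutually commute as block-diagonal matrices. For maximality I would argue that any complex skew-symmetric $Z$ whose $Zj$ commutes in this sense with a generic element of $\mathfrak{a}$ must preserve the $2\times 2$ block decomposition (the distinct real parameters $\theta_l$ force block-diagonality), after which a direct inspection in each block shows $Z$ must be real-proportional to $\stwotwo{0}{1}{-1}{0}$. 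Uniqueness of the $\theta_l$ up to permutation and sign is then the standard Weyl-group statement for this symmetric pair. Alternatively, one may cite Matsuki's classification \cite{matsuki2002classification} of the restricted root systems for $\kak$-triples to read off $\dim \mathfrak{a} = \lfloor n/2 \rfloor$ directly, bypassing the maximality verification altogether.
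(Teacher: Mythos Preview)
Your proposal is correct and follows exactly the approach the paper intends: it instantiates Algorithm~\ref{alg:K1AK2noncompact} with the involution $\tau(X)=-iXi$ listed for $\fact{12}$ (quaternion) in Appendix~\ref{app:involutions}, and carries out explicitly the computation of $\mathfrak{k}_\tau$, $\mathfrak{p}_\sigma\cap\mathfrak{p}_\tau$, $\mathfrak{a}$, and $A=\exp(\mathfrak{a})$ that the paper leaves to the reader. The paper does not give a separate proof for this theorem beyond the general $\kak$ framework and the tabulated involution, so your write-up is in fact more detailed than what appears there.
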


Finally the congruence block diagonalization of a quaternionic Hermitian positive matrix is obtained by the left folding of Theorem \ref{thm:F12quaternion}. Similar to Corollary \ref{cor:F12real}, it is a decoupled simultaneous diagonalization of the $(1, i)$ basis and $(j, k)$ basis part of a quaternionic Hermitian positive definite matrix. 

\begin{corollary}
Fix $\eta\in\{j, k\}$. Any $n\times n$ quaternionic Hermitian positive definite matrix is congruent to a block diagonal matrix with $2\times 2$ blocks $B_1^\eta, \dots, B_m^\eta$ defined in Table \ref{tab:auxmatrix}, by an $n\times n$ real invertible matrix. 
\end{corollary}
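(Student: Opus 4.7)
The plan is to mirror the approach used in Corollary \ref{cor:F12real} (the complex version): obtain the desired congruence as the left folding of the factorization $\fact{12}$ quaternion (Theorem \ref{thm:F12quaternion}), after first representing an arbitrary quaternionic Hermitian positive definite matrix as $G^D G$ via a Cholesky-type factorization.

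First I would invoke the quaternionic Cholesky factorization: any $n \times n$ quaternionic Hermitian positive definite matrix $A$ can be written as $A = G^D G$ with $G \in \gl{n,\mathbb{H}}$. Next I would apply Theorem \ref{thm:F12quaternion} to $G$ to obtain
\begin{equation*}
    G = U\, B_n^{\eta,\theta}\, V,
\end{equation*}
where $U \in \un{n,\mathbb{H}}$, $V \in \gl{n,\mathbb{C}}$, and $\theta = (\theta_1, \dots, \theta_m)$. Substituting and using $U^D U = I_n$, together with the fact that $V$ is complex so $V^D = V^H$, yields
\begin{equation*}
    A = G^D G = V^H (B_n^{\eta,\theta})^D \, B_n^{\eta,\theta}\, V.
\end{equation*}

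The main technical step, and the only point that is not a verbatim translation of the complex case, is the angle-doubling identity $(B_n^{\eta,\theta})^D B_n^{\eta,\theta} = B_n^{\eta, 2\theta}$. I would verify this blockwise: each $2 \times 2$ diagonal block $B_l^\eta = \bigl[\begin{smallmatrix} \cosh\theta_l & \eta\sinh\theta_l \\ -\eta\sinh\theta_l & \cosh\theta_l \end{smallmatrix}\bigr]$ is $D$-Hermitian, since conjugation sends $\eta \mapsto -\eta$ and transposition swaps the off-diagonal entries, so $(B_l^\eta)^D = B_l^\eta$. Squaring and using $\eta^2 = -1$ produces $(B_l^\eta)^2 = \bigl[\begin{smallmatrix} \cosh 2\theta_l & \eta \sinh 2\theta_l \\ -\eta\sinh 2\theta_l & \cosh 2\theta_l \end{smallmatrix}\bigr] = B_l^{\eta, 2\theta_l}$, the only subtlety being care with the noncommutativity of $\eta$ with itself (here it is fine since $\eta \cdot (-\eta) = 1$). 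Combining the blocks gives the claim for the full block-diagonal matrix.

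Finally, setting $W = V \in \gl{n,\mathbb{C}}$ (and relabeling $2\theta$ as $\theta$, which is harmless since the $\theta_l$ are otherwise arbitrary real parameters), we arrive at $A = W^H \, B_n^{\eta,\theta}\, W$, which exhibits $A$ as congruent to a block diagonal matrix of the required form. The main obstacle I anticipate is purely notational: the stated corollary says ``real invertible matrix,'' but the factor produced by Theorem \ref{thm:F12quaternion} lies in $\gl{n,\mathbb{C}}$, so the congruence is most naturally realized via a complex (not real) matrix, exactly paralleling the way Corollary \ref{cor:F12real} uses a real matrix to congruence-diagonalize a complex Hermitian positive definite matrix.
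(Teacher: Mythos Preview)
Your proposal is correct and follows exactly the paper's approach: the corollary is stated as the left folding of Theorem \ref{thm:F12quaternion}, obtained via the quaternionic Cholesky factorization followed by the angle-doubling identity $(B_n^{\eta,\theta})^D B_n^{\eta,\theta}=B_n^{\eta,2\theta}$, precisely as you outline. Your observation about the congruence matrix is also well-taken: since $V\in\gl{n,\mathbb{C}}$ in Theorem \ref{thm:F12quaternion}, the congruence is naturally realized by a \emph{complex} invertible matrix, and the word ``real'' in the corollary's statement appears to be a slip (the parallel with Corollary \ref{cor:F12real} should read $\mathbb{R}\to\mathbb{C}$ just as $\mathbb{C}\to\mathbb{H}$).
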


\subsection{${\fact{13}}$ : ${(\Gl_\beta(n), \Un_{\beta}(n), \Ortho_{\beta}(n))}$}\label{sec:F13}

\begin{gather*}
\boxed{
\begin{bmatrix}
\rule{0cm}{0.1cm}\\ n\times n \\ \text{$\beta$-invertible} \\ \rule{0cm}{0.1cm}
\end{bmatrix}
= 
\begin{bmatrix}
\rule{0cm}{0.1cm}\\ n\times n \\ \hspace{0.1cm} \text{ $\beta$-unitary } \hspace{0.1cm}\, \\ \rule{0cm}{0.1cm}
\end{bmatrix}
\cdot
{\renewcommand{\arraystretch}{1.3}
\begin{bmatrix}
\LARGE{\text{$\diagdown$}} \rule{1.2cm}{0cm} \\ \sigma_l \\  \rule{1.1cm}{0cm}\LARGE{\text{$\diagdown$}}
\end{bmatrix}}
\cdot
\begin{bmatrix}
\rule{0cm}{0.1cm}\\ n\times n \\ \text{$\beta$-orthogonal} \\ \rule{0cm}{0.1cm}
\end{bmatrix}
}
\end{gather*}

The factorization $\fact{13}$ of complex and quaternionic invertible matrices has the complex and quaternionic orthogonal factors. The $\beta$-orthogonal groups are discussed in Section \ref{sec:orthogonal}. As mentioned, the quaternionic orthogonal matrices do not satisfy the property $O^TO = I$, since the set of such quaternion matrices do not form a group. Instead, the quaternionic orthogonal matrices satisfy $O^{D_j}O = I$. 

\par In this section, only $\beta=2, 4$ cases are presented for $\fact{13}$. However if we take a closer look, the case of $\beta = 1$ is identical to the invertible square SVD (real $\fact{7}$), since $\Un_\beta(n) = \Ortho_\beta(n)$ for $\beta = 1$. Let us start with the case of $\beta = 2$. 

\begin{theorem}[$\fact{13}$, complex]\label{thm:F13complex}
For any $n\times n$ complex invertible matrix $G$, there exist 
\begin{itemize}
    \item An $n\times n$ unitary matrix $U\in\un{n}$,
    \item An $n\times n$ complex orthogonal matrix $O\in\ortho{n, \mathbb{C}}$, 
    \item A unique $n\times n$ positive diagonal matrix $\Sigma$ (up to diagonal permutation),
\end{itemize} 
such that the following factorization holds:
\begin{equation}\label{eq:F13complex}
    G = U\Sigma O.
\end{equation}
\end{theorem}

Already introduced in \eqref{eq:orthosvdintro}, Theorem \ref{thm:F13complex} is another variant of the SVD with $\beta$-orthogonal groups $\Ortho_\beta(n)$. As discussed in Section \ref{sec:folding}, the right folding of Theorem \ref{thm:F13complex} is the famous \textit{Takagi factorization}. Before presenting the Takagi factorization, we need a Lemma.
\begin{lemma}\label{lem:compsymsqrt}
The set $\{G^TG:G\in\gl{n, \mathbb{C}}\}$ is equal to the set of all invertible complex symmetric matrices. In other words, for any invertible complex symmetric matrix $A$, there exists a complex invertible matrix $G$ such that $A = G^TG$ holds. 
\end{lemma}
\begin{proof}
This is easily proved by the Takagi factorization (Theorem \ref{thm:Takagicomp}). However, since we will use this Lemma to prove Theorem \ref{thm:Takagicomp}, we proceed differently.  
\par Select a polynomial interpolation $p$ of the matrix square root function (with analytic branches on the eigenvalues of $A$) on $A$, which satisfies $p(A)^2 = A$  (for example, Lagrange-Hermite interpolating polynomial). This agrees with the usual definition of the square root of a complex matrix, see \cite[Chapter 6]{horn1991topics}. Since $A$ is complex symmetric, $p(A)$ is also complex symmetric. Thus, letting $G = p(A)$ we have $G^TG = G^2 = A$. 
\end{proof}

The Takagi factorization follows from the right folding of \eqref{eq:F13complex}. In \cite{bunse1988singular}, Theorem \ref{thm:Takagicomp} is also called the symmetric singular value decomposition.
\begin{theorem}[Takagi factorization]\label{thm:Takagicomp}
For any $n\times n$ square invertible complex symmetric matrix $A$, there exist a unitary matrix $U\in\un{n}$ and a real positive diagonal matrix $\Lambda$ such that the following factorization holds:
\begin{equation}\label{eq:Takagicomp}
    A = U\Lambda U^T.
\end{equation}
Note that the transpose is the regular transpose, not the conjugate transpose. 
\end{theorem}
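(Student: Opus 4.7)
The plan is to deduce the Takagi factorization directly from the right folding of Theorem \ref{thm:F13complex}, using Lemma \ref{lem:cholcompsym} to reduce every complex symmetric matrix to a product of the form $GG^T$, and then handle the singular case by a perturbation argument.

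First I would assume $A$ is invertible. By Lemma \ref{lem:cholcompsym}, write $A = GG^T$ for some $G\in\gl{n, \mathbb{C}}$. Then Theorem \ref{thm:F13complex} gives a factorization $G = U\Sigma O$ with $U\in\un{n}$, $\Sigma$ a positive real diagonal matrix, and $O\in\ortho{n, \mathbb{C}}$. Substituting into $A = GG^T$ yields
\begin{equation}
A \;=\; U\Sigma O\,O^T\Sigma^T U^T \;=\; U\Sigma^2 U^T,
\end{equation}
where I used $OO^T = I_n$ (the defining property of $\ortho{n, \mathbb{C}}$) and $\Sigma^T = \Sigma$. Setting $\Lambda := \Sigma^2$, which is still a positive real diagonal matrix, gives the Takagi factorization $A = U\Lambda U^T$ in the invertible case. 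This is precisely the ``right folding'' picture advertised in Section \ref{sec:folding}.

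To handle a possibly singular complex symmetric $A$, I would use a standard limiting argument. Consider the perturbation $A_\varepsilon := A + \varepsilon I_n$, which is symmetric and invertible for all but finitely many $\varepsilon>0$. By the invertible case, write $A_\varepsilon = U_\varepsilon \Lambda_\varepsilon U_\varepsilon^T$. Since $\un{n}$ is compact and the diagonal entries of $\Lambda_\varepsilon$ are bounded by $\|A_\varepsilon\|$, we can extract a subsequence $\varepsilon_k \to 0$ along which $U_{\varepsilon_k} \to U$ and $\Lambda_{\varepsilon_k}\to \Lambda$; passing to the limit gives $A = U\Lambda U^T$ with $U\in\un{n}$ and $\Lambda$ real nonnegative diagonal.

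The main obstacle is mostly bookkeeping rather than a genuine difficulty: (i) verifying that Lemma \ref{lem:cholcompsym} gives an $L$ with $LL^T = A$ even when the leading principal minors of $A$ vanish (the construction in the lemma uses congruence diagonalization rather than the classical Cholesky recursion, so this is fine); (ii) confirming that the diagonal of $\Lambda = \Sigma^2$ can be sorted by absorbing a permutation matrix into $U$, and that sign freedom can be absorbed by multiplying columns of $U$ by scalars of modulus one so that $\Lambda$ is nonnegative; and (iii) ensuring the limiting argument indeed produces a nonnegative diagonal $\Lambda$, which follows because each $\Lambda_\varepsilon$ is nonnegative. Uniqueness of the $\Lambda$ values (they are the singular values of $A$, equivalently $\sqrt{\lambda_i(AA^H)}$) is not claimed in the statement, so I would not need to address it, though it follows from $AA^H = U\Lambda^2 U^H$.
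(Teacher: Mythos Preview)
Your proposal is correct and follows essentially the same route as the paper: write $A = GG^T$ via Lemma \ref{lem:cholcompsym}, apply Theorem \ref{thm:F13complex} to $G$, and right-fold to obtain $A = U\Sigma^2 U^T$. The paper in fact only spells out the invertible case (``any invertible complex symmetric matrix can be expressed as $GG^T$ \ldots The Takagi factorization follows''); your added perturbation/compactness argument for singular $A$ fills in the step the paper leaves implicit.
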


\par The factorization $\fact{13}$ in the quaternionic case is the following.
\begin{theorem}[$\fact{13}$, quaternion]\label{thm:F13quaternion}
Fix $\eta \in\{i, j, k\}$. For any $n\times n$ quaternionic invertible matrix $G$, there exist
\begin{itemize}
    \item An $n\times n$ quaternionic unitary matrix $U\in\un{n, \mathbb{H}}$,
    \item An $n\times n$ quaternionic orthogonal matrix $O\in\ortho{n, \mathbb{H}}$, i.e., $O^{D_\eta}O = I_n$,
    \item A unique $n\times n$ positive diagonal matrix $\Sigma$ (up to diagonal permutation),
\end{itemize}
such that the following factorization holds:
\begin{equation}
    G = U\Sigma O.
\end{equation}
\end{theorem}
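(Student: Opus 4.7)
The plan is to follow Algorithm \ref{alg:K1AK2noncompact} in direct analogy with the complex case treated in Section \ref{sec:gencartanex1}. I take $G = \gl{n, \mathbb{H}}$ with its Cartan involution $\sigma(X) = -X^D$, so automatically $K_\sigma = \un{n, \mathbb{H}}$ and $\mathfrak{p}_\sigma$ is the space of quaternionic Hermitian matrices. For the second involution I take $\tau(X) = -X^{D_\eta}$ on $\mathfrak{gl}(n, \mathbb{H})$. Using $D_\eta = -\eta (\,\cdot\,)^D \eta$ together with the scalar identities $(cM)^D = M^D \bar c$ and $(Mc)^D = \bar c M^D$, one checks that $\tau^2 = \mathrm{id}$ and that
\begin{equation*}
    \sigma\tau(X) = (X^{D_\eta})^D = -\eta X \eta = (X^{D})^{D_\eta} = \tau\sigma(X),
\end{equation*}
so the pair of involutions satisfies the commutation hypothesis of the $\kak$ theorem. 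The $+1$ eigenspace $\mathfrak{k}_\tau = \{X : X^{D_\eta} = -X\}$ exponentiates to $\ortho{n, \mathbb{H}}$ by definition \eqref{def:quaternionorthogonal}, while $\mathfrak{p}_\tau$ consists of the $\eta$-Hermitian matrices.

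Next I compute the intersection $\mathfrak{p}_\sigma \cap \mathfrak{p}_\tau$. Writing $X = A + iB + jC + kD$ with real $A, B, C, D$ and taking $\eta = j$ for concreteness, the conditions $X^D = X$ and $X^{D_j} = X$ together force $B = D = 0$, $A = A^T$, and $C = -C^T$. Thus $\mathfrak{p}_\sigma \cap \mathfrak{p}_\tau = \{A + jC : A \text{ real symmetric}, C \text{ real skew-symmetric}\}$. The set $\mathfrak{a}$ of real diagonal matrices sits inside this intersection and is already a maximal abelian subalgebra of the larger space $\mathfrak{p}_\sigma$ (this is precisely the choice that produces the quaternionic SVD, $\fact{7}$); hence it is a fortiori maximal abelian in $\mathfrak{p}_\sigma \cap \mathfrak{p}_\tau$. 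Exponentiating gives $A := \exp(\mathfrak{a})$, the group of positive real diagonal matrices, and the generalized Cartan decomposition theorem instantly yields $G = K_\sigma A K_\tau$, i.e.\ $G = U\Sigma O$, together with the claimed uniqueness of $\Sigma$ up to permutation of its diagonal.

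The main obstacle is the quaternionic bookkeeping underlying the first paragraph: because $\mathbb{H}$ is noncommutative, verifying $\tau^2 = \mathrm{id}$ and $\tau\sigma = \sigma\tau$ requires careful attention to left- versus right-multiplication of scalar quaternions under $D$, and the naive applications of $(ABC)^D = C^D B^D A^D$ often introduce spurious signs. Once these identities are handled cleanly, the symmetry noted in Section \ref{sec:orthogonal} among $\ortho_i(n, \mathbb{H})$, $\ortho_j(n, \mathbb{H})$, and $\ortho_k(n, \mathbb{H})$ shows that the derivation works equally well for any $\eta \in \{i, j, k\}$, giving the full statement of the theorem.
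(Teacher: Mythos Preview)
Your proposal is correct and follows exactly the paper's approach: it is a direct instantiation of Algorithm~\ref{alg:K1AK2noncompact} with $G=\gl{n,\mathbb{H}}$, $\sigma(X)=-X^{D}$, and $\tau(X)=-X^{D_\eta}$ (the involution listed in Appendix~\ref{app:involutions} for $\fact{13}$), carried out in parallel with the worked complex example of Section~\ref{sec:gencartanex1}. Your shortcut for maximality of $\mathfrak{a}$---that real diagonals are already maximal abelian in the larger $\mathfrak{p}_\sigma$, hence in $\mathfrak{p}_\sigma\cap\mathfrak{p}_\tau$---is a clean way to finish the argument.
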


Observing the extension of a complex orthogonal matrix to a quaternionic orthogonal matrix in Section \ref{sec:orthogonal}, we notice that the corresponding quaternionic extension of a complex symmetric matrix is a quaternionic $\eta$-Hermitian matrix defined in \eqref{eq:etaDdefinition} as $M^{D_\eta} = M$. Indeed, the quaternionic extension of Theorem \ref{thm:Takagicomp} is the right folding of Theorem \ref{thm:F13quaternion} and it is pointed out by Horn and Zhang in \cite{horn2012generalization}. Lemma \ref{lem:compsymsqrt} can also be extended to the quaternionic version as follows. (Fix $\eta=j$.)
\begin{lemma}
The set $\{G^{D_j}G: G\in\gl{n, \mathbb{H}}\}$ is equal to the set of all invertible quaternionic $j$-Hermitian matrices. In other words, for any invertible quaternionic $j$-Hermitian matrix $A$, there exist a quaternionic invertible matrix $G$ such that $A = G^{D_j}G$ holds.
\end{lemma}
\begin{proof}
A complex matrix $\complexify{A}$ has the structure $\stwotwo{X}{Y}{-\overline{Y}}{\overline{X}}$ where $X$ is complex symmetric and $Y$ is complex skew-Hermitian. For any $k$ the matrix $(\complexify{A})^k$ has the same block structure. As in Lemma \ref{lem:compsymsqrt} the square root of $\complexify{A}$ can be expressed as a polynomial of $A$, thereby concluding that (a branch of) $(\complexify{A})^{\frac{1}{2}}$ has the same structure $\stwotwo{X}{Y}{-\overline{Y}}{\overline{X}}$ as above. The inverse complexify map sends this matrix to a quaternionic matrix $G$ and $G^{D_j}G = G^2 = A$ holds. 
\end{proof}

\begin{corollary}[Takagi factorization, quaternionic extension]
Fix $\eta \in\{i, j, k\}$. Any $\eta$-Hermitian matrix $A$ can be decomposed into 
\begin{equation}
    A = V\Lambda V^{D_\eta},
\end{equation}
where $\Lambda$ is a nonnegative diagonal matrix and $V\in\Ortho_\eta(n, \mathbb{H})$ is a quaternionic orthogonal matrix satisfying $V^{D_\eta}V = I_n$. 
\end{corollary}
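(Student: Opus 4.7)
The plan is to obtain the factorization as the right folding of the quaternionic $\fact{13}$ decomposition (Theorem \ref{thm:F13quaternion}), in precise analogy with how the complex Takagi factorization (Theorem \ref{thm:Takagicomp}) is derived from complex $\fact{13}$ via \eqref{eq:foldingrecipe}. First I would establish the $\eta$-Hermitian Cholesky-like factorization promised in Remark \ref{rem:choleskyorthogonal}: for any positive-definite $\eta$-Hermitian $A$ there is an invertible (lower triangular) $G$ with $A=GG^{D_\eta}$. This is shown by induction, applying lower-triangular elementary operations $L_j(\cdot)L_j^{D_\eta}$ to $\eta$-congruence-diagonalize $A$ and then extracting real nonnegative square roots; the signs/positivity of the pivots come from positive-definiteness of $A$ (the nonnegativity restriction being absent for the other quaternionic scalar types exactly as noted in Remark \ref{rem:choleskyorthogonal}).

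Second, apply quaternion $\fact{13}$ (Theorem \ref{thm:F13quaternion}) to $G$: write $G=U\Sigma O$ with $U\in\un{n,\mathbb{H}}$, $\Sigma$ positive real diagonal, and $O\in\ortho{n,\mathbb{H}}$ satisfying $O^{D_\eta}O=I_n$. Before folding, verify two algebraic facts: (i) $D_\eta$ is a contravariant involution, so $(XY)^{D_\eta}=Y^{D_\eta}X^{D_\eta}$; this follows from $D_\eta=\eta^{-1}D\eta$ together with the antihomomorphism property of $D$ and the cancellation $\eta\cdot\eta=-1$ in the middle of any product $(-\eta B^D\eta)(-\eta A^D\eta)$. (ii) $\Sigma^{D_\eta}=-\eta\Sigma\eta=-\eta^2\Sigma=\Sigma$, since $\Sigma$ is real and commutes with $\eta$.

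With these in hand, right-fold:
\[
A \;=\; GG^{D_\eta} \;=\; (U\Sigma O)(O^{D_\eta}\Sigma^{D_\eta}U^{D_\eta}) \;=\; U\Sigma(OO^{D_\eta})\Sigma U^{D_\eta} \;=\; U\Sigma^{2}U^{D_\eta},
\]
using $OO^{D_\eta}=I_n$ (left- and right-inverses coincide in finite dimension) and $\Sigma^{D_\eta}=\Sigma$. Setting $\Lambda:=\Sigma^{2}$ and $V:=U$ produces $A=V\Lambda V^{D_\eta}$ with $\Lambda$ real nonnegative diagonal, which is the desired Takagi-type factorization.

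The principal obstacle is Step 1, the $\eta$-Hermitian Cholesky-like factorization, which is only sketched in Remark \ref{rem:choleskyorthogonal}; one must verify that the $\eta$-congruence pivot step preserves $\eta$-Hermiticity at every stage and that the resulting diagonal values are real nonnegative (needed for the square-root extraction). Concerning the group membership of $V$: the construction naturally places $V=U$ in $\un{n,\mathbb{H}}$ (quaternionic unitary), exactly parallel to the complex Takagi (Theorem \ref{thm:Takagicomp}) whose factor sits in $\un{n}$ rather than in $\ortho{n,\mathbb{C}}$, with $V^{D_\eta}$ here playing the role that $V^{T}$ plays there. Reading the statement's requirement $V^{D_\eta}V=I_n$ literally would turn $A=V\Lambda V^{D_\eta}$ into a similarity $A=V\Lambda V^{-1}$ and force $A$ to have real nonnegative standard right eigenvalues, which already fails for a generic $1\times1$ $j$-Hermitian scalar such as $A=1+i$; thus the correct (and intended) statement is the unitary-congruence form above, arising directly from the right folding of Theorem \ref{thm:F13quaternion}.
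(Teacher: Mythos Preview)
Your approach is essentially the paper's: derive the corollary by right-folding Theorem \ref{thm:F13quaternion} after writing $A=GG^{D_\eta}$ via the Cholesky-like step of Remark \ref{rem:choleskyorthogonal}. You also correctly diagnose the misprint in the corollary: the folding produces $V=U\in\un{n,\mathbb{H}}$ (quaternionic \emph{unitary}), not $V\in\Ortho_\eta(n,\mathbb{H})$; with the printed hypothesis $V^{D_\eta}V=I_n$ the identity $A=V\Lambda V^{D_\eta}$ would be a similarity and already fails for $1\times 1$ examples, exactly as you note. This mirrors the complex Takagi, where $U$ is unitary and $U^T$ (not $U^{-1}$) appears.

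One small correction: you impose positive-definiteness on $A$ to control the pivots in the $\eta$-Hermitian Cholesky step, but Remark \ref{rem:choleskyorthogonal} explicitly says this restriction is absent in the quaternion case. After the elementary $L_j(\cdot)L_j^{D_\eta}$ reductions the diagonal entries are $\eta$-Hermitian scalars (quaternions with zero $\eta$-component), and each such $d$ admits a factorization $d=xx^{D_\eta}$ without any sign condition---e.g.\ for $\eta=j$ one has $x^{D_j}=x$ whenever $x\in\mathbb{C}[i]$, so $xx^{D_j}=x^2$ ranges over all of $\mathbb{C}[i]$, and a further unit-quaternion conjugation handles the $k$-component. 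Drop the positive-definiteness hypothesis and your argument goes through for all invertible $\eta$-Hermitian $A$, matching the paper's claim.
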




\section{Matrix factorizations of symplectic matrices}\label{sec:sympfact}
In Section \ref{sec:sympfact} we discuss matrix factorizations of symplectic matrices, arising from the generalized Cartan decompositions of $\symp{2n, \mathbb{R}}$ and $\symp{2n, \mathbb{C}}$. There are four categories, $\fact{14}$ to $\fact{17}$, including the KAK decomposition $\fact{14}$. As always, using the $\beta$-symbol for symplectic groups, $\Symp_\beta(2n)$ with $\beta = 1, 2$, the generalized Cartan triples $(G, K_\sigma, K_\tau)$ are the following.
\begin{alignat*}{2}
    \fact{14}&\text{ : }\big(\Symp_\beta(2n), \Un_{2\beta}(n), \Un_{2\beta}(n)\big) \,\,\text{(KAK)} \hspace{4.5cm} && \beta = 1, 2 \\
    \fact{15}&\text{ : }\big(\Symp_\beta(2n), \Un_{2\beta}(n), \Gl_\beta(n)\big) &&\beta=1, 2\\
    \fact{16}&\text{ : }\big(\Symp_\beta(2n), \Un_{2\beta}(n), \Symp_\beta(2p)\times\Symp_\beta(2q)\big) &&\beta=1, 2\\
    \fact{17}&\text{ : }\big(\Symp_\beta(2n), \Un_{2\beta}(n), \Symp_{\beta/2}(2n)\big) &&\beta=2
\end{alignat*}

In this section, the subgroup $K_\sigma = \Un_{2\beta}(n)$ is better understood as the $\beta$-unitary symplectic groups rather than the $2\beta$-unitary groups. (Since they are always realified or complexified.) See Remark \ref{rem:betadoubling} for the isomorphism.

\subsection{${\fact{14}}$ : ${(\Symp_\beta(2n), \Un_{2\beta}(n), \Un_{2\beta}(n))}$, the symplectic structured SVD}
\begin{gather*}
\boxed{
\begin{bmatrix}
\rule{0cm}{0.4cm}\\ 2n\times 2n \\ \text{{ Symplectic }} \\ \rule{0cm}{0.4cm}
\end{bmatrix}
= 
{\renewcommand{\arraystretch}{1.5}
\begin{bmatrix}
2n\times 2n \\\text{Unitary} \\ \text{ Symplectic }
\end{bmatrix}}
\cdot
{\renewcommand{\arraystretch}{0.65}
\begin{bmatrix}
    \diagdown \rule{1.5cm}{0cm} \\
    \sigma_l  \rule{0.9cm}{0cm}\\
     \diagdown \rule{0.3cm}{0cm}\\
     \hspace{0.4cm} \diagdown  \\
    \hspace{1cm} {\frac{1}{\sigma_l}} \\
    \hspace{1.8cm} \diagdown \\
\end{bmatrix}}
\cdot
{\renewcommand{\arraystretch}{1.5}
\begin{bmatrix}
2n\times 2n \\\text{Unitary} \\ \text{ Symplectic }
\end{bmatrix}}
}
\end{gather*}

The KAK decompositions of the real and the complex symplectic groups are introduced. They can be realized as special SVDs with a structure, $S = U\Sigma V$ where all $U, \Sigma, V$ are symplectic. We call this the \textit{symplectic structured SVD}. 
\begin{theorem}[symplectic structured SVD, real]\label{thm:F14real}
For any real symplectic matrix $S\in\symp{2n, \mathbb{R}}$, there exists an SVD of $S$, 
\begin{equation}\label{eq:F14real}
    S = U \twotwo{\Sigma}{}{}{\Sigma^{-1}} V,
\end{equation}
where $U, V\in\osp{2n}$ are $2n\times 2n$ orthogonal symplectic matrices and $\Sigma$ is an $n\times n$ positive diagonal matrix.
\end{theorem}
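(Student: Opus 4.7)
My plan is to apply the noncompact KAK decomposition theorem from Section \ref{sec:symspace} (Helgason's theorem) to the Cartan type C$\RN{1}$ symmetric pair $(G,K)=(\symp{2n,\mathbb{R}},\osp{2n})$ listed in Table \ref{tab:cartanss}. The Cartan involution on $\mathfrak{g}=\mathfrak{sp}(2n,\mathbb{R})$ is $\sigma(X)=-X^T$, whose $+1$ eigenspace is the Lie algebra of the maximal compact subgroup $K=\osp{2n}=\realify{\un{n}}$; since $\Symp(2n,\mathbb{R})$ is connected, all hypotheses of the KAK theorem are met.

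First I would make the Cartan decomposition $\mathfrak{g}=\mathfrak{k}+\mathfrak{p}$ explicit. From the defining relation $X^TJ_n+J_nX=0$ of $\mathfrak{sp}(2n,\mathbb{R})$, any $X$ has the block form $X=\stwotwo{A}{B}{C}{-A^T}$ with $B=B^T$ and $C=C^T$. Splitting into $\pm 1$ eigenspaces of $\sigma$ gives
$$
\mathfrak{k}=\Bigl\{\stwotwo{A}{B}{-B}{A}:A^T=-A,\,B^T=B\Bigr\},\quad
\mathfrak{p}=\Bigl\{\stwotwo{A}{B}{B}{-A}:A^T=A,\,B^T=B\Bigr\}.
$$
Next I would take as the candidate maximal abelian subalgebra
$$
\mathfrak{a}=\Bigl\{\stwotwo{D}{0}{0}{-D}:D\text{ real diagonal}\Bigr\}\subset\mathfrak{p}.
$$
Exponentiating gives $A=\bigl\{\stwotwo{\Sigma}{0}{0}{\Sigma^{-1}}:\Sigma\text{ positive diagonal}\bigr\}$, and Helgason's KAK theorem then delivers $S=U\stwotwo{\Sigma}{}{}{\Sigma^{-1}}V$ with $U,V\in\osp{2n}$, completing the proof.

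The main obstacle is verifying that $\mathfrak{a}$ is indeed maximal abelian in $\mathfrak{p}$. I would argue as follows: if $Y=\stwotwo{A'}{B'}{B'}{-A'}\in\mathfrak{p}$ commutes with every $h=\stwotwo{D}{0}{0}{-D}\in\mathfrak{a}$, then computing $hY-Yh$ block by block yields the conditions $DA'=A'D$ and $DB'+B'D=0$. Choosing a single $D$ with distinct nonzero diagonal entries, the first forces $A'$ to be diagonal and the second forces every entry of $B'$ to vanish (since $(DB'+B'D)_{ij}=(d_i+d_j)B'_{ij}$), so $Y\in\mathfrak{a}$. The diagonal entries of $\stwotwo{\Sigma}{}{}{\Sigma^{-1}}$ together are then recognized as the $2n$ singular values of $S$, reflecting the classical fact that symplectic matrices have singular values occurring in reciprocal pairs; this justifies the name ``SVD'' for the factorization.
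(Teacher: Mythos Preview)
Your proposal is correct and follows precisely the paper's approach: Theorem \ref{thm:F14real} is presented there as the KAK decomposition of the noncompact symmetric space of type C$\RN{1}$, i.e., $\symp{2n,\mathbb{R}}/\osp{2n}$ from Table \ref{tab:cartanss}, with Cartan involution $\sigma(X)=-X^T$ (Table \ref{tab:involutionlist3}). Your explicit computation of $\mathfrak{k}$, $\mathfrak{p}$, and the maximal abelian $\mathfrak{a}$ fills in exactly the details the paper leaves implicit; the only cosmetic tightening is that for the maximality argument you should pick $D$ with, say, distinct \emph{positive} entries so that $d_i+d_j\neq 0$ is guaranteed.
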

The complex $\fact{14}$ follows as the KAK decomposition of $\symp{2n, \mathbb{C}}/\usp{2n}$.
\begin{theorem}[symplectic structured SVD, complex]\label{thm:F14complex}
For any complex symplectic matrix $S\in\symp{2n, \mathbb{C}}$, there exists an SVD of $S$, 
\begin{equation}
    S = U \twotwo{\Sigma}{}{}{\Sigma^{-1}} V
\end{equation}
wehre $U, V\in\usp{2n}$ are $2n\times 2n$ unitary symplectic matrices and $\Sigma$ is an $n\times n$ positive diagonal matrix.
\end{theorem}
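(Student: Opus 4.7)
The plan is to identify Theorem \ref{thm:F14complex} as the KAK decomposition of the Riemannian noncompact symmetric space $\symp{2n,\mathbb{C}}/\usp{2n}$ (Cartan type C in Table \ref{tab:cartanss}) and invoke the theorem quoted from Helgason. So I begin by taking the Cartan involution of $\mathfrak{g}=\mathfrak{sp}(2n,\mathbb{C})$ to be $\sigma(X)=-X^H$. The $+1$ eigenspace $\mathfrak{k}$ consists of skew-Hermitian matrices in $\mathfrak{sp}(2n,\mathbb{C})$, whose exponential is the maximal compact subgroup $\usp{2n}=\un{2n}\cap\symp{2n,\mathbb{C}}$; the $-1$ eigenspace $\mathfrak{p}$ consists of Hermitian matrices $X\in\mathbb{C}^{2n\times 2n}$ satisfying $X^TJ_n+J_nX=0$.

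Next I compute a maximal abelian subalgebra $\mathfrak{a}\subset\mathfrak{p}$. The natural candidate is
\begin{equation*}
\mathfrak{a}=\Bigl\{\diag(d_1,\dots,d_n,-d_1,\dots,-d_n):d_l\in\mathbb{R}\Bigr\}.
\end{equation*}
A quick check shows each such $X$ is real diagonal (hence Hermitian) and that $J_nX$ is symmetric, so $X\in\mathfrak{p}$. These matrices commute pairwise, so $\mathfrak{a}$ is abelian. Maximality follows because any $X\in\mathfrak{p}$ commuting with all of $\mathfrak{a}$ must be diagonal (since $\mathfrak{a}$ already contains $2n$ distinct-eigenvalue elements by varying the $d_l$), and the symplectic condition $X^TJ_n+J_nX=0$ on a real diagonal matrix forces it into this block form $\diag(D,-D)$. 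Exponentiating gives
\begin{equation*}
A=\exp(\mathfrak{a})=\Bigl\{\stwotwo{\Sigma}{0}{0}{\Sigma^{-1}}: \Sigma\text{ positive diagonal}\Bigr\}.
\end{equation*}

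With the triple $(G,K_\sigma,K_\tau)=(\symp{2n,\mathbb{C}},\usp{2n},\usp{2n})$ and this explicit $A$, the KAK theorem (quoted from \cite[Thm.~6.7, p.~249]{Helgason1978} in the excerpt) immediately yields $S=U\stwotwo{\Sigma}{0}{0}{\Sigma^{-1}}V$ with $U,V\in\usp{2n}$, which is the desired factorization. Equivalently, this may be viewed as an application of Algorithm \ref{alg:K1AK2noncompact} with $\tau=\sigma$, degenerating the $\kak$ into the ordinary KAK.

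The main obstacle I expect is verifying that $\mathfrak{a}$ is truly maximal abelian inside $\mathfrak{p}$; the cleanest way is to note that the centralizer of $\mathfrak{a}$ in $\mathfrak{g}$ reduces to diagonal matrices, and then intersecting with the symplectic and Hermitian conditions leaves precisely $\mathfrak{a}$. Once this is in hand, the factorization is automatic, and the structural consequence (``both singular-vector bases are unitary symplectic, and the singular values come in reciprocal pairs") is simply the algebraic content of Theorem \ref{thm:F14real} transported from $\mathbb{R}$ to $\mathbb{C}$ — the proofs are formally identical modulo replacing $\osp{2n}$ by $\usp{2n}$.
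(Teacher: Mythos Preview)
Your proposal is correct and follows exactly the paper's approach: the paper states that complex $\fact{14}$ ``follows as the KAK decomposition of $\symp{2n,\mathbb{C}}/\usp{2n}$'' (type C in Table~\ref{tab:cartanss}), and your write-up supplies precisely the details of that KAK computation---the Cartan involution $\sigma(X)=-X^H$ (matching Table~\ref{tab:involutionlist3}), the resulting $\mathfrak{p}$, the maximal abelian $\mathfrak{a}=\{\diag(D,-D):D\text{ real diagonal}\}$, and its exponential $A$. Your maximality argument via the centralizer is the standard one and is sound.
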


\begin{remark}[three symplectic structured SVDs]\label{rem:threesympsvds}
The KAK decompositions of the symplectic matrices, Theorems \ref{thm:F14real}, \ref{thm:F14complex} and Corollary \ref{cor:conjsympSVD} (which will be introduced in Section \ref{sec:F18}), are the three ``symplectic structured SVDs" of $\symp{2n, \mathbb{R}}$, $\symp{2n, \mathbb{C}}$, and $\Symp^*(2n, \mathbb{C})$, respectively. These three SVDs have an interesting intertwining history of appearances in different languages and different fields as the following. 

\par The \textit{real symplectic structured SVD}, Theorem \ref{thm:F14real} (real $\fact{14}$), appears first in 1965 in a physics paper \cite{balian1965forme} written in French. It showed up again in 2003 \cite[Theorem 3]{xu2003svd} without knowing the presence of \cite{balian1965forme}. In the following Remark, we explain how Theorem \ref{thm:F14real} obtained the name ``Bloch-Messiah decomposition" in physics.

\par The \textit{complex symplectic structured SVD}, Theorem \ref{thm:F14complex} (complex $\fact{14}$), appears first in Fa{\ss}bender and Ikramov \cite[Theorem 5]{fassbender2005several} and referred by Mackey et al. \cite{mackey2005structured}. The folding of Theorem \ref{thm:F14complex} appears in \cite[Proposition 3]{fassbender2005several}.

\par The \textit{conjugate symplectic structured SVD} is similarly defined as the SVD of a conjugate symplectic matrix with $U, \Sigma, V$ being conjugate symplectic. It is Corollary \ref{cor:conjsympSVD}, which will appear in Section \ref{sec:F18} as an isomorphic form of complex $\fact{18}$. (The complex $\fact{18}$ first appeared in 1968 \cite{wigner1968generalization}.) The conjugate symplectic structured SVD appears also in \cite[Theorem 2]{xu2003svd} which is cited again by \cite{mackey2005structured}. 

\par The symplectic structured SVD is sometimes referred as the ``symplectic SVD" \cite{mackey2005structured}. However as we discussed in Remark \ref{rem:glsvdnaming}, the name ``symplectic SVD" is more consistent with the factorizations in Section \ref{sec:F10}, which we use an alternative name ``symplectic structured SVD" for these factorizations.
\end{remark}

\begin{remark}[Bloch-Messiah decomposition and real $\fact{14}$]\label{rem:blochmessiah}
In physics literature, Theorem \ref{thm:F14real} has the name \textit{Bloch-Messiah decomposition}. In Bloch and Messiah's original paper in 1962 \cite{bloch1962canonical}, the factorization of the Bogoliubov transformation for fermions are presented:
\begin{equation}\label{eq:blochmessiahfermion}
    \twotworr{A}{\overline{B}}{B}{\overline{A}} = \twotwo{U}{}{}{\overline{U}}\twotwo{C}{S}{-S}{C}\twotwo{V}{}{}{\overline{V}},
\end{equation}
where $U, V$ are $n\times n$ unitary matrices and $M_f = \stwotwo{A}{\overline{B}}{B}{\overline{A}} \in \un{2n}$ is implied from the anticommutator conserving property (the Bogoliubov transformation $M_f$ for fermions). In fact, \eqref{eq:blochmessiahfermion} is an isomorphic form of complexified Theorem \ref{thm:F1quaternion}, quaternion $\fact{1}$, the KAK decomposition of the compact symmetric space C$\RN{1}$. Notice that the noncompact dual of the quaternion $\fact{1}$ is Theorem \ref{thm:F14real}, real $\fact{14}$.

\par Later in 1965 the result has been extended to bosons \cite{balian1965forme}. The commutator conserving property (the Bogoliubov transformation $M_b$ for bosons) implies $M_b=\stwotwo{A}{\overline{B}}{B}{\overline{A}}\in\un{n, n}$. The set of such $M_b$ (which is a subset of $\un{n, n}$) is isomorphic to the group $\symp{2n, \mathbb{R}}$.\footnote{What is the map sending the set of $M_b$ to $\symp{2n, \mathbb{R}}$? It is exactly the (restriction of) isomorphism sending $\un{n, n}$ to $\Symp^*(2n, \mathbb{C})$, discussed in Remark \ref{rem:conjsympandunn}. Therefore the map sends the set $\{\text{All }M_b\text{'s}\}\subset\un{n, n}$ to $\symp{2n, \mathbb{R}}\subset\Symp^*(2n, \mathbb{C})$.} Thus, a factorization similar to \eqref{eq:blochmessiahfermion} is transformed to the current Bloch-Messiah decomposition, Theorem \ref{thm:F14real}, as presented in (4.23) of \cite{balian1965forme}. In language of our paper this extension to bosons is equivalent to the extension of the KAK decomposition to the noncompact dual.
\end{remark}

\subsection{${\fact{15}}$ : ${(\Symp_\beta(2n), \Un_{2\beta}(n), \Gl_\beta(n))}$}

\begin{gather*}
\boxed{
{\renewcommand{\arraystretch}{1.4}
\begin{bmatrix}
\rule{0cm}{0.1cm}\\ 2n\times 2n \\ \hspace{0.4cm} \text{Symplectic} \hspace{0.4cm}\, \\ \rule{0cm}{0.1cm}
\end{bmatrix}}
= 
{\renewcommand{\arraystretch}{1.8}
\begin{bmatrix}
2n\times 2n \\ \hspace{0.5cm}\text{Unitary} \hspace{0.5cm}\, \\ \text{Symplectic}
\end{bmatrix}}
\cdot
\small{\begin{bmatrix}
    \makebox(30,30){\makecell{$\diagdown$ \hspace{0.7cm}\,\\$\cosh\theta_l$\\ \hspace{0.7cm}$\diagdown$}} & 
    \hspace{-0.2cm}\makebox(30,30){\makecell{$\diagdown$ \hspace{0.7cm}\,\\$\sinh\theta_l$\\ \hspace{0.7cm}$\diagdown$}} \\
    \makebox(30,30){\makecell{$\diagdown$ \hspace{0.7cm}\,\\$\sinh\theta_l$\\ \hspace{0.7cm}$\diagdown$}} & 
    \hspace{-0.2cm}\makebox(30,30){\makecell{$\diagdown$ \hspace{0.7cm}\,\\$\cosh\theta_l$\\ \hspace{0.7cm}$\diagdown$}} 
\end{bmatrix}}
\cdot
\begin{bmatrix}
\colorbox[HTML]{D6D6D6}{\makebox(27,27){\makecell{\small{$n\times n$}\\ \scriptsize{\text{Invertible}}\\ $G$}}}
& \\ 
 & \hspace{-0.3cm} \colorbox[HTML]{D6D6D6}{\makebox(27,27){\makecell{\small{$n\times n$}\\ \scriptsize{\text{Invertible}}\\ $G^{-T}$}}}
\end{bmatrix}
}
\end{gather*}

The factorization $\fact{15}$ of real and complex symplectic matrices is the $\kak$ decomposition of the symplectic group. First we start with the real symplectic matrices. 
\begin{theorem}[$\fact{15}$, real]\label{thm:F15real}
For any $2n\times 2n$ real symplectic matrix $S\in\symp{2n, \mathbb{R}}$, there exist
\begin{itemize}
    \item A $2n\times 2n$ orthogonal symplectic matrix $O\in\osp{2n}$,
    \item An $n\times n$ real invertible matrix $G\in\gl{n, \mathbb{R}}$
    \item $n$ unique (up to order and signs) real numbers $\theta_1, \dots, \theta_n$,
\end{itemize} 
such that the following factorization holds:
\begin{equation}
    S = O \twotwo{Ch}{Sh}{Sh}{Ch} \twotwo{G}{}{}{G^{-T}}.
\end{equation}
where $Ch, Sh$ are $n\times n$ diagonal matrices with $\cosh, \sinh$ values of $\theta_1, \dots, \theta_n$. 
\end{theorem}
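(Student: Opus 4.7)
The plan is to apply Algorithm \ref{alg:K1AK2noncompact} to the generalized Cartan triple $(\symp{2n,\mathbb{R}}, \osp{2n}, K_\tau)$, where the Cartan involution is $\sigma(X) = -X^T$ and the second involution is $\tau(X) = I_{n,n} X I_{n,n}$, with $K_\tau$ the analytic subgroup $\{\operatorname{diag}(G, G^{-T}) : G\in\gl{n,\mathbb{R}}\}\subset\symp{2n,\mathbb{R}}$ (which is canonically isomorphic to $\gl{n,\mathbb{R}}$). The first routine check is that $\tau$ is indeed an involution of $\mathfrak{sp}(2n,\mathbb{R})$ that commutes with $\sigma$: $\sigma\tau(X) = -(I_{n,n}XI_{n,n})^T = -I_{n,n}X^TI_{n,n} = \tau\sigma(X)$ since $I_{n,n}^T=I_{n,n}$, and $\tau$ preserves the defining relation $X^TJ_n + J_nX = 0$.

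Writing $X\in\mathfrak{sp}(2n,\mathbb{R})$ in block form $X=\begin{bsmallmatrix} A & B \\ C & -A^T \end{bsmallmatrix}$ with $B=B^T$, $C=C^T$, I would then read off the eigenspace decompositions:
\begin{align*}
\mathfrak{p}_\sigma &= \Bigl\{\begin{bsmallmatrix} A & B \\ B & -A \end{bsmallmatrix} : A=A^T,\ B=B^T\Bigr\}, \qquad \mathfrak{k}_\tau = \Bigl\{\begin{bsmallmatrix} A & 0 \\ 0 & -A^T \end{bsmallmatrix} : A\in\mathbb{R}^{n\times n}\Bigr\}, \\
\mathfrak{p}_\tau &= \Bigl\{\begin{bsmallmatrix} 0 & B \\ C & 0 \end{bsmallmatrix} : B=B^T,\ C=C^T\Bigr\}, \qquad \mathfrak{p}_\sigma\cap\mathfrak{p}_\tau = \Bigl\{\begin{bsmallmatrix} 0 & B \\ B & 0 \end{bsmallmatrix} : B=B^T\Bigr\}.
\end{align*}
As maximal abelian subalgebra inside $\mathfrak{p}_\sigma\cap\mathfrak{p}_\tau$ I would take $\mathfrak{a}=\{\begin{bsmallmatrix} 0 & D \\ D & 0 \end{bsmallmatrix} : D\ \text{diagonal}\}$. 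Commutativity is a one-line computation, and maximality follows because any real symmetric $B$ commuting with every diagonal $D$ (in the sense of the corresponding $2\times 2$ block matrices) must itself be diagonal.

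Next I would exponentiate: for $M = \begin{bsmallmatrix} 0 & D \\ D & 0 \end{bsmallmatrix}$ with $D=\operatorname{diag}(\theta_1,\dots,\theta_n)$, the identity $M^{2k}=\operatorname{diag}(D^{2k},D^{2k})$ and $M^{2k+1}=\begin{bsmallmatrix} 0 & D^{2k+1} \\ D^{2k+1} & 0 \end{bsmallmatrix}$ splits the Taylor series into hyperbolic cosine and sine, giving $\exp(M) = \begin{bsmallmatrix} Ch & Sh \\ Sh & Ch \end{bsmallmatrix} = A$. Invoking the generalized Cartan decomposition (Flensted--Jensen), $\symp{2n,\mathbb{R}} = K_\sigma A K_\tau$ yields exactly the factorization claimed.

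The main obstacle is not existence but uniqueness. The parameters $\theta_1,\dots,\theta_n$ are determined only up to the action of the restricted Weyl group $W(\mathfrak{a})$ on $\mathfrak{a}$, so I would identify this Weyl group explicitly. The normalizer of $\mathfrak{a}$ inside $K_\sigma\cap K_\tau = \osp{2n}\cap\{\operatorname{diag}(G,G^{-T})\}$ is generated by signed permutation matrices of the block form $\operatorname{diag}(P,P)$ with $P$ orthogonal signed-permutation; these act on $D$ by conjugation as permutations together with independent sign flips of the $\theta_l$. This gives exactly the ``up to permutation and sign'' ambiguity asserted in the theorem. For the folding/eigen-like interpretation, $S^TS$ becomes a congruence diagonalization of a symmetric positive definite symplectic-structured matrix by $K_\tau$, which can serve as an independent check of the list of parameters.
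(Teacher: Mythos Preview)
Your proposal is correct and follows exactly the paper's approach: it is a direct instantiation of Algorithm~\ref{alg:K1AK2noncompact} with the involutions $\sigma(X)=-X^T$ and $\tau(X)=I_{n,n}XI_{n,n}$ listed in Table~\ref{tab:involutionlist3}, and your computations of $\mathfrak{k}_\tau$, $\mathfrak{p}_\sigma\cap\mathfrak{p}_\tau$, the maximal abelian $\mathfrak{a}$, and $A=\exp(\mathfrak{a})$ are all accurate. The paper itself does not spell out these steps for $\fact{15}$ individually, so you have in fact supplied the details that the paper leaves to the general framework of Section~\ref{sec:background}.
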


\par Observe that the matrix $\diag({G}, {G^{-T}})$ is a block diagonal symplectic matrix. The immediate corollary is again obtained by the left folding, $S\mapsto S^TS$. The symplectic Cholesky factorization \cite{gama2006symplectic} guarantees the left folding of Theorem \ref{thm:F15real} to cover the whole set of symmetric positive definite symplectic matrices. 

\begin{corollary}\label{cor:F15real}
For any symmetric positive definite symplectic matrix $A$, we have the following factorization:
\begin{equation}
    A = \twotwo{G^T}{}{}{G^{-1}}\twotwo{Ch}{Sh}{Sh}{Ch}\twotwo{G}{}{}{G^{-T}},
\end{equation}
where $G$ is some $n\times n$ real invertible matrix and $Ch, Sh$ are $\cosh, \sinh$ diagonal matrices as usual.
\end{corollary}

\par Complex analogues of Theorem \ref{thm:F15real} and Corollary \ref{cor:F15real} are the following.

\begin{theorem}[$\fact{15}$, complex]\label{thm:F15complex}
For any $2n\times 2n$ complex symplectic matrix $S\in\symp{2n, \mathbb{C}}$, there exist
\begin{itemize}
    \item A $2n\times 2n$ unitary symplectic matrix $U\in\usp{2n}$,
    \item An $n\times n$ complex invertible matrix $G\in\gl{n, \mathbb{C}}$
    \item $n$ unique (up to order and signs) real numbers $\theta_1, \dots, \theta_n$,
\end{itemize} 
such that the following factorization holds:
\begin{equation}
    S = U \twotwo{Ch}{Sh}{Sh}{Ch} \twotwo{G}{}{}{G^{-T}}.
\end{equation}
Note that the transpose is not the conjugate transpose. 
\end{theorem}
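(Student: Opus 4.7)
The plan is to apply Algorithm~\ref{alg:K1AK2noncompact} to the generalized Cartan triple $(G, K_\sigma, K_\tau) = (\symp{2n, \mathbb{C}}, \usp{2n}, \gl{n, \mathbb{C}})$, viewing the complex symplectic group as a real Lie group. The Cartan involution on $\mathfrak{g} = \mathfrak{sp}(2n, \mathbb{C})$ is $\sigma(X) = -X^H$, which gives $K_\sigma = \usp{2n}$ and $\mathfrak{p}_\sigma$ equal to the Hermitian elements of $\mathfrak{sp}(2n, \mathbb{C})$. For the second involution I would take $\tau(X) = I_{n,n} X I_{n,n}$; this commutes with $\sigma$ because $I_{n,n}$ is real symmetric, so the hypotheses of the generalized Cartan decomposition are met.

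Writing $X\in\mathfrak{sp}(2n,\mathbb{C})$ in block form as $\stwotwo{A}{B}{C}{-A^T}$ with $B, C$ complex symmetric, the fixed subspace $\mathfrak{k}_\tau$ is precisely the block-diagonal part $\stwotwo{A}{0}{0}{-A^T}$, so the analytic subgroup $K_\tau$ is the intended embedding $G\mapsto\stwotwo{G}{0}{0}{G^{-T}}$ of $\gl{n,\mathbb{C}}$. The $(-1)$-eigenspace $\mathfrak{p}_\tau$ is block antidiagonal, and imposing Hermiticity forces $B = \overline{C}$ (with $C$ still complex symmetric), so
\[
\mathfrak{p}_\sigma \cap \mathfrak{p}_\tau \;=\; \Big\{\stwotwo{0}{\overline{C}}{C}{0} : C = C^T \in \mathbb{C}^{n\times n}\Big\}.
\]
Inside this I propose
\[
\mathfrak{a} \;=\; \Big\{\stwotwo{0}{\Theta}{\Theta}{0} : \Theta = \diag(\theta_1, \dots, \theta_n),\ \theta_l \in \mathbb{R}\Big\}.
\]
Commutativity of any two such elements is immediate because their product is the scalar diagonal $\diag(\Theta_1\Theta_2, \Theta_1\Theta_2)$. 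Maximality follows from the componentwise identity $\theta_l C_{lk} = \theta_k \overline{C_{lk}}$ that any commuting perturbation $\stwotwo{0}{\overline{C}}{C}{0}$ must satisfy, which, once the $\theta_l$ are chosen distinct, forces $C$ to be real and diagonal. Exponentiating $\mathfrak{a}$ via the standard identity $\exp\stwotwo{0}{\Theta}{\Theta}{0} = \twotwo{\cosh\Theta}{\sinh\Theta}{\sinh\Theta}{\cosh\Theta}$ produces $A = \bigl\{\twotwo{Ch}{Sh}{Sh}{Ch}\bigr\}$ as desired.

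Flensted-Jensen's theorem then delivers $S = k_\sigma\, a\, k_\tau$ with $a \in A$ uniquely determined, yielding the stated factorization; the $n$ real angles $\theta_l$ are unique up to permutation and sign via the Weyl group action on $\mathfrak{a}$ (these symmetries also absorb the ambiguity between the analytic subgroup $\exp(\mathfrak{k}_\tau)$ and the full group $\gl{n,\mathbb{C}}$ in the manner noted after Algorithm~\ref{alg:K1AK2noncompact}). I expect the maximality check for $\mathfrak{a}$ to be the main technical hurdle, since one has to rule out enlargement by any non-diagonal complex symmetric perturbation while simultaneously reconciling the two conjugate conditions forced by the $2\times 2$ block structure. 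A useful sanity check is specialization to real $S$, which collapses this argument onto the proof of Theorem~\ref{thm:F15real} with $\mathbb{R}$ replacing $\mathbb{C}$ throughout, and confirms that the $\cosh/\sinh$ block appears in the same position.
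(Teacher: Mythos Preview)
Your proposal is correct and follows essentially the same approach as the paper: you apply Algorithm~\ref{alg:K1AK2noncompact} to the triple $(\symp{2n,\mathbb{C}},\usp{2n},\gl{n,\mathbb{C}})$ with precisely the involution $\tau(X)=I_{n,n}XI_{n,n}$ that the paper records in Table~\ref{tab:involutionlist3}, and your identification of $\mathfrak{k}_\tau$, $\mathfrak{p}_\sigma\cap\mathfrak{p}_\tau$, the maximal abelian $\mathfrak{a}$, and its exponential are all correct. The paper itself does not spell out these computations for $\fact{15}$ individually, so your write-up is in fact more detailed than what appears there.
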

\begin{corollary}
For any Hermitian positive definite symplectic matrix $A$, we have the following factorization:
\begin{equation}
    A = \twotwo{G^T}{}{}{G^{-1}}\twotwo{Ch}{Sh}{Sh}{Ch}\twotwo{\overline{G}}{}{}{G^{-H}},
\end{equation}
where $G$ is some $n\times n$ complex invertible matrix and $Ch, Sh$ are $\cosh, \sinh$ diagonal matrices as usual.
\end{corollary}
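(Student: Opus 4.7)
The plan is to obtain this corollary as the left folding of Theorem \ref{thm:F15complex} along the recipe \eqref{eq:foldingrecipe}. The relevant Cartan-type group involution on $\symp{2n, \mathbb{C}}$ is $\tau_1 : X \mapsto X^{-H}$, whose fixed point set is exactly $\usp{2n}$, hence $\tau_1(k_1) = k_1$ for the first factor $U$ of Theorem \ref{thm:F15complex}. The left folding then reads $\tau_1(S)^{-1} \cdot S = S^H S$, so I will identify the target matrix $A$ with $S^H S$ for $S \in \symp{2n, \mathbb{C}}$.

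Starting from $S = U \stwotwo{Ch}{Sh}{Sh}{Ch} \diag(G, G^{-T})$, I would compute $S^H S$ directly. Because $Ch, Sh$ are real diagonal, the middle factor is real symmetric and equal to its conjugate transpose; because $U \in \usp{2n}$, the cross term collapses via $U^H U = I_{2n}$. The square of the hyperbolic block is
\begin{equation*}
\twotwo{Ch}{Sh}{Sh}{Ch}^{\!2} = \twotwo{Ch_n^{2\theta}}{Sh_n^{2\theta}}{Sh_n^{2\theta}}{Ch_n^{2\theta}},
\end{equation*}
which absorbs the doubling of the angles into a relabeling $\theta \mapsto 2\theta$. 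For the outer factors, $(G^{-T})^H = \overline{G}^{-1}$ gives $S^H S = \diag(G^H, \overline{G}^{-1})\stwotwo{Ch}{Sh}{Sh}{Ch}\diag(G, G^{-T})$. Renaming $H := \overline{G}$ and using $\overline{H}^H = H^T$ and $\overline{H}^{-T} = H^{-H}$ rewrites this in the form stated in the corollary.

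The remaining point, and the main obstacle, is surjectivity: every Hermitian positive definite symplectic $A$ must arise as $S^H S$ for some $S \in \symp{2n, \mathbb{C}}$. I would establish this by invoking the group-level Cartan (polar) decomposition of $\symp{2n, \mathbb{C}}$ associated to $\tau_1$, which writes $\symp{2n, \mathbb{C}} = \usp{2n} \cdot P$ with $P = \exp(\mathfrak{p}_\sigma)$ equal to the set of Hermitian positive definite symplectic matrices. Given such $A$, the Hermitian square root $A^{1/2}$ (via functional calculus on the spectral decomposition) lies in $P$, hence in $\symp{2n, \mathbb{C}}$, and setting $S := A^{1/2}$ gives $A = S^H S$ with $S$ complex symplectic. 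A complex analog of the symplectic Cholesky of \cite{gama2006symplectic} would serve equally well. Composing this surjectivity with the direct calculation above yields the factorization for every Hermitian positive definite symplectic $A$, completing the proof.
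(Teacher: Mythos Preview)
Your proposal is correct and follows exactly the approach the paper intends: the corollary is presented as the immediate left folding of Theorem \ref{thm:F15complex}, and for the real analogue the paper explicitly invokes the symplectic Cholesky factorization \cite{gama2006symplectic} to guarantee surjectivity onto positive definite symplectic matrices. Your square-root argument via the global Cartan decomposition of $\symp{2n,\mathbb{C}}$ is a perfectly valid alternative for that surjectivity step, and your algebraic bookkeeping (doubling of angles, the renaming $G\mapsto\overline{G}$ to match the stated form) fills in details the paper leaves implicit.
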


\subsection{${\fact{16}}$ : ${(\Symp_\beta(2n), \Un_{2\beta}(n), \Symp_\beta(2p)\times\Symp_\beta(2q))}$}\label{sec:F16}

\begin{gather*}
\boxed{
\begin{bmatrix}
\rule{0cm}{0.1cm}\\ 2n\times 2n \\ \text{{ Symplectic }} \\ \rule{0cm}{0.1cm}
\end{bmatrix}
\!=\! 
{\renewcommand{\arraystretch}{1.3}
\begin{bmatrix}
2n\times 2n \\\text{Unitary} \\ \text{ Symplectic }
\end{bmatrix}}
\!\cdot\!
\begin{bmatrix}
    \rule{0cm}{0.8cm}\makebox(18.5,18.5){$\begin{smallmatrix}\diagdown\hspace{0.6cm}\\ \text{ch}_l \\ \hspace{0.6cm}\diagdown    \end{smallmatrix}$}
    & \hspace{-0.15cm}\makebox(18.5,18.5){$\begin{smallmatrix}\diagdown\hspace{0.6cm}\\ \text{sh}_l \\ \hspace{0.6cm}\diagdown    \end{smallmatrix}$}
    & \hspace{-0.15cm}\makebox(18.5,18.5){$\,$} & \hspace{-0.15cm}\makebox(18.5,18.5){$\,$}
    \\
    \makebox(18.5,18.5){$\begin{smallmatrix}\diagdown\hspace{0.6cm}\\ \text{sh}_l \\ \hspace{0.6cm}\diagdown    \end{smallmatrix}$}
    & \hspace{-0.15cm}\makebox(18.5,18.5){$\begin{smallmatrix}\diagdown\hspace{0.6cm}\\ \text{ch}_l \\ \hspace{0.6cm}\diagdown    \end{smallmatrix}$}
    & \hspace{-0.15cm}\makebox(18.5,18.5){$\,$} & \hspace{-0.15cm}\makebox(18.5,18.5){$\,$}
    \\ 
    \makebox(18.5,18.5){$\,$} & \hspace{-0.15cm}\makebox(18.5,18.5){$\,$}
    & \hspace{-0.15cm}\makebox(18.5,18.5){$\begin{smallmatrix}\diagdown\hspace{0.6cm}\\ \text{ch}_l \\ \hspace{0.6cm}\diagdown    \end{smallmatrix}$}
    & \hspace{-0.15cm}\makebox(18.5,18.5){$\begin{smallmatrix}\diagdown\hspace{0.6cm}\\ -\text{sh}_l \\ \hspace{0.6cm}\diagdown    \end{smallmatrix}$}
    \\
    \makebox(18.5,18.5){$\,$} & \hspace{-0.15cm}\makebox(18.5,18.5){$\,$}
    & \hspace{-0.15cm}\makebox(18.5,18.5){$\begin{smallmatrix}\diagdown\hspace{0.6cm}\\ -\text{sh}_l \\ \hspace{0.6cm}\diagdown    \end{smallmatrix}$}
    & \hspace{-0.15cm}\makebox(18.5,18.5){$\begin{smallmatrix}\diagdown\hspace{0.6cm}\\ \text{ch}_l \\ \hspace{0.6cm}\diagdown    \end{smallmatrix}$}
\end{bmatrix}
\!\cdot\!
\underbrace{\renewcommand{\arraystretch}{1.3}
\begin{bmatrix}
\colorbox[HTML]{D6D6D6}{\makebox(34,34){\makecell{\small{$2p\times 2p$}\\ \scriptsize{\text{Symplectic}}}}}
& \\ 
 & \hspace{-0.3cm} \colorbox[HTML]{D6D6D6}{\makebox(34,34){\makecell{\small{$2q\times 2q$}\\ \scriptsize{\text{Symplectic}}}}}
\end{bmatrix}}_{\text{with permutation}}
}
\end{gather*}

Suppose $n = p + q$, $p \ge q$. In $\fact{16}$ we decompose $2n\times 2n$ symplectic matrices into smaller symplectic matrices of sizes $2p\times 2p$ and $2q\times 2q$. Let us first define a permutation matrix $P_{p, q}$.

\begin{gather}
    \nonumber\hspace{1cm}
    \small{\overbrace{}^p\overbrace{}^q\overbrace{}^{p}\overbrace{}^{q}}
    \\[-4pt]
    \vspace{-1cm}
    P_{p, q}:= 
    \begin{bmatrix}
    I_p &0 &0 &0\\
    0& 0& I_p &0 \\
    0& I_q &0 &0\\
    0&0 &0 & I_q
    \end{bmatrix}
    \hspace{-0.2cm}
    \begin{array}{l}
    \rdelim\}{1}{0mm}[\scriptsize{$p$}] \\
    \rdelim\}{1}{0mm}[\scriptsize{$p$}] \\ 
    \rdelim\}{1}{0mm}[\scriptsize{$q$}] \\
    \rdelim\}{1}{0mm}[\scriptsize{$q$}] 
    \end{array}\label{def:Ppq}
\end{gather}
The map $A\mapsto P_{p, q}^TAP_{p, q}$ does the following.
\begin{gather}
    \nonumber
    {\overbrace{}^p\overbrace{}^p\overbrace{}^{q}\overbrace{}^{q}}
    \hspace{1.9cm}
    {\overbrace{}^p\overbrace{}^q\overbrace{}^{p}\overbrace{}^{q}}\hspace{0.2cm}
    \\[-4pt]
    \begin{bmatrix}
    X_1 & X_2 & 0 & 0\\
    X_3 & X_4 & 0 & 0 \\
    0 & 0 & Y_1 & Y_2 \\
    0 & 0 & Y_3 & Y_4
    \end{bmatrix} 
    \hspace{-0.2cm}
    \begin{array}{l}
    \rdelim\}{1}{0mm}[\scriptsize{$p$}] \\
    \rdelim\}{1}{0mm}[\scriptsize{$p$}] \\ 
    \rdelim\}{1}{0mm}[\scriptsize{$q$}] \\
    \rdelim\}{1}{0mm}[\scriptsize{$q$}] 
    \end{array}
    \hspace{0.3cm}\longmapsto \begin{bmatrix}
     X_1 & 0 & X_2 & 0 \\
    0 & Y_1 & 0 & Y_2 \\
    X_3 & 0 & X_4 & 0 \\
    0 & Y_3 & 0 & Y_4 
    \end{bmatrix}
    \hspace{-0.2cm}
    \begin{array}{l}
    \rdelim\}{1}{0mm}[\scriptsize{$p$}] \\
    \rdelim\}{1}{0mm}[\scriptsize{$q$}] \\ 
    \rdelim\}{1}{0mm}[\scriptsize{$p$}] \\
    \rdelim\}{1}{0mm}[\scriptsize{$q$}] 
    \end{array} \label{eq:F16permmap}
\end{gather}
where the sizes of $A_j, B_j, C_j, D_j$ are $p\times p$, $p\times q$, $q\times p$, $q\times q$, respectively. With the permutation matrix $P_{p, q}$, we can state the factorization $\fact{16}$ as follows.
\begin{theorem}[$\fact{16}$, real/complex]\label{thm:F16real}
Let $P = P_{p,q}$ and $f:A\mapsto P^TAP$. For any $2n\times 2n$ real (resp. complex) symplectic matrix $S\in\symp{2n, \mathbb{R}}$ (resp. $S\in\symp{2n, \mathbb{C}}$), there exist 
\begin{itemize}
    \item A $2n\times 2n$ orthogonal (resp. unitary) symplectic matrix $U$,
    \item Two real (resp. complex) symplectic matrices $S_p\in\symp{2p, \mathbb{R}}$ and $S_q\in\symp{2q, \mathbb{R}}$ (resp. $S_p\in\symp{2p, \mathbb{C}}$ and $S_q\in\symp{2q, \mathbb{C}}$),
    \item $q$ unique (up to order and signs) real numbers $\theta_1, \dots, \theta_q$,
\end{itemize}
such that the following factorization holds:
\begin{equation}
    S = U \twotwo{H}{}{}{H^{-1}}f\bigg(\twotwo{S_p}{}{}{S_q}\bigg) = U
    \begin{bsmallmatrix}Ch& & Sh & & &\\ & I_{p-q} & & & & \\ Sh & & Ch & & &\\ & & & \,\,Ch & & -Sh \\ & & & & I_{p-q} &\\ & & &-Sh & &\,\,\,Ch \end{bsmallmatrix}
    \begin{bsmallmatrix}
    X_1 & 0 & X_2 & 0\\
    0 & Y_1 & 0 & Y_2 \\
    X_3 & 0 & X_4 & 0 \\
    0 & Y_3 & 0 & Y_4 
    \end{bsmallmatrix},
\end{equation}
where $S_p = \stwotwo{X_1}{X_2}{X_3}{X_4}$ and $S_q = \stwotwo{Y_1}{Y_2}{Y_3}{Y_4}$. The blocks $X_j$'s are $p\times p$ and $Y_j$'s are $q\times q$. 
\end{theorem}

\subsection{${\fact{17}}$ : ${(\Symp_\beta(2n), \Un_{2\beta}(n), \Symp_{\beta/2}(2n))}$}

\begin{gather*}
\boxed{
{\renewcommand{\arraystretch}{1.3}
\begin{bmatrix}
2n\times 2n \\ \rule{0.3cm}{0cm}\text{Complex}\rule{0.3cm}{0cm} \\ \text{ Symplectic }
\end{bmatrix}}
= 
{\renewcommand{\arraystretch}{1.3}
\begin{bmatrix}
2n\times 2n \\ \text{Unitary} \\ \text{ Symplectic }
\end{bmatrix}}
\cdot
{\small{\begin{bmatrix}
    \makebox(30,30){\makecell{$\diagdown$ \hspace{0.7cm}\,\\$\cosh\theta_l$\\ \hspace{0.7cm}$\diagdown$}} & 
    \hspace{-0.2cm}\makebox(30,30){\makecell{$\diagdown$ \hspace{0.7cm}\,\\$i\sinh\theta_l$\\ \hspace{0.7cm}$\diagdown$}} \\
    \makebox(30,30){\makecell{$\diagdown$ \hspace{0.7cm}\,\\$-i\sinh\theta_l$\\ \hspace{0.7cm}$\diagdown$}} & 
    \hspace{-0.2cm}\makebox(30,30){\makecell{$\diagdown$ \hspace{0.7cm}\,\\\,\,$\cosh\theta_l$\\ \hspace{0.7cm}$\diagdown$}} 
\end{bmatrix}}}
\cdot
{\renewcommand{\arraystretch}{1.3}
\begin{bmatrix}
2n\times 2n \\ \text{Real} \\ \text{ Symplectic }
\end{bmatrix}}
}
\end{gather*}

%
%
%

The factorization $\fact{17}$ is a matrix factorization of complex symplectic matrices. As $\symp{2n, \mathbb{R}}\subset \symp{2n, \mathbb{C}}$, the real symplectic factor of a complex symplectic matrix is factored out in $\fact{17}$. The factorization $\fact{17}$ is as follows.
\begin{theorem}[$\fact{17}$, complex]\label{thm:F17complex}
For any $2n\times 2n$ complex symplectic matrix $S\in\symp{2n, \mathbb{C}}$, there exist
\begin{itemize}
    \item A unitary symplectic matrix $U\in\usp{2n}$,
    \item A real symplectic matrix $Y\in\symp{2n, \mathbb{R}}$
    \item $n$ unique (up to order and signs) real numbers $\theta_1, \dots, \theta_n$,
\end{itemize} 
such that the following factorization holds:
\begin{equation}
    S = U\twotworr{Ch}{iSh}{-iSh}{Ch}Y.
\end{equation}
\end{theorem}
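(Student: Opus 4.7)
The plan is to apply Algorithm \ref{alg:K1AK2noncompact} with the generalized Cartan triple $(G, K_\sigma, K_\tau) = (\symp{2n, \mathbb{C}}, \usp{2n}, \symp{2n, \mathbb{R}})$, exactly paralleling the worked example in Section \ref{sec:gencartanex1}. The noncompact semisimple real Lie group $G = \symp{2n, \mathbb{C}}$ has Lie algebra $\mathfrak{g} = \mathfrak{sp}(2n, \mathbb{C}) = \{X = \stwotwo{A}{B}{C}{-A^T} : B = B^T, C = C^T,\ A, B, C \in \mathbb{C}^{n\times n}\}$. The Cartan involution is $\sigma(X) = -X^H$, giving the Cartan decomposition $\mathfrak{g} = \mathfrak{usp}(2n) + i\,\mathfrak{usp}(2n)$, so $K_\sigma = \usp{2n}$ as in Table \ref{tab:cartanss}. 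For the second involution I take $\tau(X) = \overline{X}$ (entrywise complex conjugation). A direct check gives $\sigma\tau(X) = -\overline{X}^H = -X^T = \overline{-X^H} = \tau\sigma(X)$, so the two involutions commute, as required by the Flensted--Jensen hypothesis.

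Next I compute the $\tau$-decomposition. The $+1$ eigenspace $\mathfrak{k}_\tau = \{X \in \mathfrak{g} : \overline{X} = X\} = \mathfrak{sp}(2n, \mathbb{R})$, and exponentiating (together with real skew-symmetric perturbations) yields $K_\tau = \symp{2n, \mathbb{R}}$; the $-1$ eigenspace is $\mathfrak{p}_\tau = i\,\mathfrak{sp}(2n, \mathbb{R})$.

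Then I intersect $\mathfrak{p}_\sigma \cap \mathfrak{p}_\tau$: these are matrices in $\mathfrak{g}$ that are simultaneously Hermitian (from $\sigma$) and purely imaginary (from $\tau$). Writing $X = iY$ with $Y$ real, the Hermitian condition forces $Y^T = -Y$, so
\begin{equation*}
\mathfrak{p}_\sigma \cap \mathfrak{p}_\tau = i\,\bigl\{Y \in \mathfrak{sp}(2n,\mathbb{R}): Y^T = -Y\bigr\} = \Bigl\{i\stwotwo{A}{B}{-B}{A}: A = -A^T \in \mathbb{R}^{n\times n},\ B = B^T \in \mathbb{R}^{n\times n}\Bigr\}.
\end{equation*}
A natural maximal abelian subalgebra inside this is the $n$-dimensional space
$\mathfrak{a} = \bigl\{\stwotwo{0}{iD}{-iD}{0}: D \in \mathbb{R}^{n\times n}\ \text{diagonal}\bigr\}$
(abelianness is immediate since two such matrices multiply to $\stwotwo{D_1D_2}{0}{0}{D_1D_2}$, symmetric in $D_1, D_2$). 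Using the identity $\bigl(i\stwotwo{0}{1}{-1}{0}\bigr)^2 = I_2$ per diagonal slot, I obtain by the power series
\begin{equation*}
A := \exp(\mathfrak{a}) = \Bigl\{\twotworr{Ch}{iSh}{-iSh}{Ch}: \theta \in \mathbb{R}^n\Bigr\},
\end{equation*}
with $Ch = \diag(\cosh\theta_l)$, $Sh = \diag(\sinh\theta_l)$; a quick check confirms these lie in $\symp{2n, \mathbb{C}}$.

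Invoking Flensted-Jensen's theorem \cite[Theorem 4.1]{flensted1978spherical} now yields $\symp{2n, \mathbb{C}} = \usp{2n}\cdot A\cdot\symp{2n, \mathbb{R}}$, which is the claimed factorization. Uniqueness of the $\theta_l$'s up to permutation and sign follows from the Weyl group action on $\mathfrak{a}$ modulo the centralizers of $K_\sigma$ and $K_\tau$. The main obstacle in a fully rigorous write-up is verifying that the chosen $\mathfrak{a}$ is indeed maximal abelian in $\mathfrak{p}_\sigma \cap \mathfrak{p}_\tau$ rather than merely abelian; this is a standard root system computation (one checks that any element of $\mathfrak{p}_\sigma \cap \mathfrak{p}_\tau$ commuting with every element of $\mathfrak{a}$ must itself be of the form $\stwotwo{0}{iD}{-iD}{0}$), but it can also be read off from Matsuki's classification \cite{matsuki2002classification} applied to this commuting involution pair.
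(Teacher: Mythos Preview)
Your proof is correct and follows precisely the paper's approach: it is an instance of Algorithm~\ref{alg:K1AK2noncompact} with the involution $\tau(X)=\overline{X}$ on $\mathfrak{sp}(2n,\mathbb{C})$, exactly as recorded in Table~\ref{tab:involutionlist3}. The computation of $\mathfrak{p}_\sigma\cap\mathfrak{p}_\tau$, the choice of $\mathfrak{a}$, and the exponentiation to $A$ all match the paper's (implicit) derivation.
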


This is in fact Theorem \ref{thm:F12real} ($\fact{12}$ real) with the additional symplectic structure. (In $\fact{12}$, an alternative selection of the maximal abelian subgroup $A= \{\text{All } \stwotwo{\,\,\,Ch}{iSh}{-iSh}{Ch}\}$ is equivalent to the subgroup $A$ in Theorem \ref{thm:F17complex}.) Similar to Corollary \ref{cor:F12real}, the left folding of Theorem \ref{thm:F17complex} is the decoupling of real and imaginary parts of a Hermitian positive definite symplectic matrix.

\begin{corollary}
Any $2n\times 2n$ Hermitian positive definite symplectic matrix $A$ is congruent to a tridiagonal matrix of the form $\stwotwo{\,\,\,Ch}{iSh}{-iSh}{Ch}$ where $Ch, Sh$ are $n\times n$ $\cosh, \sinh$ diagonal matrices. The congruence transformation is done by a $2n\times 2n$ real symplectic matrix. 
\end{corollary}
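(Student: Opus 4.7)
The plan is to obtain this corollary as the left folding of Theorem \ref{thm:F17complex}, following the recipe of Section \ref{sec:folding} with the involution $\tau_1(X) = X^{-H}$ (which fixes $K_\sigma = \usp{2n}$). I split the argument into two ingredients.

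First, I would realize every Hermitian positive definite symplectic $A$ as $A = S^H S$ for some $S \in \symp{2n,\mathbb{C}}$. The cleanest route is the global Cartan decomposition of $\symp{2n,\mathbb{C}}$: the set $P$ of Hermitian positive definite symplectic matrices coincides with $\exp(\mathfrak{p}_\sigma)$, where $\mathfrak{p}_\sigma$ is the space of Hermitian matrices in $\mathfrak{sp}(2n,\mathbb{C})$, and $\exp$ is a diffeomorphism onto $P$. Consequently the principal square root $A^{1/2} = \exp(\tfrac{1}{2}\log A)$ again lies in $P$, so setting $S = A^{1/2}$ gives $A = S^H S$ with $S$ symplectic. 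Alternatively, one could invoke a Hermitian analogue of the symplectic Cholesky factorization cited after Theorem \ref{thm:F15real}.

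Second, with $A = S^H S$ in hand, I apply Theorem \ref{thm:F17complex} to $S$ and fold. Write $S = UDY$ with $U \in \usp{2n}$, $Y \in \symp{2n,\mathbb{R}}$, and $D = \twotworr{Ch}{iSh}{-iSh}{Ch}$. Two observations drive the calculation: (i) $D$ is Hermitian, since complex-conjugation swaps $iSh \leftrightarrow -iSh$ while transposition swaps them back; (ii) $Y$ is real, so $Y^H = Y^T$. Substituting gives
\[
A = S^H S = Y^T D^H U^H U D Y = Y^T D^2 Y.
\]
A direct $2\times 2$ block computation, using that $Ch$ and $Sh$ are commuting real diagonals together with $\cosh^2\theta_l + \sinh^2\theta_l = \cosh 2\theta_l$ and $2\sinh\theta_l\cosh\theta_l = \sinh 2\theta_l$, yields
\[
D^2 = \twotworr{Ch'}{iSh'}{-iSh'}{Ch'},
\]
where $Ch' = \diag(\cosh 2\theta_l)$ and $Sh' = \diag(\sinh 2\theta_l)$, a matrix of exactly the claimed form. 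This exhibits $A$ as $Y^T M Y$ with $Y$ real symplectic and $M$ tridiagonal of the advertised shape.

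The only nontrivial step is verifying that the square root operation preserves the symplectic structure; after that, the entire argument is algebraic manipulation of the $\fact{17}$ factorization and exactly mirrors the folding corollaries accompanying Theorems \ref{thm:F15complex} and \ref{thm:F16complex}, fitting the general left-folding template \eqref{eq:foldingrecipe}.
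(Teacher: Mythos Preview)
Your proof is correct and follows exactly the approach the paper intends: the corollary is obtained as the left folding of Theorem \ref{thm:F17complex}, with your square-root argument (or the symplectic Cholesky you mention) playing the role of the covering step that the paper handles analogously via \cite{gama2006symplectic} in the discussion after Theorem \ref{thm:F15real}. The explicit verification that $D^2$ again has the $\stwotwo{Ch'}{iSh'}{-iSh'}{Ch'}$ form is exactly the computation the paper leaves implicit.
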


A decoupling of the real and the imaginary part for a given Hermitian positive definite symplectic matrix $A$ ($A^H = A$ and $A^TJA = J$) is,
\begin{equation}
    \Re(A) = S^T\twotwo{Ch}{}{}{Ch}S, \hspace{0.5cm} \Im(A) = S^T\twotwo{\,}{Sh}{-Sh}{}S,
\end{equation}
where $S$ is some $2n\times 2n$ real symplectic matrix.

\section{Matrix factorizations of indefinite orthogonal/unitary matrices}\label{sec:jorthogonalfact}
In this section we discuss the matrix factorizations of indefinite orthogonal and unitary matrices, $\fact{18}$ to $\fact{22}$. Let $n = p + q$ with $p\ge q$. Again we categorize the $\kak$ decompositions into five categories, including the KAK decomposition ($\fact{18}$, the HCSD). The generalized Cartan triple $(G, K_\sigma, K_\tau)$ of the five categories are the following:
\begin{alignat*}{2}
    \fact{18}&\text{ : }\big(\Un_\beta(p, q), \Un_{\beta}(p)\times  \Un_{\beta}(q), \Un_{\beta}(p)\times  \Un_{\beta}(q) \big) \,\,(\text{KAK = HCSD}) \hspace{1cm} &&\beta=1, 2, 4 \\
    \fact{19}&\text{ : } \big(\Un_\beta(p, q), \Un_{\beta}(p)\times  \Un_{\beta}(q), \Un_\beta(p_1, q_1)\times \Un_\beta(p_2, q_2)\big) &&\beta = 1, 2, 4\\
    \fact{20}&\text{ : } \big(\Un_\beta(2p, 2q), \Un_{\beta}(2p)\times  \Un_{\beta}(2q), \Un_{2\beta}(p, q)\big) &&\beta = 1, 2\\
    \fact{21}&\text{ : } \big(\Un_\beta(n, n), \Un_\beta(n)\times \Un_\beta(n), \Ortho_{2\beta}(n)\big) &&\beta = 1, 2\\
    \fact{22}&\text{ : } \big(\Un_\beta(p, q), \Un_\beta(p)\times \Un_\beta(q), \Un_{\beta/2}(p, q)\big) &&\beta = 2, 4\\
\end{alignat*}

\subsection{${\fact{18}}$ : ${(\Un_\beta(p, q), \Un_{\beta}(p)\times  \Un_{\beta}(q), \Un_{\beta}(p)\times  \Un_{\beta}(q))}$, the hyperbolic CSD}\label{sec:F18}

\begin{gather*}
\boxed{
{\renewcommand{\arraystretch}{1.6}
\begin{bmatrix}
n\times n \\ \hspace{0.2cm}\text{{ Indefinite }} \hspace{0.2cm}\,\\ \text{unitary}
\end{bmatrix}}
= 
\begin{bmatrix}
\colorbox[HTML]{D6D6D6}{\makebox(35,35){\makecell{$p\times p$\\\small{Unitary}}}} & \\ 
& \hspace{-0.35cm}\colorbox[HTML]{D6D6D6}{\makebox(18,18){\makecell{\small{$q\times q$}\\\tiny{Unitary}}}}
\end{bmatrix}
\cdot
\begin{bmatrix}
\begin{array}{c:c}
\makebox(33, 38){$\begin{smallmatrix}\diagdown\hspace{1.1cm}\\ \text{ch}_l \hspace{0.7cm}\\ \diagdown \hspace{0.1cm} \\ \hspace{0.7cm}\rule{0cm}{0.4cm}\text{\large{$I_{p-q}$}}\end{smallmatrix}$}& 
\makebox(17, 38){$\begin{smallmatrix}\diagdown\hspace{0.6cm}\\ \text{sh}_l\\ \hspace{0.6cm}\diagdown  \\ \rule{0cm}{0.5cm}\end{smallmatrix}$}\\ \hdashline
\makebox(38,17){$\begin{smallmatrix}\diagdown\hspace{1.2cm}\\ \text{sh}_l\hspace{0.7cm}\\ \diagdown\hspace{0.1cm}\end{smallmatrix}$}
& \makebox(17, 20){$\begin{smallmatrix}\diagdown\hspace{0.7cm}\\ \text{ch}_l\\ \hspace{0.5cm}\diagdown\end{smallmatrix}$}
\end{array}
\end{bmatrix}
\cdot
\begin{bmatrix}
\colorbox[HTML]{D6D6D6}{\makebox(35,35){\makecell{$p\times p$\\\small{Unitary}}}} & \\ 
& \hspace{-0.35cm}\colorbox[HTML]{D6D6D6}{\makebox(18,18){\makecell{\small{$q\times q$}\\\tiny{Unitary}}}}
\end{bmatrix}
}
\end{gather*}

\par The KAK decomposition of the indefinite orthogonal, indefinite unitary and quaternionic indefinite unitary group is the \textit{hyperbolic CS decomposition} (HCSD). In Wigner's 1968 work \cite[Eq.16]{wigner1968generalization} the real and the complex HCSD appeared as a counterpart of the CSD. Interestingly Wigner did not notice the CSD and the HCSD as an examples of the KAK decomposition, which was implicit in Helgason's work he cited. Rather he mentioned how it looked similar to the Iwasawa decomposition\footnote{The Iwasawa decomposition is another important Lie group decomposition which generalizes the QR factorization. For a nilpotent subgroup $N\subset G$ we have the decomposition $G = K A N$, where $K, A$ are identical to the KAK decomposition. For example if $G = \gl{n, \mathbb{R}}$, a standard choice of $N$ is the group of all upper triangular matrices with diagonal entries fixed to one. Thus, the decomposition $G = K \cdot (A\cdot N) $ is the QR factorization. One can add the positive roots of $\mathfrak{g}$ respect to $\mathfrak{a}$ to obtain a nilpotent Lie algebra $\mathfrak{n}$. As always, $N$ is a Lie subgroup of $G$ such that $\text{Lie}(N) = \mathfrak{n}$. As we are working with various $\kak$ decompositions in this paper, the Iwasawa decomposition of semisimple Lie groups could also be understood as a family of matrix factorizations.} appeared in Helgason's work. See Section \ref{sec:lesson} for details. 

\par The HCSD (Theorem \ref{thm:F18realcomplexquaternion}) was also derived as an example of the KAK decomposition in Representation theory literature, e.g., Vilenkin's book \cite{vilenkin2013representation}. In numerical linear algebra context it was introduced (with the name ``hyperbolic CS decomposition") by Grimme, Sorensen and Van Dooren \cite[Lemma 6]{grimme1996model}, with algorithmic considerations coming thereafter by Stewart and Van Dooren \cite{stewart2005factorization}. 

\par As we mentioned in Section \ref{sec:jorthogonal}, the HCSD is the hyperbolic analogue of the CSD. (See Remark \ref{rem:csdhcsdcompare} for the dual relationship.) Let $n = p + q$ and assume $p\ge q$ as always. We state the HCSD (all three $\beta$'s) as follows.

\begin{theorem}[$\fact{18}$, HCSD]\label{thm:F18realcomplexquaternion}
Fix $\beta=1,2,4$. For any $\beta$-indefinite unitary matrix $G\in\Un_\beta(p,q)$, we have the following factorization:
\begin{equation}\label{eq:F18}
    G = \begin{bmatrix}
    \begin{array}{c:c}
    U_p & \\ \hdashline & U_q\end{array}
    \end{bmatrix}
    \begin{bmatrix}\begin{array}{cc:c}
        Ch & & Sh \\
         & I_{p-q} & \\ \hdashline
         Sh & & Ch
    \end{array}\end{bmatrix}
    \begin{bmatrix}
    \begin{array}{c:c}
    U'_p & \\ \hdashline & U'_q\end{array}
    \end{bmatrix},\end{equation}
where $U_p, {U_p}'\in\Un_\beta(p)$ and $U_q, {U_q}'\in\Un_\beta(q)$. The matrices $Ch$, $Sh$ are $q\times q$ diagonal matrices with $\cosh, \sinh$ values (of $q$ unique angles, up to order and signs) on their diagonals, respectively. 
\end{theorem}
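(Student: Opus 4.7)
The plan is to derive Theorem \ref{thm:F18realcomplexquaternion} as the classical KAK decomposition of the noncompact Riemannian symmetric space $\Un_\beta(p,q)/(\Un_\beta(p)\times\Un_\beta(q))$, corresponding to types A$\RN{3}$, BD$\RN{1}$, C$\RN{2}$ in Table \ref{tab:cartanss}. Since the two flanking subgroups in \eqref{eq:F18} coincide, this is not a genuinely new $\kak$ decomposition but an application of the standard KAK theorem that was quoted just before Algorithm \ref{alg:K1AK2noncompact}; accordingly, I would follow Algorithm \ref{alg:K1AK2noncompact} with $\tau=\sigma$.

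First I would compute the Cartan decomposition of $\mathfrak{g}=\text{Lie}(\Un_\beta(p,q))$. The Cartan involution on the Lie algebra is $\sigma(X)=I_{p,q}\,X\,I_{p,q}$. Writing $X=\stwotwo{A}{B}{C}{D}$ in the $(p,q)$-block form dictated by $I_{p,q}$, the defining equation $X^\dagger I_{p,q}+I_{p,q}X=0$ forces $A^\dagger=-A$, $D^\dagger=-D$, and $C=B^\dagger$. The $+1$-eigenspace $\mathfrak{k}_\sigma$ thus consists of the block-diagonal members ($B=0$) and is exactly the tangent space of $\Un_\beta(p)\times\Un_\beta(q)$, while the $-1$-eigenspace $\mathfrak{p}_\sigma$ consists of the off-diagonal members $\stwotwo{0}{B}{B^\dagger}{0}$. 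For $\mathfrak{a}\subset\mathfrak{p}_\sigma$ I take $B$ to range over $p\times q$ real matrices whose only nonzero entries are $\theta_1,\dots,\theta_q$ in positions $(l,l)$ (for $l\le q$). A direct computation of $[X,X']$ reduces to $2\times 2$ checks and shows pairwise commutation; maximality then follows from the standard restricted-root-space argument for these symmetric pairs. Exponentiating a generic element of $\mathfrak{a}$ is block-diagonal after a permutation of basis vectors into pairs $(l,p+l)$, reducing to $\exp\bigl(\theta_l\stwotwo{0}{1}{1}{0}\bigr)=\stwotwo{\cosh\theta_l}{\sinh\theta_l}{\sinh\theta_l}{\cosh\theta_l}$, with an $I_{p-q}$ block for the unmatched basis vectors of the large factor. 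This reproduces exactly the middle matrix of \eqref{eq:F18}, and the KAK theorem now yields $G=K_\sigma A K_\sigma$ in the stated form. Uniformity across $\beta\in\{1,2,4\}$ is automatic because the entire argument used only the block structure of $I_{p,q}$ and the involution $\sigma$; the field $\mathbb{F}$ and the meaning of $\dagger$ enter only in the shape of $K_\sigma$.

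The main obstacle is the uniqueness clause ``unique up to permutation and sign.'' Uniqueness of the orbit $Aa A\cap \mathfrak{a}$ is built into the KAK theorem via the little Weyl group $W=N_{K_\sigma}(\mathfrak{a})/Z_{K_\sigma}(\mathfrak{a})$, but to conclude that $W$ acts on the $q$-tuple $(\theta_1,\dots,\theta_q)$ precisely by signed permutations, one has to identify explicit representatives in $K_\sigma$ that (i) permute coordinates and (ii) flip signs $\theta_l\mapsto -\theta_l$. The permutation generators come from block permutation matrices lying in $\Un_\beta(p)\times\Un_\beta(q)$, and a sign flip at index $l$ is realized by the diagonal matrix with $-1$'s in positions $l$ and $p+l$, which also lies in $K_\sigma$. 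This shows $W$ contains the hyperoctahedral group; the reverse inclusion follows from the root system being of type $BC_q$ (when $p>q$) or $C_q$ (when $p=q$). The restricted root multiplicities differ among $\beta=1,2,4$, which is why this HCSD sits in three different Cartan types, but the little Weyl group, and hence the uniqueness statement and the canonical form \eqref{eq:F18}, is the same in all three.
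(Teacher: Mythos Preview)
Your proposal is correct and follows essentially the same approach as the paper: the paper does not give a standalone proof of Theorem~\ref{thm:F18realcomplexquaternion} but simply identifies it as the KAK decomposition of the noncompact symmetric spaces BD$\RN{1}$, A$\RN{3}$, C$\RN{2}$, using the Cartan involution $\sigma(X)=I_{p,q}XI_{p,q}$ listed in Table~\ref{tab:involutionlist4}, which is exactly what you carry out via Algorithm~\ref{alg:K1AK2noncompact}. Your explicit Weyl-group computation for the uniqueness clause goes beyond what the paper spells out, but is consistent with its framework.
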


The folding (for the KAK decomposition, the left and the right folding are equivalent) of the HCSD can be used to obtain the general nonsquare SVD, as is hinted in an equation of Wigner's work \cite[Eq.29]{wigner1968generalization} but one has to recognize that the top right block, say, can be an arbitrary $p\times q$ matrix.

The collection of all $G^TG$ for $G\in\ortho{p, q}$ is the following: 
\begin{equation}
    \{G^TG\,|\,G\in\ortho{p, q}\} = \bigg\{\twotwo{\sqrt{I+XX^T}}{X}{X^T}{\sqrt{I+X^TX}}\bigg|X\in\mathbb{R}^{p\times q}\bigg\}.
\end{equation}
This can also be realized as the representatives of the noncompact symmetric space type BD$\RN{1}$. The folded factorization follows. 

\begin{corollary}[nonsquare SVD]\label{cor:F18}
For any matrix in $X\in\mathbb{R}^{p\times q}$, we have the following factorization:
\begin{equation}
    {\renewcommand{\arraystretch}{1.2}
    \begin{bmatrix}
    \begin{array}{c:c}
    \sqrt{I+XX^T} & X \\ \hdashline X^T  & \sqrt{I+X^TX} \end{array}
    \end{bmatrix}}
    = \begin{bmatrix}
    \begin{array}{c:c}
    U & \\ \hdashline & V\end{array}
    \end{bmatrix}
    \begin{bmatrix}\begin{array}{cc:c}
        Ch & & Sh \\
         & I_{p-q} & \\ \hdashline
         Sh & & Ch
    \end{array}\end{bmatrix}
    \begin{bmatrix}
    \begin{array}{c:c}
    U & \\ \hdashline & V\end{array}
    \end{bmatrix}^T.
\end{equation}
The decomposition of the upper right $p\times q$ block, $X =U\cdot Sh\cdot V^T$, represents the SVD of any $p\times q$ real matrix. 
\end{corollary}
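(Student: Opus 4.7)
The plan is to prove the corollary by combining the SVD of $X$ with the block structure of the HCSD, then recognizing the result as the left folding of Theorem \ref{thm:F18realcomplexquaternion}. Given $X \in \mathbb{R}^{p\times q}$, I would first take an SVD $X = U\Sigma V^T$ with $U \in \ortho{p}$, $V \in \ortho{q}$, and $\Sigma = \twoone{Sh}{0} \in \mathbb{R}^{p\times q}$ a rectangular diagonal of nonnegative singular values $\sigma_l$. Setting $\theta_l := \sinh^{-1}(\sigma_l)$ aligns the HCSD parameters $Ch = \mathrm{diag}(\cosh\theta_l)$ and $Sh = \mathrm{diag}(\sinh\theta_l)$ with the singular values of $X$, so that the HCSD diagonal matrix $M = \sthreediag{Ch}{Sh}{I_{p-q}}{Sh}{Ch}$ becomes the natural central factor.

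Next I would verify the claimed equality by computing the congruence $\twotwo{U}{}{}{V}\, M\, \twotwo{U}{}{}{V}^T$ block by block. The off-diagonal block is immediate: $U \twoone{Sh}{0} V^T = U\Sigma V^T = X$. For the top-left block, the identity $\cosh^2-\sinh^2 = 1$ gives $\mathrm{diag}(Ch, I_{p-q})^2 = I_p + \Sigma\Sigma^T$, so $\mathrm{diag}(Ch, I_{p-q}) = \sqrt{I_p + \Sigma\Sigma^T}$; then the functional-calculus identity $U\sqrt{S}U^T = \sqrt{USU^T}$ for symmetric positive semidefinite $S$ yields $U\mathrm{diag}(Ch, I_{p-q})U^T = \sqrt{I_p + XX^T}$. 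The bottom-right block produces $\sqrt{I_q + X^T X}$ by the same argument with $V$.

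To frame the above as the folding of $\fact{18}$, I would observe that the set equality $\{G^T G : G \in \ortho{p,q}\} = \{\stwotwo{\sqrt{I+XX^T}}{X}{X^T}{\sqrt{I+X^T X}} : X \in \mathbb{R}^{p \times q}\}$ stated in the paragraph preceding the corollary, combined with an HCSD $G = K_1 M_1 K_2$ and the fact that $K_1^T K_1 = I$, gives $G^T G = K_2^T M_1^2 K_2$; the squared diagonal $M_1^2$ is again of the HCSD form with angles $2\theta_l$, so a reparametrization matches the statement. The rectangular SVD of $X$ is then read off directly from the inner piece $X = U \cdot Sh \cdot V^T$. The main bookkeeping concern is aligning the $I_{p-q}$ padding with the zero singular values of $\Sigma$, which is automatic since $\sinh 0 = 0$ and $\cosh 0 = 1$; beyond that, the argument reduces to the orthogonal-conjugation square-root identity, and there is no substantial obstacle.
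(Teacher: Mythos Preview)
Your proposal is correct. Your direct block-by-block verification starting from an SVD of $X$ is valid, and your folding paragraph matches the paper's intended derivation.

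The paper, however, runs the argument in the opposite direction: it does not assume the SVD of $X$ but instead starts from the HCSD of some $G\in\ortho{p,q}$, folds to $G^TG = K_2^T M^2 K_2$, and then identifies $G^TG$ with the block matrix $\stwotwo{\sqrt{I+XX^T}}{X}{X^T}{\sqrt{I+X^TX}}$ via the set equality stated just before the corollary. The SVD of $X$ is then \emph{read off} from the $(1,2)$ block as an output, not used as an input. Your direct route is cleaner as a standalone proof of the displayed identity, but it is mildly circular in spirit, since the corollary's point is to exhibit the rectangular SVD as a consequence of the HCSD rather than to assume it. Your third paragraph, where you square $M_1$ and reparametrize $\theta_l\mapsto 2\theta_l$, is exactly the paper's argument and suffices on its own.
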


It is a well-known fact that the eigendecomposition of the matrix $\stwotwo{0}{X}{X^\dagger}{0}$ is an equivalent form of the singular value decomposition.\footnote{This is equivalent to a Lie algebra decomposition, i.e., Lemma 6.3 ({\romannumeral 3}) of \cite[p.247]{Helgason1978}. In computational perspective the Lie algebra decomposition is pointed out by Kleinsteuber \cite{kleinsteuber2005jacobi} as the SVD.} What seems to not be so well-known is that taking the exponential also is an equivalent form of the singular value decomposition. Corollary \ref{cor:F18} is an explicit exponentiation which can be viewed in multipe ways:
\begin{itemize}
    \item The SVD can be read directly off the top right and bottom left block entries, for any matrix. (With $Sh$ playing the role of $\Sigma$.)
    \item The diagonal block entries (after squaring and subtracting the identity $I$) together give the familiar $XX^\dagger$ and $X^\dagger X$  versions of the SVD often taught in elementary linear algebra textbooks.
\end{itemize}

\begin{remark}[dual relationship between the CSD and the HCSD]\label{rem:csdhcsdcompare}
The noncompactness of $\ortho{p, q}$ and the dual relationship between $\ortho{n}$, $\ortho{p, q}$ can be better understood by comparing the CSD and the HCSD. For a fixed set of matrices $O_p, O'_p\in\ortho{p}$, $O_q, O'_q\in\ortho{q}$ and a fixed real vector $\theta\in\mathbb{R}^q$, we can define the orthogonal matrix $O_n$ and the indefinite orthogonal matrix $O_{p, q}$ with the CSD and the HCSD, respectively,
\begin{align*}
    O_n &= \twotwo{O_p}{}{}{O_q}\threediag{C}{S}{I_{p-q}}{-S}{C}\twotwo{{O_p}'}{}{}{{O_q}'},\\
    O_{p, q} &= \twotwo{O_p}{}{}{O_q}\threediag{Ch}{Sh}{I_{p-q}}{Sh}{Ch}\twotwo{{O_p}'}{}{}{{O_q}'}
\end{align*}
with the cosine/sine diagonal matrices $C, S$ and the hyperbolic cosine/sine diagonal matrice $Ch, Sh$. As we make an increasing sequence of $\theta$ we obtain diverging sequence of matrices $O_{p, q}$ while the compact dual $O_n$ converges due to the compactness of the trigonometric functions. 
\end{remark}

\begin{remark}[real perplectic group and $\ortho{\lceil \frac{n}{2}\rceil, \lfloor\frac{n}{2}\rfloor}$]\label{rem:realperplecticsvd}
Recall the matrix $E_n$ with 1's on its antidiagonal, defined in \ref{eq:Edefinition}. Let $m_1 = \lceil \frac{n}{2}\rceil$, $m_2 = \lfloor\frac{n}{2}\rfloor$. Then the matrix $V=\frac{1}{\sqrt{2}}(I_{m_1, m_2}+E_n)$ is the eigenvector matrix (if $n$ is odd set the $(m_1, m_1)$ entry of $V$ to 1 for normalization) of $E_n$, with the eigendecomposition $E_n = V  I_{m_1, m_2} V^T$. The map $A\mapsto VAV^T$ is the isomorphism that sends $\ortho{\lceil \frac{n}{2}\rceil, \lfloor\frac{n}{2}\rfloor}$ to the real perplectic group. Applying the map to the real $\fact{18}$ factorization we obtain the SVD of a perplectic matrix with all factors preserving the perplectic structure. (Similar isomorphism of factorizations have been pointed out by Kleinsteuber in his doctoral thesis \cite{kleinsteuber2005jacobi}. He discusses the isomorphism of the folded Lie algebra decompositions between real perplectic and indefinite orthogonal matrices, see Example 4.2 of \cite{kleinsteuber2005jacobi}.)
\end{remark}

\begin{corollary}[perplectic structured SVD, real]
Let $n$ be even so that $m= m_1 = m_2$. For an $n\times n$ real perplectic matrix $P$, there exist
\begin{itemize}
    \item Two $n\times n$ perplectic orthogonal matrices $O_1, O_2$
    \item An $n\times n$ positive diagonal matrix $\Sigma = \diag(\sigma_1, \cdots, \sigma_m, \frac{1}{\sigma_m}, \cdots, \frac{1}{\sigma_1})$,
\end{itemize}
such that $P = O_1 \Sigma O_2$ is the SVD of $P$ with all three factors $O_1, \Sigma, O_2$ being perplectic. (For an odd $n$, we put $1$ between $\sigma_m$ and $\frac{1}{\sigma_m}$ in $\Sigma$.)
\end{corollary}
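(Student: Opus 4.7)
The plan is to transport the real hyperbolic CS decomposition (real $\fact{18}$, Theorem \ref{thm:F18realcomplexquaternion}) across a conjugation isomorphism between $\ortho{m,m}$ and the real perplectic group, as suggested by Remark \ref{rem:realperplecticsvd}. Because $p = q = m$, the identity block $I_{p-q}$ in the HCSD collapses and its middle factor reduces to the $2\times 2$ hyperbolic block $M = \begin{bsmallmatrix} Ch & Sh \\ Sh & Ch \end{bsmallmatrix}$ with $Ch, Sh \in \mathbb{R}^{m\times m}$ diagonal, built from angles $\theta_1,\ldots,\theta_m$.

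First I would fix the orthogonal change of basis
\[
V = \tfrac{1}{\sqrt{2}} \begin{bmatrix} I_m & I_m \\ E_m & -E_m \end{bmatrix},
\]
and verify by a block computation that $V^T V = I_n$ and $V I_{m,m} V^T = E_n$ (equivalently $V^T E_n V = I_{m,m}$). The latter makes $\Phi: G \mapsto V G V^T$ a group isomorphism from $\ortho{m,m}$ onto the real perplectic group: if $G^T I_{m,m} G = I_{m,m}$ then $(VGV^T)^T E_n (VGV^T) = V G^T I_{m,m} G V^T = E_n$. Setting $G := V^T P V \in \ortho{m,m}$ and applying the HCSD, $G = K_1 M K_2$ with $K_1 = \diag(O_p, O_q)$, $K_2 = \diag(O_p', O_q')$ block-diagonal orthogonal, gives
\[
P = \Phi(G) = (V K_1 V^T)\,(V M V^T)\,(V K_2 V^T).
\]
The outer factors $O_i := V K_i V^T$ are orthogonal (as $V$ and $K_i$ are) and perplectic, since $K_i^T I_{m,m} K_i = I_{m,m}$ and $\Phi$ carries indefinite orthogonal to perplectic.

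The key computation is the middle factor. A direct multiplication using $E_m^2 = I_m$ yields
\[
V M V^T = \begin{bmatrix} Ch + Sh & 0 \\ 0 & E_m(Ch - Sh)E_m \end{bmatrix}
= \diag(\sigma_1,\ldots,\sigma_m,\sigma_m^{-1},\ldots,\sigma_1^{-1}) = \Sigma,
\]
where $\sigma_l := e^{\theta_l} = \cosh\theta_l + \sinh\theta_l$; the conjugation by $E_m$ in the bottom-right block reverses the diagonal of $\diag(e^{-\theta_l})$ and produces exactly the antidiagonal reciprocal pairing $\sigma_i \sigma_{n+1-i} = 1$ characterizing the perplectic structure on diagonal matrices. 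Positivity of $\Sigma$ together with orthogonality of $O_1, O_2$ then makes $P = O_1 \Sigma O_2$ a genuine SVD of $P$. The odd-$n$ case follows by augmenting $V$ with the central $+1$ eigenvector of $E_n$ and carrying a central $1$ through both $M$ and $\Sigma$.

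The main subtlety is the choice of $V$: the symmetric eigenvector matrix $\tfrac{1}{\sqrt{2}}(I_{m,m} + E_n)$ singled out in Remark \ref{rem:realperplectic} is a valid isomorphism, but with that $V$ the cross-terms $Sh \cdot E_m + E_m \cdot Sh$ in $V M V^T$ do not vanish for $m \ge 2$ and the middle factor fails to be diagonal. The sign-flipped variant above is engineered precisely so that the two antidiagonal contributions cancel while $V I_{m,m} V^T = E_n$ is preserved; once this choice is made, everything else is a mechanical consequence of the HCSD and the homomorphism property of $\Phi$.
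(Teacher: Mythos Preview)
Your proposal is correct and follows exactly the isomorphism-transport strategy of Remark~\ref{rem:realperplecticsvd}, carrying the real HCSD across the conjugation $\Phi:G\mapsto VGV^T$ from $\ortho{m,m}$ to the perplectic group. Your observation that the particular $V=\tfrac{1}{\sqrt2}(I_{m,m}+E_n)$ quoted in the remark does not by itself diagonalize the HCSD middle factor is accurate; your modified $V=\tfrac{1}{\sqrt2}\begin{bsmallmatrix}I_m & I_m\\ E_m & -E_m\end{bsmallmatrix}$ differs from the paper's by right-multiplication by $\diag(I_m,E_m)\in\ortho{m}\times\ortho{m}$, so it realizes the same isomorphism while making the middle computation clean---this is precisely the missing detail the paper leaves implicit.
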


\begin{remark}[conjugate symplectic group and $\un{n, n}$]\label{rem:conjsympandunn}
Recall the conjugate symplectic group $\Symp^*(2n, \mathbb{C})$ in Remark \ref{rem:conjsymp}. Following Remark \ref{rem:realperplecticsvd}, we can similarly compute the isomorphism between $\Symp^*(2n, \mathbb{C})$ and $\un{n, n}$. Let $V = \frac{1}{\sqrt{2}}\begin{bsmallmatrix} I_n & -iI_n\\ -iIn & I_n \end{bsmallmatrix}$. Then $V$ is unitary and the eigendecomposition $iJ_n= VI_{n, n}V^H$ holds. The isomorphism $A\mapsto VAV^H$ sends $\un{n, n}$ to $\Symp^*(2n, \mathbb{C})$. Applying this isomorphism, an isomorphic form of the complex HCSD (complex $\fact{18}$) is obtained. It is the \textit{conjugate symplectic structured SVD}. (The symplectic structured SVDs are discussed in Remark \ref{rem:threesympsvds}.)
\end{remark}
\begin{corollary}[conjugate symplectic structured SVD]\label{cor:conjsympSVD}
For any $2n\times 2n$ conjugate symplectic matrix $S$, there exist,
\begin{itemize}
    \item Two $2n\times 2n$ conjugate symplectic and unitary matrix $U, V$,
    \item A positive diagonal matrix $\Sigma = \diag(\sigma_1, \cdots, \sigma_n, \frac{1}{\sigma_1}, \cdots, \frac{1}{\sigma_n})$,
\end{itemize}
such that $S=U\Sigma V $ is the SVD of $S$ with all three factors $U, \Sigma, V$ remains conjugate symplectic. 
\end{corollary}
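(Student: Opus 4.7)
My approach is to transport the complex hyperbolic CS decomposition (the $\beta=2$ case of Theorem \ref{thm:F18realcomplexquaternion}) through the isomorphism $\phi : \un{n,n} \to \Symp^*(2n,\mathbb{C})$, $\phi(A)=VAV^H$, of Remark \ref{rem:conjsympandunn}, and then further diagonalize the middle factor using a rotation that lies inside the compact subgroup $\Symp^*(2n,\mathbb{C})\cap\un{2n}$.

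First, I would set $G := V^H S V \in \un{n,n}$ and apply the complex HCSD to write $G = K_1 H K_2$, with $K_1, K_2 \in \un{n}\times\un{n}$ block-diagonal unitary and $H = \begin{bsmallmatrix} Ch & Sh \\ Sh & Ch \end{bsmallmatrix}$. Applying $\phi$ gives $S = \phi(K_1)\,\phi(H)\,\phi(K_2)$. A direct block computation shows $VH = HV$, so $\phi(H) = H$; moreover each $\phi(K_i)$ lies in $\Symp^*(2n,\mathbb{C})\cap\un{2n}$, since $V^H J_n V = -iI_{n,n}$ (the rewriting of $V I_{n,n} V^H = iJ_n$ from the Remark) converts the $I_{n,n}$-preserving property of $K_i$ into the $J_n$-preserving property of $\phi(K_i)$, while unitarity is automatic because $V$ is unitary.

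Next I would observe that $\Symp^*(2n,\mathbb{C})\cap\un{2n}$ coincides with the centralizer of $J_n$ in $\un{2n}$: combining $W^H J_n W = J_n$ with $W^H W = I$ yields $J_n W = W J_n$. This makes clear that
\[
W := \tfrac{1}{\sqrt{2}}\begin{bmatrix} I_n & I_n \\ -I_n & I_n \end{bmatrix}
\]
belongs to this compact subgroup, being real orthogonal and commuting with $J_n$. A $2\times 2$ block multiplication yields $W H W^H = D$ where $D := \diag(e^{\theta_1},\dots,e^{\theta_n},e^{-\theta_1},\dots,e^{-\theta_n})$, so $H = W^H D W$. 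Substituting,
\[
S \;=\; \bigl(\phi(K_1)\, W^H\bigr)\, D\, \bigl(W\, \phi(K_2)\bigr) \;=\; U\,\Sigma\,V',
\]
where $\Sigma := D$ has the claimed reciprocal-pair form with $\sigma_l = e^{\theta_l}$, and $U, V'$ are again in $\Symp^*(2n,\mathbb{C})\cap\un{2n}$ as products of elements of that subgroup. Since $\Sigma$ is positive diagonal and $U, V'$ are unitary, this is a valid SVD of $S$.

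The main obstacle is the correct choice of diagonalizer $W$. Among the two-parameter family of block orthogonal rotations that diagonalize $H$, the symmetric choice $\tfrac{1}{\sqrt{2}}\begin{bsmallmatrix} I & I \\ I & -I \end{bsmallmatrix}$ satisfies $W^H J_n W = -J_n$ rather than $J_n$, so it fails to be conjugate symplectic; one must isolate the unique rotation (up to signs) that commutes with $J_n$. Once this choice is made, the rest of the argument reduces to the group-theoretic observation that $\phi$ sends $\un{n,n}\cap\un{2n}$ isomorphically onto $\Symp^*(2n,\mathbb{C})\cap\un{2n}$, so the assembled factors inherit all the required structure.
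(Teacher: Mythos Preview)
Your approach is exactly the one the paper intends: transport the complex HCSD through the isomorphism $A\mapsto VAV^H$ of Remark~\ref{rem:conjsympandunn}. The paper leaves the passage from the hyperbolic block $H=\begin{bsmallmatrix} Ch & Sh\\ Sh & Ch\end{bsmallmatrix}$ to the diagonal $\Sigma$ implicit; your insertion of the rotation $W=\tfrac{1}{\sqrt{2}}\begin{bsmallmatrix} I & I\\ -I & I\end{bsmallmatrix}\in\osp{2n}\subset\Symp^*(2n,\mathbb{C})\cap\un{2n}$ is precisely the missing step, and your verification that $\phi(H)=H$ and that $\phi$ carries $\un{n}\times\un{n}$ into the unitary conjugate-symplectic subgroup is correct.
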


\subsection{${\fact{19}}$ : ${(\Un_\beta(p, q), \Un_{\beta}(p)\times  \Un_{\beta}(q), \Un_\beta(p_1, q_1)\times \Un_\beta(p_2, q_2))}$}

\begin{gather*}
\boxed{
{\renewcommand{\arraystretch}{1.3}
\begin{bmatrix}
n\times n \\ \hspace{0.2cm}\text{ Indefinite } \hspace{0.2cm}\, \\ \text{unitary} \\ (p, q)
\end{bmatrix}}
= 
\begin{bmatrix}
\colorbox[HTML]{D6D6D6}{\makebox(35,35){\makecell{$p\times p$\\\small{Unitary}}}} & \\ 
& \hspace{-0.35cm}\colorbox[HTML]{D6D6D6}{\makebox(18,18){\makecell{\small{$q\times q$}\\\tiny{Unitary}}}}
\end{bmatrix}
\cdot
\underbrace{\begin{bmatrix}
\begin{array}{c:c}
\makebox(40, 44){$\begin{smallmatrix}
\diagdown\hspace{0.55cm}\diagdown\hspace{0.6cm}\\ 
\cosh\theta_l \hspace{0.15cm} \sinh\theta_l \hspace{0.1cm}\\ 
\hspace{0.45cm}\diagdown \hspace{0.55cm}\diagdown \\ 
\diagdown\hspace{0.55cm}\diagdown\hspace{0.6cm}\\ 
\sinh\theta_l \hspace{0.15cm} \cosh\theta_l \hspace{0.1cm}\\ 
\hspace{0.45cm}\diagdown \hspace{0.55cm}\diagdown \\ 
\end{smallmatrix}$} 
& \makebox(17, 38){}\\ \hdashline
\makebox(38,17){}
& \makebox(17, 20){$\begin{smallmatrix}\diagdown\hspace{0.3cm}\diagdown\hspace{0.2cm}\\ \hspace{0.1cm}\text{ch}\gamma_l\,\text{sh}\gamma_l\\ \diagdown\hspace{0.3cm}\diagdown\end{smallmatrix}$}
\end{array}
\end{bmatrix}}_{\text{with permutation}}
\cdot
\underbrace{\begin{bmatrix}
\colorbox[HTML]{D6D6D6}{\makebox(28,28){\makecell{\tiny{Indefnite} \\ \tiny{unitary}\\\footnotesize{$(p_1, q_1)$}}}} & \\ 
& \hspace{-0.35cm}\colorbox[HTML]{D6D6D6}{\makebox(28,28){\makecell{\tiny{Indefinite} \\ \tiny{unitary}\\\footnotesize{$(p_2, q_2)$}}}}
\end{bmatrix}}_{\text{with permutation}}
}
\end{gather*}

The factorization $\fact{19}$ of indefinite orthogonal/unitary matrices is introduced. Let $p_1 + p_2 = p$ and $q_1 + q_2 = q$ be the partitions of $p, q$. With these four integers, we define two permutation matrices $P_1$ and $P_2$.
\begin{equation}
    P_1 := \begin{bmatrix}
    I_{p_1} & 0 & 0 & 0\\
    0 & 0 & I_{q_1} & 0 \\
    0 & I_{p_2} & 0 & 0\\
    0 & 0 & 0 & I_{q_2}
    \end{bmatrix}, \hspace{0.5cm}
    P_2 := \begin{bmatrix}
    I_{p_1} & 0 & 0 & 0\\
    0 & 0 & 0 & I_{q_2} \\
    0 & I_{p_2} & 0 & 0\\
    0 & 0 & I_{q_1} & 0
    \end{bmatrix}, 
\end{equation}
Similar to the map \eqref{eq:F16permmap}, the map $A\mapsto P_1^TAP_1$ does the following permutation to a block diagonal matrix. 
\begin{gather}
    \nonumber
    {\overbrace{}^{p_1}\overbrace{}^{q_1}\overbrace{}^{p_2}\overbrace{}^{q_2}}
    \hspace{1.7cm}
    {\overbrace{}^{p_1}\overbrace{}^{p_2}\overbrace{}^{q_1}\overbrace{}^{q_2}}\hspace{0.2cm}
    \\[-4pt]\label{eq:F19perm1}
    \begin{bmatrix}
    X_1 & X_2 & 0 & 0\\
    X_3 & X_4 & 0 & 0 \\
    0 & 0 & Y_1 & Y_2 \\
    0 & 0 & Y_3 & Y_4
    \end{bmatrix} 
    \hspace{-0.2cm}
    \begin{array}{l}
    \rdelim\}{1}{0mm}[\scriptsize{$p_1$}] \\
    \rdelim\}{1}{0mm}[\scriptsize{$q_1$}] \\ 
    \rdelim\}{1}{0mm}[\scriptsize{$p_2$}] \\
    \rdelim\}{1}{0mm}[\scriptsize{$q_2$}] 
    \end{array}
    \hspace{0.3cm}\longmapsto \begin{bmatrix}
    X_1 & 0 & X_2 & 0 \\
    0 & Y_1 & 0 & Y_2 \\
    X_3 & 0 & X_4 & 0 \\
    0 & Y_3 & 0 & Y_4 
    \end{bmatrix}
    \hspace{-0.2cm}
    \begin{array}{l}
    \rdelim\}{1}{0mm}[\scriptsize{$p_1$}] \\
    \rdelim\}{1}{0mm}[\scriptsize{$p_2$}] \\ 
    \rdelim\}{1}{0mm}[\scriptsize{$q_1$}] \\
    \rdelim\}{1}{0mm}[\scriptsize{$q_2$}] 
    \end{array}
\end{gather}
In particular, a matrix $A\in\ortho{p_1, q_1}\times \ortho{p_2, q_2}$ permuted to an element in $\ortho{p, q}$ with \eqref{eq:F19perm1}, as we can see in the block structure. Also the map $A\mapsto P_2^TAP_2$ does the following.
\begin{gather}\label{eq:F19perm2}
    \nonumber
    {\overbrace{}^{p_1}\overbrace{}^{q_2}\overbrace{}^{p_2}\overbrace{}^{q_1}}
    \hspace{1.7cm}
    {\overbrace{}^{p_1}\overbrace{}^{p_2}\overbrace{}^{q_1}\overbrace{}^{q_2}}\hspace{0.2cm}
    \\[-4pt]
    \begin{bmatrix}
    X_1 & X_2 & 0 & 0\\
    X_3 & X_4 & 0 & 0 \\
    0 & 0 & Y_1 & Y_2 \\
    0 & 0 & Y_3 & Y_4
    \end{bmatrix} 
    \hspace{-0.2cm}
    \begin{array}{l}
    \rdelim\}{1}{0mm}[\scriptsize{$p_1$}] \\
    \rdelim\}{1}{0mm}[\scriptsize{$q_2$}] \\ 
    \rdelim\}{1}{0mm}[\scriptsize{$p_2$}] \\
    \rdelim\}{1}{0mm}[\scriptsize{$q_1$}] 
    \end{array}
    \hspace{0.3cm}\longmapsto \begin{bmatrix}
    X_1 & 0 & 0 & X_2 \\
    0 & Y_1 & Y_2 & 0 \\
    0 & Y_3 & Y_4 & 0 \\
    X_3 & 0 & 0 & X_4 
    \end{bmatrix}
    \hspace{-0.2cm}
    \begin{array}{l}
    \rdelim\}{1}{0mm}[\scriptsize{$p_1$}] \\
    \rdelim\}{1}{0mm}[\scriptsize{$p_2$}] \\ 
    \rdelim\}{1}{0mm}[\scriptsize{$q_1$}] \\
    \rdelim\}{1}{0mm}[\scriptsize{$q_2$}] 
    \end{array}
\end{gather}
The factorization $\fact{19}$ of indefinite orthogonal matrices is as follows.

\begin{theorem}[$\fact{19}$, real/complex/quaternion]\label{thm:F19real}
For a fixed $\beta=1,2,4$, let $m_1 = \min(p_1, q_2)$ and $m_2 = \min(p_2, q_1)$. For any $\beta$-indefinite unitary matrix $U\in\Un_\beta(p,q)$, there exist
\begin{itemize}
    \item Two $\beta$-unitary matrices $U_p\in\Un_\beta(p)$, $U_q\in\Un_\beta(q)$,
    \item Two $\beta$-indefinite unitary matrices $V_1\in\Un_\beta(p_1, q_1)$, $V_2\in\Un_\beta(p_2, q_2)$,
    \item $m_1$ unique (up to order and signs) real numbers $\theta_1, \dots, \theta_{m_1}$,
    \item $m_2$ unique (up to order and signs) real numbers $\gamma_1, \dots, \gamma_{m_2}$, 
\end{itemize}
such that the following factorization holds:
\begin{equation}
    U = \twotwo{U_p}{}{}{U_q}
    P_2^T\twotwo{H^\theta}{}{}{H^\gamma}P_2 V,
\end{equation}
where $H^\theta = H_{\max(p_1, q_2), m_1}^\theta$ and $H^\gamma =H_{\max(p_2, q_1), m_2}^\gamma$, and $V$ is the permuted $\diag(V_1, V_2)$ by the transformation \eqref{eq:F19perm1},
\begin{equation}
    V := P_1^T\twotwo{V_1}{}{}{V_2}P_1. 
\end{equation}
Again, refer to Table \ref{tab:translation} for $\beta$-symbols of Lie groups. 
\end{theorem}

\par The block structure of the matrix $P_2^T\stwotwo{H^\theta}{}{}{H^\gamma}P_2$ depend on the partitions $(p_1, p_2), (q_1, q_2)$. For example, if $p_1\ge q_2 = m_1$ and $p_2\ge q_1 = m_2$ we have 
\begin{equation}
    P_2^T\twotwo{H^\theta}{}{}{H^\gamma}P_2 = 
    \begin{bsmallmatrix}
    Ch_{q_2} & & & Sh_{q_2} \\
    & I_{p_1 - q_2} & & & \\
    & & H_{p_2, q_1}^\gamma \\
    Sh_{q_2} & & & Ch_{q_2}
    \end{bsmallmatrix}.
\end{equation}

\par Essentially, the factorization $\fact{19}$ is a breakdown of the signature (more precisely the signature of the corresponding sesquilinear form) $(p, q)$ of the indefinite metric into smaller indefinite metrics with signatures $(p_1, q_1)$ and $(p_2, q_2)$. The complex and quaternionic versions of $\fact{19}$ is similar, as follows.

\subsection{${\fact{20}}$ : ${(\Un_\beta(2p, 2q), \Un_{\beta}(2p)\times  \Un_{\beta}(2q), \Un_{2\beta}(p, q))}$}

\begin{gather*}
\boxed{
{\renewcommand{\arraystretch}{1.25}
\begin{bmatrix}
2n\times 2n \\ \hspace{0.2cm}\text{$\beta$-indefinite} \hspace{0.2cm}\, \\ \text{unitary} \\ (2p, 2q)
\end{bmatrix}}
= 
\begin{bmatrix}
\colorbox[HTML]{D6D6D6}{\makebox(35,35){\makecell{$2p\times 2p$\\\small{Unitary}}}} & \\ 
& \hspace{-0.35cm}\colorbox[HTML]{D6D6D6}{\makebox(18,18){\makecell{\tiny{$2q\times 2q$}\\\tiny{Unit.}}}}
\end{bmatrix}
\cdot
\begin{bmatrix}
    \makebox(15,15){$I_{p-q}$} & \hspace{-0.15cm}\makebox(18.5,18.5){$\,$} & \hspace{-0.15cm}\makebox(13,13){$\,$} & \hspace{-0.15cm}\makebox(18.5,18.5){$\,$}
    \\
    & \hspace{-0.15cm}\makebox(18.5,18.5){$\begin{smallmatrix}\diagdown\hspace{0.6cm}\\ \text{ch}_l \\ \hspace{0.6cm}\diagdown    \end{smallmatrix}$} & & \hspace{-0.15cm}\makebox(18.5,18.5){$\begin{smallmatrix}\diagdown\hspace{0.6cm}\\ \text{sh}_l \\ \hspace{0.6cm}\diagdown    \end{smallmatrix}$}
    \\ 
    & & \hspace{-0.15cm}\makebox(13,13){$I_{p-q}$} & 
    \\
    & \hspace{-0.15cm}\makebox(18.5,18.5){$\begin{smallmatrix}\diagdown\hspace{0.6cm}\\ \text{sh}_l \\ \hspace{0.6cm}\diagdown    \end{smallmatrix}$} & & \hspace{-0.15cm}\makebox(18.5,18.5){$\begin{smallmatrix}\diagdown\hspace{0.6cm}\\ \text{ch}_l \\ \hspace{0.6cm}\diagdown    \end{smallmatrix}$}
\end{bmatrix}
\cdot
\underbrace{\overbrace{
\renewcommand{\arraystretch}{1.25}
\begin{bmatrix}
n\times n \\ \hspace{0.15cm}\text{$2\beta$-indefinite} \hspace{0.15cm}\, \\ \text{unitary} \\ (p, q)\end{bmatrix}
}^{\text{realified/complexified}}}_{\text{with permutation}}
}
\end{gather*}

The factorization $\fact{20}$ of indefinite orthogonal/unitary matrices is introduced. The realify and complexify maps are used throughout the section. We again use the permutation matrix $P_{p,q}$ defined in \eqref{def:Ppq}, Section \ref{sec:F16}. For $\fact{20}$, the following block permutation map $A\mapsto P_{p,q} A P_{p,q}^T$ is used:
\begin{gather}
    \nonumber
    {\overbrace{\hspace{0.7cm}}^p\overbrace{\hspace{0.7cm}}^q\overbrace{\hspace{0.7cm}}^{p}\overbrace{\hspace{0.7cm}}^{q}}
    \hspace{1.7cm}
    {\overbrace{\hspace{0.7cm}}^p\overbrace{\hspace{0.7cm}}^p\overbrace{\hspace{0.7cm}}^{q}\overbrace{\hspace{0.7cm}}^{q}}\hspace{0.2cm}
    \\[-4pt]
    \begin{bmatrix}
    X_1 & X_2 & W_1 & W_2\\
    X_3 & X_4 & W_3 & W_4 \\
    Z_1 & Z_2 & Y_1 & Y_2 \\
    Z_3 & Z_4 & Y_3 & Y_4
    \end{bmatrix} 
    \hspace{-0.2cm}
    \begin{array}{l}
    \rdelim\}{1}{0mm}[\scriptsize{$p$}] \\
    \rdelim\}{1}{0mm}[\scriptsize{$q$}] \\ 
    \rdelim\}{1}{0mm}[\scriptsize{$p$}] \\
    \rdelim\}{1}{0mm}[\scriptsize{$q$}] 
    \end{array}
    \hspace{0.3cm}\longmapsto \begin{bmatrix}
     X_1 & W_1 & X_2 & W_2 \\
    Z_1 & Y_1 & Z_2 & Y_2 \\
    X_3 & W_3 & X_4 & W_4 \\
    Z_3 & Y_3 & Z_4 & Y_4 
    \end{bmatrix}
    \hspace{-0.2cm}
    \begin{array}{l}
    \rdelim\}{1}{0mm}[\scriptsize{$p$}] \\
    \rdelim\}{1}{0mm}[\scriptsize{$p$}] \\ 
    \rdelim\}{1}{0mm}[\scriptsize{$q$}] \\
    \rdelim\}{1}{0mm}[\scriptsize{$q$}] 
    \end{array} \label{eq:F20perm}
\end{gather}

The factorization $\fact{20}$ of indefinite orthogonal matrices is as follows.
\begin{theorem}[$\fact{20}$, real]\label{thm:F20real}
Let $f:A\mapsto P_{p,q} AP_{p,q}^T$ be the map \eqref{eq:F20perm}. For any $2n\times 2n$ indefinite orthogonal matrix $O\in\ortho{2p, 2q}$, there exist
\begin{itemize}
    \item Two orthogonal matrices $O_{2p}\in\ortho{2p}$, $O_{2q}\in\ortho{2q}$,
    \item An $n\times n$ indefinite unitary matrix $V\in\un{p,q}$, 
    \item $q$ unique (up to order and signs) real numbers $\theta_1, \dots, \theta_q$,
\end{itemize}
such that the following factorization holds:
\begin{equation}
    O = \twotwo{O_{2p}}{}{}{O_{2q}} \twotwo{I_{p-q}}{}{}{H_{p+q, 2q}^\delta} f(\realify{V}),
\end{equation}
where $\delta = (\theta_1, \dots, \theta_q, -\theta_1, \dots, -\theta_q) \in\mathbb{R}^{2q}$. 
\end{theorem}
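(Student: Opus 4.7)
The plan is to apply the noncompact $\kak$ algorithm (Algorithm \ref{alg:K1AK2noncompact}) to the generalized Cartan triple $\bigl(\Un_1(2p,2q),\,\Un_1(2p)\times\Un_1(2q),\,f(\realify{\Un_1(p,q)})\bigr)$. The Cartan involution $\sigma$ on $\mathfrak{g}=\mathfrak{o}(2p,2q)$ is the standard one: $\sigma(X)=-X^T$, whose $+1$ eigenspace exponentiates to $K_\sigma=\ortho{2p}\times\ortho{2q}$. For the second involution, I note that $f\bigl(\realify{\Un_1(p,q)}\bigr)$ consists of matrices in $\ortho{2p,2q}$ that commute with a block permutation of the complex structure $J'$. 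Concretely, define $K=P_{p,q}\diag(J_p,J_q)P_{p,q}^T$ and set $\tau(X)=KXK^{-1}$ on $\mathfrak{g}$; this is an involution since $K^2=-I_{2n}$, and it commutes with $\sigma$ because $K^T=-K$ and $K$ commutes with $I_{2p,2q}$ (both after applying the permutation $f$). The fixed subgroup $K_\tau=\exp(\mathfrak{k}_\tau)$ is precisely the set of elements of $\ortho{2p,2q}$ that also commute with $K$, which is exactly $f(\realify{\Un_1(p,q)})$ by the defining property of the realify map.

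Next I would compute $\mathfrak{p}_\sigma\cap\mathfrak{p}_\tau$ directly. Writing any element of $\mathfrak{o}(2p,2q)$ in the outer block form $\stwotwo{A}{B}{B^T}{D}$ with $A=-A^T$ on $\mathbb{R}^{2p\times 2p}$ and $D=-D^T$ on $\mathbb{R}^{2q\times 2q}$, the subspace $\mathfrak{p}_\sigma$ is given by $A=D=0$ (off-diagonal symmetric couples $B,B^T$). Imposing $\tau(X)=-X$, i.e., $KX+XK=0$, restricts $B$ to anti-commute with the $J$-block structure on both sides, yielding a $q$-dimensional family (after quotienting out symmetries). A direct calculation — performed most cleanly by passing to the permuted coordinates $f^{-1}$ so that the complex structure $K$ becomes the standard $\diag(J_p,J_q)$ — identifies a maximal abelian subalgebra $\mathfrak{a}\subset\mathfrak{p}_\sigma\cap\mathfrak{p}_\tau$ whose elements are parametrised by $\theta\in\mathbb{R}^q$ with the entries $\theta_l$ appearing in one position and $-\theta_l$ in its partner position (the sign pairing being forced by $KX+XK=0$). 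This is the $q$-parameter family whose exponential is $\diag\bigl(I_{p-q},\,H_{p+q,2q}^{(\theta,-\theta)}\bigr)$, matching the middle factor in the statement.

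Having exhibited $A=\exp(\mathfrak{a})$ of the required shape, the Flensted--Jensen theorem quoted in the excerpt yields $G=K_\sigma A K_\tau$, which is exactly
\[O=\twotwo{O_{2p}}{}{}{O_{2q}}\twotwo{I_{p-q}}{}{}{H_{p+q,2q}^{\delta}}f(\realify{V}).\]
Uniqueness of $\theta\in\mathbb{R}^q$ up to permutation and sign follows from the Weyl group action on $\mathfrak{a}$: permutations arise from reordering the paired coordinates, and sign flips arise from the involution inside each pair, both being realizable by elements of $K_\sigma$. The last item to check is that $f(\realify{V})\in\ortho{2p,2q}$, which follows from the identity $\realify{V}^T\Phi\realify{V}=\Phi$ with $\Phi=\diag(I_p,-I_q,I_p,-I_q)$ together with $P_{p,q}^T\Phi P_{p,q}=I_{2p,2q}$, a short bookkeeping computation.

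The main obstacle is the middle step: verifying $\sigma\tau=\tau\sigma$ in the coordinates actually used (where $K_\tau$ has been conjugated by the permutation $P_{p,q}$), and then writing down the maximal abelian subalgebra so explicitly that the paired $(\theta,-\theta)$ pattern and the leading $I_{p-q}$ block emerge transparently. The sign-paired structure is not an accident but a rigidity condition forced by $K$ anticommuting with the elements of $\mathfrak{p}_\sigma\cap\mathfrak{p}_\tau$; tracking it correctly through the permutation $f$ is the one place where careless block bookkeeping would spoil the argument.
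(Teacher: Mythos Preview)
Your approach is exactly the paper's: the theorem is one instance of Algorithm~\ref{alg:K1AK2noncompact} together with Flensted--Jensen's theorem, with the Cartan involution $\sigma(X)=I_{2p,2q}XI_{2p,2q}$ (equivalently $-X^T$ on $\mathfrak{o}(2p,2q)$) and the non-Cartan involution listed in Table~\ref{tab:involutionlist4}, namely $\tau(X)=-\stwotwo{J_p}{}{}{J_q}X\stwotwo{J_p}{}{}{J_q}$. So the strategy is correct and identical in spirit.

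There is, however, a concrete bookkeeping slip in your choice of $K$. The complex structure whose centralizer in $\ortho{2p,2q}$ is $f(\realify{\un{p,q}})$ is $M:=\diag(J_p,J_q)$ \emph{itself}, because one already has $P_{p,q}\,J_n\,P_{p,q}^T=M$; your $K=P_{p,q}\,M\,P_{p,q}^T$ applies the permutation once too many. With your $K$ the commutation $\sigma\tau=\tau\sigma$ actually fails (your $K$ does not commute with $I_{2p,2q}$, whereas $M$ obviously does, both being block-diagonal of sizes $2p,2q$), so Flensted--Jensen would not apply as stated. The fix is simply to set $\tau(X)=MXM^{-1}$; then $\mathfrak{k}_\tau$ exponentiates to $f(\realify{\un{p,q}})$ exactly as you intended, and the rest of your outline --- computing $\mathfrak{a}\subset\mathfrak{p}_\sigma\cap\mathfrak{p}_\tau$, exponentiating to obtain the $(\theta,-\theta)$-paired $H$-block, and reading off Weyl-group uniqueness --- goes through. (A second small slip: the bookkeeping identity is $P_{p,q}\,\Phi\,P_{p,q}^T=I_{2p,2q}$, not $P_{p,q}^T\,\Phi\,P_{p,q}=I_{2p,2q}$.) You correctly flagged this permutation tracking as the fragile step; once $K$ is set to $M$ the argument is complete and matches the paper's derivation.
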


Note that $\realify{V}$ in Theorem \ref{thm:F20real} is a $2n\times 2n$ matrix that satisfies
\begin{equation*}
    \realify{V}^T\Phi \realify{V} = \Phi, \hspace{0.5cm} \text{where }\Phi = \diag(I_p, -I_q, I_p, -I_q).
\end{equation*}
The tridiagonal matrix $\stwotwo{I_{p-q}}{}{}{H_{p+q, 2q}^\delta}$ has the following block structure:
\begin{equation}
     \twotwo{I_{p-q}}{}{}{H_{p+q, 2q}^\delta} = \begin{bsmallmatrix}
     I_{p-q} & & & 0 & & \\ 
     & Ch & & & Sh & \\
     & & Ch & & & -Sh\\
     0& & & I_{p-q} & & \\
     & Sh & & & Ch & \\
     & & -Sh & & & Ch 
     \end{bsmallmatrix},
\end{equation}
where $Ch, Sh$ are $\cosh$, $\sinh$ matrices of $\theta_1, \cdots, \theta_q$. The complex $\fact{20}$ follows similarly.
\begin{theorem}[$\fact{20}$, complex]\label{thm:F20complex}
Let $f:A\mapsto P_{p,q} AP_{p,q}^T$ be the map \eqref{eq:F20perm}. For any $2n\times 2n$ complex indefinite unitary matrix $U\in\un{2p, 2q}$, there exist 
\begin{itemize}
    \item Two unitary matrices $U_{2p}\in\un{2p}$, $U_{2q}\in\un{2q}$,
    \item An $n\times n$ quaternionic indefinite unitary matrix $V\in\un{p, q, \mathbb{H}}$,
    \item $q$ unique (up to order and signs) real numbers $\theta_1, \dots, \theta_q$,
\end{itemize}
such that the following factorization holds:
\begin{equation}
    U = \twotwo{U_{2p}}{}{}{U_{2q}} \twotwo{I_{p-q}}{}{}{H_{p+q, 2q}^\delta} f(\complexify{V}),
\end{equation}
where $\delta = (\theta_1, \dots, \theta_q, -\theta_1, \dots, \theta_q)\in\mathbb{R}^{2q}$.
\end{theorem}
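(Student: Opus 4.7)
The plan is to apply Algorithm \ref{alg:K1AK2noncompact} to the noncompact generalized Cartan triple $(G, K_\sigma, K_\tau) = \big(\un{2p, 2q}, \un{2p}\times\un{2q}, \,\text{perm.-conjugate of}\,\complexify{\un{p, q, \mathbb{H}}}\big)$. Here $G$ is the noncompact $\Un_2(2p, 2q)$, and by Table \ref{tab:cartanss} (type A$\RN{3}$) its Cartan involution $\sigma(X) = I_{2p, 2q} X I_{2p, 2q}$ has fixed subgroup $K_\sigma = \un{2p}\times\un{2q}$. For $\tau$, take $\tau(X) = -\tilde{J}\,\overline{X}\,\tilde{J}^{-1}$ on $\mathfrak{g} = \mathfrak{u}(2p, 2q)$, where $\tilde{J}$ is the permutation-conjugate $P_{p,q}\, J_n\, P_{p,q}^T$ chosen so that $\tilde{J}^H I_{2p, 2q}\tilde{J} = I_{2p, 2q}$ (i.e., $\tilde J$ is compatible with the $(2p, 2q)$ signature). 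One verifies $\tau^2 = \mathrm{id}$, $\sigma\tau = \tau\sigma$, and that the group-level $+1$-fixed subgroup is exactly $f(\complexify{\un{p, q, \mathbb{H}}})$, where $f$ is the conjugation by $P_{p, q}$ appearing in the theorem statement.

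Next I would write down explicitly the decompositions $\mathfrak{g} = \mathfrak{k}_\sigma + \mathfrak{p}_\sigma$ (block-diagonal anti-Hermitian vs. off-diagonal) and $\mathfrak{g} = \mathfrak{k}_\tau + \mathfrak{p}_\tau$, and compute the intersection $\mathfrak{p}_\sigma \cap \mathfrak{p}_\tau$. Entries in $\mathfrak{p}_\sigma$ must live in the off-diagonal $2p\times 2q$ and $2q\times 2p$ blocks (with the Hermitian symmetry forced by $\mathfrak{u}(2p, 2q)$), while the $\tau$-compatibility condition pairs entries via complex conjugation and the permutation $\tilde J$. Inside the resulting intersection I would exhibit a maximal abelian subalgebra $\mathfrak{a}$ of real dimension $q$, parameterized by $\theta_1, \ldots, \theta_q$; fixing a positive Weyl chamber makes these unique up to permutation and sign.

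Exponentiating $\mathfrak{a}$ yields the middle $A$-factor, and the main calculation is to check that $\exp(\mathfrak{a})$ equals $\stwotwo{I_{p-q}}{}{}{H_{p+q, 2q}^\delta}$ with $\delta = (\theta_1, \ldots, \theta_q, -\theta_1, \ldots, -\theta_q)$. The anti-symmetric pattern in $\delta$ is not accidental: each generator $X \in \mathfrak{a}$ must satisfy $\tau(X) = -X$, which forces any $\cosh\theta_l / \sinh\theta_l$ block to be paired with a corresponding $\cosh\theta_l / -\sinh\theta_l$ block under the $\tilde{J}$-twisting; this explains both why $\dim\mathfrak{a} = q$ rather than $2q$ and why the block $I_{p-q}$ appears. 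Flensted-Jensen's theorem (cited in Section \ref{sec:gencartantheory}) then gives $G = K_\sigma \cdot A \cdot K_\tau$ with uniqueness of $a \in A$, completing the factorization.

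The main obstacle is the bookkeeping of signature conventions: the indefinite unitary group $\un{2p, 2q}$ uses $I_{2p, 2q} = \diag(I_p, I_p, -I_q, -I_q)$ (after decomposing $2p = p+p$ and $2q = q+q$), whereas the natural embedding $\complexify{\un{p, q, \mathbb{H}}}$ preserves the signature $\diag(I_p, -I_q, I_p, -I_q)$. The permutation $f$ via $P_{p, q}$ reconciles these, and one must track carefully how $P_{p,q}$ interacts with the Cartan decomposition to see that $\tau$ genuinely commutes with $\sigma$ and that the intersection block decomposition produces the specific tridiagonal structure $\stwotwo{I_{p-q}}{}{}{H_{p+q, 2q}^\delta}$. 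Once this permutation arithmetic is settled, the theorem reduces to a direct application of the $\kak$ decomposition.
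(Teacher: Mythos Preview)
Your proposal is correct and follows essentially the same approach as the paper: apply Algorithm~\ref{alg:K1AK2noncompact} with the Cartan involution $\sigma(X)=I_{2p,2q}XI_{2p,2q}$ and the non-Cartan involution $\tau(X)=-\tilde J\,\overline{X}\,\tilde J$, where your $\tilde J = P_{p,q}J_nP_{p,q}^T$ is exactly $\diag(J_p,J_q)$, the involution the paper records in Table~\ref{tab:involutionlist4} for $\fact{20}$. The paper does not spell out the computation of $\mathfrak{a}$ and its exponential beyond this recipe, so your sketch of the $\mathfrak{p}_\sigma\cap\mathfrak{p}_\tau$ bookkeeping and the pairing $\theta_l\leftrightarrow -\theta_l$ is precisely the content that would fill in the omitted details.
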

Note that a $2n\times 2n$ matrix $\complexify{V}$ in Theorem \ref{thm:F20complex} satisfies the relationship 
\begin{equation*}
    \complexify{V}^H\Phi \complexify{V} = \Phi, \hspace{0.5cm} \text{where }\Phi = \diag(I_p, -I_q, I_p, -I_q).
\end{equation*}

\subsection{${\fact{21}}$ : ${(\Un_\beta(n, n), \Un_\beta(n)\times \Un_\beta(n), \Ortho_{2\beta}(n))}$}

\begin{gather*}
\boxed{
{\renewcommand{\arraystretch}{1.25}
\begin{bmatrix}
2n\times 2n \\ \hspace{0.2cm}\text{$\beta$-indefinite} \hspace{0.2cm}\, \\ \text{unitary} \\ (n, n)
\end{bmatrix}}
= 
\begin{bmatrix}
\colorbox[HTML]{D6D6D6}{\makebox(28,28){\makecell{$n\times n$ \\ \small{Unitary}}}} & \\ 
& \hspace{-0.35cm}\colorbox[HTML]{D6D6D6}{\makebox(28,28){\makecell{$n\times n$ \\ \small{Unitary}}}}
\end{bmatrix}
\cdot
\small{\begin{bmatrix}
    \makebox(30,30){\makecell{$\diagdown$ \hspace{0.7cm}\,\\$\cosh\theta_l$\\ \hspace{0.7cm}$\diagdown$}} & 
    \hspace{-0.2cm}\makebox(30,30){\makecell{$\diagdown$ \hspace{0.7cm}\,\\$\sinh\theta_l$\\ \hspace{0.7cm}$\diagdown$}} \\
    \makebox(30,30){\makecell{$\diagdown$ \hspace{0.7cm}\,\\$\sinh\theta_l$\\ \hspace{0.7cm}$\diagdown$}} & 
    \hspace{-0.2cm}\makebox(30,30){\makecell{$\diagdown$ \hspace{0.7cm}\,\\$\cosh\theta_l$\\ \hspace{0.7cm}$\diagdown$}} 
\end{bmatrix}}
\cdot
\overbrace{\begin{bmatrix}
\colorbox{gray!25}{\makebox(27,27){$\,$}}
& \hspace{-0.35cm}\colorbox{gray!50}{\makebox(27,27){$\,$}}\\ 
\colorbox{gray!50}{\makebox(27,27){$\,$}} 
& \hspace{-0.35cm}\colorbox{gray!25}{\makebox(27,27){$\,$}}
\end{bmatrix}}^{\text{realified/complexified}}
\hspace{-2.7cm}{\renewcommand{\arraystretch}{1.25}
\begin{matrix}
 n\times n \\ \hspace{0.3cm}\text{$2\beta$-orthogonal} \hspace{0.3cm}\,
\end{matrix}}
}
\end{gather*}

\par The factorization $\fact{21}$ is a factorization of $2n\times 2n$ indefinite orthogonal/unitary matrices with the signature $(n, n)$. The realify and complexify maps $\realify{\,\,\cdot\,\,}$ and $\complexify{\,\,\cdot\,\,}$ (see \eqref{eq:realify}, \eqref{eq:complexify}) are used on the $n\times n$ $2\beta$-orthogonal matrices.

\begin{theorem}[$\fact{21}$, real]\label{thm:F21real}
For any $2n\times 2n$ indefinite orthogonal matrix $O\in\ortho{n, n}$, there exist
\begin{itemize}
    \item Two $n\times n$ orthogonal matrices $O_1, O_2\in\ortho{n}$,
    \item An $n\times n$ complex orthogonal matrix $V$, 
    \item $n$ unique (up to order and signs) real numbers $\theta_1, \dots, \theta_n$,
\end{itemize}
such that the following factorization holds:
\begin{equation}
    O = \twotwo{O_1}{}{}{O_2}\twotwo{Ch}{Sh}{Sh}{Ch} \realify{V}.
\end{equation}
\end{theorem}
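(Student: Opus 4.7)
The plan is to invoke Flensted-Jensen's theorem (Algorithm \ref{alg:K1AK2noncompact}) on the noncompact classical Lie group $G=\ortho{n,n}$, whose Cartan involution $\sigma(X)=I_{n,n}XI_{n,n}$ has fixed subgroup $K_\sigma=\ortho{n}\times\ortho{n}$ (row BD\RN{1} of Table \ref{tab:cartanss}). For the second involution I would take
\begin{equation*}
\tau(X)=-J_nXJ_n,\qquad J_n=\twotwo{0}{I_n}{-I_n}{0},
\end{equation*}
designed so that its fixed-point set inside $\mathfrak{o}(n,n)$ consists precisely of matrices commuting with $J_n$.

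The first step is to check the Flensted-Jensen hypotheses. That $\tau^2=\mathrm{id}$ is immediate from $J_n^2=-I_{2n}$, and the commutativity $\tau\sigma=\sigma\tau$ reduces to the block identity $J_nI_{n,n}=-I_{n,n}J_n$, verified by a direct $2\times 2$ block computation. The second step is to identify $K_\tau$. The matrices $M\in\ortho{n,n}$ satisfying $-J_nMJ_n=M$ are exactly those commuting with $J_n$, and a standard block calculation shows these are precisely the realifications $M=\realify{V}$ for complex $V$. Imposing $M\in\ortho{n,n}$ and unwinding $\realify{V}^TI_{n,n}\realify{V}=I_{n,n}$ forces $V^TV=I_n$, so $K_\tau=\realify{\ortho{n,\mathbb{C}}}$ (up to connected components, which are absorbed into the Weyl-chamber reduction as in Section \ref{sec:gencartanex2}).

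Next I would compute $\mathfrak{p}_\sigma\cap\mathfrak{p}_\tau$ and extract a maximal abelian subalgebra. Writing a typical element of $\mathfrak{o}(n,n)$ as $\stwotwo{A}{B}{B^T}{D}$ with $A,D$ real skew-symmetric, one has $\mathfrak{p}_\sigma=\big\{\stwotwo{0}{B}{B^T}{0}\big\}$, and a direct computation gives $J_n\stwotwo{0}{B}{B^T}{0}J_n=\stwotwo{0}{B^T}{B}{0}$, so elements in $\mathfrak{p}_\sigma\cap\mathfrak{p}_\tau$ are characterized by $B=B^T$. Restricting to diagonal $B$ yields
\begin{equation*}
\mathfrak{a}=\bigg\{\twotwo{0}{D}{D}{0}:D\text{ real diagonal}\bigg\},
\end{equation*}
which is maximal inside $\mathfrak{p}_\sigma\cap\mathfrak{p}_\tau$ because any commuting family of real symmetric matrices can be simultaneously diagonalized by an orthogonal conjugation absorbed into $K_\sigma$. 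Exponentiating $\stwotwo{0}{D}{D}{0}$ with $D=\diag(\theta_1,\dots,\theta_n)$ produces exactly $\stwotwo{Ch}{Sh}{Sh}{Ch}$, and the $\kak$ decomposition assembles the claimed factorization.

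The main obstacle I expect is a clean treatment of the group-level identification $K_\tau=\realify{\ortho{n,\mathbb{C}}}$, where one must distinguish the analytic subgroup generated by $\mathfrak{k}_\tau$ from the full fixed-point subgroup of the global $\tau$. This is inconsequential for existence but matters for tracking connected components; it is handled by the same sign/permutation freedom that makes $\theta_1,\dots,\theta_n$ unique only up to the action of the relevant Weyl group on the diagonal entries of $D$, which in this case is the hyperoctahedral group of signed permutations.
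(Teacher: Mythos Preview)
Your proposal is correct and follows exactly the paper's approach: the theorem is an instance of the generalized Cartan decomposition (Algorithm \ref{alg:K1AK2noncompact}) applied to $G=\ortho{n,n}$ with the Cartan involution $\sigma(X)=I_{n,n}XI_{n,n}$ and the non-Cartan involution $\tau(X)=-J_nXJ_n$, which is precisely the involution listed for $\fact{21}$ in Table \ref{tab:involutionlist4}. Your identification of $K_\tau=\realify{\ortho{n,\mathbb{C}}}$, the computation of $\mathfrak{p}_\sigma\cap\mathfrak{p}_\tau$ as block matrices $\stwotwo{0}{B}{B}{0}$ with $B$ symmetric, and the choice of maximal abelian $\mathfrak{a}$ with $B$ diagonal all match the paper's framework.
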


Since the group $\ortho{n, n}$ is isomporphic to the real perplectic group, we also obtain a matrix factorization of real perplectic matrices. Under the isomorphism that sends $\ortho{n, n}$ to the real perplectic group (see Remark \ref{rem:realperplecticsvd}), the matrix $V\in\ortho{n, \mathbb{C}}$ becomes a realified complex perplectic matrix. Choose a subgroup $A$ (recall that the choice of $\mathfrak{a}$ is not unique) such that the elements of $A$ are the matrices with the diagonal ($\cosh$) and antidiagonal ($\sinh$) values being nonzero. See \eqref{eq:alternativeforms} for such an example. Then, an isomorphic form of $\fact{21}$ follows as a matrix factorization of a real perplectic matrix. 

\begin{corollary}[$\fact{21}$, real perplectic]\label{cor:F21real}
For any $2n\times 2n$ real perplectic matrix $P$, there exist
\begin{itemize}
    \item A $2n\times 2n$ real perplectic orthogonal matrix $O$,
    \item An $n\times n$ complex perplectic matrix $V$,
    \item $n$ unique (up to order and signs) real numbers $\theta_1, \dots, \theta_n$,
\end{itemize}
such that the following factorization holds:
\begin{equation}
    P = O \twotwo{\Sigma}{}{}{\Sigma^{-1}} \realify{V}.
\end{equation}
\end{corollary}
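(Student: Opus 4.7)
The plan is to transport the factorization of Theorem \ref{thm:F21real} across the isomorphism of Remark \ref{rem:realperplecticsvd}, applied in the $2n\times 2n$ case where the signature is $(n,n)$. Set $W = \frac{1}{\sqrt{2}}(I_{n,n}+E_{2n})$, an orthogonal matrix satisfying $E_{2n}=WI_{n,n}W^T$, so that $\phi : G\mapsto WGW^T$ is a group isomorphism from $\ortho{n,n}$ onto the real perplectic group. Given a $2n\times 2n$ real perplectic $P$, its pre-image $O = \phi^{-1}(P)=W^TPW$ lies in $\ortho{n,n}$.

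Applying Theorem \ref{thm:F21real} to $O$ yields $O = \stwotwo{O_1}{}{}{O_2}\stwotwo{Ch}{Sh}{Sh}{Ch}\realify{V_c}$ with $O_1,O_2\in\ortho{n}$, $V_c\in\ortho{n,\mathbb{C}}$ and the $n$ angles $\theta_1,\dots,\theta_n$ unique up to permutation and sign. Pushing this through $\phi$ gives the identity
\begin{equation*}
P = \phi\!\left(\stwotwo{O_1}{}{}{O_2}\right)\cdot \phi\!\left(\stwotwo{Ch}{Sh}{Sh}{Ch}\right)\cdot \phi(\realify{V_c}),
\end{equation*}
and it remains to identify each transported factor with an element of the subgroup claimed by the corollary.

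The first factor is orthogonal because $W$ and $\stwotwo{O_1}{}{}{O_2}$ are both orthogonal, and it is perplectic by definition of $\phi$, so it is a real perplectic orthogonal matrix. For the third factor, recall from Remark \ref{rem:complexperplectic} that the complex orthogonal group $\ortho{n,\mathbb{C}}$ and the complex perplectic group are conjugate in $\gl{n,\mathbb{C}}$; realifying this isomorphism and checking that it matches the restriction of $\phi$ to $K_\tau=\realify{\ortho{n,\mathbb{C}}}$ identifies $\phi(\realify{V_c})$ with the realify of a complex perplectic matrix $V$. Concretely, this amounts to verifying that $\phi$ carries the $K_\tau$ of $\ortho{n,n}$ (the $(-1)$-eigenspace of the involution used in $\fact{21}$) to the realified complex perplectic group, which is immediate from the commutative diagram of the two isomorphisms.

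The main obstacle is the middle factor. Since the maximal abelian subalgebra $\mathfrak{a}\subset\mathfrak{p}_\sigma\cap\mathfrak{p}_\tau$ is unique only up to conjugation by $K_\sigma\cap K_\tau$, the plan is to exhibit a specific choice whose exponential, after transporting through $\phi$, takes the block-diagonal form $\stwotwo{\Sigma}{}{}{\Sigma^{-1}}$ with $\Sigma=\diag(e^{\theta_1},\dots,e^{\theta_n})$. Concretely, the hyperbolic block $\stwotwo{Ch}{Sh}{Sh}{Ch}$ is simultaneously diagonalized by $Q = \tfrac{1}{\sqrt{2}}\stwotwo{I}{I}{I}{-I}$ into $Q\stwotwo{\Sigma}{}{}{\Sigma^{-1}}Q$; since $Q\in\ortho{2n}\cap\ortho{n,n}= \ortho{n}\times\ortho{n} = K_\sigma$, absorbing the two copies of $Q$ into $\stwotwo{O_1}{}{}{O_2}$ merely replaces it by another element of $K_\sigma$, and the new maximal abelian subgroup $A':=\{Q\stwotwo{Ch}{Sh}{Sh}{Ch}Q\}$ lies in $\ortho{n,n}$. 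Its image under $\phi$ is then the claimed group of block-diagonal matrices $\stwotwo{\Sigma}{}{}{\Sigma^{-1}}$, and uniqueness of $(\theta_1,\dots,\theta_n)$ up to permutation and sign descends directly from Theorem \ref{thm:F21real}.
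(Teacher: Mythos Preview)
Your overall strategy---transport Theorem \ref{thm:F21real} through the isomorphism $\phi:A\mapsto WAW^T$ of Remark \ref{rem:realperplecticsvd}---is exactly what the paper does, and your treatment of the first and third factors is fine. The gap is in the middle factor. You claim that $Q=\tfrac{1}{\sqrt2}\stwotwo{I}{I}{I}{-I}$ lies in $\ortho{2n}\cap\ortho{n,n}=\ortho{n}\times\ortho{n}=K_\sigma$, so that it may be absorbed into the block-diagonal orthogonal factor. But $Q\notin\ortho{n,n}$: a direct computation gives $Q^TI_{n,n}Q=\stwotwo{0}{I}{I}{0}\neq I_{n,n}$. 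Equivalently, $Q$ is not block diagonal, hence not in $K_\sigma$; nor is it of the form $\stwotwo{X}{Y}{-Y}{X}$, hence not in $K_\tau=\realify{\ortho{n,\mathbb{C}}}$. So neither copy of $Q$ can be absorbed, and the new ``$A'$'' you build is no longer a subgroup of $\ortho{n,n}$ sandwiched between $K_\sigma$ and $K_\tau$.

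The paper avoids this by changing $\mathfrak{a}$ \emph{before} transporting: it selects, inside $\mathfrak{p}_\sigma\cap\mathfrak{p}_\tau\subset\mathfrak{o}(n,n)$, the maximal abelian subalgebra whose exponentials have $\cosh$ values on the main diagonal and $\sinh$ values on the antidiagonal (a choice conjugate to the original $\mathfrak{a}$ by an element of $K_\sigma\cap K_\tau$, not by your $Q$). It is this particular $A$ whose image under $\phi$ is the block-diagonal group $\{\stwotwo{\Sigma}{}{}{\Sigma^{-1}}\}$. If you want to repair your argument, replace the $Q$-conjugation step by this alternative choice of $\mathfrak{a}$ and verify directly that $\phi$ carries it to the claimed diagonal form.
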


\par The complex $\fact{21}$ is the following. 
\begin{theorem}[$\fact{21}$, complex]\label{thm:F21complex}
For any $2n\times 2n$ indefinite unitary matrix $U\in\un{n, n}$, there exist
\begin{itemize}
    \item Two $n\times n$ unitary matrices $U_1, U_2\in\un{n}$,
    \item An $n\times n$ quaternionic orthogonal matrix $V$ such that $V^{D_i}V = I_n$,
    \item $n$ unique (up to order and signs) real numbers $\theta_1, \dots, \theta_n$,
\end{itemize}
such that the following factorization holds:
\begin{equation}
    U = \twotwo{U_1}{}{}{U_2} \twotwo{Ch}{Sh}{Sh}{Ch} \complexify{V}.
\end{equation}
\end{theorem}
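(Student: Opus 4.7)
The plan is to realize Theorem~\ref{thm:F21complex} as Flensted-Jensen's generalized Cartan decomposition applied via Algorithm~\ref{alg:K1AK2noncompact} to the triple
\[
    (G,K_\sigma,K_\tau) \;=\; \big(\un{n,n},\ \un{n}\times\un{n},\ \complexify{\Ortho_i(n,\mathbb{H})}\big).
\]
I will fix the Cartan involution $\sigma(X) = -X^H$ on $\mathfrak{u}(n,n)$. Writing $X = \stwotwo{A}{B}{B^H}{D}$ in the standard $n{+}n$ block form (so $A,D$ are skew-Hermitian), the $\pm1$-eigenspaces are $\mathfrak{k}_\sigma = \mathfrak{u}(n)\oplus\mathfrak{u}(n)$ and $\mathfrak{p}_\sigma = \big\{\stwotwo{0}{B}{B^H}{0}\big\}$. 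For the second involution I propose
\[
    \tau(X) \;:=\; -J_n\,\overline{X}\,J_n,
\]
and will verify: (i) $\tau^2=\mathrm{id}$ (from $J_n^2=-I$); (ii) $\tau$ preserves $\mathfrak{u}(n,n)$, via the identity $J_n I_{n,n} J_n = I_{n,n}$; (iii) $\sigma\tau = \tau\sigma$, both sides equaling $X\mapsto J_n X^T J_n$; and (iv) the fixed subgroup $K_\tau$ inside $\un{n,n}$ equals $\complexify{\Ortho_i(n,\mathbb{H})}$.

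For (iv), a direct block calculation will show that the $\tau$-fixed elements of $\mathfrak{gl}(2n,\mathbb{C})$ are precisely matrices of the form $\stwotwo{A}{B}{-\bar B}{\bar A}$, i.e., the image of the complexify map. To match this with the $D_i$-orthogonality condition in the statement, I will derive
\[
    \complexify{V^{D_i}} \;=\; I_{n,n}\,\complexify{V}^H\, I_{n,n}
\]
from $V^{D_i} = -iV^D i$ and the block definition of $\complexify{\,\cdot\,}$ (the paper records only the $D_j$ analogue in \eqref{eq:DjcomplexifyT}); combined with $\complexify{V}\in\un{n,n}$ this unfolds to $\complexify{V^{D_i}V}=I_{2n}$, i.e., $V^{D_i}V = I_n$. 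Next I will solve $\tau(X)=-X$, which gives $\overline{X} = J_n X J_n$; in block form this forces $D = -\bar A$ and $B^T = B$, so intersecting with $\mathfrak{p}_\sigma$ yields
\[
    \mathfrak{p}_\sigma \cap \mathfrak{p}_\tau \;=\; \big\{\stwotwo{0}{B}{B^H}{0} \,:\, B = B^T\big\}.
\]
A short commutator argument then pins down the maximal abelian subspace: for such elements, $[M_1,M_2]=0$ iff $B_1\bar B_2 = B_2\bar B_1$, and requiring $D\bar B = BD$ for every real diagonal $D$ collapses a symmetric $B$ to real diagonal, so $\mathfrak{a} = \big\{\stwotwo{0}{D}{D}{0} : D \in\mathbb{R}^{n\times n}\text{ diagonal}\big\}$.

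Finally, since $\stwotwo{0}{D}{D}{0}^2 = \mathrm{diag}(D^2, D^2)$, exponentiation yields
\[
    A = \exp(\mathfrak{a}) = \Big\{\stwotwo{\cosh D}{\sinh D}{\sinh D}{\cosh D}\Big\},
\]
exactly the middle factor. Flensted-Jensen's theorem then delivers the factorization $U = k_\sigma\,a\,k_\tau$, with uniqueness of $(\theta_1,\dots,\theta_n)$ up to permutation and sign coming from the Weyl group action on $\mathfrak{a}$. The hardest step will be (iv): identifying $K_\tau$ with $\complexify{\Ortho_i(n,\mathbb{H})}$ inside $\un{n,n}$ rests on the $D_i$-complexify identity above, which the paper does not tabulate; once that identity is established, the remainder is a mechanical run of Algorithm~\ref{alg:K1AK2noncompact}.
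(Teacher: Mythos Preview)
Your proposal is correct and follows essentially the same approach as the paper: you run Algorithm~\ref{alg:K1AK2noncompact} with exactly the involution $\tau(X)=-J_n\overline{X}J_n$ that the paper records in Table~\ref{tab:involutionlist4} for $\fact{21}$ on $\mathfrak{u}(n,n)$, and your Cartan involution $\sigma(X)=-X^H$ coincides on $\mathfrak{u}(n,n)$ with the paper's $I_{p,q}XI_{p,q}$. Your derivation of the identity $\complexify{V^{D_i}}=I_{n,n}\complexify{V}^H I_{n,n}$ (via $\complexify{iI_n}=iI_{n,n}$) cleanly fills in the one step the paper leaves implicit, and the rest of your computation of $\mathfrak{p}_\sigma\cap\mathfrak{p}_\tau$, $\mathfrak{a}$, and $A=\exp(\mathfrak{a})$ is accurate.
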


Similar to Corollary \ref{cor:F21real}, the isomorphism between $\un{n, n}$ and $\Symp^*(2n, \mathbb{C})$ obtains another matrix factorization. Under the map which sends $\un{n, n}$ to $\Symp^*(2n, \mathbb{C})$, the matrix $V$ in Theorem \ref{thm:F21complex} becomes a complex orthogonal and conjugate symplectic matrix. We omit the detail.

\subsection{${\fact{22}}$ : ${(\Un_\beta(p, q), \Un_\beta(p)\times \Un_\beta(q), \Un_{\beta/2}(p, q))}$}

\begin{gather*}
\boxed{
{\renewcommand{\arraystretch}{1.25}
\begin{bmatrix}
n\times n \\ \hspace{0.2cm}\text{$\beta$-indefinite} \hspace{0.2cm}\, \\ \text{unitary} \\ (p, q)
\end{bmatrix}}
= 
\begin{bmatrix}
\colorbox[HTML]{D6D6D6}{\makebox(35,35){\makecell{$p\times p$\\\small{Unitary}}}} & \\ 
& \hspace{-0.35cm}\colorbox[HTML]{D6D6D6}{\makebox(18,18){\makecell{\small{$q\times q$}\\\tiny{Unitary}}}}
\end{bmatrix}
\cdot
\begin{bmatrix}
\begin{array}{c:c}
\makebox(33, 38){$\begin{smallmatrix}\diagdown\hspace{1.1cm}\\ \text{ch}_l \hspace{0.7cm}\\ \diagdown \hspace{0.1cm} \\ \hspace{0.7cm}\rule{0cm}{0.4cm}\text{\large{$I_{p-q}$}}\end{smallmatrix}$}
& \makebox(17, 38){$\begin{smallmatrix}\diagdown\hspace{0.6cm}\\ i\text{sh}_l\\ \hspace{0.6cm}\diagdown  \\ \rule{0cm}{0.5cm}\end{smallmatrix}$}\\ \hdashline
\makebox(38,17){$\begin{smallmatrix}\diagdown\hspace{1.2cm}\\ -i\text{sh}_l\hspace{0.8cm}\\ \diagdown\hspace{0.1cm}\end{smallmatrix}$}
& \makebox(17, 20){$\begin{smallmatrix}\diagdown\hspace{0.7cm}\\ \text{ch}_l\\ \hspace{0.5cm}\diagdown\end{smallmatrix}$}
\end{array}
\end{bmatrix}
\cdot
{\renewcommand{\arraystretch}{1.25}
\begin{bmatrix}
n\times n \\ \hspace{0.2cm}\text{$\frac{\beta}{2}$-indefinite} \hspace{0.2cm}\, \\ \text{unitary} \\ (p, q)
\end{bmatrix}}
}
\end{gather*}

The factorization $\fact{22}$ is a factorization of complex/quaternionic indefinite unitary matrices. The relationship $\ortho{p, q}\subset \un{p, q}\subset\un{p, q, \mathbb{H}}$ is crucial. 
\begin{theorem}[$\fact{22}$, complex]\label{thm:F22complex}
For any $n\times n$ complex indefinite unitary matrix $U\in\un{p, q}$, there exist
\begin{itemize}
    \item Two unitary matrices $U_p\in\un{p}$ and $U_q\in\un{q}$,
    \item An $n\times n$ indefinite orthogonal matrix $O\in\ortho{p, q}$,
    \item $q$ unique (up to order and signs) real numbers $\theta_1, \dots, \theta_q$,
\end{itemize}
such that the following factorization holds:
\begin{equation}
    U = \begin{bmatrix}
    \begin{array}{c:c}
        U_p &  \\ \hdashline
         & U_q
    \end{array}
    \end{bmatrix}
    \begin{bmatrix}
    \begin{array}{rc:c}
        Ch& & iSh \\
         & I_{p-q} & \\ \hdashline
         -iSh & & Ch
    \end{array}
    \end{bmatrix}
    O.
\end{equation}
$Ch, Sh$ are $q\times q$ $\cosh, \sinh$ diagonal matrices as usual.
\end{theorem}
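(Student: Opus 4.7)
The plan is to apply Algorithm~\ref{alg:K1AK2noncompact} to the noncompact group $G = \un{p,q}$; Table~\ref{tab:cartanss} automatically supplies $K_\sigma = \un{p}\times\un{q}$, and the second involution I would take is entrywise complex conjugation $\tau(X) = \overline{X}$ on $\mathfrak{g} = \mathfrak{u}(p,q)$. A quick check confirms that $\tau$ preserves $\mathfrak{u}(p,q)$ (conjugate the defining equation $X^H I_{p,q} + I_{p,q} X = 0$, using that $I_{p,q}$ is real), is an involution, and commutes with the Cartan involution $\sigma(X) = -X^H$ because $\sigma\tau(X) = \tau\sigma(X) = -X^T$. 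The $+1$ eigenspace of $\tau$ in $\mathfrak{u}(p,q)$ is precisely the set of real matrices of $\mathfrak{u}(p,q)$, which is $\mathfrak{o}(p,q)$; hence $K_\tau = \ortho{p,q}$.

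Writing $X \in \mathfrak{u}(p,q)$ in block form with skew-Hermitian diagonal blocks $A, D$ and off-diagonal pair $(B, B^H)$, the Cartan decomposition gives
\begin{equation*}
    \mathfrak{p}_\sigma = \left\{\stwotwo{0}{B}{B^H}{0} : B \in \mathbb{C}^{p\times q}\right\},
\end{equation*}
while $\mathfrak{p}_\tau$ consists of the purely imaginary elements of $\mathfrak{u}(p,q)$. The intersection $\mathfrak{p}_\sigma \cap \mathfrak{p}_\tau$ therefore comprises the matrices $\stwotwo{0}{iB'}{-i(B')^T}{0}$ with $B' \in \mathbb{R}^{p\times q}$. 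As a maximal abelian subalgebra $\mathfrak{a}$ I would take those in which $B'$ has $\diag(\theta_1,\dots,\theta_q)$ in its top $q\times q$ block and zeros below; a direct commutator calculation shows this subspace is abelian, and its dimension $q$ equals the real rank of $\mathfrak{u}(p,q)$, so maximality is automatic.

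To complete the decomposition I would exponentiate $\mathfrak{a}$. Partitioning $n = q + (p-q) + q$, a generator is nonzero only in the $(1,3)$ and $(3,1)$ blocks, where it carries the $2\times 2$ structure $\stwotwo{0}{iD}{-iD}{0}$ with $D = \diag(\theta_1,\dots,\theta_q)$; squaring gives $D^2\oplus D^2$, and the standard series computation yields
\begin{equation*}
\exp\twotworr{0}{iD}{-iD}{0} \;=\; \twotworr{\cosh D}{i\sinh D}{-i\sinh D}{\cosh D}.
\end{equation*}
Reinserting the untouched middle $I_{p-q}$ block recovers exactly the middle factor displayed in the theorem, and the generalized Cartan decomposition theorem (Theorem~4.1 of \cite{flensted1978spherical}) produces the factorization $U = k_\sigma\,a\,k_\tau$ with $k_\sigma \in \un{p}\times\un{q}$, $a \in A$, and $k_\tau \in \ortho{p,q}$.

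The main obstacle is the uniqueness claim for $(\theta_1,\dots,\theta_q)$. After fixing a positive Weyl chamber in $\mathfrak{a}$ the middle factor $a$ is unique, and the ``permutation and sign'' ambiguity reflects the action of the restricted Weyl group of $(\mathfrak{g},\mathfrak{a})$ by signed permutations of the $\theta_l$. This can be read off Matsuki's classification \cite{matsuki2002classification}, or verified directly by conjugating $\mathfrak{a}$ with block permutation matrices in $\un{p}\times\un{q}$ and diagonal sign matrices lying in both $K_\sigma$ and $K_\tau$.
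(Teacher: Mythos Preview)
Your proposal is correct and follows exactly the paper's framework: Algorithm~\ref{alg:K1AK2noncompact} applied with the involution $\tau(X)=\overline{X}$ on $\mathfrak{u}(p,q)$, which is precisely the choice recorded in Table~\ref{tab:involutionlist4} for $\fact{22}$. Your identification of $K_\tau=\ortho{p,q}$, the intersection $\mathfrak{p}_\sigma\cap\mathfrak{p}_\tau$, the maximal abelian $\mathfrak{a}$, and the exponentiation all match what the paper's recipe produces; note that on $\mathfrak{u}(p,q)$ your Cartan involution $-X^H$ coincides with the paper's $I_{p,q}XI_{p,q}$ (row $\fact{18}$) via the defining relation, so there is no discrepancy there.
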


The quaternionic version of $\fact{22}$ follows similarly.
\begin{theorem}[$\fact{22}$, quaternion]\label{thm:F22quaternion}
Fix $\eta\in\{j, k\}$. For any $n\times n$ quaternionic indefinite unitary matrix $U\in\un{p, q, \mathbb{H}}$, there exist
\begin{itemize}
    \item Two quaternionic unitary matrices $U_p\in\un{p, \mathbb{H}}$ and $U_q\in\un{q, \mathbb{H}}$,
    \item An $n\times n$ complex indefinite unitary matrix $V\in\un{p, q}$,
    \item $q$ unique (up to order and signs) real numbers $\theta_1, \dots, \theta_q$,
\end{itemize}
such that the following factorization holds:
\begin{equation}
     U = \begin{bmatrix}
    \begin{array}{c:c}
        U_p &  \\ \hdashline
         & U_q
    \end{array}
    \end{bmatrix}
    \begin{bmatrix}
    \begin{array}{rc:c}
        Ch& & \eta Sh \\
         & I_{p-q} & \\ \hdashline
         -\eta Sh & & Ch
    \end{array}
    \end{bmatrix}
    V.
\end{equation}
\end{theorem}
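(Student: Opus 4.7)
The plan is to apply Algorithm \ref{alg:K1AK2noncompact} to the generalized Cartan triple $(G, K_\sigma, K_\tau) = (\un{p, q, \mathbb{H}}, \un{p, \mathbb{H}} \times \un{q, \mathbb{H}}, \un{p, q})$. The Cartan involution $\sigma(X) = I_{p, q}XI_{p, q}$ on $\mathfrak{g} = \mathfrak{u}(p, q, \mathbb{H})$ splits it into block-diagonal $\mathfrak{k}_\sigma$ (exponentiating to $K_\sigma$) and block anti-diagonal $\mathfrak{p}_\sigma$ consisting of matrices $\stwotwo{0}{B}{B^D}{0}$ with $B \in \mathbb{H}^{p\times q}$.

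For the second involution, I would use the fact that $\un{p, q} \subset \un{p, q, \mathbb{H}}$ consists of matrices whose entries lie in $\mathbb{C}_i = \mathbb{R} + \mathbb{R}i$, and that a quaternion $q$ lies in $\mathbb{C}_i$ iff $iqi^{-1} = q$. Defining $\tau(X) := iXi^{-1}$ (applied entrywise), routine checks show $\tau^2 = \mathrm{id}$, that $\tau$ preserves $\mathfrak{g}$ (since $I_{p, q}$ has real entries so commutes past the conjugation), and $\tau\sigma = \sigma\tau$. The $+1$ eigenspace is $\mathfrak{k}_\tau = \mathfrak{u}(p, q)$, yielding $K_\tau = \un{p, q}$, while the $-1$ eigenspace $\mathfrak{p}_\tau$ consists of matrices with entries in $\mathbb{R}j + \mathbb{R}k$.

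The intersection $\mathfrak{p}_\sigma \cap \mathfrak{p}_\tau$ is then the block anti-diagonal matrices $\stwotwo{0}{B}{B^D}{0}$ with $B$ valued in $\mathbb{R}j + \mathbb{R}k$. Restricting to $B = jB'$ with $B' \in \mathbb{R}^{p \times q}$, I take as $\mathfrak{a}$ the span of elements with $B' = \twoone{\diag(\theta_1, \ldots, \theta_q)}{0_{(p-q) \times q}}$; commutativity follows because $B_1' {B_2'}^T$ is then diagonal hence symmetric, so the quaternionic commutator vanishes after a short computation. Exponentiating $X \in \mathfrak{a}$ using $X^2 = \diag(\Theta^2, 0_{p-q}, \Theta^2)$ in the $(q, p-q, q)$ block partition and summing the even/odd parts of the series produces exactly the $A$-factor $\sthreediag{Ch}{jSh}{I_{p-q}}{-jSh}{Ch}$. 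Choosing $B \in (\mathbb{R}k)^{p \times q}$ in place of $\mathbb{R}j$ is an equally valid maximal choice and yields the $\eta = k$ variant, accounting for the ``fix $\eta \in \{j, k\}$'' clause.

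Combining these ingredients, Flensted-Jensen's theorem (Theorem 4.1 of \cite{flensted1978spherical}) yields $G = K_\sigma A K_\tau$, the claimed factorization, with uniqueness of the $\theta_l$ up to permutation and sign coming from the Weyl group action on $\mathfrak{a}$. The main obstacle is verifying the maximality of $\mathfrak{a}$ inside $\mathfrak{p}_\sigma \cap \mathfrak{p}_\tau$: one must show that any element with an added $\mathbb{R}k$-component, or with off-diagonal $\mathbb{R}j$-entries beyond the $\diag(\theta_l)$ pattern, fails to commute with all of $\mathfrak{a}$ once the anti-commutation $jk = -kj$ is exploited in the bracket computation. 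This step can be short-circuited by invoking the restricted root system classification for type C$\RN{2}$-$\RN{3}$ in \cite{matsuki2002classification}, which confirms the $(\sigma, \tau)$-rank equals $q$, matching the $q$-parameter $\mathfrak{a}$ I have exhibited.
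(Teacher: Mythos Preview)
Your proposal is correct and follows exactly the paper's approach: the paper does not give a standalone proof of this theorem but instead derives all 53 factorizations by running Algorithm~\ref{alg:K1AK2noncompact} with the involutions tabulated in Appendix~\ref{app:involutions}, and for $\fact{22}$ quaternion the listed involution is $\tau(X)=-iXi$, which is precisely your $iXi^{-1}$. Your identification of $\mathfrak{p}_\sigma\cap\mathfrak{p}_\tau$, choice of $\mathfrak{a}$, and exponentiation to the displayed $A$-factor all match the paper's framework; the only quibble is that the label ``C$\RN{2}$-$\RN{3}$'' you cite from \cite{matsuki2002classification} is not quite right (Matsuki classifies the compact duals, and the relevant compact type here is C$\RN{1}$-$\RN{2}$, cf.\ Table~\ref{tab:compactlist}), though the rank conclusion you draw from it is correct.
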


\section{Matrix factorization of complex/quaternionic orthogonal matrices}\label{sec:orthofact}
In this section we present the matrix factorizations $\fact{23}$, $\fact{24}$, $\fact{25}$, arising from the generalized Cartan decomposition of the orthogonal groups $\ortho{n, \mathbb{C}}$ and $\ortho{n, \mathbb{H}}$. The generalized Cartan triple $(G, K_\sigma, K_\tau)$ for each factorization is the following. 
\begin{alignat*}{2}
    \fact{23}&\text{ : }\big(\Ortho_\beta(n), \Un_{\beta/2}(n), \Un_{\beta/2}(n) \big) \,\,(\text{KAK}) \hspace{3.5cm} &&\beta= 2, 4 \\
    \fact{24}&\text{ : } \big(\Ortho_\beta(n), \Un_{\beta/2}(n), \Ortho_\beta(p)\times\Ortho_\beta(q)\big) &&\beta = 2, 4\\
    \fact{25}&\text{ : } \big(\Ortho_\beta(2n), \Ortho_{\beta/2}(2n), \Ortho_{2\beta}(n)\big) &&\beta = 2\\
\end{alignat*}

\subsection{$\fact{23}$ : $(\Ortho_\beta(n), \Un_{\beta/2}(n), \Un_{\beta/2}(n))$}\label{sec:F23}

\begin{gather*}
\boxed{
\begin{bmatrix}
\, \\ n\times n \\ \small{\beta\text{-orthogonal}} \\ \,
\end{bmatrix}
= 
\begin{bmatrix}
\, \\ n\times n \\ \hspace{0.2cm}\small{\frac{\beta}{2}\text{-unitary}}\hspace{0.2cm}\, \\ \,
\end{bmatrix}
\cdot
{\renewcommand{\arraystretch}{0.8}
\begin{bmatrix}
\text{\Large{$\diagdown$}} \hspace{1cm}\,\vspace{0.1cm} \\
\hspace{0.3cm}\text{\tiny{$\text{ch}_l$ $i\text{sh}_l$}} \\
\text{\tiny{$-i\text{sh}_l$ $\text{ch}_l$}}\\
\hspace{1.4cm}\,\vspace{0.1cm} \text{\Large{$\diagdown$}} \hspace{0.1cm}\\
\end{bmatrix}}
\cdot
\begin{bmatrix}
\, \\ n\times n \\ \hspace{0.2cm}\small{\frac{\beta}{2}\text{-unitary}}\hspace{0.2cm}\, \\ \,
\end{bmatrix}
}
\end{gather*}

The factorization $\fact{23}$ is a matrix factorization of complex and quaternionic orthogonal matrices arising from the KAK decomposition of $\ortho{n, \mathbb{C}}$ and $\ortho{n, \mathbb{H}}$. 
\begin{theorem}[$\fact{23}$, complex]\label{thm:F23complex}
Let $m=\lfloor \frac{n}{2} \rfloor$. For any $n\times n$ complex orthogonal matrix $O\in\ortho{n, \mathbb{C}}$, there exist
\begin{itemize}
    \item Two $n\times n$ real orthogonal matrices $O_1, O_2\in\ortho{n}$,
    \item $m$ unique (up to order and signs) real numbers $\theta_1, \dots, \theta_m$,
\end{itemize}
such that the following factorization holds: 
\begin{equation}\label{eq:F23complex}
    O = O_1BO_2,
\end{equation}
where $B=B_n^{i, \theta}$ is defined as an $n\times n$ block diagonal matrix $\diag(B_1^i, \dots, B_m^i)$ ($n$ even) or $\diag(1, B_1^i, \dots, B_m^i)$ ($n$ odd), defined in Table \ref{tab:auxmatrix}. 
\end{theorem}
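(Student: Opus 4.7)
The plan is to invoke the KAK decomposition theorem (Theorem 3.1) applied to the noncompact symmetric space $\ortho{n, \mathbb{C}}/\ortho{n}$, which appears as type BD in Table \ref{tab:cartanss}. The work amounts to identifying the three ingredients $K_\sigma$, $\mathfrak{a}$, and $A = \exp(\mathfrak{a})$, then reading off the factorization.

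First I would set up the Cartan decomposition of $\mathfrak{g} = \mathfrak{so}(n, \mathbb{C})$, the complex skew-symmetric matrices. Write any $X \in \mathfrak{g}$ as $X = X_R + iX_I$ with $X_R, X_I$ real skew-symmetric. The Cartan involution $\sigma(X) = -X^H$ acts as $\sigma(X_R + iX_I) = X_R - iX_I$ (for $X$ skew-symmetric one has $-X^H = \bar X$). The $+1$ eigenspace is $\mathfrak{k}_\sigma = \mathfrak{so}(n,\mathbb{R})$, so $K_\sigma = \ortho{n}$, and the $-1$ eigenspace is $\mathfrak{p}_\sigma = i\,\mathfrak{so}(n,\mathbb{R})$.

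Next I would exhibit a maximal abelian subalgebra $\mathfrak{a} \subset \mathfrak{p}_\sigma$. Since elements of $\mathfrak{p}_\sigma$ are $iY$ with $Y$ real skew-symmetric, commuting families correspond to commuting families of real skew-symmetric matrices. A standard maximal choice takes $\mathfrak{a}$ to be the span of the block-diagonal matrices whose blocks are $\theta_l \cdot \stwotwo{0}{i}{-i}{0}$ (with a single $0$ entry appended when $n$ is odd), parametrized by $\theta \in \mathbb{R}^m$, $m = \lfloor n/2 \rfloor$. Exponentiating these gives exactly the auxiliary matrices $B_n^{i, \theta}$ of Table \ref{tab:auxmatrix}, because
\begin{equation*}
\exp\left(i\twotworr{0}{\theta_l}{-\theta_l}{0}\right) = \twotworr{\cosh\theta_l}{i\sinh\theta_l}{-i\sinh\theta_l}{\cosh\theta_l} = B^i_l.
\end{equation*}
Hence $A = \{B_n^{i,\theta} : \theta \in \mathbb{R}^m\}$. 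Assembling these into the KAK theorem $G = K_\sigma A K_\sigma$ yields $O = O_1 B O_2$ with $O_1, O_2 \in \ortho{n}$ and $B$ of the form \eqref{eq:F23complex}. Uniqueness of $\theta_1, \ldots, \theta_m$ up to permutation and sign is the statement that the restricted Weyl group $W(\mathfrak{a})$ acts by signed permutations on the $\theta_l$'s, which is standard for type $\mathfrak{so}(n,\mathbb{C})/\mathfrak{so}(n,\mathbb{R})$.

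The one nontrivial verification is maximality of $\mathfrak{a}$ inside $\mathfrak{p}_\sigma$, which I would handle by the classical $\ortho{n}$-conjugacy reduction of real skew-symmetric matrices to block normal form: any $iY \in \mathfrak{p}_\sigma$ is $\Ortho(n)$-conjugate (and $\Ortho(n) = K_\sigma$ preserves $\mathfrak{p}_\sigma$) to an element of the chosen $\mathfrak{a}$, so its centralizer in $\mathfrak{p}_\sigma$ has dimension $m = \lfloor n/2 \rfloor$. The main obstacle is therefore the bookkeeping for the parity of $n$ (whether a trailing $1$ appears) and confirming that the rank of the symmetric space equals $m$; both follow from the normal-form theorem for real skew-symmetric matrices.
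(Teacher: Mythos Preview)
Your proposal is correct and follows precisely the paper's systematic approach: this is the KAK decomposition of the noncompact type BD symmetric space $\ortho{n,\mathbb{C}}/\ortho{n}$, obtained by applying Theorem~3.1 with the Cartan involution $\sigma(X)=\overline{X}$ (equivalently $-X^H$ on $\mathfrak{o}(n,\mathbb{C})$, as you note) from Table~\ref{tab:involutionlist5}, identifying $\mathfrak{p}_\sigma=i\,\mathfrak{so}(n,\mathbb{R})$, and exponentiating the standard block-diagonal maximal abelian subalgebra to recover $A=\{B_n^{i,\theta}\}$. The paper does not spell out a separate argument for this case beyond that framework, so your write-up is essentially what the paper intends.
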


As we discuss at the end of Section \ref{sec:gencartanex2}, the choice of $\diag(B_1^i, \dots, B_m^i)$ is not unique. Another choice of $A$ can substitute \eqref{eq:F23complex} with the following (when $n$ even). 
\begin{equation}\label{eq:permutedF23complex}
    O = O_1 \twotworr{Ch}{iSh}{-iSh}{Ch} O_2.
\end{equation}

\par An important choice of the subgroup $A$ is the collection of the following $a$ matrices.
\begin{equation}\label{eq:complexperpsvdA}
    \arraycolsep=1pt
    a = \begin{bmatrix}
    \cosh\theta_1 & & & & & i\sinh\theta_1 \\
    & \ddots & & & \reflectbox{$\ddots$} & \\
    & & \cosh\theta_m & i\sinh\theta_m & & \\
    & & -i\sinh\theta_m & \cosh\theta & & \\
    & \reflectbox{$\ddots$} & & & \ddots & \\
    -i\sinh\theta_1 & & & & & \cosh\theta_1
    \end{bmatrix}.
\end{equation}
This choice leads to the complex perplectic SVD.

\begin{remark}[complex perplectic group and $\ortho{n, \mathbb{C}}$]\label{rem:complexperplecticsvd}
Recall the complex perplectic group $\{P\in\mathbb{C}^{n\times n}|P^TE_nP=E_n\}$. Define a matrix $V= \frac{1+i}{2}I_n + \frac{1-i}{2}E_n$. Then, $V$ satistfies $V^HV = I_n$ and $V^TV = V^2 = E_n$. (This is a simple modification of the Takagi factorization of $E_n$.) Imagine the isomorphism $A\mapsto VAV^H$, which sends the complex orthogonal group onto the complex perplectic group since
\begin{equation*}
    O^TO=I_n \hspace{0.5cm}\text{ implies }\hspace{0.5cm} (VOV^H)^TE_n(VOV^H) = E_n.
\end{equation*}
We can apply this isomorphism to obtain the SVD of a complex perplectic matrix, with the perplectic structure preserved. (See Remark \ref{rem:realperplecticsvd} for the real perplectic structured SVD.) The matrix $a$ in \eqref{eq:complexperpsvdA} is transformed to a diagonal matrix by the above isomorphism. Applying the map $A\mapsto VAV^H$ for all matrices in $O = O_1aO_2$ with \eqref{eq:complexperpsvdA}, we obtain the following structured SVD. 
\end{remark}

\begin{corollary}[perplectic structured SVD, complex]
For an $n\times n$ complex perplectic matrix $P$, there exist
\begin{itemize}
    \item Two $n\times n$ perplectic unitary matrices $U_1, U_2$
    \item A diagonal matrix $\Sigma = \diag(\sigma_1, \cdots, \sigma_m, \frac{1}{\sigma_m}, \cdots, \frac{1}{\sigma_1})$,
\end{itemize}
such that $P = U_1 \Sigma U_2$ is the SVD of $P$ with all three factors being perplectic. (For an odd $n$, we put $1$ between $\sigma_m$ and $\frac{1}{\sigma_m}$ in $\Sigma$.)
\end{corollary}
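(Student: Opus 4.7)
The strategy follows precisely the roadmap laid out in Remark \ref{rem:complexperplecticsvd}: push the statement back to the complex orthogonal setting via the isomorphism $\phi:A \mapsto VAV^H$ with $V = \tfrac{1+i}{2}I_n + \tfrac{1-i}{2}E_n$, apply Theorem \ref{thm:F23complex} in the special form with $a$ as in \eqref{eq:complexperpsvdA}, and push forward.

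First I would verify the setup. Recall from the remark that $V^HV = I_n$ and $V^TV = E_n$, so $\phi$ sends the complex orthogonal group onto the complex perplectic group. Moreover, $\phi$ carries the unitary group $\un{n}$ to the group of unitary perplectic matrices (since $U^HU=I_n$ and $U^TE_nU=E_n$ can be verified from $(VOV^H)^H(VOV^H)=I_n$ and $(VOV^H)^TE_n(VOV^H)=V(O^TV^TE_nVO)V^H=E_n$, using $V^TE_nV = V^TV^2V = (V^TV)^2 = E_n^2 = I_n$, so $V^TE_nV=I_n$ forces the perplectic condition on $VOV^H$ whenever $O^TO=I_n$). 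Given a complex perplectic $P$, set $O := V^HPV$; the computation $O^TO = V^TP^T(V^H)^TV^HPV = V^TP^T\overline{V}\overline{V}^TPV$ together with $\overline{V}\overline{V}^T = E_n$ (conjugate of $V^TV=E_n$ up to a check) shows $O$ is complex orthogonal.

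Next I would invoke Theorem \ref{thm:F23complex} with the non-standard choice of $\mathfrak{a}$ that produces the anti-diagonal form \eqref{eq:complexperpsvdA}. This gives $O = O_1 a O_2$ with $O_1,O_2 \in \ortho{n}\subset\un{n}$ real (hence unitary) orthogonal, and $a$ the matrix in \eqref{eq:complexperpsvdA}. Applying $\phi$ yields
\begin{equation*}
P = VOV^H = (VO_1V^H)(VaV^H)(VO_2V^H) = U_1 \Sigma U_2,
\end{equation*}
where I set $U_j := VO_jV^H$ and $\Sigma := VaV^H$. By the remarks of the previous paragraph, each $U_j$ is unitary perplectic.

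The heart of the argument, and the main calculational hurdle, is to check that $\Sigma = VaV^H$ is diagonal with the claimed reciprocal pattern. This reduces to a block $2\times 2$ computation: the matrix $a$ acts nontrivially only on each complementary pair of coordinates $(\ell, n-\ell+1)$ as the $2\times 2$ block $\bigl[\begin{smallmatrix}\cosh\theta_\ell & i\sinh\theta_\ell \\ -i\sinh\theta_\ell & \cosh\theta_\ell\end{smallmatrix}\bigr]$, and the matrix $V$ restricted to the same pair is $\tfrac{1}{2}\bigl[\begin{smallmatrix}1+i & 1-i \\ 1-i & 1+i\end{smallmatrix}\bigr]$. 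A direct multiplication shows this $2\times 2$ $V$ diagonalizes the imaginary-angle rotation to $\diag(e^{\theta_\ell},e^{-\theta_\ell})$, and hence after placement across the antidiagonal of $a$ we obtain $\Sigma = \diag(e^{\theta_1},\dots,e^{\theta_m},e^{-\theta_m},\dots,e^{-\theta_1})$ (inserting a central $1$ when $n$ is odd). Setting $\sigma_\ell := e^{\theta_\ell}$ produces the stated diagonal. Finally, since $\Sigma$ is real positive diagonal it is unitary, and its entries satisfy $\Sigma_{\ell,\ell}\Sigma_{n-\ell+1,n-\ell+1}=1$, which is exactly the condition $\Sigma^T E_n \Sigma = E_n$ for perplecticity. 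Thus $P = U_1\Sigma U_2$ is an SVD (the $\sigma_\ell$'s are positive) with every factor perplectic, completing the proof. The main obstacle is purely bookkeeping, namely tracking the $2\times 2$ reduction through the antidiagonal embedding; there are no conceptual gaps beyond the isomorphism already set up in Remark \ref{rem:complexperplecticsvd}.
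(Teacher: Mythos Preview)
Your approach is precisely the paper's (Remark \ref{rem:complexperplecticsvd}): transport Theorem \ref{thm:F23complex}, with the antidiagonal choice \eqref{eq:complexperpsvdA} for $a$, through the isomorphism $A\mapsto VAV^H$, and verify by the $2\times 2$ block calculation that $VaV^H$ is the reciprocal diagonal. Two small slips to repair. First, in the perplecticity check you need $(VOV^H)^T = \overline{V}\,O^T V$ because $(V^H)^T=\overline{V}$ for the symmetric $V$; your intermediate expression, followed literally, gives $V(O^T I O)V^H=VV^H=I_n$ rather than $E_n$, whereas the correct chain is $\overline{V}\,O^T(VE_nV)\,O\,V^H=\overline{V}\,V^H=\overline{V^2}=E_n$. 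Second, in the final paragraph $\Sigma$ is certainly not unitary; what you need (and already verify via $\sigma_\ell\,\sigma_{n+1-\ell}=1$) is only that $\Sigma$ is perplectic.
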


The quaternionic version of $\fact{23}$ is the following. 
\begin{theorem}[$\fact{23}$, quaternion]\label{thm:F23quaternion}
Fix $\eta\in\{j, k\}$. Let $m=\lfloor \frac{n}{2} \rfloor$. For any $n\times n$ quaternionic orthogonal matrix $O\in\ortho{n, \mathbb{H}}$ such that $O^{D_i}O = I_n$ there exist
\begin{itemize}
    \item Two $n\times n$ complex unitary matrices $U_1, U_2\in\un{n}$,
    \item $m$ unique (up to order and signs) real numbers $\theta_1, \dots, \theta_m$,
\end{itemize}
such that the following factorization holds: 
\begin{equation}
    O = U_1B U_2,
\end{equation}
where $B = B_n^{\eta, \theta}$ is an $n\times n$ block diagonal matrix defined in Table \ref{tab:auxmatrix}. 
\end{theorem}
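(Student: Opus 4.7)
The plan is to derive Theorem \ref{thm:F23quaternion} as an instance of the noncompact KAK decomposition for the Riemannian symmetric space of type D$\RN{3}$, following Algorithm \ref{alg:K1AK2noncompact} with the triple $(G, K_\sigma, K_\tau) = (\Ortho_i(n,\mathbb{H}), \un{n}, \un{n})$ and invoking Helgason's KAK theorem (\cite[Thm 6.7]{Helgason1978}). The noncompactness is already evidenced by the appearance of $\cosh, \sinh$ rather than $\cos, \sin$ in the middle factor, matching the complex $\fact{23}$ form.

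First I would identify the Lie algebra $\mathfrak{g} = \{X \in \mathbb{H}^{n\times n} : X^{D_i} = -X\}$ of $\Ortho_i(n, \mathbb{H})$, and apply the Cartan involution $\sigma(X) = -X^D$. A direct entry-wise computation (using $X^{D_i}_{ab} = -i\,\overline{X_{ba}}\,i$ and $X^D_{ab} = \overline{X_{ba}}$, together with the quaternion multiplication identities) decomposes a matrix entry $X_{ab} = r + si + tj + uk$ into an $i$-component piece and a $(j,k)$-component piece, giving
\[
\mathfrak{k}_\sigma = \mathfrak{g} \cap \{X : X^D = -X\}, \qquad \mathfrak{p}_\sigma = \mathfrak{g} \cap \{X : X^D = X\},
\]
where $\mathfrak{k}_\sigma$ turns out to be isomorphic to $\mathfrak{u}(n)$ (matrices whose quaternion content is confined to the $(1,i)$-plane, constrained to be ordinary skew-Hermitian), so $\exp(\mathfrak{k}_\sigma) = \un{n}$. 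The complementary $\mathfrak{p}_\sigma$ consists of matrices whose entries live in the $(j,k)$-plane, with a transpose-symmetry constraint dictated by $X^D = X$.

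Next I would exhibit a maximal abelian subalgebra $\mathfrak{a} \subset \mathfrak{p}_\sigma$. Fix $\eta \in \{j, k\}$ and let $\mathfrak{a}$ be the block-diagonal algebra spanned by matrices with $2\times 2$ diagonal blocks $\stwotwo{0}{\eta\theta_l}{-\eta\theta_l}{0}$ (with a leading zero if $n$ is odd). Each such generator lies in $\mathfrak{p}_\sigma$ (they are $D$-Hermitian and have no $i$ component), and they pairwise commute because distinct blocks are disjointly supported. Maximality follows from matching dimension with the known rank $\lfloor n/2 \rfloor$ of the D$\RN{3}$ symmetric space, which can be verified either directly or by translating through the complexify map to $\Ortho^*(2n)$, where the restricted root system is standard. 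Block-by-block exponentiation gives
\[
\exp \twotwo{0}{\eta\theta_l}{-\eta\theta_l}{0} = \twotwo{\cosh\theta_l}{\eta\sinh\theta_l}{-\eta\sinh\theta_l}{\cosh\theta_l},
\]
so $A = \exp(\mathfrak{a})$ is precisely the group $\{B_n^{\eta,\theta}\}$ of Table \ref{tab:auxmatrix}, and Helgason's theorem delivers $G = \un{n}\cdot A\cdot \un{n}$. Uniqueness of the $\theta_l$ up to permutation and sign is then the quotient by the restricted Weyl group, which for this symmetric space acts as signed permutations on the coordinates of $\mathfrak{a}$.

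The main obstacle will be the careful verification that $\mathfrak{p}_\sigma$ has the claimed $(j,k)$-content description under both the $X^{D_i} = -X$ and $X^D = X$ constraints interacting, and that the block-diagonal $\mathfrak{a}$ is truly maximal (not merely maximal among the obvious block-diagonal ansatz). The cleanest way around both difficulties is to push everything through the complexify map: $\Ortho_i(n,\mathbb{H})$ becomes $\Ortho^*(2n)$, the involution $\sigma$ becomes $X \mapsto -X^H$, and $\mathfrak{p}_\sigma$ becomes the intersection of complex symmetric Hermitian matrices with $\complexify{\mathfrak{gl}(n,\mathbb{H})}$, where the root-space decomposition is standard; one then pulls the maximal abelian choice back through $\complexify{\,\cdot\,}^{-1}$ to recover the $B_n^{\eta,\theta}$ form.
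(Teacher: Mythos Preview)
Your proposal is correct and follows essentially the same route as the paper: Theorem \ref{thm:F23quaternion} is obtained by running Algorithm \ref{alg:K1AK2noncompact} (the noncompact KAK framework) on $G=\Ortho_i(n,\mathbb{H})$ with the Cartan involution, and your choice $\sigma(X)=-X^D$ agrees with the paper's listed involution $-iXi$ in Table \ref{tab:involutionlist5} since the two coincide on $\mathfrak{o}_i(n,\mathbb{H})$. Your suggested detour through the complexify map to $\Ortho^*(2n)$ to confirm maximality and rank $\lfloor n/2\rfloor$ is exactly the D$\RN{3}$ identification the paper invokes via Table \ref{tab:cartanss}.
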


\par Theorem \ref{thm:F23quaternion} has other types of isomorphic forms. Obviously, switching $i, \eta$ is one such isomorphism. In this case, Theorem \ref{thm:F23quaternion} can be applied to $O\in\Ortho_j(n, \mathbb{H})$ such that $O^{D_j}O=I$ and the factors $U_1, U_2$ become unitary matrices with imaginary unit $j$. One can use the imaginary unit $k$ to obtain another isomorphic form of quaternion $F_{23}$. Other isomorphic forms are also obtained by the similar choice of $A$ as in \eqref{eq:permutedF23complex} and \eqref{eq:complexperpsvdA}.

\subsection{$\fact{24}$ : $(\Ortho_\beta(n), \Un_{\beta/2}(n), \Ortho_\beta(p)\times\Ortho_\beta(q))$}

\begin{gather*}
\boxed{
{\renewcommand{\arraystretch}{1.35}
\begin{bmatrix}
\rule{0cm}{0.1cm}\\ n\times n \\ \hspace{0.1cm} \text{$\beta$-orthogonal} \hspace{0.1cm}\, \\ \rule{0cm}{0.1cm}
\end{bmatrix}}
= 
{\renewcommand{\arraystretch}{1.35}
\begin{bmatrix}
\rule{0cm}{0.1cm}\\ n\times n \\ \hspace{0.2cm} \text{{ $\frac{\beta}{2}$-unitary }} \hspace{0.2cm}\, \\ \rule{0cm}{0.1cm}
\end{bmatrix}}
\cdot
\begin{bmatrix}
\begin{array}{c:c}
\makebox(33, 38){$\begin{smallmatrix}\diagdown\hspace{1.1cm}\\ \text{ch}_l \hspace{0.7cm}\\ \diagdown \hspace{0.1cm} \\ \hspace{0.7cm}\rule{0cm}{0.4cm}\text{\large{$I_{p-q}$}}\end{smallmatrix}$}& 
\makebox(17, 38){$\begin{smallmatrix}\diagdown\hspace{0.6cm}\\ i\text{sh}_l\\ \hspace{0.6cm}\diagdown  \\ \rule{0cm}{0.5cm}\end{smallmatrix}$}\\ \hdashline
\makebox(38,17){$\begin{smallmatrix}\diagdown\hspace{1.2cm}\\ -i\text{sh}_l\hspace{0.7cm}\\ \diagdown\hspace{0.1cm}\end{smallmatrix}$}
& \makebox(17, 20){$\begin{smallmatrix}\diagdown\hspace{0.7cm}\\ \text{ch}_l\\ \hspace{0.5cm}\diagdown\end{smallmatrix}$}
\end{array}
\end{bmatrix}
\cdot
\begin{bmatrix}
\colorbox[HTML]{D6D6D6}{\makebox(35,35){\makecell{$p\times p$\\\small{$\beta$-ortho.}}}} & \\ 
& \hspace{-0.35cm}\colorbox[HTML]{D6D6D6}{\makebox(18,18){\makecell{\small{$q\times q$}\\\tiny{$\beta$-ortho.}}}}
\end{bmatrix}
}
\end{gather*}

Let $n = p+q$, $p\ge q$. The factorization $\fact{24}$ is a factorization of complex and quaternionic orthogonal matrices. 
\begin{theorem}[$\fact{24}$, complex]\label{thm:F24complex}
For any $n\times n$ complex orthogonal matrix $O$, there exist,
\begin{itemize}
    \item An $n\times n$ orthogonal matrix $U\in\ortho{n}$, 
    \item Two complex orthogonal matrices $V_p\in\ortho{p, \mathbb{C}}$ and $V_q\in\ortho{q, \mathbb{C}}$,
    \item $q$ unique (up to order and signs) real numbers $\theta_1, \dots, \theta_q$,
\end{itemize}
such that the following factorization holds:
\begin{equation}
    O = U \begin{bmatrix}
    \begin{array}{rc:c}
        Ch& & iSh \\
         & I_{p-q} & \\ \hdashline
         -iSh & & Ch
    \end{array}
    \end{bmatrix}
    \begin{bmatrix}
    \begin{array}{c:c}
        V_p &  \\ \hdashline
         & V_q
    \end{array}
    \end{bmatrix}.
\end{equation}
\end{theorem}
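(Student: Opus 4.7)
The plan is to recognize Theorem~\ref{thm:F24complex} as the generalized Cartan ($\kak$) decomposition of the noncompact reductive Lie group $G=\ortho{n,\mathbb{C}}$ produced by Algorithm~\ref{alg:K1AK2noncompact}, applied to the triple $(G,K_\sigma,K_\tau)=(\ortho{n,\mathbb{C}},\ortho{n},\ortho{p,\mathbb{C}}\times\ortho{q,\mathbb{C}})$. On the Lie algebra $\mathfrak{g}=\mathfrak{o}(n,\mathbb{C})$, the Cartan involution is $\sigma(X)=\overline{X}$ (equivalently $-X^H$), whose fixed subgroup is $\ortho{n}$; the second involution is the inner involution $\tau(X)=I_{p,q}XI_{p,q}$, whose fixed subgroup is the block-diagonal $\ortho{p,\mathbb{C}}\times\ortho{q,\mathbb{C}}$. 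Since $I_{p,q}$ is real, $\sigma\tau=\tau\sigma$ is immediate, so the hypothesis of Flensted-Jensen's theorem is in force.

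First I would compute the two $\pm1$-eigenspace decompositions. Writing $X\in\mathfrak{o}(n,\mathbb{C})$ as $A+iB$ with real skew-symmetric $A,B$ gives $\mathfrak{k}_\sigma=\mathfrak{o}(n)$ and $\mathfrak{p}_\sigma=i\,\mathfrak{o}(n)$. Block-decomposing $X=\stwotwo{A}{B}{-B^T}{D}$ with $A\in\mathfrak{o}(p,\mathbb{C})$, $D\in\mathfrak{o}(q,\mathbb{C})$, $B\in\mathbb{C}^{p\times q}$, one reads off $\mathfrak{k}_\tau=\mathfrak{o}(p,\mathbb{C})\oplus\mathfrak{o}(q,\mathbb{C})$ and $\mathfrak{p}_\tau=\{\stwotwo{0}{B}{-B^T}{0}:B\in\mathbb{C}^{p\times q}\}$. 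Intersecting gives $\mathfrak{p}_\sigma\cap\mathfrak{p}_\tau=\{\stwotwo{0}{iB_r}{-iB_r^T}{0}:B_r\in\mathbb{R}^{p\times q}\}$, so the intersection is in bijection with real $p\times q$ matrices.

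Next I would select $\mathfrak{a}\subset\mathfrak{p}_\sigma\cap\mathfrak{p}_\tau$ maximal abelian by restricting $B_r$ to the diagonal-supported shape $\bigl[\begin{smallmatrix}0_{(p-q)\times q}\\ \diag(\theta_1,\dots,\theta_q)\end{smallmatrix}\bigr]$. A direct block computation shows that two elements parametrized by $B_{r,1},B_{r,2}$ commute iff $B_{r,1}B_{r,2}^T$ and $B_{r,1}^T B_{r,2}$ are both symmetric, and that the chosen family is maximal under this constraint (maximality also follows from the Matsuki root-system classification \cite{matsuki2002classification}). For such $X\in\mathfrak{a}$ one computes $X^2=\diag(0_{p-q},\diag(\theta^2),\diag(\theta^2))$, and the Taylor series assembles the exponential into a block-diagonal matrix whose nontrivial $2\times 2$ blocks are exactly $\stwotwo{\cosh\theta_l}{i\sinh\theta_l}{-i\sinh\theta_l}{\cosh\theta_l}$ flanking an $I_{p-q}$ identity -- which is the middle factor stated in the theorem. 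Applying Flensted-Jensen's $\kak$ theorem to $(G,K_\sigma,K_\tau)$ then produces the factorization $O=U\cdot a\cdot \diag(V_p,V_q)$, and uniqueness of the $\theta_l$ up to permutation and sign is the Weyl-group action on $\mathfrak{a}$.

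The principal obstacle is the maximality of the chosen $\mathfrak{a}$: abelianness is a one-line block calculation, but ruling out larger commuting families inside $\mathfrak{p}_\sigma\cap\mathfrak{p}_\tau$ requires the commutation constraints to force a simultaneous diagonal-like normal form on $\{B_r\}$. The clean route is to diagonalize $B_rB_r^T$ by a common real orthogonal change of basis (absorbed into $K_\sigma$), reducing to the stated shape -- a finite-dimensional analogue of how the usual SVD emerges as the KAK decomposition of $\gl{n,\mathbb{R}}$ and recorded abstractly in the restricted-root-system tables of \cite{matsuki2002classification}.
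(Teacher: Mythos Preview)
Your proposal is correct and follows exactly the paper's approach: apply Algorithm~\ref{alg:K1AK2noncompact} to $G=\ortho{n,\mathbb{C}}$ with the Cartan involution $\sigma(X)=\overline{X}$ and the non-Cartan involution $\tau(X)=I_{p,q}XI_{p,q}$ (these are precisely the entries recorded in Table~\ref{tab:involutionlist5}), then invoke Flensted-Jensen's theorem. The only cosmetic slip is the placement of the diagonal block in $B_r$: to match the displayed middle factor with $I_{p-q}$ in the \emph{center}, take $B_r=\bigl[\begin{smallmatrix}\diag(\theta)\\ 0_{(p-q)\times q}\end{smallmatrix}\bigr]$ rather than zeros on top; your choice yields a conjugate $\mathfrak{a}$ differing only by a permutation absorbed into $U$ and $V_p$.
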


Similarly the quaternionic version follows.
\begin{theorem}[$\fact{24}$, quaternion]\label{thm:F24quaternion}
Fix $\eta\in\{j, k\}$. For any $n\times n$ quaternionic orthogonal matrix $O$ such that $O^{D_i}O=I$, there exist,
\begin{itemize}
    \item An $n\times n$ complex unitary matrix $U\in\un{n}$,
    \item Two quaternionic orthogonal matrices $V_p\in\ortho{p, \mathbb{H}}$, $V_q\in\ortho{q, \mathbb{H}}$,
    \item $q$ unique (up to order and signs) real numbers $\theta_1, \dots, \theta_q$,
\end{itemize}
such that the following factorization holds:
\begin{equation}
    O = U \begin{bmatrix}
    \begin{array}{rc:c}
        Ch& & \eta Sh \\
         & I_{p-q} & \\ \hdashline
         -\eta Sh & & Ch
    \end{array}
    \end{bmatrix}
    \begin{bmatrix}
    \begin{array}{c:c}
        V_p &  \\ \hdashline
         & V_q
    \end{array}
    \end{bmatrix}.
\end{equation}
\end{theorem}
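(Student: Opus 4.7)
The plan is to apply Algorithm \ref{alg:K1AK2noncompact} to the noncompact classical group $G=\Ortho_i(n,\mathbb{H})$ with the involution $\tau(X)=I_{p,q}XI_{p,q}$ on its Lie algebra $\mathfrak{g}=\{X\in\mathbb{H}^{n\times n}:X^{D_i}=-X\}$. The resulting $\kak$ decomposition will produce the claimed factorization; the two cases $\eta\in\{j,k\}$ are interchanged by the obvious automorphism swapping these imaginary units, so it is enough to handle one of them.

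First I would identify the maximal compact subgroup $K_\sigma$. Since any quaternionic matrix satisfying both $G^{D_i}G=I_n$ and $G^DG=I_n$ must have entries lying in the $(1,i)$-plane (the two conjugations together force the $j$- and $k$-components of every entry to vanish), one has $K_\sigma=\Ortho_i(n,\mathbb{H})\cap\un{n,\mathbb{H}}=\un{n}$. The Cartan decomposition then reads $\mathfrak{g}=\mathfrak{k}_\sigma+\mathfrak{p}_\sigma$ with $\mathfrak{k}_\sigma=\mathfrak{u}(n)$ (complex skew-Hermitian) and $\mathfrak{p}_\sigma$ the subspace of $\mathfrak{g}$ whose entries use only $j$- and $k$-parts, each of which is real skew-symmetric.

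Next, one verifies $\tau\sigma=\sigma\tau$ and computes the $\pm1$-eigenspaces of $\tau$ inside $\mathfrak{g}$. The $+1$-eigenspace is the block-diagonal subalgebra, which exponentiates to $K_\tau=\Ortho_i(p,\mathbb{H})\times\Ortho_i(q,\mathbb{H})$, while the $-1$-eigenspace is the block off-diagonal part. Intersecting, $\mathfrak{p}_\sigma\cap\mathfrak{p}_\tau$ consists of matrices $\stwotwo{0}{B}{-B^T}{0}$ with $B\in\mathbb{H}^{p\times q}$ having purely $(j,k)$-valued entries. Fixing $\eta\in\{j,k\}$, a maximal abelian subalgebra $\mathfrak{a}$ is spanned by those elements for which $B$ is diagonal in its top $q\times q$ block with entries $\eta\theta_1,\ldots,\eta\theta_q$ (and zero below). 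A coordinate permutation block-diagonalizes each such element into $q$ pieces of the form $\stwotwo{0}{\eta\theta_l}{-\eta\theta_l}{0}$ plus a $(p-q)\times(p-q)$ zero block; using $\eta^2=-1$, the series for the matrix exponential assembles into $\stwotwo{Ch}{\eta Sh}{-\eta Sh}{Ch}$ with an $I_{p-q}$ inserted in the middle block. Flensted-Jensen's $\kak$ theorem then delivers $G=K_\sigma AK_\tau$, with the $\theta_l$ unique up to the Weyl-group action, i.e., permutations and signs.

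The main delicate point is verifying the maximality of $\mathfrak{a}$ and justifying the restriction $\eta\in\{j,k\}$: one cannot enlarge $\mathfrak{a}$ with any element carrying an $i$-component (those lie outside $\mathfrak{p}_\sigma$), nor simultaneously use both $j$- and $k$-scalars at the same diagonal position without destroying commutativity, since $jk=-kj$. Once the dimension count $\dim\mathfrak{a}=q$ is matched against the rank predicted by Matsuki's classification of compact-dual generalized Cartan triples \cite{matsuki2002classification}, the remainder of the argument parallels the complex case (Theorem \ref{thm:F24complex}) mutatis mutandis.
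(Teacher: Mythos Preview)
Your proposal is correct and follows exactly the approach the paper intends: the paper does not give an individual proof for this theorem but instead supplies Algorithm~\ref{alg:K1AK2noncompact} together with the involution $\tau(X)=I_{p,q}XI_{p,q}$ on $\mathfrak{o}(n,\mathbb{H})$ (Appendix~\ref{app:involutions}, Table~\ref{tab:involutionlist5}) and the Cartan involution $\sigma(X)=-iXi$ (the $\fact{23}$ row), and your computation of $K_\sigma=\un{n}$, $K_\tau=\Ortho_i(p,\mathbb{H})\times\Ortho_i(q,\mathbb{H})$, $\mathfrak{p}_\sigma\cap\mathfrak{p}_\tau$, the maximal abelian $\mathfrak{a}$, and its exponential carries this recipe through correctly.
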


As always the isomorphism between $i, j, k$ is valid for quaternionic $\fact{24}$. 

\subsection{$\fact{25}$ : $(\Ortho_\beta(2n), \Ortho_{\beta/2}(2n), \Ortho_{2\beta}(n))$}
\begin{gather*}
\boxed{
{\renewcommand{\arraystretch}{1.5}
\begin{bmatrix}
2n\times 2n \\\text{Complex} \\ \text{ Orthogonal }
\end{bmatrix}}
= 
{\renewcommand{\arraystretch}{1.5}
\begin{bmatrix}
2n\times 2n \\\text{Real} \\ \text{ Orthogonal }
\end{bmatrix}}
\cdot
{\small{\begin{bmatrix}
    \makebox(30,30){\makecell{$\diagdown$ \hspace{0.7cm}\,\\$\cosh\theta_l$\\ \hspace{0.7cm}$\diagdown$}} & 
    \hspace{-0.2cm}\makebox(30,30){\makecell{$\diagdown$ \hspace{0.7cm}\,\\$i\sinh\theta_l$\\ \hspace{0.7cm}$\diagdown$}} \\
    \makebox(30,30){\makecell{$\diagdown$ \hspace{0.7cm}\,\\$-i\sinh\theta_l$\\ \hspace{0.7cm}$\diagdown$}} & 
    \hspace{-0.2cm}\makebox(30,30){\makecell{$\diagdown$ \hspace{0.7cm}\,\\\,\,$\cosh\theta_l$\\ \hspace{0.7cm}$\diagdown$}} 
\end{bmatrix}}}
\cdot
\overbrace{\small\begin{bmatrix}
\colorbox{gray!25}{\makebox(27,27){$\,$}}
& \hspace{-0.35cm}\colorbox{gray!50}{\makebox(27,27){$\,$}}\\ 
\colorbox{gray!50}{\makebox(27,27){$\,$}} 
& \hspace{-0.35cm} \colorbox{gray!25}{\makebox(27,27){$\,$}}
\end{bmatrix}}^{\text{complexified}}
\hspace{-2.7cm}{\renewcommand{\arraystretch}{1.25}
\begin{matrix}
 n\times n \\ \hspace{0.3cm}\text{Quaternionic}\hspace{0.3cm}\, \\ \text{Orthogonal}
\end{matrix}}
}
\end{gather*}

The factorization $\fact{25}$ is a matrix factorization of complex orthogonal matrices. The fact that the complexified quaternionic orthogonal group $\complexify{\Ortho_j(n, \mathbb{H})}$ is a subset of $\ortho{2n, \mathbb{C}}$ is used. 
\begin{theorem}[$\fact{25}$, complex]\label{thm:F25complex}
For any $2n\times 2n$ complex orthogonal matrix $O$, there exist
\begin{itemize}
    \item A $2n\times 2n$ real orthogonal matrix $U$, 
    \item A $n\times n$ quaternionic orthogonal matrix $V$ such that $V^{D_j}V = I_n$, 
    \item $n$ unique (up to order and signs) real numbers $\theta_1, \dots, \theta_n$,
\end{itemize}
such that the following factorization holds:
\begin{equation}
    O = U\twotworr{Ch}{iSh}{-iSh}{Ch} \complexify{V}.
\end{equation}
\end{theorem}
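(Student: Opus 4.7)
The plan is to realize Theorem \ref{thm:F25complex} as a generalized Cartan decomposition with triple $(G, K_\sigma, K_\tau) = (\ortho{n,\mathbb{C}}, \ortho{2n}, \complexify{\ortho{n,\mathbb{H}}})$ and execute Algorithm \ref{alg:K1AK2noncompact} in four steps. Take the Cartan involution $\sigma(X) = -X^H$ on $\mathfrak{g} = \mathfrak{o}(2n, \mathbb{C})$; its $+1$-eigenspace consists of real skew-symmetric matrices, giving $K_\sigma = \ortho{2n}$, while $\mathfrak{p}_\sigma$ is the space of purely imaginary skew-symmetric matrices. For the second involution I take $\tau(X) = -J_n \overline{X} J_n$, motivated by the identity $-J_n \overline{\complexify{V}} J_n = \complexify{V}$ from Section \ref{sec:RCHbasic} characterizing the image of the complexify map. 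Direct computation (using $J_n^T = -J_n$, $J_n^2 = -I$, and $X^T = -X$ in $\mathfrak{g}$) verifies that $\tau$ is an involution preserving $\mathfrak{o}(2n,\mathbb{C})$, that $\sigma\tau = \tau\sigma$, and that the analytic subgroup $K_\tau$ integrating $\mathfrak{k}_\tau$ is precisely $\Ortho^*(2n) = \complexify{\ortho{n,\mathbb{H}}}$.

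Next I compute $\mathfrak{p}_\sigma \cap \mathfrak{p}_\tau$. Writing a generic element as $X = iY$ with $Y$ real skew-symmetric (to lie in $\mathfrak{p}_\sigma$), the condition $\tau(X) = -X$ simplifies to $J_n Y = Y J_n$. A short block computation, combining $Y J_n = J_n Y$ with $Y^T = -Y$, forces
\begin{equation*}
Y = \twotwo{A}{B}{-B}{A}, \qquad A^T = -A, \quad B^T = B,
\end{equation*}
which is exactly $\realify{A + iB}$ with $A + iB$ skew-Hermitian. Because $\realify{\,\,\cdot\,\,}$ is an injective ring homomorphism $\mathbb{C}^{n\times n} \to \mathbb{R}^{2n \times 2n}$, it transports commutators faithfully, so a maximal abelian subalgebra of $\mathfrak{p}_\sigma \cap \mathfrak{p}_\tau$ corresponds under $\realify{\,\,\cdot\,\,}$ to a maximal abelian subalgebra of $\mathfrak{u}(n)$. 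The standard choice (diagonal purely imaginary matrices) yields
\begin{equation*}
\mathfrak{a} = \Big\{\,\twotwo{0}{i\Theta}{-i\Theta}{0} : \Theta = \diag(\theta_1,\dots,\theta_n) \in \mathbb{R}^{n\times n}\,\Big\},
\end{equation*}
of dimension $n$, matching the number of angles in the statement.

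Exponentiation is then immediate: for $M = \stwotwo{0}{i\Theta}{-i\Theta}{0}$, induction gives $M^{2k} = \diag(\Theta^{2k}, \Theta^{2k})$ and $M^{2k+1} = \stwotwo{0}{i\Theta^{2k+1}}{-i\Theta^{2k+1}}{0}$, so summing the Taylor series produces $\exp(M) = \stwotwo{Ch}{iSh}{-iSh}{Ch}$ with $Ch = \cosh\Theta$, $Sh = \sinh\Theta$. Assembling $K_\sigma$, $A = \exp(\mathfrak{a})$, and $K_\tau$ and invoking the Flensted-Jensen theorem at the end of Section \ref{sec:gencartantheory} delivers the factorization $O = U \cdot \stwotwo{Ch}{iSh}{-iSh}{Ch} \cdot \complexify{V}$ for every $O \in \ortho{2n,\mathbb{C}}$. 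Uniqueness of the $\theta_l$ up to permutation and sign is then exactly the statement that the restricted Weyl group acting on $\mathfrak{a}$ is the group of signed permutations --- the same Weyl group that controls the ordinary spectral theorem for $\mathfrak{u}(n)$.

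The main obstacle will be verifying that $\mathfrak{a}$ is genuinely \emph{maximal} abelian in $\mathfrak{p}_\sigma \cap \mathfrak{p}_\tau$, not merely abelian. The realify identification reduces this to the classical fact that the only matrices in $\mathfrak{u}(n)$ commuting with every diagonal purely imaginary matrix are themselves diagonal, which is immediate, but one must carefully check that no element outside the realify image can centralize $\mathfrak{a}$ inside $\mathfrak{p}_\sigma \cap \mathfrak{p}_\tau$ --- this uses the fact that $\mathfrak{p}_\sigma \cap \mathfrak{p}_\tau = i \cdot \realify{\mathfrak{u}(n)}$ in its entirety, so no such elements exist. A secondary subtlety is that $\exp$ onto the identity component of $K_\tau$ may miss other components of $\complexify{\ortho{n,\mathbb{H}}}$, but any such sign discrepancies are absorbed into the Weyl-group symmetry of $\mathfrak{a}$, which is precisely the sign ambiguity in the $\theta_l$.
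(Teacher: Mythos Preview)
Your proposal is correct and follows exactly the paper's framework: the paper derives $\fact{25}$ by running Algorithm~\ref{alg:K1AK2noncompact} with the Cartan involution $\sigma(X)=\overline{X}$ (equivalently your $-X^H$ on $\mathfrak{o}(2n,\mathbb{C})$) and the non-Cartan involution $\tau(X)=-J_n\overline{X}J_n$ listed in Table~\ref{tab:involutionlist5}, and your computation of $\mathfrak{p}_\sigma\cap\mathfrak{p}_\tau$, the maximal abelian $\mathfrak{a}$, and $A=\exp(\mathfrak{a})$ is precisely the detailed execution the paper leaves implicit. One cosmetic slip: your opening triple should read $G=\ortho{2n,\mathbb{C}}$, not $\ortho{n,\mathbb{C}}$.
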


\addtocontents{toc}{\protect\setcounter{tocdepth}{1}}

\vspace{1.5cm}
\section{2020 Mathematical Hindsight (in 2021):\\ A lesson in mathematical history}\label{sec:lesson}
\begin{figure}[h]
    \centering
    \frame{\includegraphics[width=4.9in]{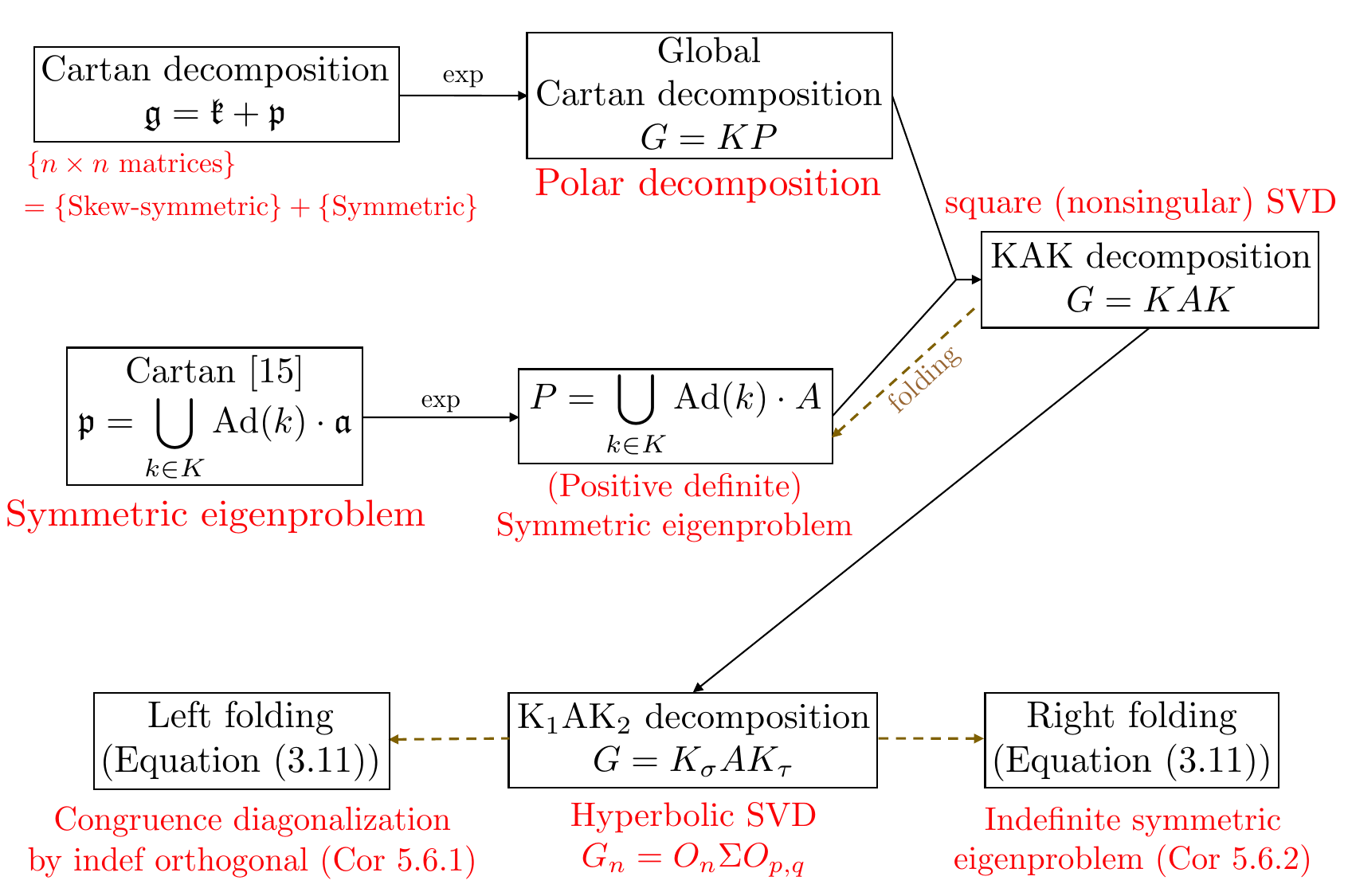}}
    \caption{The diagram of the abstractions appearing in this paper. Each abstraction amounts to a class of matrix factorizations. An example is written in {\color{red}red}. The connections between the abstractions makes this framework interesting. One could  make up any factorization with $n^2$ or $mn$ parameters, and solve the nonlinear systems, but such a factorization would not enjoy the properties that these factorizations have.}
    \label{fig:diagram}
\end{figure}

In this section we will provide a brief timeline of matrix factorizations in so much as they relate to Cartan's 1927 work \cite{cartan1927certaines} published at the age of 58. Timelines of this sort can be difficult, because there are so many different aspects that can be emphasized. We wanted to trace in the context of the KAK decomposition and the $\kak$ decomposition (summarized as in Figure \ref{fig:diagram}) how a beautiful idea like a matrix factorization could be lost in insignificance until its importance is recognized and matured. We uncovered a fascinating story that we very much wished to share.

There is a kind of joke in applied mathematics that if you discover anything important then possibly Gauss had it first. As you will see from the discussion in this timeline, even if a matrix factorization can be recognized, if it is buried in a proof of a technical lemma, if it uses non-googleable terms, and if specific cases are not worked out and highlighted, we seem to be in something of a ``tree falling in a forest" type of situation.

\subsection{1920s, Cartan}
The famous Cartan decomposition $\mathfrak{g} = \mathfrak{k} + \mathfrak{p}$ may be found in his 1929 work \cite{cartan1929groupes}. (One special case is that all square matrices can be written as an skew-symmetric plus a symmetric.)

\par With today's eyes, we recognize Cartan's work from two years earlier in 1927 \cite{cartan1927certaines} as a blueprint for matrix factorizations. Cartan provided a decomposition of the space $\mathfrak{p}$:
\begin{equation}\label{eq:cartanfrakp}
    \mathfrak{p} = \bigcup_{k\in K}\text{Ad}(k)\mathfrak{a} = \bigcup_{k\in K}k\mathfrak{a}k^{-1},
\end{equation}
which appears in French as the ``adjoint representation" on the 17th page of a 126 page paper (Figure \ref{fig:cartankak}). In equations \eqref{eq:cartanfrakp} and \eqref{eq:svdblockform} below we show how the familiar $2\times 2$ block form of the SVD is a special case of Cartan's decomposition.

\begin{figure}[h]
    \centering
    \includegraphics[width=4.8in]{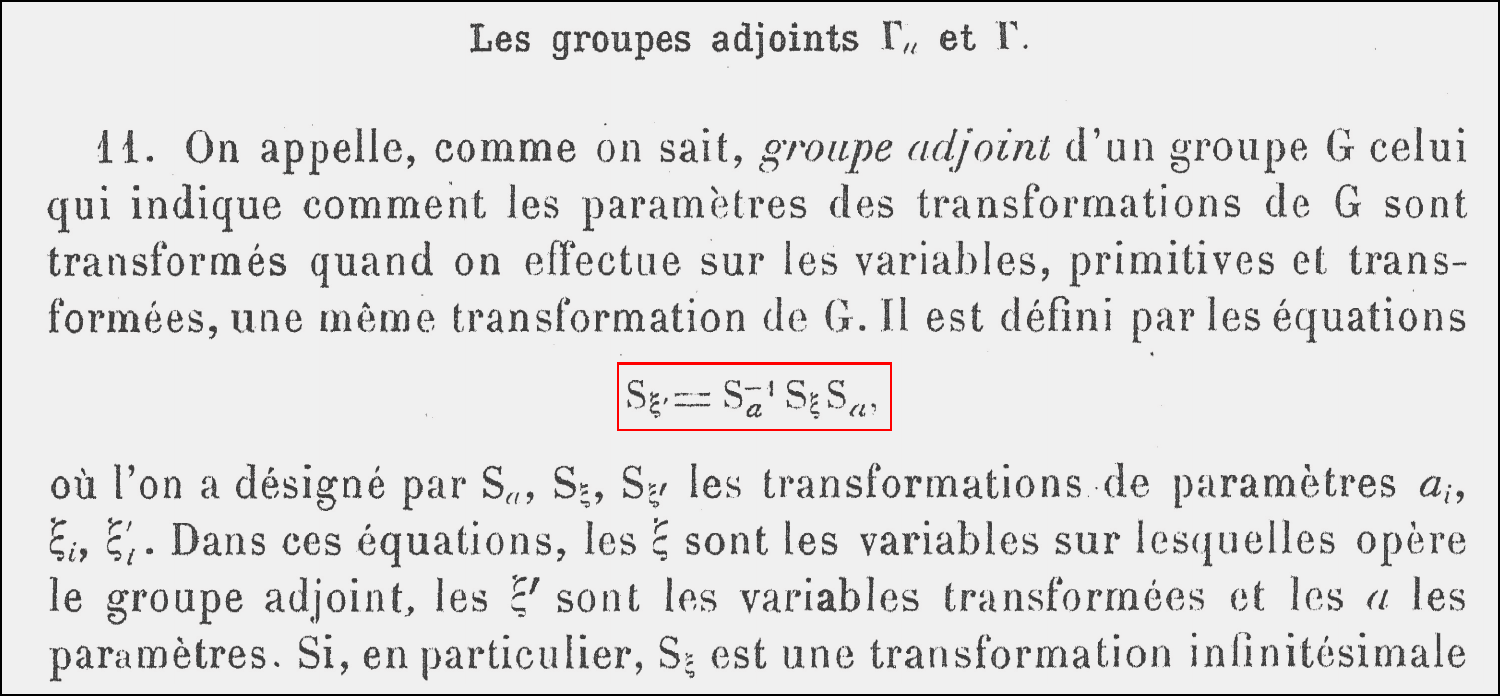}
    \caption{Cartan's adjoint representation is not yet the KAK decomposition, but does include the symmetric eigenproblem and a form of the SVD (equation \eqref{eq:svdblockform}) as special cases. It appears on the 17th page of a 126 page paper \cite{cartan1927certaines} that does not appear to have been translated into English.} 
    \label{fig:cartankak}
\end{figure}

\par Notably, this adjoint representation is not yet exactly the KAK decomposition. It is also not certain if Cartan knew further than \eqref{eq:cartanfrakp}, since the connection between Lie algebras and Lie groups were not complete at the time  according to \cite{helgasonprivatecomm}. At least he did not view the symmetric spaces as quotient spaces in his papers for the same reason. 

\par Crediting Cartan with the KAK decomposition may be an example of \href{https://en.wikipedia.org/wiki/Stigler\%27s_law_of_eponymy}{\color{blue}Stigler's law of eponymy,} though Cartan was certainly very close. (Also see \href{https://en.wikipedia.org/wiki/List_of_misnamed_theorems}{\color{blue}List of misnamed theorems} for famous misnamed theorems.) Another example of Stigler's law in this paper is the Bloch-Messiah decomposition, described in Remark \ref{rem:blochmessiah}. 

\par The equation \eqref{eq:cartanfrakp} does include as special cases the symmetric/Hermitian eigenproblems and also thinly disguised, the nonsquare SVD. To work out the SVD, we point out that a special case of the left hand side of equation \eqref{eq:cartanfrakp} is the tangent space $\mathfrak{p}$ of the symmetric space noncompact BD$\RN{1}$ which is  
\begin{equation}
    \mathfrak{p} = \{\text{All matrices of the block structure }\stwotwo{0}{A}{A^T}{0}, A\in\mathbb{R}^{p\times q}\}.
\end{equation}
The subgroup $K$ is $\ortho{p}\times \ortho{q}$ and the subalgebra $\mathfrak{a}$ is the subset of $\mathfrak{p}$ with the matrices $A$ above being diagonal matrices, say $\Sigma$. The equation \eqref{eq:cartanfrakp} is therefore, in this case, equivalent to $A = O_p \Sigma O_q^T$, a well-known $2\times 2$ block form of the SVD:

\begin{equation}\label{eq:svdblockform}
   \twotwo {0 }{A}{A^T}{0} =
    \twotwo{U}{}{}{V}\twotwo{0}{\Sigma}{\Sigma^T}{0}\twotwo{U}{}{}{V}^T .
\end{equation}

\subsection{1956, Harish-Chandra}
The well-known representation theorist who simply went by the one name Harish-Chandra may be the first to explicitly write down the KAK decomposition on the sixth line of the proof of Lemma 21 of \cite{harish1956representations} on page 590. To emphasize just how buried and unlikely to be found we provide an excerpt:

\begin{figure}[h]
    \centering
    \includegraphics[width=4.8in]{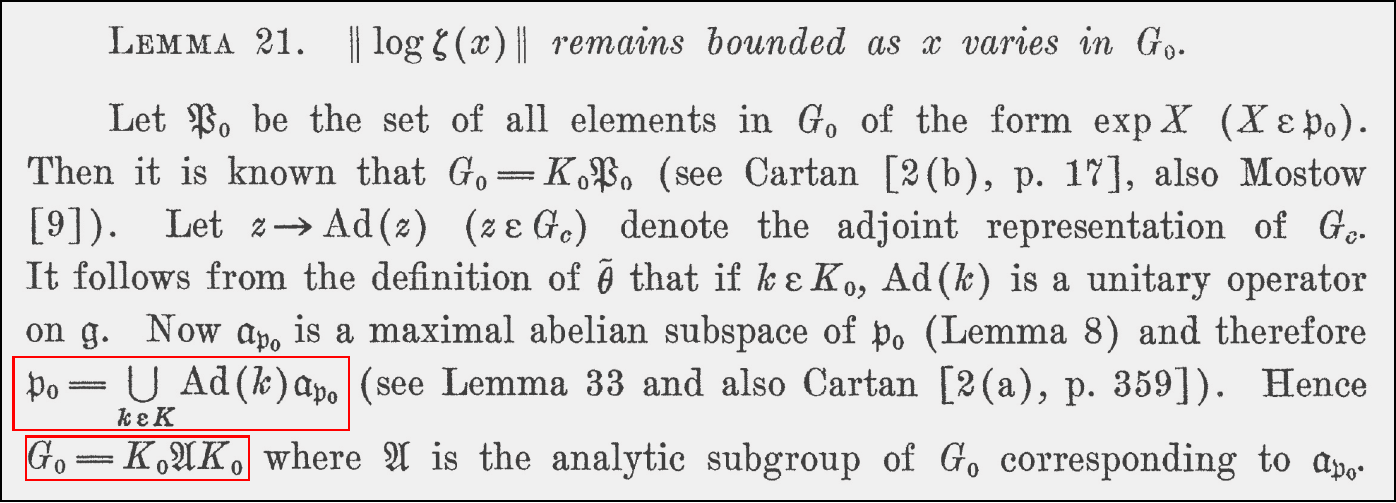}
    \caption{Harish-Chandra buries the KAK decomposition on the 6th line of a proof of what appears to be an unremarkable technical Lemma 21 \cite{harish1956representations}. The square full rank SVD and the hyperbolic CS decomposition are special cases as is the square CS decomposition (compact case).} 
    \label{fig:harishchandrakak}
\end{figure}

\par It seems unlikely that many mathematicians could find such an important result when it appears not as a statement of a theorem, not in a statement of a lemma, but buried deep in the proof of a seemingly technical lemma of little significance. On a modern level, an internet search engine would not be expected to pick up Harish-Chandra's work this way.

\par We feel comfortable stating that Harish-Chandra probably viewed KAK as a trivial consequence of Cartan's decomposition and found its utility as a technical tool but not yet as a deeply important mathematical object of its own right at least not in 1956.

\par The square full rank SVD is a special case of this ``sixth line" stating that $ A = U \Sigma V^T $, when $A$ is nonsingular. The hyperbolic CS decomposition is another special case of this ``sixth line", and we note the general rectangular SVD may be found, for example, in the upper right block entry of the folding of the hyperbolic CS decomposition. (Explicitly appears as Corollary \ref{cor:F18} of this paper.)\footnote{In fact, the upper right block entry of the HCSD, Theorem \ref{thm:F18realcomplexquaternion}, also contains the SVD even before the folding.} The square CS decomposition (Theorem \ref{thm:F4real}, $(p,q) = (r,s)$) is also a special case of this ``sixth line" of the proof of Lemma 21.

\subsection{1962, Helgason}
The KAK decomposition appears once in the 1962 book of Helgason \cite[p.320]{Helgason1962}, covering Harish-Chandra's result (exactly the Lemma 21 of \cite{harish1956representations}). The KAK decomposition continues to be used as a tool in passing seemingly without any further recognition.

\subsection{1968, Wigner}
Wigner wrote a 1968 paper \cite{wigner1968generalization} entitled ``On a generalization of Euler’s angles." It contains explicitly what we now call the CS decomposition (square case), the hyperbolic CS decomposition and even the folding of the HCSD which we have noted (that both the HCSD and the folding) includes the rectangular SVD inside (Theorem \ref{thm:F18realcomplexquaternion}, Corollary \ref{cor:F18}). At the time of writing, Google Scholar indicates this paper was only referenced eight times in the over 50 years since it was published.

\begin{figure}[h]
    \centering
    \includegraphics[width=4.8in]{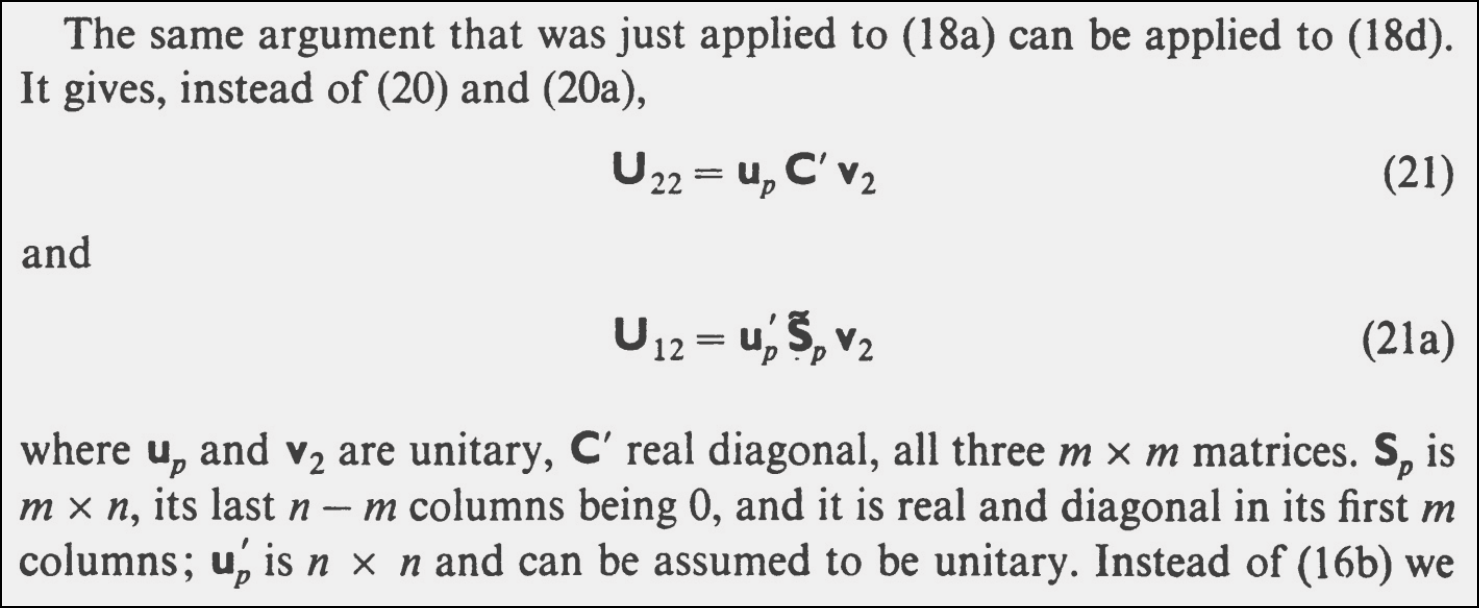}
    \caption{What we today recognize as the  general rectangular SVD is buried in insignificance
    in Wigner's equation (21a) from \cite{wigner1968generalization} which itself is the (1,2) 
     entry of what today we would call a hyperbolic CS decomposition.} 
    \label{fig:wignersvd}
\end{figure}

\par Wigner was absolutely aware of Helgason's 1962 book \cite{Helgason1962} as it is referenced in \cite{wigner1968generalization}.\footnote{In Wigner's case he thought the shape of the CSD and the HCSD reminded him of the Iwasawa decomposition, as mentioned in Section \ref{sec:F18}.} However, he certainly did not recognize his result was already there on page 320 of \cite{Helgason1962}. How could anyone?

\par Not only did Wigner not see his results looking backwards towards \cite{Helgason1962}, very few people recognized Wigner's results going forward. We credit Audrey Terras \cite{terras1988harmonic} for the observation that the SVD is in Wigner's work. If one studies Wigner's paper, the SVD is there, but completely buried in insignificance. (It appears in \cite[Eq.21a]{wigner1968generalization}, which is the upper right block of a hyperbolic CS decomposition. See Figure \ref{fig:wignersvd}.) 

\par In summary, Wigner's results were special cases of results that are in Helgason's book, but Wigner could hardly have been aware given how it appears. Unfortunately, Wigner's results themselves have mostly been overlooked in the over five decades. Why? Because the ideas were not sufficiently mature for Wigner to realize just how important they could be, the title is uninviting, and the motivation was narrow.

\subsection{1968 and 1969, Davis and Kahan}
Davis and Kahan in two papers \cite{davis1969some,davis1970rotation} develop the `cosines' and `sines' that we now recognize as  the key components of the `square' CS decomposition, a KAK decomposition, and the general CSD, a $\kak$ decomposition.

\subsection{1978, Helgason}  
Reinforcing his 1962 work \cite{Helgason1962}, in his 1978 book Helgason provides more details on the KAK decomposition. In particular, he states the KAK decomposition as an explicit Theorem in his book. The development from what Cartan wrote \eqref{eq:cartanfrakp} starts with \cite[p.247, Lemma 6.3]{Helgason1978}, and then the statement of the KAK decomposition as a theorem may then be found in \cite[p.249, Theorem 6.7]{Helgason1978}.

\par This entire sequence is in a chapter entitled `Decomposition of Symmetric Spaces', in a Section titled ``Rank of Symmetric Spaces," which would still evade a modern search engine's attempts to try to find what would be familiar to modern applications.

\subsection{1978, Flensted-Jensen}
The major development historically for this paper is Theorem 4.1 in the 1978 paper by Flensted-Jensen \cite{flensted1978spherical} which introduces what he calls the generalized Cartan decomposition following the historical, if not clearly correct, lead of giving Cartan
credit for writing the KAK decomposition. 

\par Terms such as ``$\kak$ decomposition" and ``double coset decomposition" expand upon Flensted-Jensen's original meaning which focused exclusively on the noncompact cases. The term ``generalized Cartan" now appears to refer to even extensions to compact cases as well \cite{Kobayashi}.

\subsection{1982, Stewart}
Paige and Wei \cite{paige1994history} give the credit to G. Stewart for having taken the CS decomposition to its proper place, as a first class decomposition with a name. The quoted paragraph in the Introduction of this paper and nearby paragraphs in \cite{paige1994history} explain these developments. As described in \cite{paige1994history}, Stewart first used the name ``CS decomposition" in 1982 when presenting \cite{stewart1983method}. As a printed paper, the CSD first appeared in \cite{stewart1982computing}.

\subsection{1988, Terras}
Audrey Terras captures on page 23 of her book on `Harmonic Analysis on Symmetric Spaces' \cite{terras1988harmonic} the various threads connecting the KAK decomposition, generalized Euler angles, and the SVD in different areas of study (Figure \ref{fig:terrassvd}).

\begin{figure}[h]
    \centering
    \includegraphics[width=4.8in]{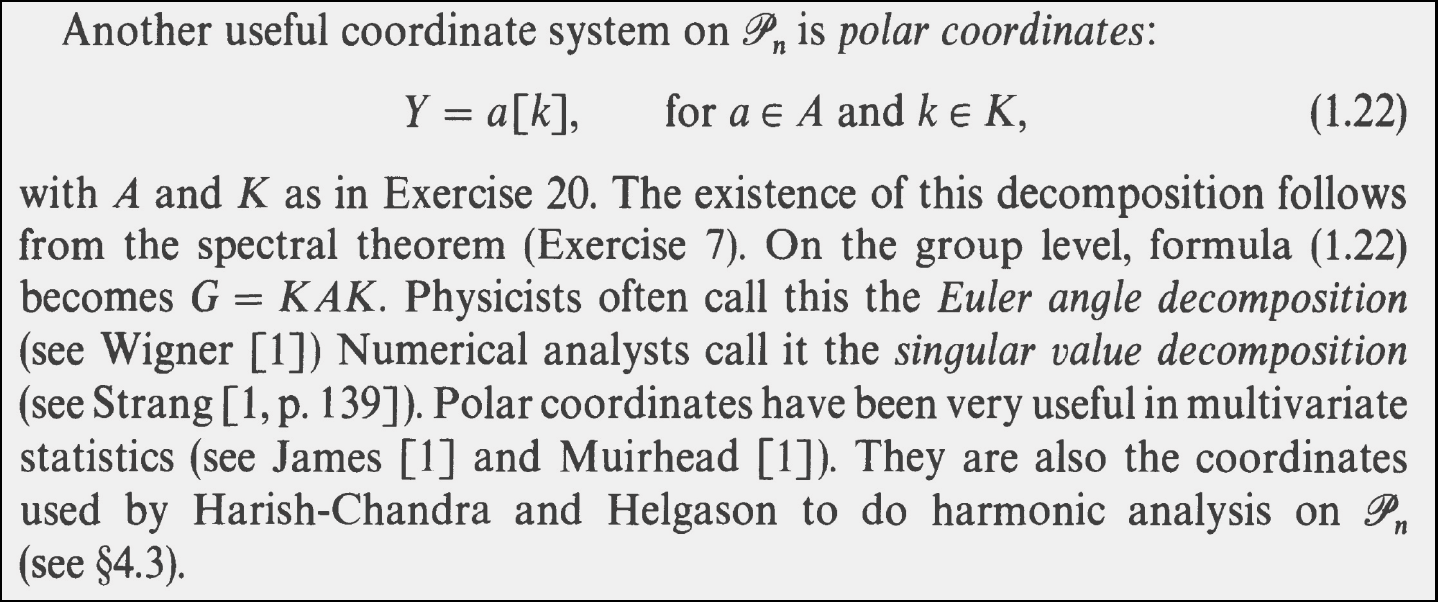}
    \caption{Terras in Exercise 23 of \cite[p.23]{terras1988harmonic} connects the symmetric positive definite eigenproblem, the polar decomposition, and the SVD.} 
    \label{fig:terrassvd}
\end{figure}

\subsection{2000s, Quantum Computing}
In a series of papers in the area of quantum computing such as \cite{bullock2004note,khaneja2001cartan,tucci2005introduction} one finds mention of the Cartan decomposition most especially in the context of the CS decomposition, and in one case \cite{fuehr2018note} the ODO decomposition ($\fact{1}$).

\subsection{Numerical Linear Algebra}
We have already discussed the SVD and the CS decomposition at length, as well as a few other decompositions. We also want to bring attention to the following papers on matrix factorizations that one way or another have perhaps without even knowing it brought what Cartan began as theory to useful developed practice: Angelika Bunse-Gerstner \cite{bunse1992chart}, Heike Fa{\ss}bender  \cite{fassbender2005several}, Nick Higham \cite{higham2003j}, Khakim Ikramov \cite{fassbender2005several,ikramov2012takagi}, D. Steven Mackey, Niloufer Mackey, and Fran\c{c}oise Tisseur \cite{mackey2003structured,mackey2005structured}, Volker Mehrmann \cite{benner2000cholesky,bunse1992chart}, Paul Van Dooren \cite{grimme1996model}, Hongguo Xu \cite{xu2003svd} and so many more. 

\subsection{2021, Wikipedia}
\par The Cartan decomposition currently has its own page in Wikipedia, and we draw attention to the \href{https://en.wikipedia.org/wiki/Cartan_decomposition#Cartan_decomposition_on_the_Lie_group_level}{\color{blue}paragraph} discussing the Cartan decomposition on the Lie Group Level. We note again that these are called refinements, and it seems likely that only $\mathfrak{p} = \bigcup_{k\in K} \text{Ad}(k)\mathfrak{a}$ appears in Cartan's work itself. As evidence of the obscurity of the generalized Cartan decomposition of Flensted-Jensen, there is no Wikipedia article at this time, though we expect it will appear shortly after this paper appears.

\addtocontents{toc}{\protect\setcounter{tocdepth}{0}}
\section{Acknowledgements}
We thank David Vogan, Sigurður Helgason, and Pavel Etingof for so much about the history, notation, and ideas connecting the Cartan decomposition and generalized Cartan decomposition to matrix factorizations. We thank Robert Gilmore for interesting conversations at Drexel University back in 2013. So much divides the applied and the pure, but so much more can unify. This work represents a bridge. We also thank David Sanders and Bernie Wang for comments on early versions of this manuscript.

\par We thank NSF grants OAC-1835443, SII-2029670, and PHY-2021825 for financial support.

\appendix
\addtocontents{toc}{\protect\setcounter{tocdepth}{1}}

\section{Lie algebras of classical Lie groups}\label{app:liealgebras}
In this section, we list the Lie algebras of the classical Lie groups. We also demonstrate how to create an element of those Lie algebras and Lie groups in the programming language Julia. Each code produces matrices \verb|g| and \verb|G| which are the matrices from listed Lie algebra and the corresponding Lie group.
\par The Lie group element \verb|G| can be obtained by applying the matrix exponential function on \verb|g|. In fact, the matrix exponential will create elements in the connected identity component of the Lie group, e.g., if you apply the exponential function to $\verb|g|\in\mathfrak{o}(p, q)$ then the resulting matrix is always in $\text{SO}(p, q)$. However one can always multiply $\pm 1$ or a complex unit in complex cases to recover the whole Lie group.

\subsection{General linear groups $\Gl_\beta(n)$}
The Lie algebras of general linear groups $\Gl_\beta(n)$, $\beta = 1,2,4$ are just the set of $n\times n$ matrices, without the invertibility restriction. 
\begin{gather}
    \mathfrak{gl}(n, \mathbb{R}) = \{n\times n\text{ real matrices}\} \\
    \mathfrak{gl}(n, \mathbb{C}) = \{n\times n\text{ complex matrices}\} \\
    \mathfrak{gl}(n, \mathbb{H}) = \{n\times n\text{ quaternionic matrices}\} 
\end{gather}

In many Lie theory textbooks the symbol $\mathfrak{u}^*(2n)$, the complexified $\mathfrak{gl}(n, \mathbb{H})$, is frequently used. With our $\complexify{\,\,\cdot\,\,}$ notation,
\begin{equation}
    \complexify{\mathfrak{gl}(n, \mathbb{H})} = \mathfrak{u}^*(2n) = \bigg\{\twotworr{a}{b}{-\overline{b}}{\overline{a}}\bigg| a, b\in\mathbb{C}^{n\times n}\bigg\}.
\end{equation}
For $\mathfrak{gl}(n, \mathbb{H})$ and $\mathfrak{u}^*(2n)$, we demonstrate both cases (a quaternion matrix and complexified matrix) of creating matrices in the Lie algebra. 

\begin{table}[h]
    \centering
    \small
    \begin{tabular}{c|l}
    Lie algebra & Julia code creating matrices \verb|g| $\in$ Lie algebra, \verb|G| $\in$ Lie Group\\ \hline
    $\mathfrak{gl}(n, \mathbb{R})$ & \verb|g = randn(n,n); G = exp(g)| \\ \hdashline
    $\mathfrak{gl}(n, \mathbb{C})$ & \verb|g = randn(n,n) + im * randn(n,n); G = exp(g)| \\ \hdashline
    $\mathfrak{gl}(n, \mathbb{H})$  & \verb|using Quaternions;| \\
    & \verb|g = [Quaternion(randn(4)...) for i in 1:n, j in 1:n];| \\ 
    & \verb|G = exp(g)| \\ \hdashline
    $\mathfrak{u}^*(2n)$ & \verb|a = randn(n,n) + im * randn(n,n);| \\
    & \verb|b = randn(n,n) + im * randn(n,n);|\\
    & \verb|g = [a b; -conj(b) conj(a)]; G = exp(g)| 
    \end{tabular}
\end{table}

\subsection{Unitary groups $\Un_\beta(n)$}
The Lie algebras of unitary groups $\beta = 1,2,4$ are well-known skew-symmetric, skew-Hermitian and quaternionic skew-Hermitian matrices, respectively. The Lie algebra of $\un{n, \mathbb{H}}$ is often denoted by $\mathfrak{sp}(n)$.
\begin{gather}
    \mathfrak{o}(n) = \{g\in\mathbb{R}^{n\times n}| g+g^T = 0\} \\
    \mathfrak{u}(n) = \{g\in\mathbb{C}^{n\times n}| g+g^H = 0\} \\
    \mathfrak{u}(n, \mathbb{H}) = \{g\in\mathbb{H}^{n\times n}| g+g^D = 0\} 
\end{gather}

\begin{table}[h]
    \centering
    \small
    \begin{tabular}{c|l}
    Lie algebra & Julia code creating matrices \verb|g| $\in$ Lie algebra, \verb|G| $\in$ Lie Group\\ \hline
    $\mathfrak{o}(n)$ & \verb|g = randn(n,n); g -= g'; G = exp(g)| \\ \hdashline
    $\mathfrak{u}(n)$ & \verb|g = randn(n,n) + im * randn(n,n); g -= g'; G = exp(g)| \\ \hdashline
    $\mathfrak{u}(n, \mathbb{H})$  & \verb|using Quaternions;| \\
    & \verb|g = [Quaternion(randn(4)...) for i in 1:n, j in 1:n];| \\ 
    & \verb|g -= g'; G = exp(g)| 
    \end{tabular}
\end{table}

\subsection{Indefinite unitary groups $\Un_\beta(p,q)$}
The Lie algebras of $\Un_\beta(p,q)$, $\beta=1,2,4$ are denoted by $\mathfrak{o}(p, q)$, $\mathfrak{u}(p, q)$, and $\mathfrak{u}(p, q, \mathbb{H})$, respectively. (In the literature, the symbol $\mathfrak{sp}(p,q)$ might be used.) 
\begin{gather}
    \mathfrak{o}(p, q) = \bigg\{\twotwo{a}{b}{b^T}{c}\bigg|a=-a^T, c=-c^T, a\in\mathbb{R}^{p\times p},  b\in\mathbb{R}^{p\times q}, c\in\mathbb{R}^{q\times q}\bigg\},\\
    \mathfrak{u}(p, q) = \bigg\{\twotwo{a}{b}{b^H}{c}\bigg|a = -a^H, c = -c^H, a\in\mathbb{C}^{p\times p},  b\in\mathbb{C}^{p\times q}, c\in\mathbb{C}^{q\times q}\bigg\},\\
    \mathfrak{u}(p, q, \mathbb{H}) = \bigg\{\twotwo{a}{b}{b^D}{c}\bigg|a = -a^D, c = -c^D, a\in\mathbb{H}^{p\times p},  b\in\mathbb{H}^{p\times q}, c\in\mathbb{H}^{q\times q}\bigg\}.
\end{gather}

\begin{table}[h]
    \centering
    \small
    \begin{tabular}{c|l}
    Lie algebra & Julia code creating matrices \verb|g| $\in$ Lie algebra, \verb|G| $\in$ Lie Group\\ \hline
    $\mathfrak{o}(p, q)$ & \verb|a = randn(p,p); b = randn(p,q); c = randn(q,q);| \\ 
    & \verb|a -= a'; c -= c'; g = [a b; b' c]; G = exp(g)| \\ \hdashline
    $\mathfrak{u}(p, q)$ & \verb|a = randn(p,p) + im * randn(p,p); a -= a';| \\ 
    & \verb|b = randn(p,q) + im * randn(p,q);| \\
    & \verb|c = randn(q,q) + im * randn(q,q); c -= c';| \\
    & \verb|g = [a b; b' c]; G = exp(g)| \\\hdashline
    $\mathfrak{u}(p, q, \mathbb{H})$  & \verb|using Quaternions;| \\
    & \verb|a = [Quaternion(randn(4)...) for i in 1:p, j in 1:p];| \\ 
    & \verb|b = [Quaternion(randn(4)...) for i in 1:p, j in 1:q];| \\ 
    & \verb|c = [Quaternion(randn(4)...) for i in 1:q, j in 1:q];| \\ 
    & \verb|a -= a'; c -= c';| \\
    & \verb|g = [a b; b' c]; G = exp(g)| 
    \end{tabular}
\end{table}

\subsection{Symplectic groups $\Symp_\beta(2n)$}
The Lie algebras of $\symp{2n, \mathbb{R}}$ and $\symp{2n, \mathbb{C}}$ are denoted by the symbols $\mathfrak{sp}(2n, \mathbb{R})$ and $\mathfrak{sp}(2n, \mathbb{C})$. 
\begin{gather}
    \mathfrak{sp}(2n, \mathbb{R}) = \bigg\{\twotwo{a}{b}{c}{-a^T}\bigg|b=b^T, c=c^T, a, b, c\in\mathbb{R}^{n\times n}\bigg\}\\
    \mathfrak{sp}(2n, \mathbb{C}) = \bigg\{\twotwo{a}{b}{c}{-a^T}\bigg|b=b^T, c=c^T, a, b, c\in\mathbb{C}^{n\times n}\bigg\}
\end{gather}
Moreover the conjugate symplectic group (isomorphic to $\un{n, n}$) has the Lie algebra
\begin{equation}
    \mathfrak{sp}^*(2n, \mathbb{C}) = \bigg\{\twotwo{a}{b}{c}{-a^H}\bigg|b=b^H, c=c^H, a, b, c\in\mathbb{C}^{n\times n}\bigg\}. 
\end{equation}

\begin{table}[h]
    \centering
    \small
    \begin{tabular}{c|l}
    Lie algebra & Julia code creating matrices \verb|g| $\in$ Lie algebra, \verb|G| $\in$ Lie Group\\ \hline
    $\mathfrak{sp}(2n, \mathbb{R})$ & \verb|a = randn(n,n); b = randn(n,n); c = randn(n,n);| \\ 
    & \verb|b += b'; c += c'; g = [a b; c -a']; G = exp(g)| \\ \hdashline
    $\mathfrak{sp}(2n, \mathbb{C})$ & \verb|a = randn(n,n) + im * randn(n,n);| \\
    & \verb|b = randn(n,n) + im * randn(n,n); b += transpose(b);| \\
    & \verb|c = randn(n,n) + im * randn(n,n); c += transpose(c);| \\
    & \verb|g = [a b; c -transpose(a)]; G = exp(g)| \\ \hdashline
    $\mathfrak{sp}^*(2n, \mathbb{C})$ & \verb|a = randn(n,n) + im * randn(n,n);| \\
    & \verb|b = randn(n,n) + im * randn(n,n); b += b';| \\
    & \verb|c = randn(n,n) + im * randn(n,n); c += c';| \\
    & \verb|g = [a b; c -a']; G = exp(g)| 
    \end{tabular}
\end{table}

\subsection{Orthogonal groups $\Ortho_\beta(n)$}
The Lie algebra of the complex orthogonal group $\ortho{n, \mathbb{C}}$ is straightforward,
\begin{equation}
    \mathfrak{o}(n, \mathbb{C}) = \{g\in\mathbb{C}^{n\times n}| g + g^T = 0\}.
\end{equation}
The Lie algebra of $\Ortho_\eta(n, \mathbb{H})$ is the set of all \textit{$\eta$-skew-Hermitian matrices}, 
\begin{equation}
    \mathfrak{o}_\eta(n, \mathbb{H}) := \{g\in\mathbb{H}^{n\times n}|g+g^{D_\eta} = 0\}.
\end{equation}

\begin{table}[h]
    \centering
    \small
    \begin{tabular}{c|l}
    Lie algebra & Julia code creating matrices \verb|g| $\in$ Lie algebra, \verb|G| $\in$ Lie Group\\ \hline
    $\mathfrak{o}(n, \mathbb{C})$ & \verb|g = randn(n,n) + im * randn(n,n);| \\
    & \verb|g -= transpose(g); G = exp(g)| \\ \hdashline
    $\mathfrak{o}(n, \mathbb{H})$ & \verb|using Quaternions;| \\
    & \verb|g = [Quaternion(randn(4)...) for i in 1:n, j in 1:n];| \\
    & \verb|g += Quaternion(0,0,1,0) * g' * Quaternion(0,0,1,0);| \\
    & \verb|G = exp(g)|
    \end{tabular}
\end{table}

\section{The list of Lie algebra involution $\tau$}\label{app:involutions}
In Tables \ref{tab:involutionlist1} to \ref{tab:involutionlist5}, we list the involutions used in our computation of 53 matrix factorizations. Note that these involutions are applied to the Lie algebra $\mathfrak{g}$ to obtain $\mathfrak{k}_\tau$ and $\mathfrak{p}_\tau$. The Cartan involutions could be found in rows marked with \colorbox{brown!20}{brown} since the brown represents the KAK decomposition. 

\par For compact cases ($\fact{1}$ to $\fact{6}$), there is no Cartan involution. The two involutions in each cell serve as $\sigma$ and $\tau$ respectively.  

\par We note that this is not the exhaustive list of non-Cartan involutions $\tau$ on each $\mathfrak{g}$. However we have included every non-Cartan involutions such that the induced subgroup $K_\tau$ has the explicit representation of Lie groups appearing in Table \ref{tab:translation}. We refer to Gilmore's textbook, Table 9.7 of \cite{gilmore2012lie} for the complete list of the involutions (real forms, up to isomorphism) associated with the classical Lie groups. 

\begin{table}[h]
{\tabulinesep=1mm
\begin{tabu}{|c|c|c|c|}
\hline

& $\mathfrak{o}(m)$ & $\mathfrak{u}(m)$ & $\mathfrak{u}(m, \mathbb{H})$ \\ \hline

$\fact{1}$ & & $-X^T$ & $-X^{D_i}$ \\ \hline

$\fact{2}$ & $-J_n X J_n$ & $-J_n \overline{X} J_n$ & \\ \hline

$\fact{3}$ & & $-X^T$, $-J_n X J_n$ & \\ \hline

$\fact{4}$ & $I_{p,q}XI_{p,q}$, $I_{r,s}XI_{r,s}$ & $I_{p,q}XI_{p,q}$, $I_{r,s}XI_{r,s}$ & $I_{p,q}XI_{p,q}$, $I_{r,s}XI_{r,s}$ \\ \hline

$\fact{5}$ & & $-X^T$, $I_{p,q}XI_{p,q}$ & $-X^{D_i}$, $I_{p,q}XI_{p,q}$ \\ \hline

$\fact{6}$ & $-J_n X J_n$, $I_{2p, 2q}X I_{2p, 2q}$ & $-J_n \overline{X} J_n$, $I_{2p, 2q}X I_{2p, 2q}$ & \\ \hline

\end{tabu}}
\caption{List of the involutions $\tau$ for Lie algebras of $\Un_\beta(m)$ ($m = n\text{ or }2n$).}
\label{tab:involutionlist1}
\end{table}

\begin{table}[h]
{\tabulinesep=1mm
\begin{tabu}{|c|c|c|c|}
\hline
& $\mathfrak{gl}(m, \mathbb{R})$ & $\mathfrak{gl}(m, \mathbb{C})$ & $\mathfrak{gl}(m, \mathbb{H})$ \\ \hline

\cellcolor{brown!20}$\fact{7}$ & $-X^T$ & $-X^H$ & $-X^D$ \\ \hline

$\fact{8}$ & $I_{p,q}X I_{p,q}$ & $I_{p,q}X I_{p,q}$ & $I_{p,q}X I_{p,q}$ \\ \hline

$\fact{9}$ & $-I_{p,q}X^TI_{p,q}$ & $-I_{p,q}X^HI_{p,q}$ & $-I_{p,q}X^DI_{p,q}$ \\ \hline

$\fact{10}$ & $J_nX^TJ_n$ & $J_nX^TJ_n$ &  \\ \hline

$\fact{11}$ & $-J_nXJ_n$ & $-J_nXJ_n$ & \\ \hline

$\fact{12}$ & & $\overline{X}$ & $-iXi$ \\ \hline

$\fact{13}$ & & $-X^T$ & $-X^{D_j}$ \\ \hline

\end{tabu}}
\caption{List of the involutions $\tau$ for Lie algebras of $\Gl_\beta(m)$ ($m = n\text{ or }2n$).}
\label{tab:involutionlist2}
\end{table}

\begin{table}[h]
{\tabulinesep=1mm
\begin{tabu}{|c|c|c|}
\hline
& $\mathfrak{sp}(2n, \mathbb{R})$ & $\mathfrak{sp}(2n, \mathbb{C})$ \\ \hline

\cellcolor{brown!20}$\fact{14}$ & $-X^T$ & $-X^H$ \\ \hline

$\fact{15}$ & $I_{n,n}XI_{n,n}$ & $I_{n,n}XI_{n,n}$ \\ \hline

$\fact{16}$ & $\stwotwo{I_{p,q}}{}{}{I_{p,q}}X\stwotwo{I_{p,q}}{}{}{I_{p,q}}$ & $\stwotwo{I_{p,q}}{}{}{I_{p,q}}X\stwotwo{I_{p,q}}{}{}{I_{p,q}}$ \\ \hline

$\fact{17}$ & & $\overline{X}$ \\ \hline

\end{tabu}}
\caption{List of the involutions $\tau$ for Lie algebras of $\Symp_\beta(2n)$.}
\label{tab:involutionlist3}
\end{table}

\begin{table}[H]
{\tabulinesep=1mm
\begin{tabu}{|c|c|c|c|}
\hline
& $\mathfrak{o}(p, q)$ & $\mathfrak{u}(p, q)$ & $\mathfrak{u}(p, q, \mathbb{H})$ \\ \hline

\cellcolor{brown!20}$\fact{18}$ & $I_{p,q}XI_{p,q}$ & $I_{p,q}XI_{p,q}$ & $I_{p,q}XI_{p,q}$ \\ \hline


$\fact{19}$ & \multicolumn{3}{c|}{$\stwotwo{I_{p_1, p_2}}{}{}{I_{q_1, q_2}}X\stwotwo{I_{p_1, p_2}}{}{}{I_{q_1, q_2}}$}  \\ \hline

$\fact{20}$ & $-\stwotwo{J_p}{}{}{J_q}X\stwotwo{J_p}{}{}{J_q}$ & $-\stwotwo{J_p}{}{}{J_q}\overline{X}\stwotwo{J_p}{}{}{J_q}$ & \\ \hline

$\fact{21}$ & $-J_nXJ_n$ & $-J_n\overline{X}J_n$ & \\ \hline

$\fact{22}$ & & $\overline{X}$ & $-iXi$ \\ \hline

\end{tabu}}
\caption{List of the involutions $\tau$ for Lie algebras of $\Un_\beta(p, q)$ (or $\Un_\beta(2p, 2q)$).}
\label{tab:involutionlist4}
\end{table}

\begin{table}[H]
{\tabulinesep=1mm
\begin{tabu}{|c|c|c|}
\hline
& $\mathfrak{o}(m, \mathbb{C})$ & $\mathfrak{o}(m, \mathbb{H})$ \\ \hline

\cellcolor{brown!20}$\fact{23}$ & $\overline{X}$ & $-iXi$ \\ \hline

$\fact{24}$ & $I_{p,q}XI_{p,q}$ & $I_{p,q}XI_{p,q}$ \\ \hline

$\fact{25}$ & $-J_n\overline{X}J_n$ & \\ \hline

\end{tabu}}
\caption{List of the involutions $\tau$ for Lie algebras of $\Ortho_\beta(m)$ ($m = n\text{ or }2n$).}
\label{tab:involutionlist5}
\end{table}

\bibliographystyle{siam}
\bibliography{bibliography.bib}

\end{document}